\newtheorem{theorem}{Theorem}[section]
\newtheorem{lemma}[theorem]{Lemma}
\newtheorem{proposition}[theorem]{Proposition}
\newtheorem{corollary}[theorem]{Corollary}
\theoremstyle{definition}
\newtheorem{definition}[theorem]{Definition}
\theoremstyle{remark}
\newtheorem{example}[theorem]{Example}
\numberwithin{equation}{section}
\newcommand{\mat}{}
\let\Oldepsilon\epsilon
\let\Oldvarepsilon\varepsilon
  \renewcommand{\varepsilon}{\Oldepsilon}
  \renewcommand{\epsilon}{\Oldvarepsilon}
\let\Oldphi\phi
\let\Oldvarphi\varphi
  \renewcommand{\varphi}{\Oldphi}
  \renewcommand{\phi}{\Oldvarphi}
\newcounter{lastenumi}
\newcommand{\sfrac}[2]{#1 / #2}
\newcommand\llangle{\langle\!\langle}
\newcommand\rrangle{\rangle\!\rangle}
\newcommand{\shiftleft}{\!\!\!\!\!\!}
\newcommand{\Ext}{\mathsf{\Lambda}}
\newcommand{\parto}{\dashrightarrow}
\newcommand{\qquant}{\qquad\forall\ }
\newcommand{\quant}{\quad\forall\ }
\newcommand{\leftopenint}{\left]}
\newcommand{\rightopenint}{\right[}
\newcommand{\leftclosedint}{\left[}
\newcommand{\rightclosedint}{\right]}
\newcommand{\Symbol}[1]{#1{^{\sigma}}}
\newcommand{\zeroplus}{0}
\newcommand{\tc}{\,:\,}
\newcommand{\dbar}{\overline{\partial}}
\newcommand{\afterline}[1]{#1\overline{\phantom{i}}}
\newcommand{\R}{\mathbb{R}}
\newcommand{\C}{\mathbb{C}}
\newcommand{\N}{\mathbb{N}}
\newcommand{\Z}{\mathbb{Z}}
\newcommand{\Q}{\mathbb{Q}}
\newcommand{\bundle}{\mathscr}
\newcommand{\Epsilon}{E}
\newcommand{\Alpha}{A}
\newcommand{\Beta}{B}
\newcommand{\vectau}{{\boldsymbol{\tau}}}
\newcommand{\veceps}{{\boldsymbol{\epsilon}}}
\newcommand{\veczeroplus}{\boldsymbol{0}}
\newcommand{\loc}{\mathrm{loc}}
\newcommand{\comp}{\mathrm{c}}
\newcommand{\op}{\mathrm{op}}
\DeclareMathOperator{\supp}{supp}
\DeclareMathOperator{\Hom}{Hom}
\DeclareMathOperator{\Div}{div}
\DeclareMathOperator{\sinc}{sinc}
\DeclareMathOperator{\codim}{codim}
\DeclareMathOperator{\Span}{span}
\DeclareMathOperator*{\esssup}{ess\,sup}
\newcommand{\dist}{\varrho}
\newcommand{\flow}{\mathrm{flow}}
\newcommand{\subunit}{\mathrm{unit}}
\newcommand{\Diff}{\mathfrak{D}}
\newcommand{\mult}{m}
\DeclareSymbolFont{t1letters}{T1}{cmr}{m}{it}
\DeclareMathSymbol{\DD}{0}{t1letters}{208}
\newcommand{\E}{\mathrm{e}}
\newcommand{\I}{\mathrm{i}}
\newcommand{\D}{\mathrm{d}}
\newcommand{\runinhead}{\subsubsection}
\begin{document}

\title[Geometry and Finite Propagation]{Sub-Finsler geometry and \\finite propagation speed}
\author{Michael G.~Cowling}
\address{Michael G.~Cowling \\ School of Mathematics and Statistics \\ University of New South Wales \\ UNSW Sydney 2052 \\ Australia}
\email{m.cowling@unsw.edu.au}

\author{Alessio Martini}
\address{Alessio Martini \\ School of Mathematics and Statistics \\ University of New South Wales \\ UNSW Sydney 2052 \\ Australia}
\curraddr{Mathematisches Seminar \\ Christian-Albrechts-Universit\"at zu Kiel \\ Ludewig-Meyn-Str.\ 4 \\ D-24098 Kiel \\ Germany}
\email{martini@math.uni-kiel.de}

\thanks{This research was supported under the Australian Research Council's Discovery Projects funding scheme (project number DP-110102488).}

\begin{abstract}
We prove a number of results on the geometry associated to the solutions of evolution equations given by first-order differential operators on manifolds.
In particular, we consider distance functions associated to a first-order operator, and discuss the associated geometry, which is sometimes surprisingly different to riemannian geometry.
\end{abstract}
\keywords{Sub-Finsler geometry, finite propagation speed}

\maketitle

\section{Introduction}

Suppose that $D$ is a first-order, formally self-adjoint differential operator on a manifold $M$.
Under what circumstances can we define a group of operators $\E^{\I tD}$ (where $t\in \R$) and when can we say that solutions to the corresponding differential equation $(\partial_t  - \I D)u = 0$ propagate with finite speed?
If $D$ is an operator between vector bundles, how do we measure the speed?
The aim of this paper is to answer these questions, under some assumptions on $D$, which are related to the H\"ormander condition for families of vector fields.
We do this precisely, and while many of the ideas here are in the literature, we have not seen them put together in a coherent way as we do here.

In particular, we establish when formally self-adjoint operators are essentially self-adjoint, and produce sharp estimates for the propagation of solutions, which involve a ``sub-Finsler'' distance when the operators act between vector bundles.
We also give a detailed description of the associated geometry.

Every first-order differential operator $D$ between vector bundles has a symbol $\sigma(D)$, which maps the cotangent space at a point $x$ to the space of linear operators from the fibre of one vector bundle to another.
The mapping that sends a cotangent vector $\xi$ to the operator norm of $\sigma(D)(\xi)$ is thus a seminorm $P_x$ on the cotangent space $T^*_xM$.
When $D$ is elliptic, the seminorm $P_x$ is a norm at each point $x$, but when $D$ is not elliptic, the seminorm may well have a nontrivial kernel, and the dimension of this kernel may vary from point to point.
Dual to the seminorm on the cotangent space, there is an extended norm $P_x^*$ on the tangent space $T_xM$ (by ``extended norm'', we mean that some vectors may have infinite norm).
The annihilator of the kernel of the seminorm $P_x$ in the tangent space is the space of tangent vectors of finite norm.
Thus in general, the geometry that we consider is similar to subriemannian geometry, but we must allow for the possibility that the dimension of the space of vectors of finite norm is not constant.
Further, when the bundles are one-dimensional, the seminorm is euclidean (once the kernel is factored out), but when the bundles are higher-dimensional, the norm is more general.
Thus we consider ``sub-Finsler'' geometry, an extension of subriemannian geometry.
We define various natural distance functions, and show that under various hypotheses they coincide; but surprisingly, they do not always do so, and we give a number of examples that show that  results that are obvious in more restricted circumstances may in fact be false in our more general context.
For example, we show that it may not be possible to measure the length of a smooth curve by considering a smooth parametrisation, and that the ``right'' distance to measure propagation may not be euclidean.
Because ``obvious'' results may be false, we feel that we are justified in giving fairly complete proofs of most results; expert readers may skip over proofs, in the knowledge that they are the proofs that may be expected, but we do suggest looking at the counterexamples later in the paper.

As we commented above, most of the ideas that we consider are not new, but have been considered in less general contexts.
For example, our technique for establishing finite propagation speed for first-order operators is well-known in the elliptic context, but less so in general; there are, for instance, a number of proofs in the subelliptic context that consider elliptic approximants to subelliptic operators rather than working directly with subelliptic operators.
Some of the analysis of distance functions that we carry out is familiar in the context of ``metric spaces'', but those who work in the context do not seem usually to consider vector bundles.

It is important for us to work in the generality of vector bundles, as we need to work with self-adjoint operators.
Given complex vector bundles $\bundle{E}$ and $\bundle{F}$, with hermitean fibre inner products (inner products on each fibre), and a differential operator $D: C^\infty(\bundle{E}) \to C^\infty(\bundle{F})$, we define a new differential operator $\DD:  C^\infty(\bundle{E} \oplus \bundle{F}) \to C^\infty(\bundle{E} \oplus \bundle{F})$ as the sum of $D$ and its formal adjoint $D^+$: more precisely, $\DD(f,g) = (D^+g, Df)$.
Then $\DD$ is formally self-adjoint, and $\DD$ induces the same distance function as $D$.
By studying the propagation of solutions to $(\partial_t  - \I \DD)u = 0$, we can say something about the wave equation $(\partial_t ^2 -  D^+D)v = 0$.
Vector bundles are also a natural context for considering systems of vector fields: to $\{X_1, \dots, X_r\}$, we associate the differential operator sending a function $f$ to the vector-valued function $(X_1f, \dots, X_rf)$, that is, from a section of a trivial bundle with fibre $\C$ to a section of a trivial bundle with fibre $\C^r$.

\subsection{Notation and Background}
Throughout, $M$ is an $n$-dimensional manifold, by which we mean a smooth $\sigma$-compact, and hence para\-compact, manifold without boundary.
Then $M$ admits a countable locally finite atlas $(\phi_\alpha)_{\alpha\in\Alpha}$; here each $U_\alpha \subseteq M$ and each $\phi_\alpha\colon  U_\alpha \to \R^n$ is a smooth bijection with smooth inverse.
By choosing a partition of unity $(\eta_\alpha)_{\alpha \in \Alpha}$ subordinate to the cover $(U_\alpha)_{\alpha \in \Alpha}$ and then rescaling the $\phi_\alpha$ so that $\phi_\alpha(\supp (\eta_\alpha)) \subseteq B_{\R^n}(0,1)$, where $B_{\R^n}(x,r)$ denotes the open ball in $\R^n$ with centre $x$ and radius $r$, we may suppose that $\bigcup_{\alpha \in \Alpha} V_\alpha = M$, where $V_\alpha = \phi_\alpha^{-1}(B_{\R^n}(0,1))$.
Then $\sum_{\alpha} \eta_\alpha = 1$ and the $\eta_\alpha$ are bump functions on $M$, by which we mean smooth compactly-supported functions taking values in $\leftclosedint 0,1 \rightclosedint$.
We write $\mathfrak{O}(M)$ and $\mathfrak{K}(M)$, or just $\mathfrak{O}$ and $\mathfrak{K}$, for the collections of all open subsets and all compact subsets of $M$.

We will endow $M$, and subsets thereof, with various extended distance functions $\dist:M \times M \to [0,\infty]$; by this, we mean that $\dist$ satisfies the usual conditions for a distance function, but may take the value $\infty$.
One way to do this is to choose a continuous ``fibre seminorm'' $P$ on $T^*M$, that is, $P_x$ is a seminorm on each fibre $T^*_xM$, and $P: T^*M \to [0,\infty)$ is continuous.
Dually, there is an extended fibre norm $P^*$ on the tangent space $TM$, given by
\[
P^*_x(v) = \sup_{\substack{\xi \in T^*_x M \\ P(\xi) \leq 1}} |\xi(v)| \,.
\]
We then say that a curve $\gamma:[a,b]\to M$ is subunit if is is absolutely continuous and $P^*(\gamma') \leq 1$ almost everywhere in $[a,b]$.
We define the (possibly infinite) distance $\dist_P(x,y)$ between points $x$ and $y$ in $M$ to be the infimum of the set of lengths of the intervals of definition of subunit curves starting at $x$ and ending at $y$.
We consider both subunit and smooth subunit curves in the text, and show, under suitable hypotheses, that it does not matter which are used, but in general there is a distinction.
It is easier to work with $P$ rather than $P^*$, as describing the continuity requirements on $P^*$ is more complex; further, when $P$ and $P^*$ arise in the analysis of a first-order differential operator, $P$ has a simple description in terms of the symbol of the operator.

In general, the topology induced by $\dist_P$ may not be equivalent to the original manifold topology of $M$.
It is easier to work with distance functions that do give rise to the original topology, and we give these a special name.

\begin{definition}\label{def:varietal}
An extended distance function is said to be \emph{varietal} if the topology that it induces coincides with the manifold topology. 
\end{definition}

Given a distance function $\dist$ on $M$, a point $x$ in $M$, and $\epsilon \in \R^+$, we write
\[
B_{\dist}(x,\epsilon) = \{y \in M \tc \dist(x,y) < \epsilon\}
\quad\text{and}\quad
\bar{B}_{\dist}(x,\epsilon) = \{y \in M \tc \dist(x,y) \leq \epsilon\};
\]
the latter set need not be closed in the manifold topology, and, given a subset $X$ of $M$, we write $\afterline{X}$ for the manifold closure of $X$.
As usual, $\dist(X,x) = \inf_{y \in X} \dist(y,x)$.
We define $B_{\dist}(X,\epsilon)$ and $\bar{B}_{\dist}(X,\epsilon)$ analogously.

We equip $M$ with a smooth measure that is equivalent to Lebesgue measure in all coordinate charts, and write ${\D}x$, ${\D}y$, \dots, for the measure elements.
Take a smooth complex finite-rank fibre-normed vector bundle $\bundle{E}$ on $M$.
We use ``function-like notation'' for spaces of sections of $\bundle{E}$; for instance, we write $L^p_\loc(\bundle{E})$ for the space of (equivalence classes of) sections $f$ of $\bundle{E}$ such that $|f|^p$ is locally integrable on $M$ if $p < \infty$, or $|f|$ is essentially bounded if $p = \infty$, and $L^p_\comp(\bundle{E})$ for the space of compactly-supported sections in $L^p_\loc(\bundle{E})$.
The former space is equipped with a Fr\'echet structure: $f_m \to f$ in $L^p_\loc(\bundle{E})$ if and only if
\[
\int_K |f_m(x) - f(x)|^p \,{\D}x \to 0 
\quad\text{as $m \to \infty$}
\]
for all $K \in \mathfrak{K}(M)$ (recall that, in general, a Fr\'{e}chet space structure involves a countable family of seminorms $Q_k$ such that $f = 0 $ if and only if $Q_k(f) = 0$ for all indices $k$); the latter is an inductive limit of Banach spaces.
We write $C(\bundle{E})$ for the space of  continuous sections of $\bundle{E}$; then convergence in $C(\bundle{E})$ means uniform convergence on compacta.
If $\bundle{E}$ has a hermitean fibre inner product $\langle\cdot,\cdot\rangle$, then, for all $f,g \in L^2(\bundle{E})$, we write $\langle f, g \rangle$ for their pointwise inner product, which is a function on $M$, and $\llangle f, g\rrangle$ for their inner product:
\[
\llangle f, g\rrangle = \int_M \langle f(x), g(x) \rangle \,{\D}x.
\]
We write $\bundle{T}$ and $\bundle{T}^r$ for the trivial bundles over $M$ with fibres $\C$ and $\C^r$, and $\bundle{T}_\R$ for the trivial bundle over $M$ with fibre $\R$.
Thus $C^\infty_\comp(\bundle{T})$ and $C^\infty_\comp(\bundle{T}_\R)$ denote the usual space of smooth compactly-supported complex-valued functions on $M$, and the subspace thereof of real-valued functions.

Suppose that $\phi_\alpha\colon  {U_\alpha} \to \R^n$ is a coordinate chart and $\bundle{E}$ is a vector bundle over $M$ with fibre $\C^r$.
On $\R^n$, as on any contractible manifold, all vector bundles are trivialisable \cite[Corollary 3.4.8]{husemoller_fibre_1994}.
Thus, when we consider the restriction $\bundle{E}|_{U_\alpha}$ of $\bundle{E}$ to ${U_\alpha}$, there are invertible linear maps $T_x$ from $\bundle{E}_x$, the fibre over $x$, to $\C^r$, which vary smoothly with $x$ in $M$, so the map $w\mapsto (\pi(w), T_{\pi(w)} w)$, where $\pi$ is the projection from $\bundle{E}$ to $M$, is a vector bundle isomorphism of $\bundle{E}|_{U_\alpha}$ with the bundle ${U_\alpha} \times \C^r$ over $U_\alpha$.
In fact, when $\bundle{E}$ has a hermitean structure, then the $T_x$ may be chosen to be isometries.
Furthermore, the map $\phi_\alpha \otimes I$ is a vector bundle isomorphism from the bundle ${U_\alpha} \times \C^r$ over $U_\alpha$ to the bundle $\R^n \times \C^r$ over $\R^n$.
This isomorphism in turn induces an identification $\tau_{\bundle{E},\alpha}$ of the sections of $\bundle{E}|_{U_\alpha}$ with the sections of the trivial bundle $\R^n \times \C^r$ over $\R^n$, which we identify with the $\C^r$-valued functions on $\R^n$.
For instance, $\tau_{\bundle{E},\alpha}: C^\infty_\comp(\bundle{E}|_{U_\alpha}) \to C^\infty_\comp(\R^n \times \C^r)$ is defined by $\tau_{\bundle{E},\alpha} f(x) = T_{\phi_\alpha^{-1}(x)}  f(\phi_\alpha^{-1}(x))$ for all $x$ in $\R^n$.
At the risk of confusion, we usually just write $\tau_\alpha$ rather than $\tau_{\bundle{E},\alpha}$.
We also use $\tau$ for the map of other spaces of sections, such as $L^1_\loc(\bundle{E})$.
When we write $\tau_\alpha^{-1}f$, where $f$ is a section over $\R^n$, we intend the section of $\bundle{E}$ that vanishes outside $U_\alpha$.

We use the letter $\kappa$ for constants; these may vary from one paragraph to the next. 
We often highlight the parameters on which these constants depend.

\section{Differential operators and symbols}

We denote by $\Diff_k(\bundle{E},\bundle{F})$ the space of smooth linear $k$th-order differential operators from $C^\infty(\bundle{E})$ to $C^\infty(\bundle{F})$, where $\bundle{E}$ and $\bundle{F}$ are smooth complex finite-rank vector bundles on $M$. 
In local coordinates and trivialisations of the bundles, as described above, each $D \in \Diff_k(\bundle{E},\bundle{F})$ may be written as
\begin{equation}\label{eq:diffopcoord}
\tau_\alpha (Df)(x) = \sum_{|J|\leq k} \mat{a}_J(x) \, \partial_J (\tau_\alpha f )(x)
\qquant x \in \R^n,
\end{equation}
where the $J$ are multi-indices and the coefficients $\mat{a}_J(x)$ are matrices that depend smoothly on $x$ in $\R^n$.
We also write
\[
\tau_\alpha D = \sum_{|J|\leq k} \mat{a}_J \, \partial_J .
\]
Note that $\Diff_{k_1}(\bundle{E},\bundle{F}) \subseteq \Diff_{k_2}(\bundle{E},\bundle{F})$ if $k_1 \leq k_2$.

Every $D \in \Diff_k(\bundle{E},\bundle{F})$ has an associated symbol $\sigma_{k}(D)$, which is a smooth section of $\Hom(S^k(\C T^* M), \Hom(\bundle{E},\bundle{F}))$;
in other words, the symbol $\sigma_k(D)$ at a point $x \in M$ is a $\Hom(\bundle{E}_x,\bundle{F}_x)$-valued symmetric $k$-linear form on $\C T^*_x M$.
In local coordinates and trivialisations, if $D$ is given by \eqref{eq:diffopcoord}, then
\begin{equation}\label{eq:symbolcoord}
\tau_\alpha(\sigma_k(D))(x)(\xi^{\odot k}) = \sum_{|J| = k} \xi^J \mat{a}_J(x)
\qquant x \in \R^n
\quant \xi \in \C^n
\end{equation}
where $\xi^{\odot k}$ denotes the symmetrised version of $\xi\otimes \dots \otimes \xi$
(with $k$ factors). 
The mapping $D \mapsto \sigma_k(D)$ is $\C$-linear, and its kernel is $\Diff_{k-1}(\bundle{E},\bundle{F})$;
moreover, if $D_1 \in \Diff_{k_1}(\bundle{E},\bundle{F})$ and $D_2 \in \Diff_{k_2}(\bundle{F},\bundle{G})$, where $\bundle{G}$ is another vector bundle on $M$, then $D_2 D_1 \in \Diff_{k_1+k_2}(\bundle{E},\bundle{G})$ and
\begin{equation}\label{eq:symbcomp}
  \sigma_{k_1+k_2}(D_2 D_1)(\xi^{\odot (k_1+k_2)})
= \sigma_{k_2}(D_2)(\xi^{\odot k_2}) \, \sigma_{k_1}(D_1)(\xi^{\odot k_1})
\end{equation}
for all $\xi \in \C T^* M$.

Recall that $M$ is endowed with a smooth measure that is equivalent to Lebesgue measure in all coordinate charts, and suppose that $\bundle{E}$ and $\bundle{F}$ are endowed with hermitean fibre inner products.
Then each $D \in \Diff_k(\bundle{E},\bundle{F})$ has a formal adjoint $D^+ \in \Diff_k(\bundle{F},\bundle{E})$, which is uniquely determined by the identity
\begin{equation}\label{eq:adjoint}
\llangle D f, g \rrangle = \llangle f, D^+ g \rrangle
\end{equation}
for all $f \in C^\infty_\comp(\bundle{E})$ and $g \in C^\infty_\comp(\bundle{F})$.
This identity extends to sections $f$ and $g$ such that $\supp f \cap\supp g$ is compact, since $\llangle Df, g \rrangle = \llangle D(\eta f), \eta g \rrangle$ for all bump functions $\eta$ equal to $1$ on $\supp f \cap \supp g$.
Clearly, the mapping $D \mapsto D^+$ is conjugate-linear and $(D_2 D_1)^+ = D_1^+ D_2^+$.
Moreover, for all $\theta \in S^k(\C T^* M)$, 
\begin{equation}\label{eq:symbadj}
\sigma_k(D^+)(\theta) = (-1)^k (\sigma_k(D)(\overline{\theta}))^*
\end{equation}
where the final ${}^*$ denotes the adjoint with respect to the hermitean inner products along the fibres of $\bundle{E}$ and $\bundle{F}$; note that the symbol of the formal adjoint does not depend on the choice of measure on $M$.

\subsection{Zeroth-order differential operators}\label{subsection:zerothorder}

Every $D \in \Diff_0(\bundle{E},\bundle{F})$ is a multiplication operator: it is given by multiplication by a smooth section of $\Hom(\bundle{E},\bundle{F})$, namely, the symbol $\sigma_0(D)$.
Formal adjunction of $D$ then corresponds to pointwise adjunction of the multiplier:
\begin{equation}\label{eqn:mult_adjoint}
\llangle h f, g \rrangle = \llangle f, h^* g \rrangle
\end{equation}
for all $h \in C^\infty(\Hom(\bundle{E},\bundle{F}))$, all $f \in C^\infty(\bundle{E})$, and all $g \in C^\infty(\bundle{F})$ such that $\supp f \cap \supp g \cap \supp h$ is compact.
Here are some special cases of \eqref{eqn:mult_adjoint}.

First, if $\bundle{E} = \bundle{F}$ and $h \in C^\infty(\bundle{T})$, then $h$ corresponds to a scalar section of $\Hom(\bundle{E},\bundle{E})$, whose pointwise adjoint corresponds to the pointwise conjugate $\overline{h}$, so
\[
\llangle h f, g \rrangle = \llangle f, \overline{h} g \rrangle.
\]

Next, if $h,g \in C^\infty(\bundle{E})$ and $f \in C^\infty(\bundle{T})$, then $h$ corresponds to a smooth section of $\Hom(\bundle{T}, \bundle{E})$, whose pointwise adjoint corresponds to the section $h^* = \langle \cdot, h \rangle$ of $\bundle{E}^*$, which we may identify with $\Hom(\bundle{E}, \bundle{T})$; now
\[
\llangle f h, g \rrangle = \llangle f, h^* g \rrangle = \llangle f, \langle g, h \rangle \rrangle.
\]

Finally, if $h \in C^\infty(\bundle{E})$, $f \in C^\infty(\Hom(\bundle{E},\bundle{F}))$ and $g \in C^\infty(\bundle{F})$, then $h$ corresponds to a smooth section of $\Hom(\Hom(\bundle{E},\bundle{F}),\bundle{F})$, whose pointwise adjoint, with respect to the Hilbert--Schmidt inner product on $\Hom(\bundle{E},\bundle{F})$, is a section of $\Hom(\bundle{F},\Hom(\bundle{E},\bundle{F}))$, given, modulo the identification of $\Hom(\bundle{E},\bundle{F})$ with $\bundle{E}^* \otimes \bundle{F}$, by the pointwise tensor product with $h^*$, and
\[
\llangle f h, g \rrangle = \llangle f, h^* \otimes g \rrangle.
\]

By the way, by using a partition of unity and local trivialisations, it is easily shown that each smooth compactly-supported section $h$ of $\Hom(\bundle{E},\bundle{F})$ may be written as a finite sum of sections of the form $f^* \otimes g$ for appropriate $f \in C^\infty_\comp(\bundle{E})$ and $g \in C^\infty_\comp(\bundle{F})$.

\subsection{First-order differential operators}\label{subsection:firstorder}

Suppose that $D \in \Diff_1(\bundle{E},\bundle{F})$.
Given any $h \in C^\infty(\bundle{T})$, denote by $\mult_{\bundle{E}}(h)$ and $\mult_{\bundle{F}}(h)$ the multiplication operators $f \mapsto h f$ on smooth sections of $\bundle{E}$ and $\bundle{F}$, and define
\begin{equation}\label{eq:symbolcommutator}
[D,\mult(h)] = D \mult_{\bundle{E}}(h) - \mult_{\bundle{F}}(h) D.
\end{equation}
In local coordinates and trivialisations, if $D$ is given by \eqref{eq:diffopcoord}, then
\[
\tau_\alpha([D,\mult(h)] f)(x) = \sum_{j = 1}^n \partial_j (\tau_\alpha h)(x) \, \mat{a}_j(x) \,\tau_\alpha f(x)
\qquant x \in \R^n;
\]
in other words, the commutator $[D,\mult(h)]\colon C^\infty(\bundle{E}) \to C^\infty(\bundle{F})$ acts by multiplication by $\sigma_1(D)({\D}h) \in C^\infty(\Hom(\bundle{E},\bundle{F}))$.
Observe that the correspondence $h \mapsto \sigma_1(D)({\D}h)$ is a differential operator $\Symbol{D} \in \Diff_1(\bundle{T},\Hom(\bundle{E},\bundle{F}))$, given in local coordinates by
\[
\tau_\alpha(\Symbol{D} h)(x) = \sum_{j = 1}^n \partial_j (\tau_\alpha h)(x) \, \mat{a}_j(x)
\qquant x \in \R^n.
\]
Clearly $\Symbol{D}$ is homogeneous, that is, $\Symbol{D} 1 = 0$, and the map $D \mapsto \Symbol{D}$ is linear.
Moreover \eqref{eq:symbolcommutator} may be rewritten as Leibniz' rule for $D$, that is,
\[
D(h f) = (\Symbol{D} h) f + h \, Df
\]
for all $f \in C^\infty(\bundle{E})$ and $h \in C^\infty(\bundle{T})$.
This identity, together with \eqref{eq:adjoint} and its zeroth-order instances discussed in \S~\ref{subsection:zerothorder}, easily implies that
\begin{equation}\label{eq:adjtau}
(\Symbol{D})^+ (f^* \otimes g) = \langle D^+ g, f \rangle - \langle g, Df \rangle
\end{equation}
for all $f \in C^\infty(\bundle{E})$ and $g \in C^\infty(\bundle{F})$, whereas from \eqref{eq:symbadj} it follows that
\[
\Symbol{(D^+)} h = - (\Symbol{D} \overline{h})^*
\]
for all $h \in C^\infty(\bundle{T})$.

For more on differential operators, see \cite[Section IV]{palais_seminar_1965}, \cite[Section 2.1]{berline_heat_1992}, and \cite[Section IV.2]{wells_differential_2008}; see also \cite[Section 10]{higson_analytic_2000} for the first-order case.

\section{Distributions and weak differentiability}\label{section:weakderivatives}

Recall that $C^\infty_\comp(\bundle{E})$ denotes the LF-space of compactly-supported smooth sections of $\bundle{E}$; its conjugate dual $C^\infty_\comp(\bundle{E})'$ is the space of $\bundle{E}$-valued distributions on $M$.
As usual, we identify a locally integrable section $f \in L^1_\loc(\bundle{E})$ with the distribution $\phi \mapsto \llangle f, \phi \rrangle$ and extend the inner product between sections of $\bundle{E}$ to denote the duality pairing between $C^\infty_\comp(\bundle{E})'$ and $C^\infty_\comp(\bundle{E})$.

Every differential operator $D \in \Diff_k(\bundle{E},\bundle{F})$ then extends to an operator on distributions: given any $u \in C^\infty_\comp(\bundle{E})'$, we define $D u \in C^\infty_\comp(\bundle{F})'$ by
\begin{equation}\label{eqn:defn-distn}
\llangle Du , \phi \rrangle = \llangle u, D^+ \phi \rrangle
\qquant \phi \in C^\infty_\comp(\bundle{F}).
\end{equation}
Since zeroth-order differential operators are multiplication operators, \eqref{eqn:defn-distn} includes the definition of the ``pointwise product'' of smooth sections and distributions, in all the variants discussed in \S~\ref{subsection:zerothorder}.
Moreover the identity
\[
\llangle u^* , \phi \rrangle = \afterline{\llangle u, \phi^* \rrangle}
\]
allows us to extend pointwise adjunction to $\Hom(\bundle{E},\bundle{F})$-valued distributions.

For a first-order operator $D$, with these definitions, we may extend the identities of \S~\ref{subsection:firstorder} to the realm of distributions.
For instance, to show that
\begin{equation}\label{eq:dist_tauadj}
\Symbol{(D^+)} h = - (\Symbol{D} \overline{h})^*
\end{equation}
for all $h \in C^\infty_\comp(\bundle{T})'$, we note that it suffices to test this distributional identity on sections of the form $f^* \otimes g$ where $f \in C^\infty_\comp(\bundle{E})$ and $g \in C^\infty_\comp(\bundle{F})$; to do this, we apply \eqref{eq:adjtau}.
Similarly it may be proved that
\begin{equation}\label{eq:dist_leibniz}
D(h f) = (\Symbol{D} h) f + h \, Df
\end{equation}
when $h \in C^\infty_\comp(\bundle{T})'$ and $f \in C^\infty(\bundle{E})$, or when $h \in C^\infty(\bundle{T})$ and $f \in C^\infty_\comp(\bundle{E})'$.

For $D \in \Diff_k(\bundle{E},\bundle{F})$, the definition of the $D$-derivative of an $\bundle{E}$-valued distribution is based on that of the formal adjoint $D^+$ and depends on the choice of measure on $M$ and on the hermitean structures on $\bundle{E}$ and $\bundle{F}$; the same holds for the definition of the embedding of $L^1_\loc(\bundle{E})$ in $C^\infty_\comp(\bundle{E})'$.
However, $L^1_\loc(\bundle{E})$ and $L^1_\loc(\bundle{F})$ do not depend on those structures: if we change the measure or inner products, then we get the same linear spaces, with equivalent families of seminorms and so equivalent Fr\'echet structures.
Moreover, if $f \in L^1_\loc(\bundle{E})$ and $Df \in L^1_\loc(\bundle{E})$, then the section in $L^1_\loc(\bundle{F})$ that corresponds to the distributional derivative $Df$ does not depend on these structures.

We say that $f \in L^1_\loc(\bundle{E})$ is weakly $D$-differentiable if $D f \in L^1_\loc(\bundle{F})$.
Given any $p \in [1 , \infty]$ and $D \in \Diff_k(\bundle{E},\bundle{F})$, we define the local Sobolev space $W^p_{D, \loc}(\bundle{E})$ by
\[
W^p_{D, \loc}(\bundle{E}) = \{f \in L^p_\loc(\bundle{E}) \tc Df \in L^p_\loc(\bundle{F})\},
\]
which is given a Fr\'echet structure by identifying it with a closed subspace of $L^p_\loc(\bundle{E}) \times L^p_\loc(\bundle{F})$ by the map $f \mapsto (f,Df)$.
Similarly, we define the Sobolev space $W^p_{D}(\bundle{E})$ by
\[
W^p_{D}(\bundle{E}) = \{f \in L^p(\bundle{E}) \tc Df \in L^p(\bundle{F})\}.
\]
The Banach space $W^p_{D}(\bundle{E})$ depends on the choice of measure on $M$ and on the hermitean structures on $\bundle{E}$ and $\bundle{F}$, while $W^p_{D,\loc}(\bundle{E})$ does not.
Finally, $W^p_{D,0}(\bundle{E})$ denotes the closure of $C^\infty_c(\bundle{E})$ in $W^p_D(\bundle{E})$.

\subsection{Mollifiers and smooth approximation}

Mollifiers, introduced by K.~O.\ Friedrichs \cite{friedrichs_identity_1944}, allow us to approximate distributions, and in particular, locally integrable functions, by smooth functions.
We now describe the application of this technique to sections of vector bundles on the manifold $M$.

For convenience, we first consider the case where $M$ is $\R^n$, equipped with Lebesgue measure and euclidean distance function, and $\bundle{T}$ is the trivial bundle $\R^n \times \C$ over $\R^n$.
Recall that all vector bundles on $\R^n$ are trivialisable, and sections of a trivial bundle over $\R^n$ with fibre $\C^r$ may be identified with functions from $\R^n$ to $\C^r$.
Hence it is easy to define mollifiers on $\R^n$ globally, and mollifiers on general manifolds and bundles may then be defined by local trivialisations and partitions of unity.

Choose a bump function $\phi \in C^\infty_\comp(\bundle{T})$ with unit mass and support in the unit ball; for all $\epsilon \in \leftopenint 0,1\rightclosedint$, define $\phi_\epsilon(x) = \epsilon^{-n} \phi(\epsilon^{-1}x)$ for all $x \in \R^n$.
For a distributional section $f \in C^\infty_\comp(\bundle{T}^r)'$ of a trivial bundle with fibre $\C^r$, we set
\begin{equation}\label{eqn:defn-of-J}
J_\epsilon f(x) = \phi_\epsilon * f(x) = \sum_{k=1}^r \llangle f, \phi_\epsilon(x-\cdot) e_k \rrangle e_k
\qquant x \in \R^n,
\end{equation}
where $\{e_1,\dots,e_r\}$ is the canonical basis of $\C^r$.

\begin{proposition}\label{prp:mollifier}
Suppose that $\bundle{E}$ is the trivial bundle $\R^n \times \C^r$ over $\R^n$, and that $1 \leq p \leq \infty$.
For all $f \in C^\infty_\comp(\bundle{E})'$, the formula \eqref{eqn:defn-of-J} defines smooth sections $J_\epsilon f$  of $\bundle{E}$ that converge to $f$ distributionally as $\epsilon \to \zeroplus$.
Moreover, the following hold.
\begin{enumerate}[(i)]
\item (Supports)
$\supp J_\epsilon f \subseteq \bar{B}_{\R^n}(\supp f ,\epsilon)$.
\item (Equicontinuity)
The operators $J_\epsilon$ are bounded on $L^p_\loc(\bundle{E})$, uniformly for $\epsilon$ in $\leftopenint 0,1\rightclosedint$.
\item (Approximation)
If $p < \infty$ and $f \in L^p_\loc(\bundle{E})$, then $J_\epsilon f \to f$ in $L^p_\loc(\bundle{F})$ as $\epsilon \to \zeroplus$; the same holds if $p = \infty$ and $f \in C(\bundle{E})$.
\item (Upper bound)
For all continuous fibre seminorms $P$ on $\bundle{E}$, all $K \in \mathfrak{K}(\R^n)$ and all $f \in L^\infty_\loc(\bundle{E})$,
\[
\limsup_{\epsilon \to \zeroplus} \sup_{x\in K} P(J_\epsilon f)(x) \leq \inf_{\substack{W \in \mathfrak{O} \\ W \supseteq K }} \esssup_{x \in W} P(f)(x).
\]
\end{enumerate}
\end{proposition}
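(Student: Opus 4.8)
The plan is to reduce everything to the classical Friedrichs mollifier theory on $\R^n$, using that $\bundle{E}$ is trivial: the operator $J_\epsilon$ acts componentwise in the canonical basis of $\C^r$, so that $J_\epsilon f = \phi_\epsilon * f$ in the usual sense of convolution of a compactly supported distribution (or an $L^p_\loc$ section) with the smooth kernel $\phi_\epsilon$. I would begin with smoothness and distributional convergence. The map $x \mapsto \phi_\epsilon(x-\cdot)\,e_k$ is a smooth map from $\R^n$ into $C^\infty_\comp(\bundle{E})$ whose derivatives are $x \mapsto (\partial_J \phi_\epsilon)(x-\cdot)\,e_k$, and whose supports stay inside a fixed compact set as $x$ ranges over a compact set; composing with the continuous conjugate-linear functional $f$ shows $J_\epsilon f \in C^\infty(\bundle{E})$, with $\partial_J(J_\epsilon f)(x) = \sum_k \llangle f, (\partial_J\phi_\epsilon)(x-\cdot)e_k\rrangle e_k$. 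For distributional convergence, given $\psi \in C^\infty_\comp(\bundle{E})$ I would write $\llangle J_\epsilon f,\psi\rrangle$ as an integral over $x$ and pull the resulting $C^\infty_\comp(\bundle{E})$-valued integral out of $f$ (legitimate since $f$ is continuous and the Riemann sums have real weights), obtaining $\llangle J_\epsilon f,\psi\rrangle = \llangle f, \check\phi_\epsilon * \psi\rrangle$ with $\check\phi_\epsilon(z) = \phi_\epsilon(-z)$; since $\check\phi_\epsilon * \psi \to \psi$ in $C^\infty_\comp(\bundle{E})$ (all derivatives converge uniformly, supports inside a fixed compact set), the claim follows.

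Part (i) is immediate: if $x \notin \bar{B}_{\R^n}(\supp f,\epsilon)$, then $\supp(\phi_\epsilon(x-\cdot)) \subseteq \bar{B}_{\R^n}(x,\epsilon)$ is disjoint from $\supp f$, so every pairing in \eqref{eqn:defn-of-J} vanishes and $J_\epsilon f(x) = 0$. For (ii), given $K \in \mathfrak{K}(\R^n)$ put $K' = \bar{B}_{\R^n}(K,1)$; since $J_\epsilon f(x)$ depends only on $f|_{\bar{B}_{\R^n}(x,\epsilon)}$, Young's inequality (equivalently, Minkowski's integral inequality, or Jensen's inequality for the probability measure $\phi_\epsilon(x-y)\,\D y$) gives $\|J_\epsilon f\|_{L^p(K)} \leq \|\phi_\epsilon\|_{L^1}\|f\|_{L^p(K')} = \|f\|_{L^p(K')}$ for all $p \in [1,\infty]$ and all $\epsilon \in \leftopenint 0,1\rightclosedint$. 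For (iii) with $p < \infty$: for a continuous compactly supported section $g$ one has $|J_\epsilon g(x) - g(x)| \leq \sup_{|y-x|\leq\epsilon}|g(y)-g(x)|$, which tends to $0$ uniformly for $x$ in any compact set by uniform continuity, so $J_\epsilon g \to g$ in $L^p_\loc(\bundle{E})$; an arbitrary $f \in L^p_\loc(\bundle{E})$ is then approximated in $L^p(K')$ by such $g$, and one concludes using the uniform bound from (ii) on $J_\epsilon(f-g)$. The case $p = \infty$, $f \in C(\bundle{E})$, is precisely the uniform-on-compacta convergence just established.

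The substance is in (iv), where the seminorm $P_x$ at the centre must be compared with the seminorms $P_y$ at the nearby points $y$ that actually enter the convolution. Fix an open $W \supseteq K$ and set $L = \esssup_{x \in W} P(f)(x)$, which we may assume finite. Choose $\epsilon_0 > 0$ with $\bar{B}_{\R^n}(K,\epsilon_0) \subseteq W$, and set $C = \esssup_{y \in \bar{B}_{\R^n}(K,\epsilon_0)} |f(y)| < \infty$. Given $\eta > 0$: since $P$ is continuous and homogeneous of degree one in the fibre variable, its restriction to the compact set $\bar{B}_{\R^n}(K,\epsilon_0) \times \{v \in \C^r \tc |v| = 1\}$ is uniformly continuous, so there is $\delta \in \leftopenint 0,\epsilon_0 \rightclosedint$ with $|P_x(v) - P_y(v)| \leq \eta\,|v|$ whenever $x,y \in \bar{B}_{\R^n}(K,\epsilon_0)$, $|x-y| < \delta$ and $v \in \C^r$ (the case $v \neq 0$ by scaling, the case $v = 0$ trivially). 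For $\epsilon < \delta$ and $x \in K$, convexity of the seminorm $P_x$ gives
\[
P_x(J_\epsilon f(x)) \leq \int \phi_\epsilon(x-y)\,P_x(f(y))\,\D y \leq \int \phi_\epsilon(x-y)\bigl(P_y(f(y)) + \eta\,|f(y)|\bigr)\,\D y \leq L + \eta C,
\]
using that $y \in \bar{B}_{\R^n}(x,\epsilon) \subseteq W$ forces $P_y(f(y)) \leq L$ for almost every such $y$, and $|f(y)| \leq C$ almost everywhere there. Hence $\limsup_{\epsilon \to \zeroplus} \sup_{x \in K} P(J_\epsilon f)(x) \leq L + \eta C$; letting $\eta \to \zeroplus$ with $C$ fixed, and then taking the infimum over open $W \supseteq K$, yields (iv). The main obstacle is exactly this \emph{additive} comparison of the seminorms at nearby points: multiplicative control of the form $P_x(v) \leq (1+\eta)P_y(v)$ fails as soon as $P_x$ has a nontrivial kernel, which is why the hypothesis $f \in L^\infty_\loc(\bundle{E})$ is needed, to absorb the error term $\eta\,|f(y)|$.
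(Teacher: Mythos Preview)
Your proof is correct and follows essentially the same approach as the paper: parts (i)--(iii) are dismissed there as well-known, and for (iv) the paper likewise applies Jensen's inequality to pull $P_x$ inside the convolution, then splits $P_x(f(x-y))$ into $P_{x-y}(f(x-y))$ plus an error controlled by the uniform continuity of $P$ on a compact set in $\bundle{E}$ together with the local $L^\infty$ bound on $f$. Your closing remark about why multiplicative control of the seminorms fails is exactly the point.
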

\begin{proof}
These are well-known facts about convolution and approximate identities in $\R^n$, and we omit the proofs, except for part (iv).

For all $x \in \R^n$ and $\epsilon \in \R^+$,
\[
J_\epsilon f(x) = \int_{\R^n} \phi_\epsilon(y) f(x-y) \,{\D}y.
\]
Since the functions $\phi_\epsilon$ are nonnegative and have unit mass, while $P_x\colon \bundle{E}_x \to \R$ is convex, Jensen's inequality implies that
\[
P_x(J_\epsilon f(x)) \leq \int_{\R^n} \phi_\epsilon(y) \, P_x(f(x-y)) \,{\D}y,
\]
whence
\begin{equation}\label{eq:seminormmajorisation}
\begin{aligned}
P(J_\epsilon f)(x)
&\leq \int_{\R^n} \phi_\epsilon(y) \, P(f)(x-y) \,{\D}y \\
&\qquad + \int_{\R^n} \phi_\epsilon(y) \, (P_x(f(x-y)) - P_{x-y}(f(x-y))) \,{\D}y.
\end{aligned}
\end{equation}
Suppose now that $K \subseteq W$, where $K\in \mathfrak{K}(\R^n)$ and $W \in \mathfrak{O}(\R^n)$.
Take $\bar\epsilon \in \R^+$ such that $\bar{B}_{\R^n}(K,\bar\epsilon) \subseteq W$.
Since $f \in L^\infty_\loc(\bundle{E})$,
\[
\esssup_{x \in \bar{B}_{\R^n}(K ,\bar\epsilon)} |f(x)| < \infty;
\]
further, $P\colon \bundle{E} \to \R$ is continuous, so uniformly continuous when restricted to $\bar{B}_{\R^n}(K,\bar\epsilon) \times \{v \in \C^r \tc |v| \leq R\}$, for all $R \in \R^+$.
Thus the second integral on the right-hand side of \eqref{eq:seminormmajorisation} tends to $0$ as $\epsilon \to \zeroplus$, while the first integral is bounded by $\esssup_{x\in W} P(f)(x)$ when $\epsilon \leq \bar\epsilon$.
Part (iv) follows.
\end{proof}

The interaction of mollifiers and differentiation is more interesting: for a differential operator $D \in \Diff_k(\bundle{E},\bundle{F})$, it is reasonable to ask whether $D J_\epsilon f$ converges to $D f$ as $\epsilon \to \zeroplus$.
When we are working on $\R^n$ with trivial bundles $\bundle{E}$ and $\bundle{F}$,  we already know that $J_\epsilon Df$ approximates $Df$.
In this case, the problem reduces to the study of the commutator operators $[D,J_\epsilon]$, given by
\[
[D, J_\epsilon] f = D J_\epsilon f - J_\epsilon D f.
\]
If $D$ is translation-invariant, then $[D,J_\epsilon] = 0$.
For an arbitrary $D$, it is clear that $[D, J_\epsilon] f \to 0$ distributionally as $\epsilon \to \zeroplus$.
Stronger forms of convergence to $0$ may be proved easily for first-order operators $D$.

\begin{proposition}\label{prp:mollifiercommutator}
Suppose that $\bundle{E}$ and $\bundle{F}$ are the trivial bundles $\R^n \times \C^r$ and $\R^n \times \C^s$ over $\R^n$, that $D \in \Diff_1(\bundle{E},\bundle{F})$, and that $1 \leq p \leq \infty$.
Then the following hold.
\begin{enumerate}[(i)]
\item (Equicontinuity)
The operators $[D, J_\epsilon]$ are bounded from $L^p_\loc(\bundle{E})$ to $L^p_\loc(\bundle{F})$, uniformly  for $\epsilon$ in $\leftopenint 0, 1\rightclosedint$.
\item (Vanishing)
If $p < \infty$ and $f \in L^p_\loc(\bundle{E})$, then $[D, J_\epsilon] f \to 0$ in $L^p_\loc(\bundle{F})$  as $\epsilon \to \zeroplus$; the same holds if $p = \infty$ and $f \in C(\bundle{E})$.
\end{enumerate}
\end{proposition}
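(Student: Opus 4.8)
The plan is to compute the commutator $[D,J_\epsilon]$ explicitly as an integral operator with a kernel supported near the diagonal, and then to read off both assertions from two elementary estimates on that kernel. Working in the given trivialisations, so that everything is in terms of $\C^r$- and $\C^s$-valued functions on $\R^n$, write $D = \sum_{j=1}^n a_j\,\partial_j + a_0$ with smooth matrix-valued coefficients $a_j$. For $f \in L^p_\loc(\bundle{E})$, which lies in $L^1_\loc(\bundle{E})$, the section $J_\epsilon f = \phi_\epsilon * f$ is smooth, so $DJ_\epsilon f$ is computed classically by differentiating under the convolution; on the other hand, the product rule gives $Df = \sum_j \partial_j(a_j f) + (a_0 - \sum_l \partial_l a_l)f$ as a distribution, and $J_\epsilon(\partial_j g) = (\partial_j\phi_\epsilon)*g$, so $J_\epsilon Df$ is also explicit. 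Subtracting, I expect to obtain
\[
[D,J_\epsilon]f(x) = \int_{\R^n} k_\epsilon(x,y)\, f(x-y)\,\D y,
\qquad
k_\epsilon(x,y) = \sum_{j=1}^n (\partial_j\phi_\epsilon)(y)\,\big(a_j(x)-a_j(x-y)\big) + \phi_\epsilon(y)\,b(x,y),
\]
with $b(x,y) = a_0(x) - a_0(x-y) + \sum_l (\partial_l a_l)(x-y)$ smooth; this is just the Leibniz rule \eqref{eq:dist_leibniz} together with the translation-invariance of the $\partial_j$, rearranged so that no derivative falls on $f$. Since the right-hand side is meaningful for any $f \in L^1_\loc(\bundle{E})$, this step needs no smooth-approximation argument.

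Next I would establish two facts about $k_\epsilon$, for $x$ in a fixed compact $K \in \mathfrak{K}(\R^n)$, for $y \in \bar{B}_{\R^n}(0,\epsilon)$, and for $\epsilon \in \leftopenint 0,1\rightclosedint$, writing $K' = \bar{B}_{\R^n}(K,1)$. First, although $\partial_j\phi_\epsilon$ is of size $\epsilon^{-n-1}$, the mean value theorem gives $|a_j(x)-a_j(x-y)| \leq \kappa\,|y| \leq \kappa\,\epsilon$ on $K'$, so $k_\epsilon(x,\cdot)$ is supported in $\bar{B}_{\R^n}(0,\epsilon)$ and $|k_\epsilon(x,y)| \leq \kappa\,\epsilon^{-n}$ with $\kappa = \kappa(K')$; in particular $\int_{\R^n}|k_\epsilon(x,y)|\,\D y \leq \kappa$. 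Second, an integration by parts in $y$ should show that the $a_j$-contributions and the $(\partial_l a_l)$-contributions to $\int k_\epsilon(x,y)\,\D y$ cancel exactly, leaving the ``cancellation identity'' $\int_{\R^n} k_\epsilon(x,y)\,\D y = a_0(x) - (\phi_\epsilon * a_0)(x)$, which tends to $0$ uniformly for $x \in K$ as $\epsilon \to \zeroplus$ by continuity of $a_0$ (Proposition~\ref{prp:mollifier}).

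From here both parts follow. For (i), Minkowski's integral inequality combined with $\int|k_\epsilon(x,y)|\,\D y \leq \kappa$ and the support condition (so that $f(x-y)$ is evaluated only over $K'$) gives $\|[D,J_\epsilon]f\|_{L^p(K)} \leq \kappa\,\|f\|_{L^p(K')}$, uniformly in $\epsilon$. For (ii), I would decompose
\[
[D,J_\epsilon]f(x) = \int_{\R^n} k_\epsilon(x,y)\,\big(f(x-y)-f(x)\big)\,\D y + f(x)\int_{\R^n} k_\epsilon(x,y)\,\D y.
\]
The second term tends to $0$ in $L^p_\loc(\bundle{F})$ by the cancellation identity together with local $p$-integrability of $f$ (local boundedness when $p=\infty$). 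For the first term, the bound $|k_\epsilon(x,y)| \leq \kappa\,\epsilon^{-n}$ and Minkowski again bound its $L^p(K)$-norm by $\kappa\sup_{|y|\leq\epsilon}\|f(\cdot-y)-f\|_{L^p(K)}$, which tends to $0$ by continuity of translations on $L^p$ when $p<\infty$, and by uniform continuity on $K'$ when $p=\infty$ and $f \in C(\bundle{E})$.

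I expect the one delicate point to be the control of the singular part of $k_\epsilon$: its pointwise size is a priori $\epsilon^{-n-1}$, not the integrable order $\epsilon^{-n}$, and it is precisely the Lipschitz vanishing of $a_j(x)-a_j(x-y)$ on the $\epsilon$-ball, plus the cancellation of its $y$-average up to the harmless lower-order term $a_0(x)-(\phi_\epsilon*a_0)(x)$, that brings it under control. Both of these features use that $D$ has order one; carrying out the same computation for an operator of order $k\geq 2$ produces a kernel with a nonintegrable singularity, which is why this short argument does not extend.
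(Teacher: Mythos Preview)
Your proposal is correct and follows essentially the same route as the paper: represent $[D,J_\epsilon]f$ as an integral operator with kernel supported in $\{|y|\le\epsilon\}$, use the Lipschitz vanishing of $a_j(x)-a_j(x-y)$ to bound the kernel by $\kappa\,\epsilon^{-n}$, and conclude (i) and (ii) via Minkowski's inequality together with continuity of translation in $L^p$ (uniform continuity on compacta when $p=\infty$). The only organisational difference is that the paper first splits off the zeroth-order part $a_0$ and handles $[a_0,J_\epsilon]$ via Proposition~\ref{prp:mollifier}, so that the remaining kernel has exactly vanishing $y$-average, whereas you keep $a_0$ in the kernel and observe that its $y$-average equals $a_0(x)-(\phi_\epsilon*a_0)(x)\to 0$; the two bookkeepings are equivalent.
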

\begin{proof}
Compare with \cite[Appendix]{garofalo_isoperimetric_1996}.

We may suppose that $D$ has the form
\[
Df(x) = \mat{b}(x) f(x) + \sum_{j=1}^n \mat{a}_{j}(x) \partial_j f(x) = D_0f(x)  + D_1 f(x),
\]
say, where the matrix-valued functions $\mat{a}_{j}$ and $\mat{b}$ are smooth.

Since $D_0$ is a multiplication operator, it is bounded from $L^p_\loc(\bundle{E})$ to $L^p_\loc(\bundle{F})$ for all $p \in \leftclosedint 1,\infty\rightclosedint$.
Hence, by Proposition~\ref{prp:mollifier}, both $D_0 J_\epsilon$ and $J_\epsilon D_0$ are bounded from $L^p_\loc(\bundle{E})$ to $L^p_\loc(\bundle{F})$, uniformly for $\epsilon$ in $\leftopenint 0,1\rightclosedint$, and
\[
D_0 J_\epsilon f \to D_0 f
\quad\text{and}\quad
J_\epsilon D_0 f \to D_0 f
\]
in $L^p_\loc(\bundle{F})$ as $\epsilon \to \zeroplus$ if either $p < \infty$ and $f \in L^p_\loc$, or $p = \infty$ and $f \in C$.
Hence parts (i) and (ii) hold for $[D_0,J_\epsilon]$, and it suffices to consider $D_1$.

If $f \in C^\infty_\comp(\bundle{E})$, then
\[
J_\epsilon  f(x)  =  \int_{\R^n}  f (x-y) \phi_\epsilon(y) \,{\D}y,
\]
so
\[
\begin{aligned}
\lbrack D_1, J_{\epsilon}\rbrack f(x)
&=  D_1 J_\epsilon f(x)  - J_\epsilon D_1 f(x)  \\
&=   \int_{\R^n} \Bigl(  \sum_{j=1}^n \bigl[\mat{a}_{j}(x) - \mat{a}_{j}(x-y)\bigr] \partial_j  f(x-y) \Bigr)  \phi_\epsilon(y) \,{\D}y \\
&= C_\epsilon f(x),
\end{aligned}
\]
say.

Define
\[
F_{\epsilon}(x,y) =  \sum_{j=1}^n \Bigl( \phi_\epsilon(y) \partial_j \mat{a}_{j}(x-y) + \bigl[\mat{a}_{j}(x) - \mat{a}_{j}(x-y)\bigr] \partial_j\phi_\epsilon(y) \Bigr)  ,
\]
and observe that, when $f \in C^\infty_\comp(\bundle{E})$ and $\lambda \in \C$,
\[
\begin{aligned}
C_\epsilon f(x)
&=   \int_{\R^n} \Bigl( \sum_{j=1}^n \bigl[\mat{a}_{j}(x) - \mat{a}_{j}(y)\bigr] \partial_j  f(y) \Bigr)  \phi_\epsilon(x - y) \,{\D}y\\
&=   -\int_{\R^n}  \Bigl( \sum_{j=1}^n \frac{\partial}{\partial y_j} \Bigl[ \bigl[\mat{a}_{j}(x) - \mat{a}_{j}(y)\bigr] \phi_\epsilon(x - y)\Bigr] \Bigr)   f(y) \,{\D}y\\
&=   -\int_{\R^n}  \Bigl( \sum_{j=1}^n \frac{\partial}{\partial y_j} \Bigl[ \bigl[\mat{a}_{j}(x) - \mat{a}_{j}(y)\bigr] \phi_\epsilon(x - y)\Bigr] \Bigr)   [f(y)-\lambda f(x)] \,{\D}y \\
&=   \int_{\R^n}  F_\epsilon(x,y) [f(x- y) - \lambda f(x)] \,{\D}y.
\end{aligned}
\]
This formula extends to all $f \in C^\infty_\comp(\bundle{E})'$ by continuity, since $F_\epsilon$ is smooth and supported in the set $\{(x,y) \in \R^n \times \R^n \tc |y| \leq \epsilon \}$.

For $x \in \R^n$, define the quadrilinear form $A(x)$ on $\C^n \times \C^n \times \C^r \times \C^s$ by
\[
A(x)(u',u,v,w) = \sum_{j,k=1}^n u_j' \frac{\partial}{\partial x_j} u_k(a_k(x) v,w),
\]
where $(w',w)$ denotes $\sum_{l=1}^s w'_l w_l$, and write $|A(x)|$ for the maximum of the expressions $|A(x)(u',u,v,w)|$ as $u'$, $u$, $v$ and $w$ range over the unit spheres in $\C^n$, $\C^n$, $\C^r$ and $\C^s$. Then for $v \in \C^r$ and $w \in \C^s$,
\[
\begin{aligned}
\Bigl| \Bigl( \sum_{j=1}^n \bigl[ a_j(x) - a_j(x-y) \bigr] \partial_j \phi_\epsilon(y) v, w \Bigr) \Bigr|
&=
\Bigl| \int_0^1 A(x-y+ty)(y,\nabla(\phi_\epsilon)(y),v,w) \,{\D}t \Bigr|
\\&\leq
\sup_{z \in B_{\R^n}(x,|y|)} \left|A(z)\right| \left|y\right| \left|\nabla(\phi_\epsilon)(y)\right| \left|v\right| \left|w\right|
\\&\leq
\sup_{z \in B_{\R^n}(x,|y|)} \left|A(z)\right| |\nabla\phi|_\epsilon(y) \left|v\right| \left|w\right|,
\end{aligned}
\]
where $|\nabla\phi|_\epsilon(y) = \epsilon^{-n} |\nabla\phi|(\epsilon^{-1} y)$, and similarly
\[
\begin{aligned}
\Bigl| \sum_{j=1}^n \phi_\epsilon(y) \partial_j a_j(x-y) v \Bigr|
&= |\phi_\epsilon(y)| \Bigl| \sum_{j=1}^n A(x-y)(e_j,e_j,v,w) \Bigr|
\\& \leq n \left|\phi_\epsilon(y)\right| \sup_{z \in B_{\R^n}(x,|y|)} \left|A(z)\right| \left|v\right| \left|w\right|,
\end{aligned}
\]
where the $e_j$ are the standard basis vectors in $\R^n$.

Set $\psi = n\phi + |\nabla \phi|$ and $\psi_\epsilon(z) = \epsilon^{-n} \psi(\epsilon^{-1} z)$; then, taking operator norms,
\[
\begin{aligned}
|F_{\epsilon}(x,y)|
&\leq \Bigl| \sum_{j=1}^n \bigl[\partial_j \mat{a}_j(x-y)\bigr]  \phi_\epsilon(y) \Bigr|
   + \Bigl| \sum_{j=1}^n \Bigl[ \bigl[\mat{a}_{j}(x-y) - \mat{a}_{j}(x)\bigr] \partial_j \phi_\epsilon(y)\Bigr] \Bigr| \\
&\leq \sup_{z\in B_{\R^n}(x,\epsilon)}  |A(z)| \, \psi_\epsilon(y).
\end{aligned}
\]
Now $\psi$ is continuous and supported in the unit ball, hence bounded, so
\[
\begin{aligned}
| C_\epsilon f(x) |
&\leq \left| \int_{\R^n} F_{\epsilon} (x,y) (f(x-y) - \lambda f(x) )\, {\D}y \right| \\
&\leq \sup_{z \in B_{\R^n}(x,\epsilon)} |A(z) | \int_{B_{\R^n}(0,\epsilon)} \psi_{\epsilon} (y)  \left|  f(x-y) - \lambda f(x) \right |\, {\D}y ,
\end{aligned}
\]
whence, from Minkowski's inequality, for all $K\in \mathfrak{K}(\R^n)$ and $\epsilon \in \leftopenint 0, 1\rightclosedint$,
\begin{equation*}
\begin{aligned}
&\shiftleft\left(\int_K |C_\epsilon f(x)|^p \,{\D}x\right)^{1/p} \\
&\leq \sup_{z \in B_{\R^n}(K,\epsilon)} |A(z) | \int_{B_{\R^n}(0,\epsilon)} \psi_{\epsilon} (y)  \left(\int_{K} |f(x-y) - \lambda f(x) |^p \,{\D}x\right)^{1/p} \,{\D}y\\
&\leq \kappa_{n,K,D,\phi} \sup_{y \in B_{\R^n}(0,1)} \left(\int_{K} |f(x-y) - \lambda f(x) |^p \,{\D}x\right)^{1/p} .
\end{aligned}
\end{equation*}
On the one hand, when $\lambda=1$, the integral on the right-hand side tends to $0$ as $y \to 0$, and we obtain part (ii) in the case where $p < \infty$.
On the other hand, when $\lambda = 0$, it follows that
\[
\begin{aligned}
\biggl(\int_K |C_\epsilon f(x)|^p \,{\D}x\biggr)^{1/p}
&\leq \kappa_{n,K,D,\phi}  \biggl(\int_{B_{\R^n}(K ,1)} |f(x)|^p \,{\D}x\biggr)^{1/p} .
\end{aligned}
\]
which establishes part (i) in the case where $p < \infty$.
To prove part (i) and part (ii) when $p = \infty$, we replace the $L^p$ norm in the argument above by an essential supremum.
\end{proof}

We consider now the general case.
Recall that there is a countable locally finite atlas of smooth bijections $\phi_\alpha\colon U_\alpha \to \R^n$ on $M$; here $U_\alpha \subseteq M$ and $\bigcup_{\alpha \in \Alpha} V_\alpha = M$, where $V_\alpha = \phi_\alpha^{-1}(B_{\R^n}(0,1))$.
There is also a partition of unity $(\eta_\alpha)_{\alpha \in \Alpha}$ on $M$ for which $\supp(\eta_\alpha) \subseteq V_\alpha$.

For each $\alpha$, there is a trivialisation $\tau_\alpha$ taking sections of $\bundle{E}$ over $U_\alpha$ to sections of a trivial bundle $\bundle{T}^r$ on $\R^n$, and similarly for $\bundle{F}$.
Sections of $\bundle{E}$ with support contained in $V_\alpha$ are then identified with sections of $\bundle{T}^r$ with support contained in the open unit ball.

Denote by $\Epsilon$ the set of all sequences $(\epsilon_\alpha)_{\alpha \in \Alpha}$, where each $\epsilon_\alpha \in \leftopenint 0,1\rightclosedint$, that is, $\Epsilon = \leftopenint 0,1\rightclosedint^\Alpha$.
For $\veceps \in \Epsilon$ and $f \in L^1_\loc(\bundle{E})$, define  $J^{\bundle{E}, \vectau}_{\veceps}f$, which we usually write as $J^{\vectau}_{\veceps}f$, as follows:
\begin{equation}\label{eq:sumdefinition}
J^\vectau_\veceps f = \sum_\alpha \tau_\alpha^{-1} J_{\epsilon_\alpha} \tau_\alpha(\eta_\alpha f);
\end{equation}
since $\supp \tau_\alpha^{-1} J_{\epsilon_\alpha} \tau_\alpha(\eta_\alpha f) \subseteq U_\alpha$, this sum is locally finite.

Given $\veceps,\veceps' \in \Epsilon$, we write $\veceps \leq \veceps'$ when $\epsilon_\alpha \leq \epsilon_\alpha'$ for all $\alpha \in \Alpha$.
The ordering of $\Epsilon$ gives a meaning to limit-like expressions along $\Epsilon$, such as
\[
\limsup_{\veceps \to \veczeroplus} F(\veceps) = \inf_{\bar\veceps \in \Epsilon} \sup_{\substack{\veceps \in \Epsilon \\ \veceps \leq \bar\veceps}} F(\veceps)
\]
for a function $F\colon \Epsilon \to \leftclosedint-\infty,\infty\rightclosedint$.
We show now that $J^\vectau_\veceps f \to f$ as $\veceps \to \veczeroplus$.

\begin{theorem}\label{thm:generalmollifier}
Suppose that $\bundle{E}$ and $\bundle{F}$ are vector bundles on a manifold $M$, that $D \in \Diff_1(\bundle{E},\bundle{F})$, and that $1 \leq p \leq \infty$.
Then the linear operators $J^\vectau_\veceps$ on $L^1_\loc(\bundle{E})$ defined by \eqref{eq:sumdefinition} have the following properties.
\begin{enumerate}[(i)]
\item (Smoothing) If $f \in L^1_\loc(\bundle{E})$, then $J^\vectau_\veceps f \in C^\infty(\bundle{E})$.
\item (Supports) If $C$ is a closed subset of $M$ and $W$ is an open neighbourhood of $C$, then there exists $\bar\veceps \in \Epsilon$ such that $\supp J^\vectau_\veceps f \subseteq W$ when $\supp f \subseteq C$ and $\veceps \leq \bar\veceps$.
\item (Equicontinuity) If $\zeta \in L^\infty_\loc(\bundle{T})$, then there exists $\xi \in L^\infty_\loc(\bundle{T})$ such that
\[
\|\zeta J^\vectau_\veceps f\|_p \leq \|\xi f\|_p
\]
for all $f \in L^p_\loc(\bundle{E})$ and $\veceps \in \Epsilon$; if $\zeta \in L^\infty_\comp$, then we may choose $\xi \in L^\infty_\comp$.
\item (Approximation) If $p < \infty$ and $f \in L^p_\loc(\bundle{E})$, then
\[
\limsup_{\veceps \to \veczeroplus} \| \zeta (J^\vectau_\veceps f - f) \|_p = 0
\]
for all $\zeta \in L^\infty_\loc(\bundle{T})$; the same holds if $p = \infty$ and $f \in C(\bundle{E})$.
\item (Upper bound) If $f \in L^\infty_\loc(\bundle{E})$, then, for each continuous fibre seminorm $P$ on $\bundle{E}$ and closed subset $C$ of $M$,
\begin{equation}\label{eq:seminormcontrol}
\limsup_{\veceps \to \veczeroplus} \sup_{x\in C} P(J^\vectau_\veceps f)(x) \leq \inf_{\substack{W \in \mathfrak{O}\\W \supseteq C}} \esssup_{x\in W} P(f)(x).
\end{equation}
\setcounter{lastenumi}{\value{enumi}}
\end{enumerate}
Further, the ``commutators'' $[D,J^{\vectau}_\veceps]$, defined by
\[
[D,J^{\vectau}_\veceps] f = D J^{\bundle{E},\vectau}_\veceps f - J^{\bundle{F},\vectau}_\veceps D f,
\]
have the following properties.
\begin{enumerate}[(i)]
\setcounter{enumi}{\value{lastenumi}}
\item (Equicontinuity) If $\zeta \in L^\infty_\loc(\bundle{T})$, then there exists $\xi \in L^\infty_\loc(\bundle{T})$ such that
\[
\|\zeta [D,J^\vectau_\veceps] f\|_p \leq \|\xi f\|_p
\]
for all $f \in L^p_\loc(\bundle{E})$ and $\veceps \in \Epsilon$; if $\zeta \in L^\infty_\comp$, then we may choose $\xi \in L^\infty_\comp$.
\item (Vanishing) If $f \in L^p_\loc(\bundle{E})$ and $p<\infty$, then
\[
\limsup_{\veceps \to \veczeroplus} \| \zeta [D, J^\vectau_\veceps] f \|_p = 0
\]
for all $\zeta \in L^\infty_\loc(\bundle{T})$; the same holds if $p = \infty$ and $f \in C(\bundle{E})$.
\setcounter{lastenumi}{\value{enumi}}
\end{enumerate}
As operators on $W^p_{D, \loc}(\bundle{E})$, the $J^\vectau_\veceps$ have the following properties.
\begin{enumerate}[(i)]
\setcounter{enumi}{\value{lastenumi}}
\item (Equicontinuity) If $\zeta \in L^\infty_\loc(\bundle{T})$, then there exists $\xi \in L^\infty_\loc(\bundle{T})$ such that
\[
\|\zeta D J^\vectau_\veceps f\|_p \leq \|\xi f\|_p + \|\xi  Df \|_p
\]
for all $f \in W^p_{D, \loc}(\bundle{E})$ and $\veceps \in \Epsilon$; if $\zeta \in L^\infty_\comp$, then we may choose $\xi \in L^\infty_\comp$.
\item (Approximation) If $f \in W^p_{D, \loc}(\bundle{E})$ and $p<\infty$, then
\[
\limsup_{\veceps \to \veczeroplus} \| \zeta (D J^\vectau_\veceps f - D f) \|_p = 0
\]
for all $\zeta \in L^\infty_\loc(\bundle{T})$; the same holds if $p = \infty$, $f \in C(\bundle{E})$, and $Df \in C(\bundle{F})$.
\item (Upper bound) If $f\in C(\bundle{E})$ and $Df \in L^\infty_\loc(\bundle{F})$, then, for each continuous fibre seminorm $P$ on $\bundle{F}$ and closed subset $C$ of $M$,
\[
\limsup_{\veceps \to \veczeroplus} \sup_{x\in C} P(DJ^\vectau_\veceps f)(x) \leq \inf_{\substack{W \in \mathfrak{O} \\ W \supseteq C}} \esssup_{x\in W} P(Df)(x).
\]
\end{enumerate}
\end{theorem}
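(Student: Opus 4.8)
The plan is to reduce every assertion to its Euclidean counterpart --- Proposition~\ref{prp:mollifier} or Proposition~\ref{prp:mollifiercommutator} --- transported through the trivialisations $\tau_\alpha$ and assembled by means of the partition of unity $(\eta_\alpha)_{\alpha\in\Alpha}$. The building blocks are the summands $T^\alpha_\veceps f := \tau_\alpha^{-1} J_{\epsilon_\alpha}\tau_\alpha(\eta_\alpha f)$ appearing in \eqref{eq:sumdefinition}: by Proposition~\ref{prp:mollifier}(i) each is a smooth section, and since $\epsilon_\alpha\leq 1$ it is supported in the \emph{fixed} compact set $\tilde K_\alpha := \phi_\alpha^{-1}(\bar{B}_{\R^n}(\phi_\alpha(\supp\eta_\alpha),1))\subseteq U_\alpha$. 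Because the atlas is locally finite, $\{\tilde K_\alpha\}_{\alpha\in\Alpha}$ is a locally finite family of compacta, which is precisely what renders the infinite sums harmless. The \emph{Smoothing} property is then immediate, and for \emph{Supports}, given $C$ closed and $W\supseteq C$ open, one observes that $T^\alpha_\veceps f = 0$ whenever $\supp f\subseteq C$ and $\supp\eta_\alpha\cap C=\emptyset$, while for the remaining $\alpha$ the set $\supp\eta_\alpha\cap C$ is a compact subset of the open set $W\cap U_\alpha$, so one picks $\bar\epsilon_\alpha$ so small that the $\bar\epsilon_\alpha$-neighbourhood of $\phi_\alpha(\supp\eta_\alpha\cap C)$ in $\R^n$ still lies in $\phi_\alpha(W\cap U_\alpha)$; then $\supp J^\vectau_\veceps f\subseteq W$ for all $\veceps\leq\bar\veceps$.

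For the three \emph{Equicontinuity} assertions I would argue pointwise: $|J^\vectau_\veceps f(x)|\leq\sum_\alpha|T^\alpha_\veceps f(x)|$ has at most $N(x):=\#\{\alpha:x\in\tilde K_\alpha\}$ nonzero terms, where $N$ is locally bounded, so Jensen's inequality gives $|J^\vectau_\veceps f(x)|^p\leq N(x)^{p-1}\sum_\alpha|T^\alpha_\veceps f(x)|^p$; Proposition~\ref{prp:mollifier}(ii), together with the local equivalence of the chosen measure on $M$ with Lebesgue measure in each chart, furnishes constants $\kappa_\alpha$, independent of $\epsilon_\alpha\in\leftopenint 0,1\rightclosedint$, with $\|T^\alpha_\veceps f\|_p\leq\kappa_\alpha\|\mathbf 1_{\supp\eta_\alpha}f\|_p$. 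Combining these, $\|\zeta J^\vectau_\veceps f\|_p^p\leq\int_M|\xi(x)|^p|f(x)|^p\,\D x$ with $|\xi|^p:=\sum_\alpha\kappa_\alpha^p\bigl(\sup_{\tilde K_\alpha}|\zeta|^p N^{p-1}\bigr)\mathbf 1_{\supp\eta_\alpha}$; this $\xi$ lies in $L^\infty_\loc(\bundle{T})$ because the sum is locally finite, and in $L^\infty_\comp$ when $\zeta\in L^\infty_\comp$, since then only the finitely many $\alpha$ with $\tilde K_\alpha\cap\supp\zeta\neq\emptyset$ contribute. For the commutator version I would first expand: writing $D_\alpha:=\tau_\alpha D\tau_\alpha^{-1}$ and using Leibniz' rule $D(\eta_\alpha f)=(\Symbol{D}\eta_\alpha)f+\eta_\alpha Df$ one obtains
\[
[D,J^\vectau_\veceps]f = \sum_\alpha \tau_\alpha^{-1}\Bigl( [D_\alpha,J_{\epsilon_\alpha}]\,\tau_\alpha(\eta_\alpha f) + J_{\epsilon_\alpha}\,\tau_\alpha\bigl((\Symbol{D}\eta_\alpha)f\bigr) \Bigr),
\]
whose first family of terms is controlled by Proposition~\ref{prp:mollifiercommutator}(i) exactly as above, and whose second family is of the same shape as $J^\vectau_\veceps$ itself (with $\eta_\alpha$ replaced by $\Symbol{D}\eta_\alpha\in C^\infty_\comp(\Hom(\bundle{E},\bundle{F}))$), so the previous estimate applies again. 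The \emph{Equicontinuity} of $f\mapsto\zeta D J^\vectau_\veceps f$ on $W^p_{D,\loc}(\bundle{E})$ then follows from the identity $D J^\vectau_\veceps f = J^\vectau_\veceps Df + [D,J^\vectau_\veceps]f$.

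The \emph{Approximation}, \emph{Vanishing} and \emph{Upper bound} assertions are proved in the same way, now passing to the limit $\veceps\to\veczeroplus$. Since $\sum_\alpha\eta_\alpha=1$, we have $J^\vectau_\veceps f - f = \sum_\alpha\tau_\alpha^{-1}(J_{\epsilon_\alpha}-\mathrm{id})\tau_\alpha(\eta_\alpha f)$; localising the estimate to the charts that meet $\supp\zeta$ (finitely many when $\zeta$ has compact support), each surviving summand tends to $0$ in $L^p$ by Proposition~\ref{prp:mollifier}(iii), and the infimum over $\bar\veceps$ implicit in $\limsup_{\veceps\to\veczeroplus}$ lets us make the relevant $\bar\epsilon_\alpha$ small simultaneously, giving \emph{Approximation}. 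The \emph{Vanishing} of $[D,J^\vectau_\veceps]f$ follows from the commutator decomposition above together with Proposition~\ref{prp:mollifiercommutator}(ii) for the first family of terms and the observation that, in the limit, the second family converges to $\sum_\alpha(\Symbol{D}\eta_\alpha)f=(\Symbol{D}1)f=0$, since $\Symbol{D}$ is homogeneous. For the \emph{Upper bound} on $L^\infty_\loc$, Proposition~\ref{prp:mollifier}(iv) applied in each chart, combined with $P(J^\vectau_\veceps f)(x)\leq\sum_{\alpha:\,x\in\tilde K_\alpha}P(T^\alpha_\veceps f)(x)$ and the chart-by-chart freedom in $\bar\veceps$, upgrades the compact-set bound to the closed-set bound \eqref{eq:seminormcontrol}. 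Finally the three $W^p_{D,\loc}$ statements are again read off from $D J^\vectau_\veceps f = J^\vectau_\veceps Df + [D,J^\vectau_\veceps]f$, applying the preceding results to $Df$ (noting that, in the last one, $f\in C(\bundle{E})$ and $Df\in L^\infty_\loc(\bundle{F})$ force $[D,J^\vectau_\veceps]f\to 0$ locally uniformly by the continuous case of \emph{Vanishing}, so only the $J^\vectau_\veceps Df$ term contributes to the bound).

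I expect the routine part to be the back-and-forth between $M$ and $\R^n$ through the $\tau_\alpha$; the delicate point, and the one I would watch most carefully, is the bookkeeping of the countable sum over $\alpha$ --- producing a \emph{single} dominating weight $\xi$ in the equicontinuity statements, and, in the approximation and upper-bound statements, obtaining uniformity along the directed set $\Epsilon$ rather than along a sequence, which is exactly what the formulation $\limsup_{\veceps\to\veczeroplus}=\inf_{\bar\veceps}\sup_{\veceps\leq\bar\veceps}$ is designed to permit (one tunes $\bar\epsilon_\alpha$ chart by chart). Throughout, the local finiteness of $\{\tilde K_\alpha\}$, which confines every compact set and every support $\supp\zeta$ to finitely many charts, is the ingredient that makes all of this go through.
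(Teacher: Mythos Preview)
Your overall strategy is the paper's own --- localise via $\tau_\alpha$, apply Propositions~\ref{prp:mollifier} and~\ref{prp:mollifiercommutator}, and reassemble using the partition of unity and the chart-by-chart freedom encoded in $\veceps\in\Epsilon$. Parts (i)--(iv) and (vi)--(ix) are handled correctly, and your commutator decomposition is exactly the paper's $I^1_\veceps+I^2_\veceps$.

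There is, however, a real gap in your treatment of (v) (and hence of (x), which you reduce to it). Summing the chartwise bounds from Proposition~\ref{prp:mollifier}(iv) yields
\[
\limsup_{\veceps\to\veczeroplus}\sup_{x\in C}P(J^\vectau_\veceps f)(x)
\;\leq\;\sum_{\alpha}\esssup_{x\in W}P(\eta_\alpha f)(x)
\;\leq\;\Bigl(\sum_\alpha\sup_{W}\eta_\alpha\Bigr)\,\esssup_{x\in W}P(f)(x),
\]
and the factor $\sum_\alpha\sup_W\eta_\alpha$ is in general much larger than $1$: it is governed by the multiplicity of the cover, not by $\sum_\alpha\eta_\alpha=1$. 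So your argument gives an upper bound, but not \eqref{eq:seminormcontrol} with the correct constant, and that constant is exactly what matters in the downstream application (Corollary~\ref{cor:bdsmoothapprox}, where one needs $\|P(Df_m)\|_\infty\leq 1$ from $\|P(Df)\|_\infty\leq 1$).

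The missing idea is to control the oscillation of the $\eta_\alpha$. The paper fixes $\delta>0$ and, for each $\beta$, decomposes the compact set $K_\beta=\phi_\beta^{-1}(\bar B_{\R^n}(0,2))$ into finitely many compact pieces $K_{\beta,j}$, each with an open neighbourhood $W_{\beta,j}$ on which every relevant $\eta_\alpha$ oscillates by at most $\delta/|\Alpha_\beta|$. Applying Proposition~\ref{prp:mollifier}(iv) on each $K_{\beta,j}$ and evaluating the $\eta_\alpha$ at a single point $y_{\beta,j}\in K_{\beta,j}$ then replaces $\sum_\alpha\sup_{W_{\beta,j}}\eta_\alpha$ by $\sum_\alpha\eta_\alpha(y_{\beta,j})+\delta=1+\delta$. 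One finally reassembles over $\beta$ and $j$ using the interchange \eqref{eq:scambiosuplimsup}, and lets $\delta\to0$. Your reduction of (x) to (v) via the uniform vanishing of $[D,J^\vectau_\veceps]f$ for continuous $f$ is fine once (v) is repaired.

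A minor remark: your closing sentence says local finiteness ``confines \dots every support $\supp\zeta$ to finitely many charts''. That is false for general $\zeta\in L^\infty_\loc(\bundle{T})$; what saves you in (iv) and (vii) is precisely the interchange principle \eqref{eq:scambiosuplimsup} (each term of the sum depends on a single $\epsilon_\alpha$), not any finiteness of the sum. Your earlier clause ``the infimum over $\bar\veceps$ \dots lets us make the relevant $\bar\epsilon_\alpha$ small simultaneously'' is the right thought; just drop the misleading finiteness claim.
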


The alert reader will have already noticed that, in contrast to Propositions~\ref{prp:mollifier} and \ref{prp:mollifiercommutator}, equicontinuity and the limiting properties here refer to a topology on the spaces of sections $L^p_\loc$ (and $C$), defined by the ``extended seminorms'' $f \mapsto \| \zeta f \|_p$, where $\zeta$ ranges over $L^\infty_\loc(\bundle{T})$, which is finer than the usual Fr\'echet topology when $M$ is not compact.
Indeed, if $M$ is not compact, then the finer topology, known as the Whitney topology (at least in the case of $C$ \cite[Chapter~2]{hirsch_differential_1994}), is not metrisable, nor does it yield a topological vector space structure: the mapping $\lambda \mapsto \lambda f$ is not continuous unless the section $f$ is compactly-supported.
However, like the Fr\'echet topology, the Whitney topology is independent of the measure on $M$ and the hermitean structure of the bundle.

Propositions~\ref{prp:mollifier} and \ref{prp:mollifiercommutator} cannot be strengthened by just replacing the Fr\'echet topology with the Whitney topology; the stronger approximation result of Theorem~\ref{thm:generalmollifier} is due to the fact that the approximant $J^\vectau_\veceps f$ depends on the sequence $\veceps \in \Epsilon$ whose components $\epsilon_\alpha$ may be chosen independently.

A propos of limits along $\Epsilon$, the following remark will be useful in the course of the proof: if $\{\Alpha_\beta\}_{\beta \in \Beta}$ is a collection of subsets of $\Alpha$ which is locally finite, in the sense that $\{ \beta \in \Beta \tc \alpha \in \Alpha_\beta \}$ is finite for all $\alpha \in \Alpha$, then
\begin{equation}\label{eq:scambiosuplimsup}
\limsup_{\veceps \to \veczeroplus} \sup_{\beta \in \Beta} F_\beta(\veceps|_{\Alpha_\beta}) = \sup_{\beta \in \Beta} \limsup_{\veceps \to \veczeroplus} F_\beta(\veceps|_{\Alpha_\beta})
\end{equation}
for all functions $F_\beta\colon \leftopenint0,1\rightclosedint^{\Alpha_\beta} \to \leftclosedint-\infty,\infty\rightclosedint$.

\begin{proof}
First, the sum defining $J^\vectau_\veceps$ is a locally finite sum of smooth, compactly-supported sections of $\bundle{E}$, so part (i) clearly holds.

Next, for all closed subsets $C$ of $M$, open neighbourhoods $W$ of $C$, and $\alpha \in \Alpha$, we may find $\bar{\epsilon}_\alpha \in \leftopenint 0,1\rightclosedint$ such that 
\[
\bar{B}_{\R^n} (\phi_\alpha( \supp \eta_\alpha \cap C) , \bar{\epsilon}_\alpha) \subseteq \phi_\alpha(W),
\]
and part (ii) follows from Proposition~\ref{prp:mollifier} (i).

Further, if $\zeta \in L^\infty_\loc(\bundle{T})$, then
\[
|\zeta J^\vectau_\veceps f| \leq \sum_\alpha \kappa_\alpha |J_{\epsilon_\alpha} \tau_\alpha (\eta_\alpha f)|
\]
pointwise almost everywhere, where the constants $\kappa_\alpha$ are independent of $f$.
We deduce from Proposition~\ref{prp:mollifier} (ii) that
\[
\|J_{\epsilon_\alpha} \tau_\alpha (\eta_\alpha f)\|_p \leq \kappa'_\alpha \|\tau_\alpha (\eta_\alpha f) \|_p
\]
and hence
\[
\|\zeta J^\vectau_\veceps f\|_p \leq \sum_\alpha \kappa''_\alpha \|\eta_\alpha f\|_p \leq \sup_{\alpha\in\Alpha} \kappa'''_\alpha \|\eta_\alpha f\|_p
\]
for new constants $\kappa'_\alpha$, $\kappa''_\alpha$ and $\kappa'''_\alpha$, and we take $\xi$ to be $\sup_{\alpha\in\Alpha} \kappa'''_\alpha \eta_\alpha$ to prove part (iii).
Analogously, one shows that
\[
\|\zeta (J^\vectau_\veceps f - f) \|_p \leq \sup_{\alpha\in\Alpha} \kappa''''_\alpha \|J_{\epsilon_\alpha} \tau_\alpha(\eta_\alpha f) - \tau_\alpha(\eta_\alpha f)\|_p
\]
for suitable constants $\kappa''''_\alpha$, whence
\[
\limsup_{\veceps \to \veczeroplus} \|\zeta (J^\vectau_\veceps f - f) \|_p \leq \sup_{\alpha\in\Alpha} \kappa_\alpha \limsup_{t \to \zeroplus} \|J_t \tau_\alpha(\eta_\alpha f) - \tau_\alpha(\eta_\alpha f)\|_p
\]
by \eqref{eq:scambiosuplimsup}, and part (iv) follows from Proposition~\ref{prp:mollifier} (iii).

Suppose now that $f \in L^\infty_\loc(\bundle{E})$ and $P$ is a continuous fibre seminorm on $\bundle{E}$.
Write $P_\alpha$ for the corresponding seminorm on the fibres of the trivial bundle $\R^n \times \C^r$ over $\R^n$; in other words, for a section $f$ of $\bundle{E}$ with support in $U_\alpha$, $P_\alpha(\tau_\alpha f)(\phi(x)) = P(f)(x)$ for all $x \in U_\alpha$.
Given any $\alpha \in \Alpha$, write $K_\alpha$ for $\phi_\alpha^{-1}(\bar{B}_{\R^n}(0,2))$, so $\supp \tau_\alpha^{-1} J_t \tau_\alpha (\eta f) \subseteq K_\alpha$ when $t \leq 1$, and given any $\beta \in \Alpha$, denote by $\Alpha_\beta$ the finite set of indices $\alpha$ in $\Alpha$ such that $K_\alpha \cap K_\beta \neq \emptyset$.

Fix $\delta \in \R^+$.
Given any $\beta \in \Alpha$, we may find a finite decomposition of $K_\beta$ as $K_{\beta,1} \cup \dots \cup K_{\beta,k_\beta}$, in which each $K_{\beta,j}$ is compact and the oscillation of $\eta_\alpha$ on an open neighbourhood $W_{\beta,j}$ of $K_{\beta,j}$ is bounded by $\delta/|\Alpha_\beta|$.
Take $y_{\beta,j}$ in $K_{\beta,j}$.
Then, by Proposition~\ref{prp:mollifier} (iv), for all closed subsets $C$ of $M$ and open neigh\-bour\-hoods $W$ of $C$,
\begin{align*}
\limsup_{\veceps \to \veczeroplus} \sup_{x \in K_{\beta,j} \cap C} P(J^\vectau_\veceps f)(x)
& \leq \sum_{\alpha \in \Alpha_\beta} \limsup_{t \to \zeroplus} \sup_{x \in K_{\beta,j} \cap C} P_\alpha(J_{t} \tau_\alpha (\eta_\alpha f))(\phi_\alpha(x)) \\
& \leq \sum_{\alpha \in \Alpha_\beta} \esssup_{x \in W_{\beta,j} \cap W} P_\alpha(\tau_\alpha (\eta_\alpha f))(\phi_\alpha(x)) \\
& \leq \sum_{\alpha \in \Alpha_\beta} \sup_{x \in W_{\beta,j}} \eta_\alpha(x) \esssup_{z \in W} P(f)(z) \\
& \leq \sum_{\alpha \in \Alpha_\beta}  (\eta_\alpha(y_{\beta,j}) + \delta/|\Alpha_\beta|) \esssup_{z \in W} P(f)(z) \\
& \leq (1 + \delta) \esssup_{z \in W} P(f)(z)
\end{align*}
for each $K_{\beta,j}$.
Since the restriction of $P(J^\vectau_\veceps f)$ to $K_{\beta,j}$ depends only on $\veceps|_{\Alpha_\beta}$, and the set $\{(\beta,j) \tc \alpha \in \Alpha_\beta\} = \bigcup_{\beta \in \Alpha_\alpha} \{\beta\} \times \{1,\dots,k_\beta\}$ is finite for all $\alpha \in \Alpha$,
\[
\begin{aligned}
\limsup_{\veceps \to \veczeroplus} \sup_{x\in C} P(J^\vectau_\veceps f)(x)
&= \sup_{\substack{\beta \in \Alpha \\ j = 1,\dots,k_\beta}} \limsup_{\veceps \to \veczeroplus} \sup_{x\in K_{\beta,j} \cap C} P(J^\vectau_\veceps f)(x)  \\
&\leq (1 + \delta) \esssup_{z\in W} P(f)(z),
\end{aligned}
\]
by \eqref{eq:scambiosuplimsup}, and part (v) follows from the arbitrariness of $\delta$ and $W$.

We now write $D \in \Diff_1(\bundle{E},\bundle{F})$ in local coordinates, and decompose $[D, J_\veceps^\vectau]$ as $I^1_\veceps + I^2_\veceps$, where
\[
\begin{aligned}
I^1_\veceps f &= \sum_\alpha \tau_\alpha^{-1} J_{\epsilon_\alpha} \tau_\alpha ((\Symbol{D} \eta_\alpha) f) \\
I^2_\veceps f &= \sum_\alpha \tau_\alpha^{-1} [\tau_\alpha (D), J_{\epsilon_\alpha}] \tau_\alpha (\eta_\alpha f).
\end{aligned}
\]
The properties (vi) and (vii) of $[D,J^\vectau_\veceps]$ follow from the analogous properties of $I^1_\veceps$ and $I^2_\veceps$, which in turn are obtained from Propositions~\ref{prp:mollifier} and \ref{prp:mollifiercommutator}, by arguing as in the proofs of parts (iii) and (iv) of this theorem and observing that
\[
\sum_\alpha (\Symbol{D} \eta_\alpha) f = \Bigl( \Symbol{D} \sum_\alpha \eta_\alpha \Bigr) f = 0.
\]

Finally, the decomposition
\[
D J^{\bundle{E},\vectau}_\veceps f = [D,J_\veceps^\vectau] f + J^{\bundle{F},\vectau}_\veceps Df
\]
shows that part (viii) follows from parts (iii) and (vi), while part (ix) follows from parts (iv) and (vii).
Moreover, given any continuous fibre seminorm $P$ on $\bundle{F}$,
\[
|P(D J^{\bundle{E},\vectau}_\veceps f) - P(J^{\bundle{F},\vectau}_\veceps Df)| \leq P([D,J^\vectau_\veceps] f),
\]
and, by part (vii), the right-hand side tends to $0$ uniformly as $\veceps \to \veczeroplus$ whenever $f$ is continuous; therefore, under our assumptions,
\[
\limsup_{\veceps \to \veczeroplus} \sup_{x \in C} P(D J^{\bundle{E},\vectau}_\veceps f)(x) = \limsup_{\veceps \to \veczeroplus} \sup_{x \in C} P(J^{\bundle{F},\vectau}_\veceps Df)
\]
for all subsets $C$ of $M$, and part (x) follows from part (v).
\end{proof}

Not only is the Whitney topology finer than the Fr\'echet space topology on $L^p_\loc$, but also, when restricted to $L^p$, it is finer than the usual Banach space topology of $L^p$.
Hence the following density result is an immediate consequence of Theorem~\ref{thm:generalmollifier}.

\begin{corollary}\label{cor:cssmoothapprox}
Suppose that $D \in \Diff_1(\bundle{E},\bundle{F})$ and $1 \leq p < \infty$.
Then $C^\infty_\comp(\bundle{E})$ is dense in $W^p_{D, \loc}(\bundle{E})$, and $W^p_D \cap C^\infty(\bundle{E})$ is dense in $W^p_{D}(\bundle{E})$.
\end{corollary}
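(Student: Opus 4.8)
The plan is to obtain both density statements from the mollifier operators $J^\vectau_\veceps$ of Theorem~\ref{thm:generalmollifier}, which already carries all the analytic content; the only further ingredients are the observation recorded just before the corollary, namely that the Whitney-type convergence furnished by that theorem is finer than both the Fr\'echet topology of $W^p_{D,\loc}(\bundle{E})$ and the Banach topology of $W^p_D(\bundle{E})$, together with a standard cut-off to force compact support where needed. I would dispose of $W^p_D(\bundle{E})$ first, since it is the more direct case.

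For $f \in W^p_D(\bundle{E})$, each $J^\vectau_\veceps f$ is smooth by Theorem~\ref{thm:generalmollifier}(i). Applying parts~(iv) and~(ix) of that theorem with the weight $\zeta \equiv 1 \in L^\infty_\loc(\bundle{T})$ — legitimate since $f \in L^p(\bundle{E}) \subseteq L^p_\loc(\bundle{E})$ and a fortiori $f \in W^p_{D,\loc}(\bundle{E})$ — gives $\limsup_{\veceps \to \veczeroplus} \|J^\vectau_\veceps f - f\|_p = 0$ and $\limsup_{\veceps \to \veczeroplus} \|D J^\vectau_\veceps f - Df\|_p = 0$. As $\|f\|_p$ and $\|Df\|_p$ are finite, this forces $J^\vectau_\veceps f \in W^p_D(\bundle{E}) \cap C^\infty(\bundle{E})$ once $\veceps$ is close enough to $\veczeroplus$, and the two displayed limits then say precisely that $J^\vectau_\veceps f \to f$ in $W^p_D(\bundle{E})$.

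For $W^p_{D,\loc}(\bundle{E})$ I would argue that, given $f$, a compact set $K$, and $\delta \in \R^+$, one can produce $g \in C^\infty_\comp(\bundle{E})$ with $\int_K |f-g|^p \leq \delta$ and $\int_K |Df-Dg|^p \leq \delta$, such conditions defining a base of neighbourhoods of $f$ in the Fr\'echet space. First pick a bump function $\chi$ equal to $1$ on an open neighbourhood $W$ of $K$; by the distributional Leibniz rule~\eqref{eq:dist_leibniz}, $\chi f \in W^p_D(\bundle{E})$ with $D(\chi f) = (\Symbol{D}\chi) f + \chi\, Df$, and since $\Symbol{D}\chi = \sigma_1(D)(\D\chi)$ vanishes on $W$ (where $\chi$ is constant), $\chi f$ and $D(\chi f)$ agree with $f$ and $Df$ on $W \supseteq K$. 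Then mollify $\chi f$: the sections $J^\vectau_\veceps(\chi f)$ are smooth by~(i), are compactly supported once $\veceps$ is small by the support property~(ii) (applied to the compact set $\supp(\chi f)$ and a relatively compact open neighbourhood of it), and converge to $\chi f$ in $L^p(\bundle{E})$ along with their $D$-images — either by the argument of the previous paragraph applied to $\chi f \in W^p_D(\bundle{E})$, or directly from~(iv) and~(ix) with $\zeta$ a bump function equal to $1$ on $K$. Since $\chi f = f$ and $D(\chi f) = Df$ on $K$, choosing $\veceps$ small enough makes $g = J^\vectau_\veceps(\chi f)$ do the job.

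I do not expect a genuine obstacle: the work is entirely inside Theorem~\ref{thm:generalmollifier}. The one point needing care is producing \emph{compactly-supported} approximants in the first assertion — hence the cut-off is performed before mollifying and the support statement~(ii) is invoked — together with the bookkeeping that multiplying by $\chi$ leaves $Df$ unchanged on $K$, which rests on the vanishing of $\sigma_1(D)(\D\chi)$ wherever $\chi$ is locally constant. Finally, it is worth noting why no cut-off appears in the second assertion: in general $C^\infty_\comp(\bundle{E})$ is \emph{not} dense in $W^p_D(\bundle{E})$ (its closure being $W^p_{D,0}(\bundle{E})$, possibly proper), so only smooth, not necessarily compactly-supported, sections can be expected to be dense there.
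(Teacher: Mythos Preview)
Your proposal is correct and follows essentially the same approach as the paper, which simply records the corollary as an immediate consequence of Theorem~\ref{thm:generalmollifier} together with the remark that Whitney convergence dominates both the Fr\'echet and the $L^p$ topologies. Your write-up supplies exactly the details the paper leaves implicit --- in particular the cut-off step needed to produce \emph{compactly-supported} approximants for $W^p_{D,\loc}$, which the paper itself invokes in the same way later (see the proof of Proposition~\ref{prp:sobolev0}).
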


An analogous result when $p = \infty$ may be obtained by restricting to continuous sections with continuous $D$-derivatives.
The following weaker result, however, does not require the continuity of the $D$-derivatives.

\begin{corollary}\label{cor:bdsmoothapprox}
Suppose that $D \in \Diff_1(\bundle{E},\bundle{F})$, $f \in C(\bundle{E})$,  $D f \in L^\infty_\loc(\bundle{F})$, and $\|P(Df)\|_\infty \leq 1$ for some continuous fibre seminorm $P$ on $\bundle{F}$.
Then there exists a sequence of $C^\infty(\bundle{E})$-sections $f_m$ that converges to $f$ uniformly on compacta, such that $\|P(Df_m)\|_\infty \leq 1$ for all $m$ and $\supp f_m \subseteq W$ for all open neighbourhoods $W$ of $\supp f$ once $m$ is large enough.
Moreover, if $\bundle{E} = \bundle{T}$ and $f$ is real-valued, then the $f_m$ may be chosen to be real-valued.
\end{corollary}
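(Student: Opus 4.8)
The plan is to construct the $f_m$ by mollifying $f$ with the operators $J^{\vectau}_{\veceps}$ of Theorem~\ref{thm:generalmollifier}, extracting a suitable sequence $\veceps_m$, and then rescaling slightly. The reason for the rescaling is that Theorem~\ref{thm:generalmollifier}(x) only yields $\|P(DJ^{\vectau}_{\veceps}f)\|_{\infty}\le 1+o(1)$ as $\veceps\to\veczeroplus$, not the sharp bound $\le 1$; this is repaired using that $D$ is linear and $P$ is positively homogeneous, so that dividing $J^{\vectau}_{\veceps_m}f$ by a scalar slightly larger than $1$ restores $\|P(D\,\cdot\,)\|_{\infty}\le 1$ while affecting the other conclusions negligibly.

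Concretely, I would first record three consequences of Theorem~\ref{thm:generalmollifier}. Applying part~(x) with the closed set $C=M$ (the only open set containing $M$ being $M$ itself) gives $\limsup_{\veceps\to\veczeroplus}\|P(DJ^{\vectau}_{\veceps}f)\|_{\infty}\le\esssup_{x\in M}P(Df)(x)=\|P(Df)\|_{\infty}\le 1$, where $\|P(DJ^{\vectau}_{\veceps}f)\|_{\infty}=\sup_{x\in M}P(DJ^{\vectau}_{\veceps}f)(x)$ because $DJ^{\vectau}_{\veceps}f$ is continuous by part~(i). Applying part~(iv) with $p=\infty$, with $f\in C(\bundle{E})$, and with $\zeta\equiv 1\in L^{\infty}_{\loc}(\bundle{T})$ gives $\limsup_{\veceps\to\veczeroplus}\|J^{\vectau}_{\veceps}f-f\|_{\infty}=0$. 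And, for each open neighbourhood $W$ of $\supp f$, part~(ii) with $C=\supp f$ gives some $\bar\veceps_{W}\in\Epsilon$ such that $\supp J^{\vectau}_{\veceps}f\subseteq W$ whenever $\veceps\le\bar\veceps_{W}$.

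Using the first two of these, for each $m$ I would choose $\veceps_m\in\Epsilon$ small enough that $\|J^{\vectau}_{\veceps_m}f-f\|_{\infty}<1/m$, that $\|P(DJ^{\vectau}_{\veceps_m}f)\|_{\infty}\le 1+1/m$, and that $\epsilon_{m,\alpha}\le 1/m$ for every $\alpha$. Then I set $\lambda_m=\max\{1,\|P(DJ^{\vectau}_{\veceps_m}f)\|_{\infty}\}$, so that $1\le\lambda_m\le 1+1/m$, and $f_m=\lambda_m^{-1}J^{\vectau}_{\veceps_m}f$. By part~(i), $f_m\in C^{\infty}(\bundle{E})$; by linearity of $D$ and homogeneity of $P$, $\|P(Df_m)\|_{\infty}=\lambda_m^{-1}\|P(DJ^{\vectau}_{\veceps_m}f)\|_{\infty}\le 1$ for every $m$; and since $J^{\vectau}_{\veceps_m}f\to f$ uniformly, $\lambda_m\to 1$, and $f$ is bounded on compact sets, $f_m\to f$ uniformly on compacta. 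For the supports, the $\alpha$-summand of $J^{\vectau}_{\veceps_m}f$ vanishes unless $\supp\eta_\alpha$ meets $\supp f$; when $\supp f$ is compact there are only finitely many such $\alpha$, and as $\epsilon_{m,\alpha}\to 0$ for each of them, part~(ii) shows that, for any given open $W\supseteq\supp f$, $\supp f_m=\supp J^{\vectau}_{\veceps_m}f\subseteq W$ once $m$ is large enough. For the last assertion, when $\bundle{E}=\bundle{T}$ I would use the standard (real, identity) trivialisations of $\bundle{T}$ and recall that the function $\phi$ defining the mollifier is a bump function, hence real-valued; then each $J_{\epsilon}$ on $\R^n$, and hence $J^{\vectau}_{\veceps}$ (the $\eta_\alpha$ being real), maps real-valued functions to real-valued functions, so the $f_m$ are real when $f$ is.

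I expect the support clause to be the only real obstacle: it forces $\veceps_m$ to be small in all relevant coordinates at once, and since a closed subset of a non-compact manifold need not possess a countable cofinal family of open neighbourhoods, the clause ``$\supp f_m\subseteq W$ for all open neighbourhoods $W$ of $\supp f$ once $m$ is large enough'' should be read as allowing the threshold to depend on $W$; it is transparent when $\supp f$ is compact, which is the case needed in the sequel. Everything else is a routine diagonal extraction from the limiting statements of Theorem~\ref{thm:generalmollifier} combined with the rescaling trick above.
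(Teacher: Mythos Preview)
Your approach is exactly the paper's: mollify with $J^\vectau_\veceps$, extract a sequence using parts (iv) and (x) of Theorem~\ref{thm:generalmollifier}, then divide by a scalar just above $1$ to restore the sharp bound (the paper uses the fixed factor $(1+2^{-m})^{-1}$ where you use $\lambda_m^{-1}=\bigl(\max\{1,\|P(DJ^\vectau_{\veceps_m}f)\|_\infty\}\bigr)^{-1}$, a cosmetic difference). Your reservation about the support clause is well placed and in fact more careful than the paper's own treatment: the paper simply ``fix[es] a decreasing countable base $\{W_m\}_{m\in\N}$ of open neighbourhoods of $\supp f$'' and arranges $\supp g_m\subseteq W_m$, but such a countable cofinal family need not exist when $\supp f$ is non-compact (take $\supp f=\R\times[-1,1]\subset\R^2$ and neighbourhoods $\{|y|<1+\delta(x)\}$ for positive continuous $\delta$). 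Your argument for compact $\supp f$---only finitely many $\alpha$ contribute, and $\epsilon_{m,\alpha}\to 0$ for each---is the honest one, and it covers the only instance in which the support clause is actually invoked later (Proposition~\ref{prp:distancesmoothfunctions}).
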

\begin{proof}
By parts (iv) and (x) of Theorem~\ref{thm:generalmollifier}, $\limsup_{\veceps \to \veczeroplus} \| J^\vectau_\veceps f - f \|_\infty = 0$ and
\[
\limsup_{\veceps \to \veczeroplus} \| P(D J^\vectau_\veceps f) \|_\infty \leq \|P (Df) \|_\infty \leq 1.
\]
We fix a decreasing countable base $\{W_m\}_{m \in \N}$ of open neighbourhoods of $\supp f$, and then choose, for all $m \in \N$, a sequence $\veceps$ in $\Epsilon$ such that the section $g_m = J^\vectau_\veceps f$ satisfies $\| g_m - f \|_\infty \leq 2^{-m}$ and $\| P(D g_m) \|_\infty \leq 1 + 2^{-m}$.
We may also assume that $\supp g_m \subseteq W_m$ by Theorem~\ref{thm:generalmollifier} (i).
The conclusion then follows by taking $f_m = (1+2^{-m})^{-1} g_m$.
\end{proof}

\subsection{Integration and differentiation}

Approximation using mollifiers allows us to extend results such as integration by parts, Leibniz' rule, and the chain rule to the realm of weakly differentiable sections (see, for instance, \cite[Chapter 7]{gilbarg_elliptic_2001}). In what follows, $p'$ denotes the index conjugate to $p$, that is, $1/p' + 1/p = 1$.

\begin{proposition}[Integration by parts]\label{prp:parts}
Suppose that $D \in \Diff_1(\bundle{E},\bundle{F})$ and that $1 \leq p \leq \infty$.
If $f \in W^p_{D, \loc}(\bundle{E})$ and $g \in W^{p'}_{D^+,\loc}(\bundle{F})$, 
and moreover $\supp(f \otimes g)$ is compact, then
\[
\llangle Df, g \rrangle = \llangle f, D^+ g \rrangle.
\]
\end{proposition}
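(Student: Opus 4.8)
The plan is to reduce the general weakly-differentiable case to the already-known identity \eqref{eq:adjoint} for smooth compactly-supported sections, using the mollifier machinery of Theorem~\ref{thm:generalmollifier}. First I would dispose of the compact support bookkeeping: since $\supp(f \otimes g) = \supp f \cap \supp g$ is compact, I can choose a bump function $\eta \in C^\infty_\comp(\bundle{T}_\R)$ that equals $1$ on a neighbourhood of $\supp f \cap \supp g$, and replace $f$ by $\eta f$ and $g$ by $\eta g$; by Leibniz' rule \eqref{eq:dist_leibniz} (and its adjoint counterpart) the extra terms produced are supported away from the overlap, so $\llangle Df, g\rrangle = \llangle D(\eta f), \eta g\rrangle$ and likewise for the right-hand side. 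Thus I may assume from the outset that both $f$ and $g$ are compactly supported, say with supports in a fixed compact set $K$, and that $f \in W^p_{D,\loc}(\bundle{E})$, $g \in W^{p'}_{D^+,\loc}(\bundle{F})$ with $p, p'$ conjugate.

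Next I would mollify. Set $f_\veceps = J^{\bundle{E},\vectau}_\veceps f$ and $g_\veceps = J^{\bundle{F},\vectau}_\veceps g$; by Theorem~\ref{thm:generalmollifier}(i) these are smooth, by (ii) their supports lie in a fixed compact neighbourhood of $K$ once $\veceps$ is small, and by (viii)--(ix) we have $f_\veceps \to f$ and $Df_\veceps \to Df$ (similarly $g_\veceps \to g$ and $D^+ g_\veceps \to D^+ g$) in the appropriate $L^\bullet_\loc$ sense along $\veceps \to \veczeroplus$. Since the $f_\veceps$ are smooth and compactly supported, \eqref{eq:adjoint} gives $\llangle D f_\veceps, g_\veceps \rrangle = \llangle f_\veceps, D^+ g_\veceps \rrangle$ for every $\veceps$. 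It then remains to pass to the limit in both pairings. For the left-hand side, write $\llangle Df_\veceps, g_\veceps\rrangle - \llangle Df, g\rrangle = \llangle Df_\veceps - Df, g_\veceps\rrangle + \llangle Df, g_\veceps - g\rrangle$; pick a bump function $\zeta$ equal to $1$ on the common compact support, so that $\llangle Df_\veceps - Df, g_\veceps\rrangle = \llangle \zeta(Df_\veceps - Df), g_\veceps\rrangle$, and apply Hölder's inequality together with the $L^p_\loc$-convergence $\|\zeta(Df_\veceps - Df)\|_p \to 0$ from Theorem~\ref{thm:generalmollifier}(ix) and the uniform $L^{p'}_\loc$-bound on $g_\veceps$ (on the fixed compact set) from the Equicontinuity statement (iii); the term $\llangle Df, g_\veceps - g\rrangle$ is handled the same way, swapping the roles of $p$ and $p'$. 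The right-hand side is treated identically, using weak $D^+$-differentiability of $g$ in place of weak $D$-differentiability of $f$. Taking the limit along $\Epsilon$ then yields the claimed identity.

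The one point requiring a little care — and what I'd flag as the main obstacle — is the borderline exponent $p = \infty$ (so $p' = 1$) and, dually, $p = 1$: Theorem~\ref{thm:generalmollifier}(ix) only guarantees $DJ^\vectau_\veceps f \to Df$ in $L^\infty_\loc$ when $f$ and $Df$ are \emph{continuous}, which we are not assuming. In the $p = \infty$ case, however, we do not need convergence of $Df_\veceps$ in $L^\infty$: it suffices to have $Df_\veceps \to Df$ in $L^1_\loc$ (which holds by (ix) with the exponent $1$, since $f, Df \in L^\infty_\loc \subseteq L^1_\loc$ on the relevant compact set) paired against $g, g_\veceps \in L^1 = L^{p'}$, provided the $g_\veceps$ stay bounded in $L^\infty_\loc$ — but $g \in W^1_{D^+,\loc}$ need not lie in $L^\infty_\loc$, so this pairing is not quite set up symmetrically. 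The clean fix is to exploit that $L^\infty_\loc \times L^1_\comp$ pairings converge when one factor converges in $L^1_\loc$ on a compact set and the other is a fixed $L^1$ section: so for $p = \infty$ keep $g$ fixed and only mollify $f$, replacing $\llangle Df_\veceps, g\rrangle = \llangle f_\veceps, D^+g\rrangle$ — valid since $f_\veceps$ is smooth compactly supported and $g \in W^1_{D^+,\loc}$, which by the symmetric ($p=1$) version of the argument already gives integration by parts against a smooth compactly supported test section — and then let $\veceps \to \veczeroplus$ using $L^1_\loc$-convergence of $f_\veceps \to f$, $Df_\veceps \to Df$ against the fixed $L^\infty_\loc$ (indeed bounded, on $K$) sections $D^+g$ and $g$. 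Symmetrically for $p = 1$. Away from these endpoints, $1 < p < \infty$, both $f_\veceps$ and $g_\veceps$ may be mollified freely and the double-limit argument of the previous paragraph goes through verbatim.
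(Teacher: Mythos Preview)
Your overall strategy---cut off by a bump function, mollify, invoke the smooth identity, pass to the limit---is the same as the paper's, but you complicate it by mollifying \emph{both} $f$ and $g$.  The paper mollifies only one side: by the symmetry $(f,D,p) \leftrightarrow (g,D^+,p')$ one may assume $p < \infty$ from the outset, approximate $\eta f$ by smooth compactly-supported $\phi_m$ in $W^p_{D,\loc}$ via Corollary~\ref{cor:cssmoothapprox}, and then observe that $\llangle D\phi_m, g \rrangle = \llangle \phi_m, D^+ g \rrangle$ holds \emph{by the very definition} \eqref{eqn:defn-distn} of the distributional derivative $D^+ g$---no second mollification and no appeal to \eqref{eq:adjoint} for both sides smooth is needed.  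The limit is then a plain $L^p$--$L^{p'}$ pairing on a fixed compact set, and the endpoint issue never arises.

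Your endpoint fix contains an actual slip.  For $p = \infty$ you claim that $g$ and $D^+ g$ are ``$L^\infty_\loc$ (indeed bounded, on $K$)'' sections, but $p' = 1$, so $g, D^+g$ are only guaranteed to lie in $L^1_\loc$.  Pairing $L^1_\loc$-convergence of $f_\veceps, Df_\veceps$ against a fixed $L^1$ section does \emph{not} give convergence of the integral; your stated principle about ``$L^\infty_\loc \times L^1_\comp$ pairings'' is false as written.  A correct repair is to note that, by parts (iii) and (viii) of Theorem~\ref{thm:generalmollifier}, the $f_\veceps$ and $Df_\veceps$ remain uniformly bounded in $L^\infty$ on the common compact support, so their $L^1_\loc$-convergence upgrades to weak-$*$ convergence in $L^\infty$, which \emph{can} be paired against $g, D^+g \in L^1$.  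But the paper's symmetry reduction to $p < \infty$ avoids this detour entirely.
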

\begin{proof}
By exchanging $f$ with $g$ and $D$ with $D^+$ if necessary, we may suppose that $p < \infty$.

Take a bump function $\eta$ equal to $1$ on $\supp (f \otimes g)$.
By Corollary~\ref{cor:cssmoothapprox}, there exists a sequence of $C^\infty_\comp(\bundle{E})$-sections $f_m$ such that $(f_m,Df_m) \to (f,Df)$ in $L^p_\loc(\bundle{E}\oplus \bundle{F})$, and then $(\eta f_m,D(\eta f_m)) \to (\eta f,D(\eta f))$ in $L^p_\comp(\bundle{E}\oplus \bundle{F})$.
Now
\[
D(\eta f) = (\Symbol{D} \eta)f + \eta Df
\]
by Leibniz' rule \eqref{eq:dist_leibniz} for a smooth function $\eta$ and a distribution $f$, and moreover $\Symbol{D} \eta$ vanishes on $\supp (f \otimes g)$, so $\llangle (\Symbol{D} \eta) f, g\rrangle = 0$.
Hence \eqref{eqn:defn-distn} implies that
\[
\begin{aligned}
\llangle Df, g \rrangle
&= \llangle D(\eta f), g \rrangle = \lim_m \llangle D(\eta f_m), g \rrangle \\
&= \lim_m \llangle \eta f_m, D^+ g \rrangle = \llangle \eta f, D^+ g \rrangle \\
&= \llangle f, D^+ g \rrangle,
\end{aligned}
\]
as required.
\end{proof}

\begin{proposition}[Leibniz' rule]\label{prp:leibniz}
Suppose that $D \in \Diff_1(\bundle{E},\bundle{F})$ and $1 \leq p \leq \infty$.
Suppose also that $h \in W^p_{\Symbol{D},\loc}(\bundle{T})$ and $f \in W^{p'}_{D, \loc}(\bundle{E})$.
Then $hf \in W^1_{D, \loc}(\bundle{E})$ and
\[
D(h f) = (\Symbol{D} h) f + h D f.
\]
\end{proposition}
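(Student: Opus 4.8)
The plan is to reduce the claim to the partly-smooth Leibniz rule \eqref{eq:dist_leibniz} by mollifying one of the two factors and then passing to the limit, in the spirit of the proof of Proposition~\ref{prp:parts}. First I would settle the membership part: H\"older's inequality on each compact set shows that $hf$, $(\Symbol{D} h)f$ and $h\,Df$ all lie in $L^1_\loc$ (each pairs an $L^p_\loc$ factor with an $L^{p'}_\loc$ factor), so the candidate right-hand side $(\Symbol{D} h)f + h\,Df$ is an $L^1_\loc(\bundle{F})$-section. Hence, once the distributional identity $D(hf) = (\Symbol{D} h)f + h\,Df$ is proved, it is automatic that $hf \in W^1_{D,\loc}(\bundle{E})$: the entire content is that distributional identity.

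To prove it, I would mollify whichever of $h$, $f$ has the finite Lebesgue exponent. Suppose first $p < \infty$, and set $h_\veceps = J^\vectau_\veceps h$; by Theorem~\ref{thm:generalmollifier}(i) these are smooth sections of $\bundle{T}$, so the second clause of \eqref{eq:dist_leibniz} (a smooth section times a distribution) yields
\[
D(h_\veceps f) = (\Symbol{D} h_\veceps)\, f + h_\veceps \, Df
\]
as $\bundle{F}$-valued distributions, for every $\veceps \in \Epsilon$. Now I would let $\veceps \to \veczeroplus$. By Theorem~\ref{thm:generalmollifier}(iv), $h_\veceps \to h$ in $L^p_\loc(\bundle{T})$; and, applying Theorem~\ref{thm:generalmollifier}(ix) to the first-order operator $\Symbol{D} \in \Diff_1(\bundle{T},\Hom(\bundle{E},\bundle{F}))$ and to $h \in W^p_{\Symbol{D},\loc}(\bundle{T})$, also $\Symbol{D} h_\veceps \to \Symbol{D} h$ in $L^p_\loc(\Hom(\bundle{E},\bundle{F}))$. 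Multiplying these by the fixed sections $f$ and $Df$ in $L^{p'}_\loc$ and using H\"older on compacta gives $h_\veceps f \to hf$, $(\Symbol{D} h_\veceps) f \to (\Symbol{D} h) f$ and $h_\veceps\, Df \to h\,Df$, all in $L^1_\loc$. Since $L^1_\loc$-convergence implies distributional convergence and $D$ is continuous on distributions, the left-hand side of the displayed identity converges distributionally to $D(hf)$ and the right-hand side to $(\Symbol{D} h) f + h\, Df$; equating the limits finishes this case. If instead $p = \infty$, then $p' = 1 < \infty$, and the same argument applies with the roles of $h$ and $f$ (and of $\Symbol{D}$ and $D$) exchanged: one mollifies $f$, uses the first clause of \eqref{eq:dist_leibniz}, and invokes parts (iv) and (ix) of Theorem~\ref{thm:generalmollifier} with exponent $1$ for $f$ and $Df$, multiplying by the fixed $L^\infty_\loc$-sections $h$ and $\Symbol{D} h$.

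I do not expect a real obstacle here, since all the hard analysis has been absorbed into Theorem~\ref{thm:generalmollifier}: the proof is just ``mollify, apply the elementary Leibniz rule, pass to the limit''. The one step that genuinely requires the full strength of that theorem --- rather than Proposition~\ref{prp:mollifier} alone --- is the assertion $\Symbol{D} h_\veceps \to \Symbol{D} h$ in $L^p_\loc$, i.e.\ that mollification does not damage the $\Symbol{D}$-derivative; this is precisely part~(ix), and it is what lets the term $(\Symbol{D} h_\veceps) f$ be carried to the limit. The only other point needing a moment's care is the endpoint $p = \infty$, where mollifiers fail to converge in $L^\infty_\loc$; but there $p' = 1$ is finite, so one simply mollifies $f$ rather than $h$, and no device such as extracting a subsequence is needed.
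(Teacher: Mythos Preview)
Your proof is correct, but it takes a different route from the paper's. The paper argues by duality: it pairs $D(hf)$ with a test section $\phi \in C^\infty_\comp(\bundle{F})$, rewrites $\llangle hf, D^+\phi\rrangle = \llangle f, \overline{h}\,D^+\phi\rrangle$, applies the distributional Leibniz rule \eqref{eq:dist_leibniz} together with \eqref{eq:dist_tauadj} to compute $D^+(\overline{h}\phi)$, and then invokes the integration-by-parts result Proposition~\ref{prp:parts} to obtain $\llangle f, D^+(\overline{h}\phi) + (\Symbol{D} h)^*\phi\rrangle = \llangle hDf + (\Symbol{D} h)f, \phi\rrangle$. You instead mollify one factor directly, use \eqref{eq:dist_leibniz} with one smooth factor, and pass to the limit via Theorem~\ref{thm:generalmollifier}(iv),(ix). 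The paper's approach is case-free in $p$ and reuses Proposition~\ref{prp:parts} (so the mollifier theorem enters only through that earlier result, and only its parts (iv),(ix) for $D$ itself, not for $\Symbol{D}$); your approach bypasses Proposition~\ref{prp:parts} and \eqref{eq:dist_tauadj} entirely but needs part~(ix) applied to the auxiliary operator $\Symbol{D}$ and a case split at $p=\infty$. Both are perfectly sound.
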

\begin{proof}
By H\"older's inequality, $hf \in L^1_\loc(\bundle{E})$ and $(\Symbol{D} h) f + h D f \in L^1_\loc(\bundle{F})$.
We must show that the $\bundle{F}$-valued distributions $D(hf)$ and $(\Symbol{D} h) f + h D f$ coincide.
In fact, for all $\phi \in C^\infty_\comp(\bundle{F})$,
\[
\llangle D(h f), \phi \rrangle = \llangle h f, D^+ \phi \rrangle = \llangle f, \overline{h} D^+ \phi \rrangle.
\]
Leibniz' rule \eqref{eq:dist_leibniz} for a smooth section $\phi$ and a distribution $\overline{h}$, together with \eqref{eq:dist_tauadj}, leads to the distributional equality
\[
D^+(\overline{h} \phi) =  -(\Symbol{D} h)^* \phi + \overline{h} D^+ \phi.
\]
By the hypotheses, each summand in the right-hand side lies in $L^p_\comp(\bundle{E})$, therefore $D^+(\overline{h} \phi) \in L^p_\comp(\bundle{E})$ too, and
\[
\llangle D(h f), \phi \rrangle = \llangle f, D^+(\overline{h} \phi) + (\Symbol{D} h)^* \phi \rrangle = \llangle h Df + (\Symbol{D} h) f, \phi \rrangle
\]
by Proposition~\ref{prp:parts}, since $\overline{h} \phi$ is compactly-supported and in $L^p_\loc(\bundle{F})$.
\end{proof}

\begin{proposition}[Chain rule]\label{prp:chain}
Suppose that $D \in \Diff_1(\bundle{T}, \bundle{E})$ is homogeneous.
If $1 \leq p < \infty$ and $f \in W^p_{D, \loc}(\bundle{T})$ is real-valued, and the function $g\colon \R \to \C$ is continuously differentiable and $g' $ is bounded,
then $g \circ f \in W^p_{D, \loc}(\bundle{T})$ and
\[
D(g \circ f) = (g' \circ f) \, D f.
\]
\end{proposition}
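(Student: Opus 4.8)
The plan is to prove the chain rule by first establishing it for smooth approximants and then passing to the limit, using the mollifier machinery of Theorem~\ref{thm:generalmollifier} together with the previously established Leibniz' rule. Concretely, since $D$ is homogeneous, $D \in \Diff_1(\bundle{T},\bundle{E})$ acts as a first-order operator with $D1 = 0$, and $\Symbol{D}$ coincides with $D$ on functions; thus Leibniz' rule \eqref{eq:dist_leibniz} becomes $D(uv) = (Du)v + u\,Dv$ for scalar functions. First I would reduce to the case where $f$ is continuous and $Df \in L^p_\loc$ by approximation; more precisely, pick $f_m = J^\vectau_{\veceps_m} f \in C^\infty(\bundle{T})$ with $f_m \to f$ and $Df_m \to Df$ in $L^p_\loc$ (and one may arrange $f_m$ real-valued by the last assertion of Corollary~\ref{cor:bdsmoothapprox}-style reasoning, or directly since mollification preserves real-valuedness).

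Next I would verify the chain rule for the smooth approximants: for $f_m \in C^\infty$, $g \circ f_m$ is $C^1$ (in fact one only needs it to be Lipschitz composed with $C^\infty$), and the classical chain rule in local coordinates gives $D(g \circ f_m) = (g' \circ f_m)\, Df_m$ pointwise, using the coordinate expression $\tau_\alpha(Dh) = \sum_j \mat{a}_j \partial_j(\tau_\alpha h)$ and the ordinary chain rule $\partial_j(g\circ f_m) = (g'\circ f_m)\partial_j f_m$. Then I would pass to the limit: since $g$ is continuously differentiable with $g'$ bounded, $g$ is Lipschitz, so $g \circ f_m \to g \circ f$ in $L^p_\loc$ (dominated convergence, using $|g\circ f_m - g\circ f| \leq \|g'\|_\infty |f_m - f|$); and on the right-hand side, $g' \circ f_m \to g' \circ f$ almost everywhere (along a subsequence where $f_m \to f$ a.e.) and boundedly, while $Df_m \to Df$ in $L^p_\loc$, so $(g'\circ f_m) Df_m \to (g'\circ f) Df$ in $L^p_\loc$ by a standard product-convergence argument (split as $(g'\circ f_m)(Df_m - Df) + ((g'\circ f_m) - (g'\circ f))Df$ and estimate each piece). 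Taking distributional limits on both sides of $D(g\circ f_m) = (g'\circ f_m)Df_m$ yields $D(g\circ f) = (g'\circ f)Df$ as distributions, and since the right-hand side is in $L^p_\loc(\bundle{E})$, this shows $g\circ f \in W^p_{D,\loc}(\bundle{T})$ with the claimed identity.

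The main obstacle is the limiting argument for the product $(g'\circ f_m)\,Df_m$: one needs both almost-everywhere convergence of $g' \circ f_m$ (which requires passing to a subsequence along which $f_m \to f$ pointwise a.e., legitimate since $L^p_\loc$ convergence gives such a subsequence) and the uniform bound $\|g'\|_\infty$ to control the cross terms; the continuity of $g'$ is exactly what converts a.e.\ convergence of $f_m$ into a.e.\ convergence of $g'\circ f_m$. A minor subtlety worth addressing is that $g$ maps into $\C$ while $f$ is real-valued, so $g\circ f$ is a legitimate complex scalar function and the computation takes place in $\bundle{T} = M \times \C$; since $D$ is $\C$-linear this causes no difficulty. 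The restriction $p < \infty$ is used precisely to guarantee the $L^p_\loc$-approximation $f_m \to f$, $Df_m \to Df$ furnished by Theorem~\ref{thm:generalmollifier}(iv) and (ix).
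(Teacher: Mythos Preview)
Your proposal is correct and follows essentially the same route as the paper: approximate $f$ by smooth real-valued $f_m$ via mollification (Corollary~\ref{cor:cssmoothapprox}), apply the classical chain rule to $g\circ f_m$, and pass to the limit using the same splitting $(g'\circ f_m)(Df_m - Df) + (g'\circ f_m)Df$, with pointwise a.e.\ convergence of a subsequence and dominated convergence for the second term. The reference to Leibniz' rule in your opening is a red herring---it is not actually invoked anywhere in the argument---but this does not affect the validity of the proof.
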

\begin{proof}
By Corollary~\ref{cor:cssmoothapprox}, there exists a sequence of $C^\infty_\comp(\bundle{T}_\R)$-sections $f_m$ such that $(f_m,Df_m) \to (f,Df)$ in $L^p_\loc(\bundle{T} \oplus \bundle{E})$; by extracting a subsequence, we may suppose that the convergence is also pointwise almost everywhere.
Now $g$ is Lipschitz and $f_m \to f$ in $L^p_\loc(\bundle{T})$, and so $g \circ f_m \to g \circ f$ in $L^p_\loc(\bundle{T})$.
Moreover, by the chain rule for $C^1$-functions,
\[
D(g \circ f_m) = (g' \circ f_m) \, D f_m = (g' \circ f_m) \, (Df_m - Df) + (g' \circ f_m) \, Df.
\]
Since $\|g' \circ f_m\|_\infty \leq \|g'\|_\infty < \infty$, the first summand converges to $0$ in $L^p_\loc(\bundle{E})$; moreover, since $g'$ is continuous, $g' \circ f_m \to g' \circ f$ pointwise almost everywhere, and therefore the second summand converges to $(g' \circ f) \, Df$ in $L^p_\loc(\bundle{E})$ by the dominated convergence theorem.
Thus $g \circ f_m\to g \circ f$ in $W^p_{D, \loc}(\bundle{T})$, and the conclusion follows.
\end{proof}

\section{Reversible sub-Finsler geometry}\label{section:subfinsler}

Suppose that $P$ is a continuous fibre seminorm on $T^* M$, and $P^*$ is the dual extended fibre norm on $TM$.
Thus
\[
P_x(\xi) = \sup_{\substack{v \in T_x M \\ P^*(v) \leq 1}} |\xi(v)|
\quad\text{and}\quad
P^*_x(v) = \sup_{\substack{\omega \in T^*_x M \\ P(\omega) \leq 1}} |\xi(v)|
\]
by the finite-dimensional Hahn--Banach theorem.

As a function on the tangent bundle, $P^*$ need not be continuous.
However, it may be approximated by continuous fibre norms on $TM$, as we are about to show.

\begin{lemma}\label{lem:riemannianapproximants}
There exists a countable family $\mathfrak{G}_P$ of riemannian metrics on $M$ such that
\begin{align}
P^*(v)
&= \sup_{g \in \mathfrak{G}_P} |v|_g
\qquant v \in TM  \label{eqn:p-star-g} \\
\noalign{\noindent{or, equivalently,}}
P(\xi)
&= \inf_{g \in \mathfrak{G}_P} |\xi|_g
\qquant \xi \in T^*M.   \label{eqn:p-g}
\end{align}
\end{lemma}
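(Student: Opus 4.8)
The plan is to build the family $\mathfrak{G}_P$ locally, using a countable cover by coordinate charts and a partition of unity, and to glue together local riemannian approximants. Because $P$ is a continuous fibre seminorm (hence locally a continuous family of convex, $1$-homogeneous functions on $\C^n$), the key is the elementary fact that a closed convex symmetric body in $\R^n$ (or its polar) can be written as an intersection of a countable family of ellipsoids. Concretely, fix the atlas $(\phi_\alpha)_{\alpha \in \Alpha}$ and partition of unity $(\eta_\alpha)$ from the Notation section; in the chart $U_\alpha$, $P$ pulls back to a continuous seminorm $P_\alpha(x,\xi)$ on $\R^n \times (\R^n)^*$. I would first approximate the unit coball $\{\xi : P_\alpha(x,\xi) \le 1\}$ from outside by ellipsoids $\{\xi : {}^t\xi\, g_{\alpha,k}(x)\, \xi \le 1\}$, where the $g_{\alpha,k}$ are positive-definite symmetric matrices depending continuously on $x$: this is possible because a continuous convex body can be sandwiched between finitely many ellipsoids on each compact set, with error controlled, and one then takes a countable exhaustion. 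Dually this gives $P_\alpha^*(x,v) = \sup_k |v|_{g_{\alpha,k}(x)}$ pointwise, with the sup over a countable set and each summand a genuine (continuous, positive-definite) riemannian metric. The only subtlety is that $P_\alpha$ is merely a seminorm, so the coball is an unbounded cylinder; but the ellipsoids are honest ellipsoids, so $|\cdot|_{g_{\alpha,k}}$ is finite while the sup may be $+\infty$, which is exactly the behaviour of the extended norm $P^*$ — good.

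Next I would globalise. Each $g_{\alpha,k}$ is only defined on $U_\alpha$, so I multiply by the bump function $\eta_\alpha$ (or rather, add a fixed background metric off the support): set $g_{\alpha,k}^{\mathrm{glob}} = \eta_\alpha g_{\alpha,k} + (1-\eta_\alpha) h_0$ for some fixed auxiliary riemannian metric $h_0$ on $M$, or more carefully use the standard trick of taking $\sum_\beta \eta_\beta g_{\beta, k(\beta)}$ over the (locally finite) family to stay positive-definite. One must check that the countable collection $\mathfrak{G}_P = \{g_{\alpha,k}^{\mathrm{glob}} : \alpha \in \Alpha, k \in \N\}$ still satisfies \eqref{eqn:p-star-g}: for $v \in T_xM$, only the $\alpha$ with $\eta_\alpha(x) > 0$ contribute near $x$, and on such a chart $P^*(v) = P_\alpha^*(x,v) = \sup_k |v|_{g_{\alpha,k}(x)}$, while the added terms $(1-\eta_\alpha)h_0$ only shrink the local metric, so $|v|_{g^{\mathrm{glob}}_{\alpha,k}} \le |v|_{g_{\alpha,k}}$; the reverse inequality $P^*(v) \le \sup_{g \in \mathfrak{G}_P}|v|_g$ needs that for each $x$ one can recover the full local supremum, which follows because near a fixed $x$ one can choose a chart with $\eta_\alpha \equiv 1$ in a neighbourhood, or absorb the $1-\eta_\alpha$ correction into a limiting argument. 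The equivalence of \eqref{eqn:p-star-g} and \eqref{eqn:p-g} is then pure finite-dimensional convex duality: taking polars interchanges $\sup$ of ellipsoids with $\inf$ of the dual ellipsoids, and $P^{**} = P$ by the Hahn--Banach statement recalled just before the lemma.

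The step I expect to be the main obstacle is the \emph{continuity} of the approximating metrics $g_{\alpha,k}$ in $x$, together with the countability of the index set: one needs a construction of outer-approximating ellipsoids for a continuously varying convex body that is itself continuous and uses only countably many ellipsoids overall (not a separate uncountable family at each point). The clean way is to parametrise ellipsoids by a countable dense set of "directions and radii" in the space of positive-semidefinite quadratic forms — e.g.\ for each rational covector $\xi_0$ and rational $\delta > 0$, take the metric whose unit ball is the thinnest slab-like ellipsoid containing $\{P_\alpha(x,\cdot) \le 1\}$ and nearly tangent to it in the direction $\xi_0$, with the width in the remaining directions chosen as a continuous function of $x$ (e.g.\ via an explicit formula involving $P_\alpha(x,\xi_0)$ and a large but finite regularising constant). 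Then continuity in $x$ is inherited from continuity of $P_\alpha$, and exhausting over rational $(\xi_0,\delta)$ gives a countable family whose pointwise supremum of induced norms equals $P_\alpha^*$, because the polar body is the intersection of all supporting half-spaces and these rational slabs are dense among them. Once this local lemma is in hand, the gluing and the duality are routine.
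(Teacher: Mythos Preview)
Your proposal is essentially the paper's proof: slab-like ellipsoids indexed by a countable dense set of (cotangent) directions $\omega$, with width governed by $P(\omega(x))$ (hence continuous, and smoothable, in $x$), then glued to a background metric $g$ satisfying $P \leq |\cdot|_g$ via bump functions $\zeta_\alpha$ equal to $1$ on the $V_\alpha$. One slip to correct: on $T^*M$ the ellipsoids must lie \emph{inside} the (possibly unbounded) $P$-coball, not outside as you wrote --- a bounded ellipsoid cannot contain an unbounded cylinder; the correct picture is cigar-shaped ellipsoids inside the coball on $T^*M$ (dually, slabs outside the $P^*$-ball on $TM$), and the gluing should use bump functions identically $1$ on the $V_\alpha$ rather than the partition $\eta_\alpha$, so that the local metric is exactly recovered at each point.
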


\begin{proof}
Recall that a riemannian metric on $M$ is given by a smooth fibre inner product on $TM$, or, by duality, by a smooth fibre inner product on $T^* M$.
From a geometric point of view, proving \eqref{eqn:p-star-g} amounts to realising the closed unit ball of $P^*$ at a point $x \in M$ (which is convex but may have no interior) as the intersection of the closed unit balls of the metrics $g$ in $\mathfrak{G}_P$, which are ellipsoids, and proving \eqref{eqn:p-g} amounts to realising the open unit ball of $P$ at a point $x \in M$ (which is convex, but may be unbounded) as the union of the open unit balls of the metrics $g$, which are also ellipsoids.
In general, this may require an infinite number of ellipsoids, as we may see by considering the problem of realising a square as an intersection or union of $g$ balls. We consider the cotangent space problem only.

It is easy to show that a riemannian metric that satisfies
\begin{equation}\label{eqn:good-riemannian}
 P(\xi) \leq |\xi|_g
\qquant \xi \in T^* M
\end{equation}
exists.
Indeed, if $g$ is a riemannian metric on $M$, then the function
\[
x \mapsto \sup_{\substack{\xi \in T^*_x M \\ |\xi|_g \leq 1}} P(\xi)
\]
is locally finite, therefore it is majorised by a strictly positive function $\psi \in C^\infty(\bundle{T})$, and one simply needs to rescale $g$ by $\psi^2$.

Take a riemannian metric $g$ on $M$ satisfying \eqref{eqn:good-riemannian} and the countable atlas $(\phi_\alpha)_{\alpha \in \Alpha}$.
Recall that each $\phi_\alpha$ maps $U_\alpha$ to $\R^n$, that $V_\alpha = \phi_\alpha^{-1}(B_{\R^n}(0,1))$, and that $M = \bigcup_{\alpha \in \Alpha} V_\alpha$.
Each subbundle $T^*U_\alpha$ of $T^* M$ is trivialisable.

Fix $\alpha \in \Alpha $, and choose a bump function $\zeta_\alpha$ with compact support in $U_\alpha$ that is equal to $1$ on $V_\alpha$ and a countable set $\mathcal{Y}_\alpha$ of smooth sections of $T^*U_\alpha$ such that
\begin{equation}\label{eq:omegadensity}
\afterline{ \{ \omega(x) \tc \omega \in \mathcal{Y}_\alpha \} } =   \{ \xi \in T^*_x M \tc |\xi|_g = 1 \}
\end{equation}
for all $x \in U_\alpha$.
To do this, it is sufficient to consider constant sections taking values in a countable dense subset of the unit sphere with respect to a trivialisation of $T^*U_\alpha$ given by a $g$-orthonormal frame.

Next, fix $\omega \in \mathcal{Y}_\alpha$.
Since $P(\omega)$ is a continuous nonnegative function on $U_\alpha$, there is a sequence of smooth functions $\psi_{\omega,k}: U_\alpha \to \R$ such that
\[
P(\omega) + 2^{-k} \leq \psi_{\omega,k} \leq P(\omega) + 2^{1-k}.
\]
We now define, for all $k \in \N$, a smooth inner product $(\cdot,\cdot)_{\alpha,\omega,k}$ and associated norm $|\cdot|_{\alpha,\omega,k}$ along the fibres of $T^* U_\alpha$ by
\begin{align*}
&\shiftleft (\xi_1,\xi_2)_{\alpha,\omega,k} \\
&= \psi_{\omega,k}(x)^2 \bigl( \langle \pi(\xi_1), \pi(\xi_2) \rangle_g + 2^{2k}  \langle \xi_1 - \pi(\xi_1), \xi_2 - \pi(\xi_2) \rangle_g  \bigr) \\
&= \psi_{\omega,k}(x)^2 \bigl( \langle \xi_1, \omega(x) \rangle_g\, \langle \xi_2, \omega(x) \rangle_g \\
&\qquad+ 2^{2k} (\langle \xi_1,\xi_2 \rangle_g - \langle \xi_1, \omega(x) \rangle_g\, \langle \xi_2, \omega(x) \rangle_g )\bigr)
\end{align*}
for all $\xi_1, \xi_2 \in T^*_x M$ and $x \in U_\alpha$, where $\pi(\xi)$ is the projection of $\xi$ in $T_xM$ onto $\R\omega(x)$, that is, $\pi(\xi) = \langle \xi,\omega(x) \rangle_g\,  \omega(x)$.
Now
\[
|\pi( \xi)|_g \leq  \frac{ |\xi |_{\alpha,\omega,k} }{\psi_{\omega,k}(x)}
\quad\text{and}\quad
|\xi - \pi(\xi)|_g  \leq  \frac{ |\xi |_{\alpha,\omega,k} }{2^k \psi_{\omega,k}(x)}
\]
for all $\xi \in T^*_x M$ and $x \in U_\alpha$,
and so, from \eqref{eqn:good-riemannian},
\[
\begin{aligned}
P(\xi)
&\leq P( \pi( \xi) ) + P(\xi - \pi(\xi)) \\
&\leq |\langle \xi,\omega(x) \rangle_g| \, P(\omega(x)) + |\xi - \pi(\xi)|_g \\
&\leq \frac{ |\xi |_{\alpha,\omega,k}} {\psi_{\omega,k}(x)} \bigl( P(\omega(x)) + 2^{-k} \bigr) \\
&\leq |\xi |_{\alpha,\omega,k}.
\end{aligned}
\]
Hence
\begin{equation}\label{eqn:P-estimate-1}
P(\xi) \leq \inf_{\substack{k\in \N \\ \omega \in \mathcal{Y}_\alpha}}  |\xi|_{\alpha,\omega,k} .
\end{equation}
Moreover, for all $x \in U_\alpha$, from the definitions of $\psi_{\omega,k}$ and $(\cdot,\cdot)_{\alpha,\omega,k}$,
\[
 P(\omega(x)) + 2^{1-k} \geq \psi_{\omega,k}(x) = |\omega(x)|_{\alpha,\omega,k}.
\]
More generally, for all $\xi \in T_x^* U_\alpha$ such that $|\xi|_g = 1$ and all $k \in \N$, we may choose $\omega \in \mathcal{Y}_\alpha$ such that $|\xi - \omega(x)|_g \leq 2^{-2k}$, by \eqref{eq:omegadensity}. Then by construction, $|\xi-\omega(x)|_{\alpha,\omega,k} \leq 3 \times 2^{-2k}$ and $P(\xi - \omega(x)) \leq 2^{-2k}$, hence
\[
P(\xi) \geq |\xi|_{\alpha,\omega,k} - 2^{1-k} - 3\times 2^{-k} - 2^{-2k}
\]
and the reverse of inequality \eqref{eqn:P-estimate-1} follows. Putting everything together, we deduce that
\[
P(\xi ) = \inf_{\substack{k\in \N \\ \omega \in \mathcal{Y}_\alpha}}  |\xi|_{\alpha,\omega,k}
\]
for all $x \in U_\alpha$ and $\xi \in T^*_xM$.

For each $\alpha \in \Alpha$, choose a bump function $\zeta_\alpha$ with compact support in $U_\alpha$ that is equal to $1$ on $V_\alpha$, and define a riemannian metric $g_{\alpha,\omega,k}$ on $M$ by setting
\[
\langle \xi, \xi \rangle_{g_{\alpha,\omega,k}}
= \zeta_\alpha(x) \, (\xi,\xi)_{\alpha,\omega,k} + (1-\zeta_\alpha(x)) \, \langle \xi, \xi \rangle_g
\]
for all $x \in M$ and $\xi \in T^*_x M$; the first summand is defined to vanish whenever $x \notin U_\alpha$.
Then clearly $P(\xi) \leq |\xi|_{g_{\alpha,\omega,k}}$ for all $\xi \in T^* M$.
Moreover, if $x \in M$, then $x \in V_\alpha$ for some $\alpha$, therefore $|\xi|_{g_{\alpha,\omega,k}} = |\xi|_{\alpha,\omega,k}$ for all $\xi \in T^*_x M$.
We now set
\[
\mathfrak{G}_P = \{ g_{\alpha,\omega,k} \tc \omega \in \mathcal{Y}_\alpha, k \in \N,  \alpha \in \Alpha \},
\]
and the desired conclusion follows.
\end{proof}

Define the finite subspace of $P^*$ in $TM$ and the zero subspace of $P$ in $T^*M$ by
\[
F(P^*_x) = \{ v \in T_x M \tc P^*(v) < \infty\}
\quad\text{and}\quad
Z(P_x) = \{ \xi \in T_x^* P \tc P(\xi) = 0 \}.
\]
Then $F(P^*_x)$ is the annihilator of $Z(P_x)$, so $\dim F(P^*_x) = \codim Z(P_x)$, and the function $x \mapsto \dim F(P^*_x)$ is lower-semicontinuous.
When this function is continuous, that is, when it is locally constant, $P^*$ has additional continuity properties.
We define $F(P^*) = \bigcup_{x \in M} F(P^*_x)$.

\begin{proposition}\label{prp:constantdimension}
Suppose that $x \mapsto \dim F(P^*_x)$ is continuous.
Then $F(P^*)$ is closed in $TM$, and $P^*$ restricted to $F(P^*)$ is continuous.
\end{proposition}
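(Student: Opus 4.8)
The plan is to deduce from the constant-dimension hypothesis that the subspace field $x \mapsto Z(P_x)$ — and hence $x \mapsto F(P^*_x)$, its annihilator — varies continuously, so that $F(P^*)$ is a topological vector subbundle of $TM$, and then to exhibit $P^*|_{F(P^*)}$ as the fibrewise dual of a continuous fibre norm. First, continuity of $P$ makes $Z(P) := \bigcup_x Z(P_x)$ closed in $T^*M$: if $\xi_m \in Z(P_{x_m})$ and $\xi_m \to \xi$ in $T^*M$, then $P(\xi) = \lim_m P(\xi_m) = 0$. Fix $x_0 \in M$ and set $k = \dim Z(P_{x_0})$; the hypothesis means that $\dim F(P^*_x)$, and hence $\dim Z(P_x) = n - \dim F(P^*_x)$, is locally constant, so $\dim Z(P_x) = k$ near $x_0$. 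In a local trivialisation the bundle $x \mapsto \mathrm{Gr}_k(T^*_xM)$ becomes $U \times \mathrm{Gr}_k(\R^n)$ with compact fibre, so any sequence $x_m \to x_0$ has a subsequence along which $Z(P_{x_m})$ converges in the Grassmannian to some $k$-plane $L \subseteq T^*_{x_0}M$; since each $\xi \in L$ is a limit of vectors $\xi_m \in Z(P_{x_m})$ (via orthogonal projections in the chart), closedness of $Z(P)$ gives $L \subseteq Z(P_{x_0})$, and $\dim L = k = \dim Z(P_{x_0})$ forces $L = Z(P_{x_0})$. As every subsequence has a further subsequence with this same limit, $Z(P_{x_m}) \to Z(P_{x_0})$, so $x \mapsto Z(P_x)$ is continuous.

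Taking annihilators is a homeomorphism $\mathrm{Gr}_k(T^*_xM) \to \mathrm{Gr}_{n-k}(T_xM)$ depending continuously on $x$, so $x \mapsto F(P^*_x)$ is continuous; equivalently $F(P^*)$ is a continuous vector subbundle of $TM$, and in particular closed in $TM$, which is the first assertion. For the second, fix a riemannian metric $g$ on $M$. By the previous paragraph the $g$-orthogonal projection onto $Z(P_x)$ depends continuously on $x$, so $N := \bigcup_x Z(P_x)^{\perp_g}$ is a continuous subbundle of $T^*M$ on which $P$ restricts to a genuine continuous fibre norm, since $Z(P_x) \cap Z(P_x)^{\perp_g} = 0$. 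For $v \in F(P^*_x) = Z(P_x)^\circ$, decomposing $\xi \in T^*_xM$ as $\xi_N + \xi_Z$ with $\xi_N \in Z(P_x)^{\perp_g}$ and $\xi_Z \in Z(P_x)$ gives $\xi(v) = \xi_N(v)$ and $P(\xi) = P(\xi_N)$ (the latter because $\xi_Z \in \ker P_x$), so
\[
P^*_x(v) = \sup_{\substack{\xi_N \in Z(P_x)^{\perp_g} \\ P(\xi_N) \leq 1}} |\xi_N(v)| .
\]
Thus, under the isomorphism $F(P^*_x) \xrightarrow{\sim} (Z(P_x)^{\perp_g})^*$ given by restriction of functionals — an isomorphism because $Z(P_x)^\circ \cap (Z(P_x)^{\perp_g})^\circ = (Z(P_x) + Z(P_x)^{\perp_g})^\circ = 0$ and the dimensions match, and one depending continuously on $x$ — the norm $P^*|_{F(P^*)}$ corresponds to the fibrewise dual of the continuous fibre norm $P|_N$.

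Finally one invokes the elementary fact that the fibrewise dual of a continuous fibre norm is continuous: in a local trivialisation $N \cong U \times \R^m$ the dual norm is $(x,\phi) \mapsto \sup_{|u|=1} |\phi(u)|/P_x(u)$, a supremum over the compact unit sphere of functions jointly continuous in $(x,\phi,u)$, since $u \mapsto P_x(u)$ is continuous and, locally uniformly in $x$, bounded below by a positive constant; hence it is continuous, and therefore so is $P^*|_{F(P^*)}$. The crux of the argument is the first paragraph: extracting genuine continuity of $x \mapsto Z(P_x)$ from the one-sided information that $Z(P)$ is closed, using precisely that the fibre dimension does not drop. Without that hypothesis this step breaks down — and, as the counterexamples later in the paper show, $P^*$ really can fail to be continuous on $F(P^*)$. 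One could instead try to argue from $P^* = \sup_{g \in \mathfrak{G}_P} |\cdot|_g$ (Lemma~\ref{lem:riemannianapproximants}), but controlling that supremum over infinitely many metrics seems less convenient than the quotient/dual description above.
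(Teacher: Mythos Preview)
Your proof is correct and follows essentially the same route as the paper: both establish continuity of $x \mapsto Z(P_x)$ via closedness of $Z(P)$ together with compactness of the Grassmannian fibre under the constant-dimension hypothesis, deduce that $F(P^*)$ is a continuous subbundle, and then realise $P^*|_{F(P^*)}$ as the fibrewise dual of a continuous fibre norm. The only cosmetic differences are that the paper works with the quotient $E^* = T^*M/Z(P)$ via the transpose of the inclusion $E = F(P^*) \hookrightarrow TM$ rather than the $g$-orthogonal complement $N = Z(P)^{\perp_g}$, and argues continuity of the dual norm by a multiplicative $(1+\epsilon)$-sandwich rather than your ``supremum over a compact sphere of jointly continuous functions''; these are equivalent realisations of the same idea.
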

\begin{proof}
Without loss of generality, we suppose that $M$ is connected, so the function $x \mapsto \dim F(P^*_x)$ is constant, that is, $\codim F(P^*_x) = k$ for some $k$ and all $x \in M$.

Write $G$ for the $k$th grassmannian bundle over $T^* M$, so $G_x$ is the set of $k$-dimensional subspaces of $T^*_x M$, and define $X = \{S \in G \tc P|_S = 0\}$.
Then $X$ is closed in $G$, because $P$ is continuous, and $X \cap G_x = \{Z(P_x)\}$ for all $x \in M$.
Since $G$ has compact fibres, $X$ is the image of a continuous section of $G$, and this section may be lifted locally to a continuous section of the frame bundle of $T^* M$.
Thus there is a continuous local frame $\{\omega_1,\dots,\omega_n\}$ for $T^* M$ in the neighbourhood of each point of $M$ such that $Z(P_x) = \Span \{ \omega_1|_x, \dots, \omega_k|_x\}$, whence
\[
F(P^*_x) = \ker \omega_1|_x \cap \dots \cap \ker \omega_k|_x = \Span \{ \omega_{k+1}^*|_x, \dots, \omega_n^*|_x \},
 \]
where $\{\omega_1^*,\dots,\omega_n^*\}$ is the dual local frame for $TM$.
This proves that $F(P^*)$ is closed in $TM$, and determines a continuous subbundle $E$ of $TM$.

Denote by $\iota^*\colon T^* M \to E^*$ the pointwise transpose of the inclusion map $\iota\colon E \to TM$.
Then $Z(P_x) = \ker \iota^*|_{T^*_x M}$, hence $P$ induces a continuous fibre norm $Q$ on $E^*$ such that $Q \circ \iota^* = P$.
It is then easily checked that the restriction of $P^*$ to $E$ is the dual norm of $Q$ pointwise.

By the use of local trivialisations of $E^*$, we may find, for all $x \in M$, a neighbourhood $U$ of $x$ and linear isomorphisms $t_y\colon E^*_x \to E^*_y$ for all $y \in U$ such that the mapping $(y,\xi) \mapsto t_y(\xi)$ is continuous from $U \times E^*_x$ to $E^*$.
The continuity of $Q$ and the compactness of the unit sphere of $Q_x$ in $E^*_x$ then imply that, for all positive $\epsilon$, there is a neighbourhood $V$ of $x$ in $M$ such that, for all $y \in V$,
\[
(1+\epsilon)^{-1} Q_y \circ t_y \leq Q_x \leq (1+\epsilon) Q_y \circ t_y
\]
and correspondingly
\[
(1+\epsilon)^{-1} P^*_y|_{F(P^*_y)} \leq P^*_x|_{F(P^*_x)} \circ t^*_y \leq (1+\epsilon) P^*_y|_{F(P^*_y)}.
\]
This proves the continuity of $P^*|_{F(P^*)}$.
\end{proof}

\begin{definition}
A tangent vector $v \in T_x M$ is said to be $P$-subunit if $P^*(v) \leq 1$.
\end{definition}

\begin{definition}\label{def:controldistance}
We write $\Gamma^k([a,b])$ for the set of all curves $\gamma\colon [a,b] \to M$ of class $C^k$; here $k$ may be $\infty$.
A curve $\gamma\colon [a,b] \to M$ is said to be $P$-subunit if it is absolutely continuous and $\gamma'(t)$ is $P$-subunit for almost all $t \in [a,b]$.
We write $\Gamma_P^\subunit(I)$ for the set of all $P$-subunit curves defined on the interval $I$, and $\Gamma_P^\subunit$ for the set of all $P$-subunit curves when the interval of definition may vary.
We write $\dist_P$ for the distance function induced by $P$, that is,
$\dist_P(x,y)$ is the infimum of the set of all $T \in \R^+$ for which there exists $\gamma \in \Gamma_P^\subunit( [0,T] )$ such that $\gamma(0) = x$ and $\gamma(T) = y$.
\end{definition}

The infimum need not be attained: for instance, in $\R^2\setminus \{(0,0)\}$ with the euclidean metric, there is no minimising curve joining $(-1,0)$ and $(1,0)$.

Absolute continuity may be defined in various equivalent ways: here is one.

\begin{definition}\label{def:absolutecontinuous}
The curve $\gamma\colon [a,b] \to M$ is said to be \emph{absolutely continuous} if $\phi_\alpha \circ \gamma$ is locally absolutely continuous for each $\phi_\alpha$ in the atlas $\Alpha$. 
We write $AC([a,b])$ for the set of all absolutely curves on the interval $[a,b]$.
\end{definition}

Suppose that $P$ is a norm induced by a riemannian metric $g$ on $M$ and $\dist_g$ is the distance function induced by $g$.
If $\gamma \in\Gamma_P^\subunit( [a,b] )$, then
\begin{equation}\label{eq:riemannianlipschitzcurve}
\dist_g(\gamma(s),\gamma(t)) \leq |s-t|
\qquant s,t \in [a,b].
\end{equation}
Conversely, a curve $\gamma\colon [a,b] \to M$ that satisfies \eqref{eq:riemannianlipschitzcurve} is $P$-subunit: the derivative $\gamma'(t)$ may be computed in exponential coordinates centred at $\gamma(t)$, and the difference quotient is controlled by the Lipschitz constant.

Thanks to Lemma~\ref{lem:riemannianapproximants}, a similar result may be proved for an arbitrary fibre seminorm $P$.
We use the family $\mathfrak{G}_P$ of riemannian metrics defined in Lemma~\ref{lem:riemannianapproximants}.

\begin{proposition}\label{prp:subfinslermetric}
The function $\dist_P$ is an extended distance function on $M$,
\begin{equation}\label{eq:riemanniandistancecomparison}
\dist_g(x,y) \leq \dist_P(x,y)
\end{equation}
for all $x,y \in M$ and $g \in \mathfrak{G}_P$,
and the topology induced by $\dist_P$ is at least as fine as the manifold topology.
Further, for a function $\gamma\colon [a,b] \to M$, the following conditions are equivalent:
\begin{enumerate}[(i)]
\item $\gamma$ is a $P$-subunit curve;
\item $\dist_P(\gamma(s),\gamma(t)) \leq |s-t|$ for all $s,t \in [a,b]$;
\item $\dist_g(\gamma(s),\gamma(t)) \leq |s-t|$ for all $s,t \in [a,b]$ and all $g \in \mathfrak{G}_P$.
\end{enumerate}
\end{proposition}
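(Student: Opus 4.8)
The plan is to establish the inequality \eqref{eq:riemanniandistancecomparison} first, since both the non-degeneracy of $\dist_P$ and the comparison of topologies follow from it at once, and then to prove the equivalence of (i), (ii), (iii) by running through the cycle (i) $\Rightarrow$ (ii) $\Rightarrow$ (iii) $\Rightarrow$ (i). The verification that $\dist_P$ is an extended distance function splits into the standard checks: the constant curve gives $\dist_P(x,x)=0$; reversing a $P$-subunit curve $\gamma$ on $[a,b]$ (that is, passing to $t \mapsto \gamma(a+b-t)$) produces a $P$-subunit curve, because a seminorm and hence its dual extended norm $P^*$ is symmetric, so $\dist_P(x,y)=\dist_P(y,x)$; and the concatenation of two $P$-subunit curves (after an affine shift of the parameter of the second) is an absolutely continuous curve which is $P$-subunit away from the junction point, whence the triangle inequality. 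Non-degeneracy is deferred until after \eqref{eq:riemanniandistancecomparison} is available.

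For \eqref{eq:riemanniandistancecomparison} I would invoke Lemma~\ref{lem:riemannianapproximants}: \eqref{eqn:p-star-g} gives $|v|_g \le P^*(v)$ for every $v \in TM$ and $g \in \mathfrak{G}_P$, so every $P$-subunit curve is $g$-subunit, and \eqref{eq:riemannianlipschitzcurve} then yields $\dist_g(\gamma(s),\gamma(t)) \le |s-t|$ for such a curve; taking the infimum over $P$-subunit curves joining $x$ to $y$ gives $\dist_g(x,y) \le \dist_P(x,y)$. Since $\mathfrak{G}_P$ is nonempty and each $g \in \mathfrak{G}_P$ induces a genuine distance function that induces the manifold topology, \eqref{eq:riemanniandistancecomparison} shows both that $\dist_P(x,y)=0$ forces $\dist_g(x,y)=0$ and hence $x=y$ (completing the proof that $\dist_P$ is an extended distance function), and that $B_{\dist_P}(x,\epsilon) \subseteq B_{\dist_g}(x,\epsilon)$ for each such $g$. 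As the $\dist_g$-balls form a neighbourhood basis of the manifold topology, every manifold-open set is $\dist_P$-open, so the $\dist_P$-topology is at least as fine as the manifold topology.

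For the equivalences: (i) $\Rightarrow$ (ii) is obtained by restricting a $P$-subunit curve on $[a,b]$ to the subinterval with endpoints $s$ and $t$ (and reversing it if $t<s$), which is a $P$-subunit curve from $\gamma(s)$ to $\gamma(t)$ defined on an interval of length $|s-t|$, so $\dist_P(\gamma(s),\gamma(t)) \le |s-t|$. The implication (ii) $\Rightarrow$ (iii) is immediate from \eqref{eq:riemanniandistancecomparison}. For (iii) $\Rightarrow$ (i), fix one $g_0 \in \mathfrak{G}_P$: condition (iii) for $g_0$ is exactly \eqref{eq:riemannianlipschitzcurve} for $g_0$, so by the converse remark following that display, $\gamma$ is $g_0$-subunit, and in particular absolutely continuous in the sense of Definition~\ref{def:absolutecontinuous}, so that $\gamma'(t)$ exists for almost every $t$. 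Applying the same remark to each $g \in \mathfrak{G}_P$ gives $|\gamma'(t)|_g \le 1$ for $t$ outside a null set $N_g$; since $\mathfrak{G}_P$ is countable, $N = \bigcup_{g \in \mathfrak{G}_P} N_g$ is still null, and for $t \notin N$ we get $P^*(\gamma'(t)) = \sup_{g \in \mathfrak{G}_P} |\gamma'(t)|_g \le 1$ by \eqref{eqn:p-star-g}, so $\gamma$ is $P$-subunit.

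I expect the only real obstacle to be the last implication (iii) $\Rightarrow$ (i): one must exchange the quantifiers in ``for every $g$, for almost every $t$, $|\gamma'(t)|_g \le 1$'' to obtain ``for almost every $t$, for every $g$, $|\gamma'(t)|_g \le 1$'', and this is legitimate precisely because $\mathfrak{G}_P$ may be taken countable — which is exactly the point of stating Lemma~\ref{lem:riemannianapproximants} with a countable family of riemannian metrics. Everything else reduces to routine manipulations of subunit curves together with the inequality \eqref{eq:riemanniandistancecomparison}.
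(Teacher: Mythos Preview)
Your proposal is correct and follows essentially the same route as the paper's own proof: establish \eqref{eq:riemanniandistancecomparison} from Lemma~\ref{lem:riemannianapproximants} (since $|v|_g\le P^*(v)$ for every $g\in\mathfrak{G}_P$), deduce non-degeneracy of $\dist_P$ and the topology comparison from it, and then run the cycle (i)$\Rightarrow$(ii)$\Rightarrow$(iii)$\Rightarrow$(i), with the key step (iii)$\Rightarrow$(i) relying on the countability of $\mathfrak{G}_P$ to swap the quantifiers on $g$ and $t$. Your write-up is slightly more explicit than the paper's (you spell out symmetry, the triangle inequality, and the absolute-continuity step via the converse of \eqref{eq:riemannianlipschitzcurve}), but the argument is the same in substance.
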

\begin{proof}
For a riemannian metric $g \in \mathfrak{G}_P$, inequality \eqref{eq:riemanniandistancecomparison} follows easily from the fact that, for each $\gamma \in \Gamma_P^\subunit( [0,T] )$ joining $x$ to $y$, the $g$-norm of $\gamma'(t)$ is at most $1$ for almost all $t$, so
\[
\dist_g(x,y) \leq \int_0^T |\gamma'(t)|_g \,{\D}t \leq T.
\]

From \eqref{eq:riemanniandistancecomparison}, if $\dist_P(x,y) = 0$, then $\dist_g(x,y) = 0$ and hence $x = y$; it follows immediately that $\dist_P$ satisfies the other axioms for an (extended) distance function.
Moreover, again by \eqref{eq:riemanniandistancecomparison}, the topology induced by $\dist_P$ is no coarser than the topology induced by $\dist_g$, that is, the original topology of $M$.

Further, for a function $\gamma\colon [a,b] \to M$, condition (i) implies condition (ii) by the definition of $\dist_P$, while condition (ii) implies condition (iii) by  \eqref{eq:riemanniandistancecomparison}.
Finally, if condition (iii) holds, then $\gamma$ is absolutely continuous and, for all $g \in \mathfrak{G}_P$, $|\gamma'(t)|_g \leq 1$ for almost all $t \in [a,b]$.
As $\mathfrak{G}_P$ is countable, we may reverse the order of the quantifiers on $g$ and $t$, and deduce from Lemma~\ref{lem:riemannianapproximants} that $P^*(\gamma'(t)) \leq 1$ for almost all $t \in [a,b]$, which is condition (i).
\end{proof}

\subsection{Topologies on $M$}

By Proposition~\ref{prp:subfinslermetric}, the topology induced by $\dist_P$ is no coarser than the original manifold topology of $M$; recall (from Definition \ref{def:varietal}) that $\dist_P$ is varietal if the two topologies are equivalent.
In general, the topology induced by $\dist_P$ may be finer than the original manifold topology of $M$.
Unless otherwise specified, we do not assume that $\dist_P$ is varietal, and topological concepts such as compactness and convergence refer to the original topology of $M$.

\begin{lemma}\label{lem:subunitlimit}
Suppose that a sequence of curves $\gamma_m \in \Gamma_P^\subunit([a,b])$ converges pointwise to a curve $\gamma\colon [a,b] \to M$.
Then $\gamma \in \Gamma_P^\subunit([a,b])$.
\end{lemma}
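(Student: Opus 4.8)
The goal is to show that the set of $P$-subunit curves on a fixed interval $[a,b]$ is closed under pointwise convergence. The plan is to exploit the characterisation of $P$-subunit curves given in Proposition~\ref{prp:subfinslermetric}, specifically the equivalence of conditions (i) and (iii): a curve is $P$-subunit if and only if $\dist_g(\gamma(s),\gamma(t)) \leq |s-t|$ for all $s,t \in [a,b]$ and all $g \in \mathfrak{G}_P$. This turns a statement involving almost-everywhere differentiability into a pointwise Lipschitz-type estimate, which is manifestly stable under pointwise limits.

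First I would fix $g \in \mathfrak{G}_P$ and $s,t \in [a,b]$. Since each $\gamma_m$ is $P$-subunit, Proposition~\ref{prp:subfinslermetric} (in the form (i) $\Rightarrow$ (iii)) gives $\dist_g(\gamma_m(s),\gamma_m(t)) \leq |s-t|$. Now $\gamma_m(s) \to \gamma(s)$ and $\gamma_m(t) \to \gamma(t)$ in the manifold topology; since $g$ is a genuine riemannian metric, $\dist_g$ is varietal (it induces the manifold topology), so $\dist_g$ is continuous on $M \times M$ with respect to the manifold topology. Passing to the limit in the inequality yields $\dist_g(\gamma(s),\gamma(t)) \leq |s-t|$. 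As $g \in \mathfrak{G}_P$ and $s,t \in [a,b]$ were arbitrary, the limit curve $\gamma$ satisfies condition (iii) of Proposition~\ref{prp:subfinslermetric}, hence condition (i), i.e.\ $\gamma \in \Gamma_P^\subunit([a,b])$.

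The main point to be careful about is the continuity of $\dist_g$ in the manifold topology. Since $g$ is an honest riemannian metric on $M$, the induced length metric $\dist_g$ is finite-valued and metrises the manifold topology (this is standard and is implicitly used throughout this section, e.g.\ in the proof of Proposition~\ref{prp:subfinslermetric} where $\dist_g$ is referred to as ``the original topology of $M$''); in particular $\dist_g\colon M \times M \to [0,\infty)$ is continuous. Strictly speaking one only needs sequential continuity at the limit points $(\gamma(s),\gamma(t))$, which follows at once from the triangle inequality: $|\dist_g(\gamma_m(s),\gamma_m(t)) - \dist_g(\gamma(s),\gamma(t))| \leq \dist_g(\gamma_m(s),\gamma(s)) + \dist_g(\gamma_m(t),\gamma(t)) \to 0$.

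I do not expect any serious obstacle here; the work has all been done in Proposition~\ref{prp:subfinslermetric}, which converts the ``almost everywhere'' condition defining subunit curves into the stable pointwise condition (iii). The only thing to resist is the temptation to argue directly with absolute continuity and weak derivatives of the $\gamma_m$ — that route would require some uniform-integrability or equicontinuity input (a pointwise limit of absolutely continuous functions need not be absolutely continuous in general), whereas the metric characterisation sidesteps this entirely because the uniform Lipschitz bound (constant $1$) is built into the notion of subunit curve and is preserved in the limit.
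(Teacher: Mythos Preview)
Your proposal is correct and is exactly the paper's approach: the paper's proof is a single sentence invoking Proposition~\ref{prp:subfinslermetric}~(iii), noting that this characterisation is preserved under pointwise convergence. Your additional remarks on the continuity of $\dist_g$ and on why one should avoid arguing directly with absolute continuity are apt elaborations of what the paper leaves implicit.
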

\begin{proof}
The characterisation of $P$-subunit curves in Proposition~\ref{prp:subfinslermetric} (iii) is preserved by pointwise convergence.
\end{proof}

Recall that $\bar{B}_P(K,R)$ denotes  $\{ x \in M \tc \dist_P(K,x) \leq R\}$.

\begin{proposition}\label{prp:minimizingsubunit}
Suppose that $K$ is a compact subset of $M$ and $R \in \R^+$.
If ${\bar{B}_P(K,R)}$ has compact closure, then it is compact and coincides with the set of all $x \in M$ for which there exists $\gamma \in\Gamma_P^\subunit( [0,R])$ such that $\gamma(0) \in K$ and $\gamma(R) = x$.
Moreover $\bar{B}_P(K,R')$ is also compact for some $R'$ greater than $R$.
\end{proposition}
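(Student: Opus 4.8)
The plan is to establish all three assertions at once via an Arzel\`a--Ascoli argument built on the family $\mathfrak{G}_P$ of riemannian metrics from Lemma~\ref{lem:riemannianapproximants}. Write $L = \afterline{\bar{B}_P(K,R)}$, which is compact by hypothesis, fix any $g_0 \in \mathfrak{G}_P$, and recall that $\dist_{g_0}$ induces the manifold topology and that $\dist_{g_0} \leq \dist_P$ by \eqref{eq:riemanniandistancecomparison}; thus $(L,\dist_{g_0})$ is a compact metric space. Denote by $A$ the set of all $x \in M$ that are the endpoint of some $P$-subunit curve defined on $[0,R]$ and starting in $K$. Since $\dist_P(\gamma(0),\gamma(R)) \leq R$ for any such curve, we have $A \subseteq \bar{B}_P(K,R) \subseteq L$; hence it is enough to prove the reverse inclusion $L \subseteq A$, which then forces $A = \bar{B}_P(K,R) = L$, so that $\bar{B}_P(K,R)$ is compact and equals $A$.

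To prove $L \subseteq A$, fix $x \in L$ and choose a sequence $x_m$ in $\bar{B}_P(K,R)$ converging to $x$ in the manifold topology. For each $m$, since $\dist_P(K,x_m) \leq R$ there are $y_m \in K$, a time $T_m < R + 1/m$, and a $P$-subunit curve $\gamma_m\colon[0,T_m]\to M$ from $y_m$ to $x_m$; I truncate or extend it by a constant, setting $\gamma_m^0(t) = \gamma_m(\min(t,T_m))$ for $t \in [0,R]$. Then $\gamma_m^0$ is $P$-subunit, $\gamma_m^0(0) = y_m \in K$, every value $\gamma_m^0(t)$ lies in $\bar{B}_P(K,R) \subseteq L$, and $\dist_{g_0}(\gamma_m^0(R),x_m) \leq \dist_P(\gamma_m^0(R),x_m) \leq \max(0,T_m-R) \leq 1/m$. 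By Proposition~\ref{prp:subfinslermetric} (iii) the $\gamma_m^0$ are $1$-Lipschitz for $\dist_{g_0}$, hence equicontinuous maps into the compact metric space $(L,\dist_{g_0})$, so Arzel\`a--Ascoli yields a subsequence converging uniformly, in particular pointwise in the manifold topology, to a curve $\gamma^0\colon[0,R]\to L$. Lemma~\ref{lem:subunitlimit} gives that $\gamma^0$ is $P$-subunit; $\gamma^0(0) \in K$ because $K$ is closed; and $\gamma^0(R) = x$, since $\dist_{g_0}(\gamma_m^0(R),x) \leq \dist_{g_0}(\gamma_m^0(R),x_m) + \dist_{g_0}(x_m,x) \to 0$. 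Hence $x \in A$.

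For the last assertion, we may now use that $\bar{B}_P(K,R)$ is compact. By local compactness of $M$, choose a relatively compact open set $W \supseteq \bar{B}_P(K,R)$; since $\bar{B}_P(K,R)$ is compact and $M \setminus W$ is closed and disjoint from it, $\dist_{g_0}(\bar{B}_P(K,R), M\setminus W) > 0$, and I let $\delta$ be half of this. Truncating at time $R$ a near-minimising $P$-subunit curve from $K$ to a point of $\bar{B}_P(K,R+\delta)$ shows that $\bar{B}_P(K,R+\delta) \subseteq \bar{B}_P(\bar{B}_P(K,R),\delta)$; since $\dist_{g_0} \leq \dist_P$, this last set is contained in $\bar{B}_{g_0}(\bar{B}_P(K,R),\delta)$, which by the choice of $\delta$ is contained in $W$. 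Thus $\bar{B}_P(K,R+\delta)$ has compact closure, and applying the first assertion with $R$ replaced by $R+\delta$ shows $\bar{B}_P(K,R+\delta)$ is compact; take $R' = R+\delta$.

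The heart of the proof, and the main obstacle, is the inclusion $L \subseteq A$. The two delicate points are that the parametrisation of the $\gamma_m$ must be adjusted so as to absorb the length slack $1/m$ while keeping the curves equi-Lipschitz, and that, since $\dist_P$ need not be continuous for the manifold topology, the convergence of the endpoints $\gamma_m^0(R)$ to $x$ has to be routed through the riemannian approximant $\dist_{g_0}$ rather than argued with $\dist_P$ directly.
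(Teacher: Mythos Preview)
Your proof is correct. The strategy differs from the paper's in one notable way: where the paper \emph{rescales} the approximating curves $\gamma_m\colon[0,T_m]\to M$ linearly onto $[0,R]$, you \emph{truncate and extend by a constant}. The paper's rescaled curves are only $(1+\epsilon_m)$-Lipschitz with respect to each $\dist_g$, so after passing to the limit the paper must run a separate ``for all $\delta>0$, the rescaling on $[0,(1+\delta)R]$ is eventually $P$-subunit'' argument to squeeze $P^*(\gamma'(t))$ down to $1$. Your truncated curves remain genuinely $P$-subunit throughout, so Lemma~\ref{lem:subunitlimit} applies directly to the limit with no further work; the price is that the endpoint $\gamma_m^0(R)$ no longer hits $x_m$ exactly, but you correctly absorb the $O(1/m)$ discrepancy via $\dist_{g_0}$. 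You also fold the paper's two passes (first $x\in\bar B_P(K,R)$, then $x$ in the closure) into a single argument by working with $L$ from the start. For the final assertion both proofs are essentially the same: contain $\bar B_P(K,R+\delta)$ in a $\dist_g$-thickening of $\bar B_P(K,R)$ that sits inside a prescribed compact neighbourhood.
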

\begin{proof}
Take a riemannian metric $g \in \mathfrak{G}_P$.
If $\bar{B}_P(K,R)$ is relatively compact, then $\bar{B}_{g} (\bar{B}_P(K, R), R\epsilon)$ is compact for sufficiently small positive $\epsilon$.

Take $x \in \bar{B}_P(K,R)$.
We may then find $\gamma_m \in \Gamma_P^\subunit( [0,R(1+\epsilon_m)] )$ such that $\gamma_m(0) \in K$ and $\gamma_m((1+\epsilon_m)R) = x$, where $0 \leq \epsilon_m \leq \epsilon$ and $\epsilon_m \downarrow 0$.
Hence the images of the $\gamma_m$ are all contained in $\bar{B}_{g} (\bar{B}_P(K, R), R\epsilon)$.
If we rescale these curves so that they are all defined on $[0,R]$, we obtain a sequence of curves $\tilde\gamma_m\colon [0,R] \to M$ which are $(1+\epsilon)$-Lipschitz with respect to $\dist_g$, and whose images are all contained in $\bar{B}_{g} (\bar{B}_P(K, R), R\epsilon)$.
By the Arzel\`a--Ascoli theorem, after taking a subsequence, the $\tilde\gamma_m$ converge uniformly to a curve $\gamma\colon [0,R] \to M$ for which $\gamma(0) \in K$ and $\gamma(R) = x$.

Now, for all positive $\delta$, the rescalings of the curves $\tilde\gamma_m$ on $[0,(1+\delta) R]$ are eventually $P$-subunit (since $\epsilon_m \to 0$), therefore their limit, that is, the rescaling of $\gamma$ on $[0,(1+\delta) R]$, is also $P$-subunit, by Lemma~\ref{lem:subunitlimit}.
In other words, $P^*(\gamma'(t)) \leq 1+\delta$ for all positive $\delta$ and almost all $t \in [0,R]$.
It follows by exchanging quantifiers that $\gamma$ is $P$-subunit.

Finally, take $x$ in the closure of $\bar{B}_P(K,R)$.
Then there is a sequence of $P$-subunit curves $\gamma_m\colon [0,R] \to M$ such that $\gamma_m(0) \in K$ and $\gamma_m(R) \to x$.
As before, we may extract a subsequence that converges uniformly to a $P$-subunit curve $\gamma\colon [0,R] \to M$ such that $\gamma(0) \in K$ and $\gamma(R) = x$, and therefore $x \in \bar{B}_P(K,R)$.
This shows that $\bar{B}_P(K,R)$ is closed, hence compact.
The same argument proves that $\bar{B}_P(K,R(1+\epsilon))$ is compact too, since it is contained in the compact set $\bar{B}_{g} (\bar{B}_P(K, R), R\epsilon)$.
\end{proof}

If $\dist_P$ is varietal, then the proof of Proposition~\ref{prp:minimizingsubunit} may be simplified.

\begin{definition}\label{eqn:def-R_P(K)}
For a compact subset $K$ of $M$, we define
\begin{equation}
R_P(K) = \sup \{ R \in \R^+ \tc \bar{B}_P(K,R) \in \mathfrak{K}(M) \}.
\end{equation}
For a point $x$ in $M$, we write $R_P(x)$ instead of  $R_P(\{x\})$.
\end{definition}
By Proposition~\ref{prp:minimizingsubunit}, the supremum is never a maximum and is always strictly positive.

\subsection{Distance, rectifiability and length}

The previous characterisation of $P$-subunit curves shows that $(M,\dist_P)$ is an (extended) length space, in the sense of Gromov (see, for instance, \cite{roe_lectures_2003}).

\begin{definition}\label{def:length}
Suppose that $\gamma\colon \leftclosedint a,b\rightclosedint \to M$ is a continuous curve. 
The $P$-length of $\gamma$, written $\ell_P(\gamma)$, is defined to be
\begin{equation}
 \sup \left\{ \sum_{j=1}^m \dist_P(\gamma(t_{j-1}),\gamma(t_j)) \tc m \in \N, \, a = t_0 \leq \dots \leq t_m = b \right\}.
\end{equation}
\end{definition}

To help state the next results, we define $\Gamma_P([a,b])$ and $\Gamma([a,b])$ to be the sets of all $\dist_P$-continuous and all continuous curves $\gamma\colon [a,b] \to M$.

\begin{proposition}\label{prp:lengthspace}
For all $x,y \in M$, the distance $\dist_P(x,y)$ is equal to 
\begin{equation}\label{eq:lengthspace}
 \inf \left\{ \ell_P(\gamma) \tc  \gamma \in \Gamma_P( [a,b] ), \ \gamma(a) = x, \ \gamma(b) = y \right\}.
\end{equation}
\end{proposition}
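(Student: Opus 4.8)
The plan is to prove the two inequalities between $\dist_P(x,y)$ and the infimum in \eqref{eq:lengthspace} separately; both follow quickly from the triangle inequality for $\dist_P$ and from the characterisation of $P$-subunit curves obtained in Proposition~\ref{prp:subfinslermetric}.

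For the inequality $\dist_P(x,y)\le\inf\{\ell_P(\gamma):\dots\}$, I would take an arbitrary curve $\gamma\in\Gamma_P([a,b])$ with $\gamma(a)=x$ and $\gamma(b)=y$ and simply note that the trivial partition $a=t_0\le t_1=b$ is admissible in the definition of $\ell_P$, so that $\dist_P(x,y)=\dist_P(\gamma(a),\gamma(b))\le\ell_P(\gamma)$ (more generally, for any partition the triangle inequality gives $\dist_P(x,y)\le\sum_j\dist_P(\gamma(t_{j-1}),\gamma(t_j))$). Taking the infimum over all such $\gamma$ yields the claimed inequality; and if $\dist_P(x,y)=\infty$ this same estimate forces $\ell_P(\gamma)=\infty$ for every admissible $\gamma$, so the infimum is $\infty$ as well, irrespective of whether any admissible curve exists.

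For the reverse inequality I may assume $\dist_P(x,y)<\infty$. Given $\epsilon\in\R^+$, the definition of $\dist_P$ (Definition~\ref{def:controldistance}) provides a $P$-subunit curve $\gamma\in\Gamma_P^\subunit([0,T])$ with $\gamma(0)=x$, $\gamma(T)=y$ and $T\le\dist_P(x,y)+\epsilon$. By Proposition~\ref{prp:subfinslermetric} (equivalence of conditions (i) and (ii)), $\dist_P(\gamma(s),\gamma(t))\le|s-t|$ for all $s,t\in[0,T]$; in particular $\gamma$ is $\dist_P$-continuous, hence $\gamma\in\Gamma_P([0,T])$, and for every partition $0=t_0\le\dots\le t_m=T$ one has $\sum_{j=1}^m\dist_P(\gamma(t_{j-1}),\gamma(t_j))\le\sum_{j=1}^m(t_j-t_{j-1})=T$, so that $\ell_P(\gamma)\le T\le\dist_P(x,y)+\epsilon$. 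Thus the infimum in \eqref{eq:lengthspace} is at most $\dist_P(x,y)+\epsilon$, and letting $\epsilon\to\zeroplus$ finishes the proof.

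The argument is essentially formal once Proposition~\ref{prp:subfinslermetric} is available: there is no genuine obstacle, the only points meriting attention being the observation that the description of $P$-subunit curves as the $1$-Lipschitz maps for $\dist_P$ is exactly what lets one pass back and forth between minimising subunit curves and finite-$P$-length curves, and the bookkeeping in the degenerate case $\dist_P(x,y)=\infty$, where both sides equal $\infty$.
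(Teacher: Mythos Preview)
Your proof is correct and follows essentially the same approach as the paper: one inequality comes from the trivial partition, giving $\ell_P(\gamma)\ge\dist_P(\gamma(a),\gamma(b))$, and the other from the fact (Proposition~\ref{prp:subfinslermetric}) that a $P$-subunit curve on $[0,T]$ is $\dist_P$-continuous with $\ell_P(\gamma)\le T$. The paper's proof is a bit more terse and does not separate out the case $\dist_P(x,y)=\infty$, but the content is the same.
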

\begin{proof}
Write $\tilde \dist_P(x,y)$ for the expression \eqref{eq:lengthspace}.
On the one hand, $\tilde \dist_P$ is an extended distance function and $\tilde \dist_P \geq \dist_P$ since $\ell_P(\gamma) \geq \dist_P(\gamma(a),\gamma(b))$ for all $\gamma \in \Gamma_P( [a,b] )$.
On the other hand, if $\gamma \in \Gamma_P^\subunit( [0,T] )$, then $\gamma \in \Gamma_P( [a,b] )$ and $\ell_P(\gamma) \leq T$, by Proposition~\ref{prp:subfinslermetric}, and the reverse inequality $\tilde \dist_P \leq \dist_P$ follows.
\end{proof}

Next we show that the expression \eqref{eq:lengthspace} does not change if we require only that the curves $\gamma$ are continuous with respect to the manifold topology.

\begin{proposition}\label{prp:finitelengthimpliesPcts}
If $\gamma \in \Gamma( \leftclosedint a,b\rightclosedint )$ and $\ell_P(\gamma) < \infty$, then $\gamma \in \Gamma_P( \leftclosedint a,b\rightclosedint )$, and the topology on $\gamma(\leftclosedint a,b\rightclosedint)$ induced by $\dist_P$ and the relative topology coincide.
\end{proposition}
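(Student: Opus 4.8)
The plan is to prove first that $\gamma$ is continuous for $\dist_P$, after which the coincidence of the two topologies follows from a soft compactness argument; the content lies in the first half, since both assertions are trivial when $\dist_P$ is varietal. If $\gamma$ is constant there is nothing to prove. Otherwise, fix any $g_1\in\mathfrak{G}_P$; since $\gamma$ is non-constant and $\dist_{g_1}$ is a genuine metric, its $\dist_{g_1}$-length $\ell_{g_1}(\gamma)$ (defined as in Definition~\ref{def:length}, with $\dist_{g_1}$ in place of $\dist_P$) satisfies $0<\ell_{g_1}(\gamma)\le\ell_P(\gamma)<\infty$, the middle inequality coming from \eqref{eq:riemanniandistancecomparison}. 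Because $\dist_{g_1}$ is varietal, $\gamma$ is a rectifiable curve in the metric space $(M,\dist_{g_1})$, so the classical continuity of the arc-length function makes $s_1(t):=\ell_{g_1}(\gamma|_{[a,t]})$ continuous, non-decreasing and onto $[0,L_1]$, with $L_1:=\ell_{g_1}(\gamma)$. Reparametrising, I obtain $\bar\gamma\colon[0,L_1]\to M$ with $\bar\gamma\circ s_1=\gamma$ (well defined, since $\ell_{g_1}(\gamma|_{[s,t]})=0$ forces $\gamma(s)=\gamma(t)$), which is $1$-Lipschitz for $\dist_{g_1}$ because $\dist_{g_1}(\gamma(s),\gamma(t))\le\ell_{g_1}(\gamma|_{[s,t]})=|s_1(s)-s_1(t)|$; in particular $\bar\gamma$ is differentiable almost everywhere in each chart, and $\ell_P(\bar\gamma)=\ell_P(\gamma)<\infty$, the $P$-length being invariant under continuous non-decreasing surjective reparametrisation.

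The heart of the argument is to control the $\dist_P$-speed of $\bar\gamma$. At a point $\rho$ where $\bar\gamma'(\rho)$ exists, the first-order expansion of $\dist_g$ in $g$-normal coordinates gives $\dist_g(\bar\gamma(\rho),\bar\gamma(\rho+h))/|h|\to|\bar\gamma'(\rho)|_g$ as $h\to0$, for every $g\in\mathfrak{G}_P$; since $\dist_g\le\dist_P$ and $P^*(v)=\sup_{g\in\mathfrak{G}_P}|v|_g$ by \eqref{eqn:p-star-g}, it follows that $\liminf_{h\to0}\dist_P(\bar\gamma(\rho),\bar\gamma(\rho+h))/|h|\ge P^*(\bar\gamma'(\rho))$ for almost every $\rho$. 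Writing $\tilde s(\sigma):=\ell_P(\bar\gamma|_{[0,\sigma]})$, which is non-decreasing with $\tilde s(L_1)=\ell_P(\bar\gamma)<\infty$, the bound $\tilde s(\rho+h)-\tilde s(\rho)\ge\dist_P(\bar\gamma(\rho),\bar\gamma(\rho+h))$ gives $\tilde s'(\rho)\ge P^*(\bar\gamma'(\rho))$ almost everywhere, whence $\int_0^{L_1}P^*(\bar\gamma'(\rho))\,{\D}\rho\le\ell_P(\bar\gamma)<\infty$, so $P^*\circ\bar\gamma'\in L^1([0,L_1])$. Conversely, for $0\le\sigma_1\le\sigma_2\le L_1$ and $\delta>0$, the substitution $\phi(\rho)=\int_{\sigma_1}^\rho(P^*(\bar\gamma'(r))+\delta)\,{\D}r$ is absolutely continuous and strictly increasing, with Lipschitz inverse $\psi$, and $\bar\gamma\circ\psi$ is — by the chain rule under this substitution — a $P$-subunit curve joining $\bar\gamma(\sigma_1)$ to $\bar\gamma(\sigma_2)$ on an interval of length $\int_{\sigma_1}^{\sigma_2}P^*(\bar\gamma')+\delta(\sigma_2-\sigma_1)$. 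Letting $\delta\to0$ yields $\dist_P(\bar\gamma(\sigma_1),\bar\gamma(\sigma_2))\le\int_{\sigma_1}^{\sigma_2}P^*(\bar\gamma'(\rho))\,{\D}\rho$, whose right-hand side tends to $0$ as $\sigma_2-\sigma_1\to0$; hence $\bar\gamma$, and therefore $\gamma=\bar\gamma\circ s_1$, is continuous for $\dist_P$, that is, $\gamma\in\Gamma_P([a,b])$.

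For the coincidence of topologies, note that $\gamma\colon[a,b]\to(M,\dist_P)$ is now continuous, so $\gamma([a,b])$ is compact in the $\dist_P$-topology; it is Hausdorff in the manifold topology, being a subspace of $M$. By Proposition~\ref{prp:subfinslermetric} the $\dist_P$-topology is at least as fine as the manifold topology, so the identity map of $\gamma([a,b])$ from its $\dist_P$-subspace topology onto its manifold subspace topology is a continuous bijection from a compact space onto a Hausdorff space, hence a homeomorphism, and the two topologies agree.

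The hard part is the first half. One cannot invoke the classical continuity of the arc-length function directly for $\dist_P$, since that statement already presupposes continuity in the metric under consideration — which is exactly what is to be proved, and $\dist_P$ need not be varietal. Passing to the Riemannian reparametrisation $\bar\gamma$, which \emph{is} Lipschitz (hence almost everywhere differentiable) for some $g_1\in\mathfrak{G}_P$, reduces the problem to the behaviour of $\bar\gamma'$, and Lemma~\ref{lem:riemannianapproximants} is precisely what ties $P^*(\bar\gamma')$ to the $\dist_P$-increments of $\bar\gamma$, forcing $P^*\circ\bar\gamma'$ to be integrable and then supplying a continuous modulus of $\dist_P$-continuity through an honest $P$-subunit reparametrisation. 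The routine measure-theoretic points — well-definedness of $\bar\gamma$, the value of $\tilde s'$, and the validity of the chain rule under the substitution $\psi$ — are suppressed in this sketch.
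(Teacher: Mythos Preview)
Your argument is correct, but it takes a markedly different route from the paper's proof.

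The paper gives a short, purely metric argument. First, it uses Proposition~\ref{prp:minimizingsubunit} to note that $R_P(\gamma([a,b]))>0$, so closed $\dist_P$-balls of small enough radius centred on the image are closed in the manifold topology. Then, for fixed $\bar t$, the bound $\ell_P(\gamma)<\infty$ forces the $\dist_P$-oscillation of $\gamma$ on one-sided neighbourhoods of $\bar t$ to shrink to~$0$ (otherwise one could extract infinitely many disjoint jumps of fixed positive $\dist_P$-size, contradicting finite length). Thus for every $\delta\in(0,R_P(\gamma([a,b])))$ there is $\epsilon>0$ with $\gamma((\bar t,\bar t+\epsilon))\subseteq\bar B_P(\gamma(t),\delta)$ for each $t$ in that interval; since this ball is closed in~$M$, the manifold-continuity of $\gamma$ places $\gamma(\bar t)$ in it as well, giving $\dist_P(\gamma(\bar t),\gamma(t))\le\delta$. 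The final topological step is the same as yours.

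Your approach instead reparametrises by $g_1$-arclength to obtain a Lipschitz $\bar\gamma$, then uses Lemma~\ref{lem:riemannianapproximants} and the monotone function $\tilde s$ to show $P^*(\bar\gamma')\in L^1$, and finally reparametrises again to exhibit a $P$-subunit curve between nearby points. This is essentially the content of (and the method behind) Proposition~\ref{prp:lengthintegral}, which in the paper comes \emph{after} the present proposition. What you gain is a stronger conclusion---absolute $\dist_P$-continuity of $\bar\gamma$ with an explicit modulus---at the cost of a longer argument and some measure-theoretic bookkeeping (the chain rule under the substitution~$\psi$; this can in fact be bypassed by checking $1$-Lipschitzness of $\bar\gamma\circ\psi$ for every $g\in\mathfrak G_P$ directly, via \eqref{eq:riemannianlengthintegral}, and invoking Proposition~\ref{prp:subfinslermetric}(iii)). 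The paper's proof is shorter and needs no differentiation, relying instead on the compactness information in Proposition~\ref{prp:minimizingsubunit}; yours avoids that proposition but anticipates the later integral formula.
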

\begin{proof}
Since $\gamma$ is continuous, $\gamma(\leftclosedint a,b \rightclosedint)$ is compact, hence $R_P(\gamma(\leftclosedint 0,L \rightclosedint)) > 0$ by Proposition~\ref{prp:minimizingsubunit}.

Fix now $\bar{t} \in \leftclosedint a,b\rightopenint$.
First, since $\ell_P(\gamma) < \infty$,
\[
\inf_{\epsilon > 0} \sup_{t,t' \in \leftopenint \bar{t}, \bar{t}+\epsilon \rightopenint} \dist_P(\gamma(t),\gamma(t')) = 0;
\]
in fact, if the infimum $\eta$ were positive, then we could find a decreasing sequence $(t_m)_{m\in\N}$ tending to $\bar{t}$ such that $\dist_P(\gamma(t_{2k+1}),\gamma(t_{2k})) \geq \eta/2$, and deduce that
\[
\ell_P(\gamma) \geq \sum_{k=0}^{j-1} \dist_P(\gamma(t_{2k+1}),\gamma(t_{2k})) \geq j \frac{\eta}{2}
\]
for all $j \in \N$, which is a contradiction.

Therefore, for every positive $\delta$, there is a positive $\epsilon$ such that
\[
\gamma(\leftopenint \bar{t},\bar{t}+\epsilon\rightopenint) \subseteq \bar{B}_P(\gamma(t),\delta)
\]
for all $t \in \leftopenint \bar{t},\bar{t}+\epsilon\rightopenint$.
If $\delta < R_P(\gamma(\leftclosedint 0,L \rightclosedint))$, then $\bar{B}_P(\gamma(t),\delta)$ is closed, hence
\[
\gamma(\bar{t}) \in \afterline{\gamma(\leftopenint \bar{t},\bar{t}+\epsilon\rightopenint)} \subseteq \bar{B}_P(\gamma(t),\delta)
\]
by the continuity of $\gamma$, which means that $\dist_P(\gamma(\bar{t}),\gamma(t)) \leq \delta$.

This proves that $\lim_{t \to \bar{t}+} \dist_P(\gamma(t),\gamma(\bar{t})) = 0$.
The proof when $\bar{t} \in \leftopenint a,b\rightclosedint$ and $t \to \bar{t}-$ is similar.
To conclude, recall that a continuous map from a compact space to a Hausdorff space is closed; hence every topology on $M$ that makes $\gamma$ continuous induces the quotient topology induced by $\gamma$ on $\gamma(\leftclosedint a,b \rightclosedint)$.
\end{proof}

\begin{corollary}\label{corol:lengthspacesmooth}
For all $x,y \in M$, the distance $\dist_P(x,y)$ is equal to 
\begin{equation*}
 \inf \left\{ \ell_P(\gamma) \tc  \gamma \in \Gamma( [a,b] ), \ \gamma(a) = x, \ \gamma(b) = y \right\}.
\end{equation*}
\end{corollary}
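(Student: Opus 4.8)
The plan is to deduce this directly from Proposition~\ref{prp:lengthspace} and Proposition~\ref{prp:finitelengthimpliesPcts}, since the only difference between the corollary and Proposition~\ref{prp:lengthspace} is whether the competing curves are required to be continuous for $\dist_P$ or merely for the manifold topology. Write $\delta(x,y)$ for the infimum appearing in the statement of the corollary.

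First I would note that $\Gamma_P([a,b]) \subseteq \Gamma([a,b])$: by Proposition~\ref{prp:subfinslermetric} the topology induced by $\dist_P$ is at least as fine as the manifold topology, so every $\dist_P$-continuous curve is a fortiori continuous for the manifold topology. Hence the infimum $\delta(x,y)$ is taken over a family of curves at least as large as the one in Proposition~\ref{prp:lengthspace}, whence $\delta(x,y) \leq \dist_P(x,y)$ for all $x,y \in M$.

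For the reverse inequality I would take an arbitrary $\gamma \in \Gamma([a,b])$ with $\gamma(a) = x$ and $\gamma(b) = y$ and show that $\ell_P(\gamma) \geq \dist_P(x,y)$. If $\ell_P(\gamma) = \infty$ this is trivial; otherwise $\ell_P(\gamma) < \infty$, so Proposition~\ref{prp:finitelengthimpliesPcts} gives $\gamma \in \Gamma_P([a,b])$, and then, choosing in Definition~\ref{def:length} the trivial partition $a = t_0 \leq t_1 = b$, we get $\ell_P(\gamma) \geq \dist_P(\gamma(a),\gamma(b)) = \dist_P(x,y)$. Taking the infimum over all such $\gamma$ yields $\delta(x,y) \geq \dist_P(x,y)$, and combining the two inequalities proves the corollary.

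There is essentially no obstacle here: the substantive work has already been carried out in Propositions~\ref{prp:lengthspace} and~\ref{prp:finitelengthimpliesPcts}, and the only point requiring a moment's attention is the inclusion $\Gamma_P([a,b]) \subseteq \Gamma([a,b])$, which is immediate from the comparison of topologies. One could alternatively invoke Proposition~\ref{prp:lengthspace} once $\gamma$ has been shown to lie in $\Gamma_P([a,b])$, but the elementary bound $\ell_P(\gamma) \geq \dist_P(\gamma(a),\gamma(b))$ suffices and keeps the argument self-contained.
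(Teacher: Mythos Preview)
Your proposal is correct and follows exactly the approach indicated by the paper, which simply states that the result follows immediately from Propositions~\ref{prp:lengthspace} and~\ref{prp:finitelengthimpliesPcts}. You have spelled out the two easy inequalities implicit in that remark.
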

\begin{proof}
This follows immediately from Propositions \ref{prp:lengthspace} and \ref{prp:finitelengthimpliesPcts}.
\end{proof}

We may express the length of an absolutely continuous curve as an integral.

\begin{proposition}\label{prp:lengthintegral}
Suppose that $\gamma \in AC([a,b])$.
Then
\begin{equation}\label{eq:lengthintegral}
\ell_P(\gamma) = \int_a^b P^*(\gamma'(t)) \, {\D}t.
\end{equation}
If $\ell_P(\gamma) < \infty$, then $\gamma$ is also $\dist_P$-absolutely continuous and
\begin{equation}\label{eq:metricderivative}
P^*(\gamma'(t)) = \lim_{s \to t} \frac{\dist_P(\gamma(s),\gamma(t))}{|s-t|}
\end{equation}
for almost all $t \in [a,b]$.
\end{proposition}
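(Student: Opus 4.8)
The plan is to establish the two inequalities between $\ell_P(\gamma)$ and $\int_a^b P^*(\gamma'(t))\,{\D}t$ separately, and then to read off the $\dist_P$-absolute continuity of $\gamma$ and the metric derivative formula \eqref{eq:metricderivative} from a comparison of the length function of $\gamma$ with that integral. Two preliminary observations will be used throughout. First, by Lemma~\ref{lem:riemannianapproximants} the extended norm $P^*$ is a supremum of the continuous functions $|\cdot|_g$, $g\in\mathfrak{G}_P$, hence lower-semicontinuous on $TM$; thus $t\mapsto P^*(\gamma'(t))$ is measurable and the right-hand side of \eqref{eq:lengthintegral} is meaningful, and moreover $|v|_g\le P^*(v)$ for each $g$ and each $v$, so that $\dist_g\le\dist_P$ as in \eqref{eq:riemanniandistancecomparison}. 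Second, $P^*_x$ is a genuine (extended) \emph{norm}: if $P^*_x(v)=0$ then $\xi(v)=0$ for every $\xi\in T^*_x M$ (rescaling $\xi$ according to whether $P_x(\xi)$ vanishes), so $v=0$.

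For the bound $\ell_P(\gamma)\le\int_a^b P^*(\gamma'(t))\,{\D}t$ I may assume the right-hand side $L$ is finite, and I set $\phi(t)=\int_a^t P^*(\gamma'(r))\,{\D}r$, a continuous nondecreasing surjection of $[a,b]$ onto $[0,L]$. Since $\gamma'$ vanishes almost everywhere on any interval on which $\phi$ is constant, and $\gamma$ is absolutely continuous, $\gamma$ is constant there, so there is a well-defined curve $\tilde\gamma\colon[0,L]\to M$ with $\tilde\gamma\circ\phi=\gamma$. For each $g\in\mathfrak{G}_P$ one has $|\gamma'(r)|_g\le P^*(\gamma'(r))=\phi'(r)$ almost everywhere, so the $g$-arclength of $\gamma|_{[r,r']}$ is at most $\phi(r')-\phi(r)$, whence $\dist_g(\gamma(r),\gamma(r'))\le\phi(r')-\phi(r)$; transporting this through $\phi$ gives $\dist_g(\tilde\gamma(u),\tilde\gamma(v))\le|u-v|$ for all $u,v\in[0,L]$ and all $g$. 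By the characterisation of subunit curves in Proposition~\ref{prp:subfinslermetric} (condition (iii) implies condition (i)), $\tilde\gamma$ is $P$-subunit, so each restriction $\tilde\gamma|_{[\phi(s),\phi(t)]}$ witnesses $\dist_P(\gamma(s),\gamma(t))\le\phi(t)-\phi(s)$; summing over a partition of $[a,b]$ and taking the supremum yields $\ell_P(\gamma)\le L$.

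For the reverse bound I may assume $\ell_P(\gamma)<\infty$, the case $\ell_P(\gamma)=\infty$ being covered by the inequality just proved. Then $\Phi(t):=\ell_P(\gamma|_{[a,t]})$ is finite, nondecreasing and additive under concatenation, hence differentiable almost everywhere. At any $t$ where both $\Phi$ and $\gamma$ are differentiable, and for any $g\in\mathfrak{G}_P$, the chain $\Phi(t+h)-\Phi(t)=\ell_P(\gamma|_{[t,t+h]})\ge\dist_P(\gamma(t),\gamma(t+h))\ge\dist_g(\gamma(t),\gamma(t+h))$, combined with the classical Riemannian expansion $\dist_g(\gamma(t),\gamma(t+h))=|h|\,|\gamma'(t)|_g+o(h)$ (compute in exponential coordinates at $\gamma(t)$), gives $\Phi'(t)\ge|\gamma'(t)|_g$; taking the supremum over $g$ yields $\Phi'(t)\ge P^*(\gamma'(t))$ almost everywhere. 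Hence $\int_a^b P^*(\gamma'(t))\,{\D}t\le\int_a^b\Phi'(t)\,{\D}t\le\Phi(b)-\Phi(a)=\ell_P(\gamma)$, which with the previous paragraph proves \eqref{eq:lengthintegral}; since all these inequalities are now equalities, $\int_a^b\Phi'=\Phi(b)-\Phi(a)$, so $\Phi$ is absolutely continuous with $\Phi'=P^*(\gamma')$ almost everywhere.

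Finally, still assuming $\ell_P(\gamma)<\infty$: for disjoint subintervals $(a_i,b_i)$ of $[a,b]$ we have $\sum_i\dist_P(\gamma(a_i),\gamma(b_i))\le\sum_i(\Phi(b_i)-\Phi(a_i))=\sum_i\int_{a_i}^{b_i}\Phi'$, which is small whenever $\sum_i(b_i-a_i)$ is small because $\Phi'\in L^1$, so $\gamma$ is $\dist_P$-absolutely continuous. At a Lebesgue point $t$ of $\Phi'$ at which $\gamma$ is differentiable, the estimate $\dist_P(\gamma(s),\gamma(t))\le|\Phi(t)-\Phi(s)|=\bigl|\int_s^t\Phi'\bigr|$ gives $\limsup_{s\to t}\dist_P(\gamma(s),\gamma(t))/|s-t|\le\Phi'(t)=P^*(\gamma'(t))$, while $\dist_P\ge\dist_g$ and the same Riemannian expansion give $\liminf_{s\to t}\dist_P(\gamma(s),\gamma(t))/|s-t|\ge|\gamma'(t)|_g$ for every $g$, hence $\ge P^*(\gamma'(t))$; thus the limit in \eqref{eq:metricderivative} exists and equals $P^*(\gamma'(t))$. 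I expect the delicate step to be the upper bound: passing through the countable family $\mathfrak{G}_P$ and Proposition~\ref{prp:subfinslermetric}, rather than differentiating the arclength reparametrisation $\tilde\gamma$ directly, is precisely what lets me avoid a change-of-variables lemma for absolutely continuous reparametrisations, and one must also keep in mind that it is the norm property of $P^*$ (and not merely the seminorm property) that makes $\tilde\gamma$ well defined.
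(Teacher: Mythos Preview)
Your proof is correct and follows essentially the same route as the paper: both use the arc-length function $\phi(t)=\int_a^t P^*(\gamma')$ together with the Riemannian approximants of Lemma~\ref{lem:riemannianapproximants} and Proposition~\ref{prp:subfinslermetric} to build a $P$-subunit reparametrisation $\tilde\gamma$ and obtain $\ell_P(\gamma)\le\int P^*(\gamma')$, and both use the monotone length function $\Phi(t)=\ell_P(\gamma|_{[a,t]})$ and the lower bound via $\dist_g$ to get the reverse inequality. The only noteworthy difference is that where the paper concludes \eqref{eq:metricderivative} by invoking the general theory of absolutely continuous curves in metric spaces, you carry out the argument by hand: you deduce from the chain of equalities that $\Phi$ is absolutely continuous with $\Phi'=P^*(\gamma')$ a.e., then bound the $\limsup$ by $\Phi'(t)$ and the $\liminf$ by $\sup_g|\gamma'(t)|_g=P^*(\gamma'(t))$. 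This makes your write-up self-contained, at the cost of a little more length; your explicit remark that the norm (not seminorm) property of $P^*$ is what makes $\tilde\gamma$ well defined is also a nice touch that the paper leaves implicit by routing the same step through $\dist_g$.
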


We remark that, in the general theory of absolutely continuous curves in metric spaces (see, for example, \cite[Section 4.1]{ambrosio_topics_2004} or \cite[Section 1.1]{ambrosio_gradient_2008}), the right-hand side of \eqref{eq:metricderivative} is known as the \emph{metric derivative} of $\gamma$.

\begin{proof}
Note first that the corresponding statement for a riemannian metric $g$ on $M$ is easily proved.
To do so, define $\ell_g$ like $\ell_P$ in Definition \ref{def:length}, but with $\dist_P$ replaced by $\dist_g$.
By using exponential coordinates centered at $\gamma(t)$, one sees that
\begin{equation}\label{eq:riemannianmetricderivative}
|\gamma'(t)|_g = \lim_{s \to t} \frac{\dist_g(\gamma(s),\gamma(t))}{|s-t|}
\end{equation}
for all points $t$ in $[a,b]$ at which $\gamma$ is differentiable, and it follows from the theory of absolutely continuous curves in metric spaces that
\begin{equation}\label{eq:riemannianlengthintegral}
\ell_{g}(\gamma|_{\leftclosedint t_1,t_2\rightclosedint}) = \int_{t_1}^{t_2} |\gamma'(\tau)|_g \,{\D}\tau
\end{equation}
whenever $a \leq t_1 \leq t_2 \leq b$.
From \eqref{eqn:p-star-g}, \eqref{eq:riemanniandistancecomparison} and \eqref{eq:riemannianmetricderivative}, we deduce that
\begin{equation}\label{eq:metricderivativeinequality}
P^*(\gamma'(t)) \leq \liminf_{s \to t} \frac{\dist_P(\gamma(s),\gamma(t))}{|s-t|}
\end{equation}
for all $t \in [a,b]$ where $\gamma$ is differentiable.

Suppose now that $\ell_P(\gamma) < \infty$.
Then the function $r\colon \leftclosedint a,b\rightclosedint \to \R$, defined by $r(t) = \ell_P(\gamma|_{\leftclosedint a,t\rightclosedint})$, is nondecreasing, so differentiable almost everywhere, and
\[
\int_{a}^{b} r'(\tau) \,{\D}\tau \leq r(b) - r(a) = \ell_P(\gamma).
\]
Now
\[
\dist_P(\gamma(t_1),\gamma(t_2)) \leq \ell_P(\gamma|_{\leftclosedint t_1,t_2\rightclosedint}) = r(t_2) - r(t_1)
\]
whenever $a \leq t_1 \leq t_2 \leq b$, so $P^*(\gamma'(t)) \leq r'(t)$ for almost all $t \in [a,b]$ from \eqref{eq:metricderivativeinequality}, and \emph{a fortiori}
\[
\int_a^b P^*(\gamma'(\tau)) \,{\D}\tau \leq \ell_P(\gamma).
\]
The same inequality holds trivially when the right-hand side is infinite.

Conversely, if $T = \int_a^b P^*(\gamma'(\tau)) \,{\D}\tau < \infty$, then define $\tilde r\colon \leftclosedint a,b\rightclosedint \to [0,T]$ by
\[
\tilde r(t) = \int_a^t P^*(\gamma'(\tau)) \,{\D}\tau .
\]
The function $\tilde r$ is nondecreasing and surjective.
Further, if $a \leq t_1 \leq t_2 \leq b$ and $g \in \mathfrak{G}_P$, then
\[
\dist_g(\gamma(t_1),\gamma(t_2)) 
\leq \int_{t_1}^{t_2} |\gamma'(\tau)|_g \,{\D}\tau 
\leq \int_{t_1}^{t_2} P^*(\gamma'(\tau)) \,{\D}\tau 
= \tilde r(t_2) - \tilde r(t_1),
\]
by \eqref{eqn:p-star-g} and \eqref{eq:riemannianlengthintegral}.
In particular, if $\tilde r(t_1) = \tilde r(t_2)$ then $\gamma(t_1) = \gamma(t_2)$, hence we may define a function $\tilde\gamma\colon \leftclosedint 0,T\rightclosedint \to M$ by $\gamma = \tilde\gamma \circ \tilde r$, and $\tilde\gamma$ is $1$-Lipschitz with respect to $\dist_g$ for every $g \in \mathfrak{G}_P$.
By Proposition~\ref{prp:subfinslermetric}, this implies that $\tilde\gamma\colon [0,T] \to M$ is $1$-Lipschitz with respect to $\dist_P$, hence $\tilde\gamma \in \Gamma_P^\subunit( [0,T] )$ and
\[
\ell_P(\gamma) = \ell_P(\tilde\gamma) \leq T = \int_a^b P^*(\gamma'(\tau)) \,{\D}\tau.
\]
Again, this inequality holds trivially when the right-hand side is infinite, and we have proved \eqref{eq:lengthintegral}.

If $\ell_P(\gamma) < \infty$, then $P^*(\gamma'(\cdot))$ is integrable on $[a,b]$ and
\[
\ell_P(\gamma|_{\leftclosedint t_1,t_2 \rightclosedint}) = \int_{t_1}^{t_2} P^*(\gamma'(\tau)) \,{\D}\tau
\]
whenever $a \leq t_1 \leq t_2 \leq b$; now \eqref{eq:metricderivative} follows from the theory of absolutely continuous curves in metric spaces.
\end{proof}

\begin{corollary}
For all $x,y \in M$,
\[
\dist_P(x,y) = \inf \left\{ \int_a^b P^*(\gamma'(t)) \,{\D}t \tc \gamma \in AC([a,b]),\ \gamma(a) = x,\  \gamma(b) = y \right\}.
\]
\end{corollary}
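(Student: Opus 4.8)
The plan is to read off the identity from Proposition~\ref{prp:lengthintegral}, which already equates the $P$-length of an absolutely continuous curve with the integral $\int_a^b P^*(\gamma'(t))\,{\D}t$, together with Definition~\ref{def:controldistance} of $\dist_P$. Write $\tilde\dist_P(x,y)$ for the infimum on the right-hand side; I will establish the two inequalities $\tilde\dist_P(x,y)\geq\dist_P(x,y)$ and $\tilde\dist_P(x,y)\leq\dist_P(x,y)$ separately.

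For the first inequality I would take an arbitrary $\gamma\in AC([a,b])$ with $\gamma(a)=x$ and $\gamma(b)=y$; there is nothing to prove when $\int_a^b P^*(\gamma'(t))\,{\D}t=\infty$, so assume this quantity is finite. By Proposition~\ref{prp:lengthintegral}, $\int_a^b P^*(\gamma'(t))\,{\D}t=\ell_P(\gamma)$, and applying Definition~\ref{def:length} with the trivial partition $\{a,b\}$ gives $\ell_P(\gamma)\geq\dist_P(\gamma(a),\gamma(b))=\dist_P(x,y)$. Taking the infimum over all such $\gamma$ then yields $\tilde\dist_P(x,y)\geq\dist_P(x,y)$. (Alternatively, since absolutely continuous curves are continuous, this inequality is immediate from Corollary~\ref{corol:lengthspacesmooth} together with Proposition~\ref{prp:lengthintegral}.)

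For the reverse inequality I would use that, by Definition~\ref{def:controldistance}, every $P$-subunit curve $\gamma\in\Gamma_P^\subunit([0,T])$ is absolutely continuous and satisfies $P^*(\gamma'(t))\leq 1$ for almost every $t$, so that $\int_0^T P^*(\gamma'(t))\,{\D}t\leq T$. Hence, whenever $T\in\R^+$ admits a $P$-subunit curve from $x$ to $y$ defined on $[0,T]$, that curve is an admissible competitor for the infimum defining $\tilde\dist_P(x,y)$, and so $\tilde\dist_P(x,y)\leq T$; taking the infimum over all such $T$ gives $\tilde\dist_P(x,y)\leq\dist_P(x,y)$. This is vacuous when $\dist_P(x,y)=\infty$.

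I do not expect a genuine obstacle here: the corollary is essentially a repackaging of Proposition~\ref{prp:lengthintegral} and the definition of $\dist_P$. The only point that might need a word of care is the parametrisation bookkeeping in the second step --- matching a $P$-subunit curve to a curve on an interval of a prescribed length --- but since the infimum in Definition~\ref{def:controldistance} runs directly over all admissible lengths $T$, no reparametrisation is actually required; should one want a fixed interval, one would simply append a constant, hence subunit, segment.
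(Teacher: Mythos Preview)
Your proposal is correct and matches the paper's intended argument: the corollary is stated without proof in the paper, precisely because it follows immediately from Proposition~\ref{prp:lengthintegral} (identifying $\ell_P(\gamma)$ with $\int_a^b P^*(\gamma')\,{\D}t$ for absolutely continuous $\gamma$) together with Corollary~\ref{corol:lengthspacesmooth} and the definition of $\dist_P$ via subunit curves. Your two-inequality breakdown is exactly the natural unpacking of this.
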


We conclude our discussion of curves and lengths by pointing out that any curve of finite $P$-length may be reparametrised using arc-length, and then becomes a subunit curve, from part (iii) of Proposition \ref{prp:subfinslermetric}.

\subsection{Completeness}

We say that the fibre seminorm $P$ is \emph{complete} if the set $\bar{B}_P(K,R)$ is relatively compact for all compact subsets $K$ of $M$ and all positive $R$.
By Proposition~\ref{prp:minimizingsubunit}, $P$ is complete if and only if $R_P(K) = \infty$ for all compact subsets $K$ of $M$.

\begin{proposition}\label{prop:P-complete-M-complete}
If $P$ is complete, then the metric space $(M,\dist_P)$ is complete.
The converse holds if $\dist_P$ is varietal.
\end{proposition}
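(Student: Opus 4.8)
The plan is to prove the two implications separately; the unifying observation is that completeness of $P$ is precisely what forces closed $\dist_P$-balls to be compact in the manifold topology, and this is what allows one to pass between the (a priori distinct) $\dist_P$-topology and the manifold topology.

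For the forward implication, suppose $P$ is complete and let $(x_m)$ be a $\dist_P$-Cauchy sequence. Choosing $N$ with $\dist_P(x_m,x_N)\le 1$ for $m\ge N$, the tail $(x_m)_{m\ge N}$ lies in $\bar B_P(\{x_N\},1)$, which is relatively compact by hypothesis and hence, by Proposition~\ref{prp:minimizingsubunit}, compact in the manifold topology. Since $M$ is metrizable, a subsequence $x_{m_k}$ converges in the manifold topology to some $x_\infty\in M$. To upgrade this to $\dist_P$-convergence, set $\epsilon_j=\sup_{k,l\ge j}\dist_P(x_{m_k},x_{m_l})$, so $\epsilon_j\to 0$; for fixed $j$ the points $x_{m_k}$ with $k\ge j$ lie in $\bar B_P(\{x_{m_j}\},\epsilon_j)$, which by $P$-completeness and Proposition~\ref{prp:minimizingsubunit} is compact, hence closed, in the manifold topology. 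Therefore $x_\infty\in \bar B_P(\{x_{m_j}\},\epsilon_j)$, i.e.\ $\dist_P(x_{m_j},x_\infty)\le\epsilon_j\to 0$; since $(x_m)$ is $\dist_P$-Cauchy, the whole sequence converges to $x_\infty$.

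For the converse, assume in addition that $\dist_P$ is varietal, so that the manifold topology and the $\dist_P$-topology coincide; in particular $(M,\dist_P)$ is locally compact and each $\bar B_P(K,R)$ is $\dist_P$-closed as well as manifold-closed. It suffices to show $R_P(K)=\infty$ for every compact $K$, so suppose $R_0:=R_P(K)<\infty$. For $R<R_0$ there is an admissible radius $R''\in\leftopenint R,R_0\rightopenint$, so $\bar B_P(K,R)$ is a closed subset of the compact set $\bar B_P(K,R'')$ and hence compact; on the other hand $\bar B_P(K,R_0)$ is not relatively compact, since otherwise Proposition~\ref{prp:minimizingsubunit} would make $\bar B_P(K,R')$ compact for some $R'>R_0$, contradicting the definition of $R_0$. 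Now $\bar B_P(K,R_0)$, being $\dist_P$-closed in the complete space $(M,\dist_P)$, is complete; and it is totally bounded, because for small $\eta>0$ every $x\in\bar B_P(K,R_0)$ lies within $\dist_P$-distance $2\eta$ of the compact, hence totally bounded, set $\bar B_P(K,R_0-\eta)$ — walk along a subunit curve from $K$ to $x$ of length $<\dist_P(K,x)+\eta/2$ and stop $2\eta$ before the endpoint. A complete, totally bounded metric space is compact, contradicting the previous sentence; hence $R_0=\infty$ and $P$ is complete. (Alternatively, this implication is an instance of the Hopf--Rinow theorem for complete, locally compact length spaces, using Proposition~\ref{prp:lengthspace} and the fact that a compact subset of a metric space is bounded.)

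I expect the converse to be the delicate direction. The varietal hypothesis is used twice in an essential way — to identify relative compactness in the two topologies, and to run the ``closed subset of a compact set'' argument — and the only real computation is the choice of constants in the total-boundedness estimate. The forward implication is routine once one has recorded that $P$-completeness turns closed $\dist_P$-balls into manifold-compact sets.
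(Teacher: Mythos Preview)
Your proof is correct. The forward direction is essentially the paper's argument: both extract a manifold-convergent subsequence from a $\dist_P$-bounded (hence relatively compact) set, then use that the closed $\dist_P$-balls are manifold-closed (Proposition~\ref{prp:minimizingsubunit}) to upgrade to $\dist_P$-convergence; the paper phrases this last step as lower-semicontinuity of $\dist_P(x_m,\cdot)$, which is the same observation.

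For the converse the paper takes your parenthetical route: it observes that when $\dist_P$ is varietal, a compact $K$ is $\dist_P$-bounded, $\bar B_P(K,R)$ is closed and bounded, and then simply invokes the Hopf--Rinow theorem for complete locally compact length spaces (citing Roe) to conclude that closed bounded sets are compact. Your main argument instead reproves the relevant piece of Hopf--Rinow by hand, via the total-boundedness estimate $\bar B_P(K,R_0)\subseteq \bar B_P\bigl(\bar B_P(K,R_0-\eta),2\eta\bigr)$ obtained by truncating subunit curves. This is more self-contained and makes explicit exactly where the length-space structure enters (only in that truncation step), at the cost of a few more lines; the paper's version is terser but outsources the content to an external reference.
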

\begin{proof}
Suppose that $P$ is complete, and take a $\dist_P$-Cauchy sequence $(x_m)_{m\in\N}$ in $M$.
The set $\{x_m\}_{m\in\N}$ is $\dist_P$-bounded, hence it is relatively compact, thus we may find a subsequence $(x_{n_k})_{k\in\N}$ that converges to a point $x \in M$ in the manifold topology.
By completeness and Proposition~\ref{prp:minimizingsubunit}, the function $\dist_P(x_m, \cdot)$ is lower-semicontinuous, whence
\[
\dist_P(x_m,x) \leq \liminf_{k \to \infty} \dist_P(x_m,x_{n_k}),
\]
and the right-hand side tends to $0$ as $m$ tends to $\infty$ since $(x_m)_{m\in\N}$ is $\dist_P$-Cauchy, hence $\dist_P(x_m,x)$ tends to $0$.

Conversely, suppose that $(M,\dist_P)$ is complete and $\dist_P$ is varietal.
Then each compact subset $K$ of $M$ is $\dist_P$-bounded, so $\bar{B}_P(K,R)$ is also bounded for all positive $R$, and it is closed because $\dist_P$ is continuous.
Since $(M,\dist_P)$ is a complete locally compact length space, closed $\dist_P$-bounded subsets of $M$ are compact \cite[Theorem~1.5]{roe_lectures_2003}, and we are done.
\end{proof}

\begin{definition}\label{defn:support_of_seminorm}
The closed set $\afterline{\{x \in M \tc P_x \neq 0\}}$ is said to be the \emph{support} of the fibre seminorm $P$.
\end{definition}

\begin{proposition}\label{prp:compactsupportcomplete}
If $P$ is compactly-supported, then it is complete.
\end{proposition}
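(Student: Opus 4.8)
The plan is to prove the sharper statement that, writing $S = \supp P$ for the (compact) support of the fibre seminorm, every ball $\bar{B}_P(K,R)$ is contained in $K \cup S$. Since $K \cup S$ is compact, this makes $\bar{B}_P(K,R)$ relatively compact for every compact $K$ and every $R \in \R^+$, which is exactly what completeness of $P$ requires.

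The one substantive ingredient is a trapping phenomenon in the region where $P$ vanishes. If $x \notin S$, then $P_x = 0$, so the dual extended norm is $P^*_x(v) = \sup_{\xi \in T^*_x M} |\xi(v)|$, which equals $0$ when $v = 0$ and $+\infty$ otherwise. Using this, I would first show that a $P$-subunit curve $\gamma\colon [a,b] \to M$ with $\gamma(a) \notin S$ must be constant. Set $c = \inf\{ t \in [a,b] \tc \gamma(t) \in S \}$, with the convention $\inf\emptyset = b$. On $[a,c)$ we have $\gamma(t) \notin S$, so the subunit condition $P^*(\gamma'(t)) \le 1$ forces $\gamma'(t) = 0$ for almost every $t \in [a,c)$; hence, working in each coordinate chart of the atlas and using Definition~\ref{def:absolutecontinuous} together with the connectedness of $[a,c)$, the curve $\gamma$ is constantly equal to $\gamma(a)$ on $[a,c)$, and also at $t = c$ by continuity. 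But $\{ t \tc \gamma(t) \in S \}$ is closed, so if it were non-empty its infimum $c$ would belong to it, giving $\gamma(c) = \gamma(a) \in S$, a contradiction. Therefore this set is empty, $c = b$, and $\gamma \equiv \gamma(a)$.

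Next I would use reversibility: since $P^*(-v) = P^*(v)$, the time-reversal $t \mapsto \gamma(T-t)$ of a $P$-subunit curve on $[0,T]$ is again $P$-subunit, so the conclusion above applies equally to any $P$-subunit curve whose \emph{endpoint} lies outside $S$. Finally, take $x \in \bar{B}_P(K,R)$ with $x \notin S$. Then $\dist_P(K,x) \le R < \infty$, so there is a point $y \in K$ with $\dist_P(y,x) < \infty$, and hence a $P$-subunit curve $\gamma\colon [0,T] \to M$ with $T$ finite, $\gamma(0) = y$ and $\gamma(T) = x$. Its reversal begins at $x \notin S$, so it is constant, and therefore $x = y \in K$. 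This proves $\bar{B}_P(K,R) \subseteq K \cup S$ and finishes the argument.

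I do not expect any real obstacle: the only routine checks are that ``$P^*(\gamma'(t)) \le 1$ together with $\gamma(t) \notin S$'' forces $\gamma'(t) = 0$ (immediate from the displayed form of $P^*_x$), and that an absolutely continuous curve with almost-everywhere-vanishing derivative on an interval is constant. The point worth flagging is that one should \emph{not} attempt to deduce completeness by comparison with a Riemannian metric as in Proposition~\ref{prp:subfinslermetric}: a compactly-supported $P$ dominates no Riemannian metric, and the correct mechanism is precisely that subunit curves cannot leave $S$.
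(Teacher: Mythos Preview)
Your proposal is correct and follows essentially the same approach as the paper: both arguments observe that at points outside $S = \supp P$ the only subunit vector is zero, deduce that any $P$-subunit curve touching $M \setminus S$ is constant, and conclude that $\bar{B}_P(K,R) \subseteq K \cup S$. Your constancy argument via $c = \inf\{t : \gamma(t) \in S\}$ is a careful version of the paper's one-line assertion, and you avoid the paper's (unnecessary) final appeal to Proposition~\ref{prp:minimizingsubunit}, since relative compactness is all that the definition of completeness requires.
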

\begin{proof}
If $x \in M \setminus \supp(P)$, then the only $P$-subunit vector in $T_x M$ is the null vector.
Hence all $P$-subunit curves passing through $M \setminus \supp(P)$ are constant, and every point of $M \setminus \supp(P)$ has infinite $\dist_P$-distance to every other point of $M$.
Hence, for all compact subsets $K$ of $M$ and all positive $R$,
\[
\bar{B}_P(K,R) = K \cup \bar{B}_P(K \cap \supp(P), R),
\]
and necessarily $\bar{B}_P(K \cap \supp(P), R) \subseteq \supp(P)$.
Thus $\bar{B}_P(K \cap \supp(P),R)$ is compact, by Proposition~\ref{prp:minimizingsubunit}, and consequently $\bar{B}_P(K,R)$ is compact.
\end{proof}

\subsection{Subunit vector fields and H\"ormander's condition}\label{subsection:hoermander}

We begin with a definition.
\begin{definition}
A smooth section of $TM$ that is $P$-subunit everywhere in $M$ is said to be a $P$-subunit vector field.\footnote{We consider all vector fields to be smooth.}
We write $\mathfrak{X}_P$ for the set of all $P$-subunit vector fields and $L(\mathfrak{X}_P)$ for the Lie algebra of vector fields generated by $\mathfrak{X}_P$.
\end{definition}

Various extended distance functions may be defined as in Definition \ref{def:controldistance}, by restricting $\gamma$ to a subclass of $\Gamma_P^\subunit$.
For example, we might restrict out attention to smooth $P$-subunit curves, or piecewise smooth $P$-subunit curves, or flow curves along $P$-subunit vector fields.
More precisely, the flow curves of a $P$-subunit vector field are smooth $P$-subunit curves, and any concatenation of flow curves of $P$-subunit fields is a $P$-subunit curve; such a concatenation will be called a $P$-subunit piecewise flow curve.

\begin{definition}
We write $\Gamma_P^\infty$ for the set of smooth $P$-subunit curves, $\Gamma_P^\flow$ for the set of $P$-subunit piecewise flow curves, $\dist_P^\infty$ for the extended distance corresponding to the class $\Gamma_P^\infty$ and $\dist_P^\flow$ for the extended distance function corresponding to the class $\Gamma_P^{\flow}$.
\end{definition}

It is not obvious that these three distance functions are the same, however
\begin{equation}\label{eq:distancesinequalities}
\dist_P \leq \dist_P^\infty \leq \dist_P^\flow.
\end{equation}
The second inequality is justified because we obtain the same distance function $\dist_P^\infty$ by taking the class of smooth $P$-subunit curves as by taking the class of piecewise smooth $P$-subunit curves, that is, the $P$-subunit curves $\gamma\colon \leftclosedint a,b\rightclosedint \to M$ for which a finite subdivision $\{ t_0 ,\dots ,t_k \}$ of $[a,b]$ exists such that $\gamma|_{\leftclosedint t_{j-1},t_j\rightclosedint}$ is smooth when $j=1,\dots,k$.
Indeed, for every positive $\epsilon$, there is a smooth increasing bijection $\eta\colon \leftclosedint a,b+\epsilon\rightclosedint \to \leftclosedint a,b\rightclosedint$ such that $\eta' \leq 1$ and $\eta^{(h)}(\eta^{-1}(t_j)) = 0$ when $j=0,\dots,k$ and $h \geq 1$, so the reparametrisation $\gamma \circ \eta\colon \leftclosedint a,b+\epsilon\rightclosedint \to M$ is $P$-subunit and smooth.

\begin{example}
Suppose that $\phi\colon \R \to \R$ is continuous but not differentiable anywhere, and that $\phi(0) = 0$.
Given $(p,q) \in \R^2$, define the seminorm $P_{(p,q)}\colon \R^2 \to \leftclosedint 0,\infty\rightopenint$ by 
\[
P_{(p,q)}(\xi,\eta) = | \xi + \phi(q)\eta|/(1+\phi(q)^2)^{1/2}.
\]
Then
\[
P_{(p,q)}^* (u,v) = 
\begin{cases}
|(u,v)|   &\text{if $(u,v) \in \R (1, \phi(q))$} \\
\infty     &\text{otherwise}.
\end{cases}
\]
It is easy to check that the vector field $\partial /\partial x$ along the $x$ axis does not extend to a $P$-subunit vector field, because we require vector fields to be smooth.
In fact, there are no nonnull $P$-subunit vector fields. It follows that $\dist_P^\flow((0,0), (1,0)) = \infty$, while $\dist_P^\infty((0,0), (1,0)) = 1$.
\end{example}

\begin{definition}
The fibre seminorm $P$ is said to satisfy H\"ormander's condition if $\{ X|_x \tc X \in L(\mathfrak{X}_P) \} = T_x M$ for every $x \in M$.
\end{definition}

\begin{proposition}\label{prp:hoermander}
If $P$ satisfies H\"ormander's condition, then $\dist_P^\flow$ is varietal and \emph{a fortiori} $\dist_P$ and $\dist_P^\infty$ are varietal too.
\end{proposition}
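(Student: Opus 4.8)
The plan is to reduce the claim to a local reachability statement and then run the Chow--Rashevskii argument while keeping track of the lengths of subunit curves. First, by \eqref{eq:distancesinequalities} we have $B_{\dist_P^\flow}(x,r)\subseteq B_{\dist_P^\infty}(x,r)\subseteq B_{\dist_P}(x,r)$, so the topologies induced by $\dist_P$, $\dist_P^\infty$, $\dist_P^\flow$ form an increasing chain, each at least as fine as the manifold topology --- the first by Proposition~\ref{prp:subfinslermetric}, the other two then by the displayed inclusions. Hence it suffices to show that the topology of $\dist_P^\flow$ is \emph{no finer} than the manifold topology, i.e.\ that for every $x_0\in M$ and every $\epsilon>0$ there is a manifold-neighbourhood of $x_0$ each point of which is joined to $x_0$ by a $P$-subunit piecewise flow curve of length less than $\epsilon$. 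This being local, I would work in a coordinate chart and regard $x_0$ as a point of $\R^n$.

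Next I would extract the relevant data from H\"ormander's condition. Since $L(\mathfrak{X}_P)$ is spanned, as a vector space, by iterated Lie brackets of $P$-subunit vector fields, and $\{X|_{x_0}\tc X\in L(\mathfrak{X}_P)\}=T_{x_0}M$, there exist finitely many $P$-subunit vector fields $X_1,\dots,X_k$ and iterated brackets $Y_1,\dots,Y_n$ formed from them --- with $Y_j$ of ``weight'' $w_j$, the number of factors counted with multiplicity --- such that $Y_1|_{x_0},\dots,Y_n|_{x_0}$ is a basis of $T_{x_0}M$; it is enough to span at the single point $x_0$. Because $P$ is a seminorm, $P^*$ is symmetric, so each $-X_i$ is again $P$-subunit, and consequently so is $-Y_j$ an iterated bracket of $P$-subunit fields.

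The core is the classical ``commutator flow'' construction. For each $j$, recursively on $w_j$, I would produce a number $\delta_j>0$ and a continuous family $(\psi_j(s))_{|s|<\delta_j}$ of local diffeomorphisms defined near $x_0$, with $\psi_j(0)=\mathrm{id}$, such that (a) each $\psi_j(s)$ is a finite composition of flow maps of the $P$-subunit fields $\pm X_1,\dots,\pm X_k$, so that $\psi_j(s)(p)$ is joined to $p$ by a $P$-subunit piecewise flow curve of length at most $\kappa_j|s|^{1/w_j}$, uniformly for $p$ near $x_0$, and (b) $s\mapsto\psi_j(s)(p)$ is differentiable at $s=0$ with derivative $Y_j|_p$, uniformly in $p$. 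The reparametrisation $s\mapsto|s|^{1/w_j}$ of the iterated commutator of flows is what produces the correct first-order term while keeping the flow-length controlled, and reversibility is used here, since the construction flows some of the fields backwards. I would then define $\Phi(s_1,\dots,s_n)=\psi_n(s_n)\circ\cdots\circ\psi_1(s_1)(x_0)$ on $\prod_{j=1}^n(-\delta_j,\delta_j)$. This $\Phi$ is continuous, $\Phi(0)=x_0$, and --- the error terms in (b) being uniform --- differentiable at $0$ with differential $(a_1,\dots,a_n)\mapsto\sum_{j}a_jY_j|_{x_0}$, an isomorphism onto $T_{x_0}M$. A continuous map between equidimensional manifolds that is differentiable at a point with invertible differential there is open at that point (a standard consequence of Brouwer's degree), so $\Phi$ maps a small enough neighbourhood $N$ of $0$ to a set containing a manifold-neighbourhood of $x_0$; choosing $N$ so small that $\sum_j\kappa_j|s_j|^{1/w_j}<\epsilon$ on $N$, every point of $\Phi(N)$ is joined to $x_0$ by a $P$-subunit piecewise flow curve of length $<\epsilon$, which gives the reduction above.

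I expect the main obstacle to be the commutator-flow construction of the previous paragraph: carrying out the recursion on the weight $w_j$ so as to realise $Y_j$ as the first-order term while simultaneously keeping the realizing curve a genuine $P$-subunit piecewise flow curve and bounding its length by $|s|^{1/w_j}$ with constants uniform in the base point, and coping with the ensuing non-smoothness --- it is this non-smoothness that forces the open-mapping step to be phrased via topological degree rather than the inverse function theorem. An alternative, closer to Chow's original argument, would avoid the commutator construction: one shows that some composition $t\mapsto\exp(t_NX_{i_N})\circ\cdots\circ\exp(t_1X_{i_1})(x_0)$ already has rank $n$ at a point with all $|t_i|$ arbitrarily small --- for otherwise the reachable set would lie in a proper submanifold tangent to all the $X_i$, hence to all of $L(\mathfrak{X}_P)$, contradicting H\"ormander's condition --- and then transports the resulting open set back to a neighbourhood of $x_0$ by prepending a short backward flow curve.
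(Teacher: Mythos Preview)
Your proposal is correct and follows essentially the same route as the paper: both arguments localise, extract finitely many $P$-subunit vector fields whose iterated brackets span $T_{x_0}M$, and then invoke the Chow--Rashevskii mechanism to show that $\dist_P^\flow$-balls are manifold-neighbourhoods of their centre. The only difference is one of packaging: the paper simply cites Chow's theorem (from Montgomery's book) to obtain the ball inclusion $B_g(x,r)\subseteq B_{\dist_{\mathfrak{X}}}(x,\kappa r^{1/m})$ for the finite system $\mathfrak{X}$, whereas you sketch the commutator-flow proof of that theorem yourself, including the degree-theoretic openness step.
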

\begin{proof}
Recall that $L(\mathfrak{X}_P)$ is the linear span of the iterated Lie brackets of elements of $\mathfrak{X}_P$.
Hence, for every fixed $x \in M$, there is a finite subset $\mathfrak{X}$ of $\mathfrak{X}_P$ such that the iterated commutators of elements of $\mathfrak{X}$ up to some order, $m$ say, evaluated at $x$, span $T_x M$.
Denote by $\dist_\mathfrak{X}$ the extended distance function corresponding to the class of $P$-subunit curves that are concatenations of flow curves of vector fields in $\mathfrak{X}$.
Then clearly $\dist_P^\flow \leq \dist_\mathfrak{X}$, so, by Chow's theorem (see, for example, \cite[Chapter 2]{montgomery_tour_2002}), for some $g \in \mathfrak{G}_P$ and constant $\kappa$,
\[
B_g(x,r) \subseteq B_{\dist_\mathfrak{X}}(x,\kappa r^{1/m}) \subseteq B_{\dist_P^\flow}(x,\kappa r^{1/m})
\]
for all sufficiently small positive $r$.
Hence the $\dist_P^\flow$ balls centered in $x$ are neighbourhoods of $x$ and, by the arbitrariness of $x$, the $\dist_P^\flow$-open sets are open.
The conclusion follows from the inequalities \eqref{eq:riemanniandistancecomparison} and \eqref{eq:distancesinequalities}.
\end{proof}

We remark that when when the dimension of the spaces of finite vectors varies from point to point, the H\"ormander condition depends on the seminorm $P$ as well as on the vector space of finite vectors. 
For example, take a smooth function $\phi\colon \R \to \R$, and define the seminorm $P_{(p,q)}$ on $\R^2$ by
\[
P_{(p,q)}(u,v) = \bigl( u^2 + \phi^{2}(p) v^2\bigr)^{1/2}.
\]
Then $P$ satisfies H\"ormander's condition if $\phi(p) = p^k$ where $k \in \N$, but not if $\phi$ is the smooth extension of $p \mapsto \E ^{-1/p^2} $ to $\R$ (see the discussion in \S \ref{subsection:nonsmootharclength}).
However, the spaces of finite vectors coincide everywhere for these two examples.

\begin{definition}\label{defn:lipschitz}
The fibre seminorm $P$ is said to satisfy the Lipschitz seminorm condition if, for every $\alpha \in \Alpha$, there is a countable family $\mathfrak{X}$ of $P$-subunit vector fields on $U_\alpha$ and a constant $L$, which may depend on $\alpha$, such that
\begin{enumerate}[(i)]
\item $|\tau_\alpha X(x) - \tau_\alpha X(y)| \leq L |x-y|$ for all $x,y \in B_{\R^n}(0,1)$ and $X \in \mathfrak{X}$, and
\item $\afterline{\{ X|_x \tc X \in \mathfrak{X} \} } = \{ v \in T_x M \tc P^*(v) \leq 1\}$ for all $x \in V_\alpha$.
\end{enumerate}
\end{definition}

Since the $V_\alpha$ are relatively compact in $M$ and form a locally finite cover of $M$, the Lipschitz seminorm condition for $P$ does not depend on the choice of the atlas $\{\phi_\alpha\}_{\alpha \in \Alpha}$.

\begin{theorem}\label{thm:randomapproximation}
Suppose that $P$ satisfies the Lipschitz seminorm condition, and that $\gamma \in \Gamma_P^\subunit( [0,T] )$.
For all neighbourhoods $W$ of $\gamma(T)$ there exists a neighbourhood $U$ of $\gamma(0)$ such that, for all $x \in U$, there exists $\delta \in \Gamma_P^\flow( [0,T] )$ for which $\delta(0) = x$ and $\delta(T) \in W$.
\end{theorem}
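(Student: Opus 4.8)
\emph{Overview and reduction to a single chart.}
The plan is to work one chart at a time, to realise the velocity $\gamma'(t)$ as an approximate value of one of the subunit vector fields supplied by the Lipschitz seminorm condition, and then to replace the infinitesimal motion along $\gamma'$ by genuine flows, keeping the error under control by Gronwall's inequality.
Being $P$-subunit, $\gamma$ is absolutely continuous, hence continuous for the manifold topology (Definition~\ref{def:absolutecontinuous}), so $\gamma([0,T])$ is compact.
Applying the Lebesgue number lemma to the open cover $\{\gamma^{-1}(V_\alpha)\}_{\alpha\in\Alpha}$ of $[0,T]$, I obtain a partition $0 = s_0 < \dots < s_N = T$ and indices $\alpha_1,\dots,\alpha_N$ with $\gamma([s_{k-1},s_k]) \subseteq V_{\alpha_k}$.
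Since a concatenation of piecewise flow curves is again a piecewise flow curve, it is enough to prove the statement for each restriction $\gamma|_{[s_{k-1},s_k]}$ and then to chain: from $W$ the one-subinterval statement for $\gamma|_{[s_{N-1},s_N]}$ yields a neighbourhood of $\gamma(s_{N-1})$, which plays the role of $W$ for $\gamma|_{[s_{N-2},s_{N-1}]}$, and so on down to a neighbourhood $U$ of $\gamma(0)$; concatenating the curves produced at each step gives $\delta$.
So I may assume $\gamma([0,T]) \subseteq V_\alpha$ for one $\alpha$; I work in the chart $\phi_\alpha$ and trivialisation $\tau_\alpha$, identify tangent vectors and vector fields with $\R^n$-valued functions, and write $|\cdot|$ for the Euclidean norm in these coordinates.
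By Lemma~\ref{lem:riemannianapproximants} there is a constant $C$ with $|v| \leq C$ for every $v$ with $P^*(v) \leq 1$ based at a point of a fixed compact coordinate neighbourhood of $\gamma([0,T])$; in particular $|\gamma'(t)| \leq C$ for almost all $t$.
Let $\mathfrak{X} = \{X_1,X_2,\dots\}$ and $L$ be as in Definition~\ref{defn:lipschitz}, so the $X_i$ are $L$-Lipschitz on $B_{\R^n}(0,1)$, satisfy $|X_i(y)| \leq C$ for $y$ near $\gamma([0,T])$, and have $\{X_i(y)\}_i$ dense in $\{v : P^*(v)\leq 1\}$ for all $y \in V_\alpha$.
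Multiplying by a bump function $\chi$ supported in $U_\alpha$ and equal to $1$ on $V_\alpha$ makes each $X_i$ into a globally defined, compactly supported, hence complete, smooth vector field $\chi X_i$ on $M$, which is $P$-subunit everywhere (as $P^*(\chi(y)X_i(y)) = \chi(y)\,P^*(X_i(y)) \leq 1$) and agrees with $X_i$ throughout $V_\alpha$.

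\emph{A measurable selection and a finite partition.}
Fix $\delta, \eta > 0$.
Since $P^*(\gamma'(t)) \leq 1$ a.e.\ and the $X_i(\gamma(t))$ are dense in the relevant unit ball, the measurable sets $E_i = \{t \in [0,T] : |X_i(\gamma(t)) - \gamma'(t)| < \delta\}$ cover $[0,T]$ up to a null set, so $i(t) := \min\{i : t \in E_i\}$ defines a measurable $\N$-valued function with $|X_{i(t)}(\gamma(t)) - \gamma'(t)| < \delta$ a.e.
By Lusin's theorem there is a compact $K \subseteq [0,T]$ with $|[0,T]\setminus K| < \eta$ on which $i$ is continuous; being $\N$-valued on a compact set, $i|_K$ takes finitely many values $j_1,\dots,j_r$ on pairwise disjoint compact sets $K_1,\dots,K_r$ whose union is $K$.
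Separating the $K_l$ by pairwise disjoint open sets, each (after discarding superfluous components) a finite union of open intervals, and taking $0$, $T$ and their endpoints in $(0,T)$, I obtain a partition $0 = u_0 < \dots < u_M = T$ such that each $[u_{m-1},u_m]$ either has its interior inside one of these open sets --- and I set $j_m$ to be the associated value --- or has interior disjoint from all of them --- and I set $j_m := 1$; in the latter case $[u_{m-1},u_m] \cap K$ is empty, since $K$ is covered by the open sets.
The mismatch set $B$ of those $t$ for which $i(t) \neq j_m$ with $t \in [u_{m-1},u_m]$ then differs from a subset of $[0,T]\setminus K$ by a finite set, so $|B| < \eta$.

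\emph{The flow polygon and conclusion.}
For any point $x$, let $\delta\colon[0,T]\to M$ be the curve with $\delta(0) = x$ and $\delta(t) = \Phi^{\chi X_{j_m}}_{t-u_{m-1}}\bigl(\delta(u_{m-1})\bigr)$ for $t \in [u_{m-1},u_m]$, where $\Phi^Y_s$ is the time-$s$ flow of $Y$; this is well defined (the fields $\chi X_{j_m}$ are complete), it is a concatenation of flow curves of $P$-subunit vector fields, so $\delta \in \Gamma_P^\flow([0,T])$, and $\delta(0) = x$.
While $\delta$ stays in $V_\alpha$ one has, in coordinates and on each subinterval,
\[
\frac{\D}{\D t}\bigl(\delta(t)-\gamma(t)\bigr) = \bigl[X_{j_m}(\delta(t)) - X_{j_m}(\gamma(t))\bigr] + \bigl[X_{j_m}(\gamma(t)) - \gamma'(t)\bigr],
\]
the first bracket being at most $L\,|\delta(t)-\gamma(t)|$ in norm and the second at most $\delta$ when $t \notin B$ and at most $2C$ when $t \in B$.
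As $|B| < \eta$, Gronwall's inequality yields
\[
\sup_{t\in[0,T]} |\delta(t)-\gamma(t)| \leq \E^{LT}\bigl(|x-\gamma(0)| + \delta\,T + 2C\eta\bigr),
\]
valid so long as $\delta$ remains in $V_\alpha$; a bootstrap on $\bar{t} = \sup\{t : \delta([0,t]) \subseteq V_\alpha\}$ shows this holds on all of $[0,T]$, provided the right-hand side stays below the coordinate distance from $\gamma([0,T])$ to the complement of $V_\alpha$.
Fixing $\delta$ and $\eta$ small enough, and then taking $U$ to be a small enough coordinate ball about $\gamma(0)$, one can make $\sup_t|\delta(t)-\gamma(t)|$ as small as one wishes, uniformly over $x \in U$; in particular $\delta(T) \in W$.
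This proves the one-subinterval statement, and with it the theorem.

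\emph{The main obstacle.}
The delicate step is the passage from the merely measurable, $\N$-valued velocity selection $i$ to the interval-wise constant choice $j_m$ driving the flow polygon: one must bound the measure of the mismatch set $B$, confine the whole construction to one chart so that the \emph{uniform} Lipschitz constant of Definition~\ref{defn:lipschitz} governs the Gronwall estimate, and verify that the resulting curve is genuinely a $P$-subunit piecewise flow curve on all of $[0,T]$ --- which is why the given local subunit fields must first be globalised by a cut-off. The reduction to a chart and the Gronwall estimate itself are routine.
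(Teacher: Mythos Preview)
Your proof is correct, and its overall architecture---local reduction, measurable selection of an approximating field, replacement by a piecewise-constant selection, and a Gronwall estimate for the flow polygon---matches the paper's. The main difference is in \emph{how} you pass from the measurable selection $i(t)$ to a piecewise-constant one. You invoke Lusin's theorem to get a compact set of small complement on which $i$ is continuous (hence finitely-valued), and then treat the remaining set of measure $<\eta$ as a crude error absorbed by the $2C\eta$ term in Gronwall. The paper instead avoids Lusin entirely: it discretises time with step $d = T/N$, replaces $\nu_0(t)$ by an index $\nu_1(t)$ chosen at the nearest grid point (so $\nu_1$ is automatically finitely-valued by compactness of the subunit balls at the finitely many grid points), and then takes $\nu_2$ to be the \emph{increasing rearrangement} of $\nu_1$ on each grid interval, which is genuinely piecewise constant. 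The key observation is that $\int_{I_j} X_{\nu_1(\tau)}(\gamma(jd))\,\D\tau = \int_{I_j} X_{\nu_2(\tau)}(\gamma(jd))\,\D\tau$ because rearrangement preserves integrals of compositions, so the rearrangement costs nothing in the discrete Gronwall recursion; the remaining error is $O(\epsilon d + L d^2)$ per step. Your approach is more direct and uses a standard tool; the paper's is more elementary (no Lusin) at the price of the rearrangement trick and a discrete rather than continuous Gronwall argument. Two minor points worth noting: you are explicit about globalising the local fields $X_i$ via a cutoff so that the resulting curve is a bona fide element of $\Gamma_P^\flow$, a detail the paper leaves implicit; and your chaining reduction to a single chart is cleaner than the paper's contradiction-plus-bisection, though both serve the same end.
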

\begin{proof}
Fix a riemannian metric $g \in \mathfrak{G}_P$.

With a view to a contradiction, suppose that $\gamma \in \Gamma_P^\subunit( \leftclosedint 0,T\rightclosedint )$ is ``bad'', that is, the conclusion does not hold for $\gamma$.
Clearly $\gamma|_{\leftclosedint 0,T/2\rightclosedint}$ or $\gamma(\cdot + T/2)|_{\leftclosedint 0,T/2\rightclosedint}$ is bad too.
Iteration of this bisection procedure, together with a compactness argument, shows that we may suppose that $\gamma(\leftclosedint 0,T\rightclosedint) \subseteq \bar{B}_g(z,r)$ for some $z \in M$ and $r\in\R^+$ such that $\bar{B}_g(z,3r) \subseteq V_\alpha$ for some $\alpha \in \Alpha$; further iteration allows us to suppose that $T < r$, so
\begin{equation}\label{eq:curve_in_chart}
\gamma(\leftclosedint 0,T\rightclosedint) \subseteq \bar{B}_P(\bar{B}_g(\gamma(0),T),T) \subseteq V_\alpha.
\end{equation}
Now take the countable family $\mathfrak{X}$ of $P$-subunit vector fields $X_k$ and the Lipschitz constant $L$ corresponding to $\alpha$ as in Definition~\ref{defn:lipschitz}.
There is a constant $\kappa$ such that
\begin{equation}\label{eq:eusub}
|\tau_\alpha(v)| \leq \kappa P^*(v) 
\qquant v \in TV_\alpha,
\end{equation}
where $|\cdot|$ denotes the euclidean norm on $\R^n$.

Take $x \in \bar{B}_g(\gamma(0),T)$.
We aim to construct $\delta\colon [0,T] \to M$ that is a piecewise flow curve of fields in $\mathfrak{X}$, such that $\delta(0) = x$, and $\delta(T)$ is arbitrarily near $\gamma(T)$ whenever $x$ is sufficiently near $\gamma(0)$.
The image of any such $\delta$ is contained in $V_\alpha$ by \eqref{eq:curve_in_chart}, therefore from now on we work in the coordinates $\phi_\alpha$.
For simplicity, we continue to write $\gamma$ rather than $\phi_\alpha \circ \gamma$.
Hence 
\begin{equation}\label{eq:gammaintegral}
\gamma(t) = \gamma(0) + \int_0^t \gamma'(\tau) \,{\D}\tau.
\end{equation}
By altering $\gamma'$ on a negligible subset of $\leftclosedint 0,T\rightclosedint$, we may suppose that $\gamma'$ is a Borel function, $\gamma'(t)$ is $P$-subunit for all $t \in \leftclosedint 0,T\rightclosedint$, and \eqref{eq:gammaintegral} still holds.

Fix $\epsilon \in \R^+$.
By the density and smoothness properties of the family $\mathfrak{X}$, the function $\nu_0\colon \leftclosedint 0,T\rightclosedint \to \N$, given by
\[
\nu_0(t) = \min \{ k \in \N \tc | X_k(\gamma(t)) - \gamma'(t) | \leq \epsilon \},
\]
is well-defined and Borel.
Fix $N \in \Z^+$ and set $d = T/N$ and $b(t) = \lfloor t/d \rfloor d$.
Then the function $\nu_1\colon \leftclosedint 0,T\rightclosedint \to \N$, given by
\[
\nu_1(t) = \min \{ k \in \N \tc | X_k(\gamma(b(t))) - X_{\nu_0(t)}(\gamma(b(t))) | \leq \epsilon \},
\]
is also well-defined and Borel; furthermore, since $b$ takes its values in the finite set $\{0,d,2d,\dots,Nd\}$ and the unit $P^*$-ball at $\gamma(jd)$ is compact when $j=0,\dots,N$, the function $\nu_1$ takes a finite number of values too.

Set $I_j = \leftclosedint j d, (j+1) d \rightopenint$, where $j=0,\dots,N-1$.
We define $\nu_2\colon \leftclosedint 0,T\rightclosedint \to \N$ to be the increasing rearrangement of $\nu_1$ on each of the intervals $I_j$, that is, $\nu_2(t) = n$ when $|I_{\lfloor t/d \rfloor} \cap \{ \nu_1 \leq n-1 \}| \leq t - b(t) < |I_{\lfloor t/d \rfloor} \cap \{ \nu_1 \leq n \}|$, and set $\nu_2(T) = \nu_1(T)$.
Hence $\nu_2$ takes a finite number of values and is piecewise constant.
Concatenating flow curves along fields in $\mathfrak{X}$, we define $\delta\colon \leftclosedint 0,T\rightclosedint \to M$ by
\begin{equation}\label{eq:deltaintegral}
\delta(t) = x + \int_0^t X_{\nu_2(\tau)}(\delta(\tau)) \,{\D}\tau.
\end{equation}
We want now to estimate $|\gamma(T) - \delta(T)|$.

Set $D_j = |\delta(jd) - \gamma(jd)|$ when $j=0,\dots,N$.
Clearly
\[
D_{j+1} \leq D_j + \biggl| \int_{I_{j}} (\gamma'(\tau) - X_{\nu_2(\tau)}(\delta(\tau))) \,{\D}\tau \biggr|.
\]
Decompose the integrand as
\[
\begin{aligned}
\gamma'(\tau) - X_{\nu_2(\tau)}(\delta(\tau)) &= \gamma'(\tau) - X_{\nu_0(\tau)}(\gamma(\tau)) \\
&\qquad+ X_{\nu_0(\tau)}(\gamma(\tau)) - X_{\nu_0(\tau)}(\gamma(jd)) \\
&\qquad+ X_{\nu_0(\tau)}(\gamma(jd)) - X_{\nu_1(\tau)}(\gamma(jd)) \\
&\qquad+ X_{\nu_1(\tau)}(\gamma(jd)) - X_{\nu_2(\tau)}(\gamma(jd)) \\
&\qquad+ X_{\nu_2(\tau)}(\gamma(jd)) - X_{\nu_2(\tau)}(\delta(\tau)).
\end{aligned}
\]
The norms of the first and third pieces are at most $\epsilon$, by definition of $\nu_0$ and $\nu_1$.
The second and the fifth pieces are controlled by the Lipschitz seminorm condition, together with inequalities
\[
|\gamma(\tau)-\gamma(jd)| \leq \kappa d
\quad\text{and}\quad
|\delta(\tau)-\gamma(jd)| \leq D_j + \kappa d
\]
for all $\tau \in I_{j}$, by \eqref{eq:eusub}, \eqref{eq:gammaintegral} and \eqref{eq:deltaintegral}.
The fourth piece vanishes after integration over $I_{j}$, because it is the difference of two simple functions, one of which is a rearrangement of the other.
Putting everything together,
\[
\begin{aligned}
D_{j+1} 
&\leq D_j + \epsilon d + L \kappa d^2 + \epsilon d + L (D_j + \kappa d) d \\
& = (1+ L d) D_j + 2 \epsilon d + 2 L \kappa d^2,
\end{aligned}
\]
and by induction,
\[
D_j \leq (1+Ld)^j D_0 + 2(\kappa d+\epsilon/L) \left((1+Ld)^j -1\right).
\]
Since $d = T/N$ and $(1+LT/N)^N \leq \E^{LT}$,
\[
|\delta(T) - \gamma(T)| \leq \E^{LT} |x-\gamma(0)| + 2(\kappa T/N+\epsilon/L) (\E^{LT}-1).
\]
Note now that $T$, $\kappa$ and $L$ do not depend on the parameters $x$, $\epsilon$ and $N$ of the construction.
Hence by taking $N$ sufficiently large, $\epsilon$ sufficiently small, and $x$ sufficiently near $\gamma(0)$, we may construct a subunit piecewise flow curve $\delta$ for which $|\delta(T) - \gamma(T)|$ is arbitrarily small.
This contradicts the badness of $\gamma$ and proves the desired result.
\end{proof}

\begin{corollary}\label{cor:smoothdistanceequality}
Suppose that $P$ satisfies the Lipschitz seminorm condition.
For all $x,y \in M$ such that $x \neq y$,
\begin{equation}\label{eq:liminfdistances}
\dist_P(x,y) \geq \liminf_{z \to y} \dist_P^\flow(x,z).
\end{equation}
If $\dist_P^\flow$ is varietal, then $\dist_P = \dist_P^\flow$.
More generally, if $\dist_P^\infty$ is varietal, then $\dist_P(x,y) = \dist_P^\infty(x,y)$, and both are equal to
\begin{equation}\label{eq:distancelengthsmooth}
\inf \left\{ \ell_P(\gamma) \tc  \gamma \in \Gamma^\infty( \leftclosedint a,b\rightclosedint ),\ \gamma(a) = x, \ \gamma(b) = y\right\}
\end{equation}
for all $x,y \in M$.
\end{corollary}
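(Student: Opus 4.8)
The plan is to deduce the whole statement from Theorem~\ref{thm:randomapproximation}, the chain of inequalities \eqref{eq:distancesinequalities}, and the elementary fact that an extended distance function which induces the manifold topology is continuous for it.

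First I would prove \eqref{eq:liminfdistances}. It is vacuous when $\dist_P(x,y) = \infty$, so I would fix $T > \dist_P(x,y)$ and, since the set of admissible parameter lengths in Definition~\ref{def:controldistance} is upward closed (append a constant segment, which is $P$-subunit because the zero tangent vector is), choose $\gamma \in \Gamma_P^\subunit([0,T])$ with $\gamma(0) = x$ and $\gamma(T) = y$. Given any neighbourhood $W$ of $y = \gamma(T)$, Theorem~\ref{thm:randomapproximation} supplies a neighbourhood $U$ of $\gamma(0) = x$ from every point of which some member of $\Gamma_P^\flow([0,T])$ reaches $W$; applying this to the point $x$ itself, which lies in $U$, I obtain $\delta \in \Gamma_P^\flow([0,T])$ with $\delta(0) = x$ and $\delta(T) \in W$, so that $\dist_P^\flow(x,\delta(T)) \le T$. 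As $W$ runs through the neighbourhoods of $y$, this shows $\liminf_{z \to y} \dist_P^\flow(x,z) \le T$, and letting $T$ decrease to $\dist_P(x,y)$ gives \eqref{eq:liminfdistances}.

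Next I would derive the distance equalities. If an extended distance function $\dist$ is varietal, then $z \mapsto \dist(x,z)$ is $1$-Lipschitz for $\dist$, hence continuous for the topology induced by $\dist$, which is the manifold topology; consequently the $\liminf$ as $z \to y$ of such a function equals its value at $y$. Thus, when $\dist_P^\flow$ is varietal, \eqref{eq:liminfdistances} reads $\dist_P(x,y) \ge \dist_P^\flow(x,y)$ for $x \ne y$ --- and this is trivial when $x = y$ --- so, together with $\dist_P \le \dist_P^\flow$ from \eqref{eq:distancesinequalities}, we get $\dist_P = \dist_P^\flow$. When only $\dist_P^\infty$ is varietal, I would first note that $\dist_P^\infty \le \dist_P^\flow$ by \eqref{eq:distancesinequalities}, so that \eqref{eq:liminfdistances} also yields $\dist_P(x,y) \ge \liminf_{z\to y}\dist_P^\infty(x,z) = \dist_P^\infty(x,y)$ for $x \ne y$, hence for all $x,y$; again with \eqref{eq:distancesinequalities} this forces $\dist_P = \dist_P^\infty$.

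Finally I would identify $\dist_P = \dist_P^\infty$ with the infimum in \eqref{eq:distancelengthsmooth}. On one hand, every $\gamma \in \Gamma^\infty([a,b])$ joining $x$ to $y$ satisfies $\ell_P(\gamma) \ge \dist_P(\gamma(a),\gamma(b)) = \dist_P(x,y)$, directly from Definition~\ref{def:length}, so the infimum is at least $\dist_P(x,y)$. On the other hand, given $T > \dist_P^\infty(x,y)$, the definition of $\dist_P^\infty$ provides a smooth $P$-subunit curve $\gamma$ on some interval $[0,T']$ with $T' \le T$ joining $x$ to $y$, and Proposition~\ref{prp:lengthintegral} gives $\ell_P(\gamma) = \int_0^{T'} P^*(\gamma'(t)) \,{\D}t \le T' \le T$ since $\gamma$ is $P$-subunit; hence the infimum is at most $T$, and letting $T \downarrow \dist_P^\infty(x,y) = \dist_P(x,y)$ finishes the proof. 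I do not expect a real obstacle here: the substantial content is Theorem~\ref{thm:randomapproximation}, and the only subtle step is the logical bridge from \eqref{eq:liminfdistances}, which controls $\dist_P^\flow$ only near $y$ and not at $y$, to the global identities --- which is exactly where the "varietal" hypothesis, hence continuity, enters.
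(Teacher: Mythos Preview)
Your approach is essentially the paper's, and everything from \eqref{eq:liminfdistances} onward---the use of continuity under the varietal hypothesis, the squeeze via \eqref{eq:distancesinequalities}, and the identification with \eqref{eq:distancelengthsmooth}---matches the paper's proof almost verbatim.

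There is, however, one genuine gap in your derivation of \eqref{eq:liminfdistances}. The point $\delta(T) \in W$ produced by Theorem~\ref{thm:randomapproximation} might happen to be $y$ itself, and in that case it does not witness the $\liminf$, which is taken over $z \neq y$. Notice that your argument never uses the hypothesis $x \neq y$; that is a sign something is missing, since the inequality can fail when $x = y$ (take any situation where $\dist_P^\flow$ is not varietal at $y$). The paper closes this gap as follows: restrict to open neighbourhoods $W$ of $y$ that do not contain $x$, which is possible precisely because $x \neq y$. Then $\delta(0) = x \notin W$ forces $\delta$ to be non-constant, so the connected image $\delta([0,T])$ has more than one point, and hence the relatively open set $\delta([0,T]) \cap W$ cannot reduce to the singleton $\{y\}$. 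Picking any $z = \delta(s) \in \delta([0,T]) \cap W \setminus \{y\}$ and restricting $\delta$ to $[0,s]$ gives $\dist_P^\flow(x,z) \leq s \leq T$, as required.
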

\begin{proof}
The inequality \eqref{eq:liminfdistances} is trivially satisfied when $\dist_P(x,y) = \infty$.
Otherwise, by Theorem~\ref{thm:randomapproximation}, for all open neighbourhoods $W$ of $y$ that do not contain $x$ and all $T$ greater than $\dist_P(x,y)$, we may find $\delta \in \Gamma_P^\flow( \leftclosedint 0,T\rightclosedint )$ such that $\delta(0) = x$ and $\delta(T) \in W$.
Since $\delta(\leftclosedint 0,T\rightclosedint)$ is connected and $\delta$ is not constant, $\delta(\leftclosedint 0,T \rightclosedint) \cap W \neq \{y\}$; hence we may find $z \in \delta(\leftclosedint0,T\rightclosedint) \cap W \setminus \{y\}$ such that $\dist_P^\flow(x,z) \leq T$.

In particular, if $\dist_P^\flow$ is varietal, then it is continuous, hence
\[
\dist_P^\flow(x,y) \geq \dist_P(x,y) \geq \liminf_{z \to y} \dist_P^\flow(x,z) = \dist_P^\flow(x,y)
\]
by \eqref{eq:distancesinequalities} and \eqref{eq:liminfdistances}, and the equality $\dist_P = \dist_P^\flow$ follows.
In fact, \eqref{eq:distancesinequalities} and \eqref{eq:liminfdistances} also imply that $\dist_P(x,y) \geq \liminf_{z \to y} \dist_P^\infty(x,z)$ when $x \neq y$, therefore the same argument proves that $\dist_P = \dist_P^\infty$ whenever $\dist_P^\infty$ is varietal.
It remains to note that the infimum \eqref{eq:distancelengthsmooth} is at least $\dist_P(x,y)$ by Corollary~\ref{corol:lengthspacesmooth}, and at most $\dist_P^\infty(x,y)$ because $\ell_P(\gamma) \leq T$ for all $\gamma \in \Gamma_P^\infty( \leftclosedint 0,T\rightclosedint )$.
\end{proof}

It is interesting to compare the expressions for the distance as the infimum of the lengths of curves in the preceding corollary, Proposition \ref{prp:lengthspace}, and Corollary \ref{corol:lengthspacesmooth}.
When the function $x \mapsto \dim Z(P_x)$ is continuous, the equality of $\dist_P^\infty(x,y)$ and \eqref{eq:distancelengthsmooth} may be obtained without the hypotheses of Corollary~\ref{cor:smoothdistanceequality}, thanks to the following result.

\begin{proposition}\label{prop:smoothreparametrisation}
Suppose that $x \mapsto \dim Z(P_x)$ is continuous and that $\gamma \in \Gamma^1(\leftclosedint a,b\rightclosedint)$.
Then $\ell_P(\gamma)$ is the infimum of the set of all $T \in \R^+$ for which there is a smooth diffeomorphism $r\colon \leftclosedint 0,T \rightclosedint \to \leftclosedint a,b\rightclosedint $ such that $\gamma \circ r$ is $P$-subunit.
\end{proposition}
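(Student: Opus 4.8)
The plan is to reduce everything to an arc-length reparametrisation, after first replacing $P^*(\gamma'(\cdot))$ by a \emph{smooth} majorant. The easy half, $\ell_P(\gamma) \le \inf\{\dots\}$, is immediate: an increasing homeomorphism $r$ leaves the sums defining $\ell_P$ in Definition~\ref{def:length} unchanged, so $\ell_P(\gamma\circ r) = \ell_P(\gamma)$, while a $P$-subunit curve on $[0,T]$ has $P$-length at most $T$ (from Proposition~\ref{prp:lengthintegral}, or from Proposition~\ref{prp:subfinslermetric}); hence every $T$ in the set on the right is $\ge \ell_P(\gamma)$. This also settles the case $\ell_P(\gamma) = \infty$, since then that set is empty and its infimum is $+\infty$.

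So assume $\ell_P(\gamma) < \infty$ and set $v(t) = P^*(\gamma'(t))$. \textbf{This is the one place the constant-dimension hypothesis is used}: as $\dim Z(P_x) = \codim F(P^*_x)$, the hypothesis makes $x \mapsto \dim F(P^*_x)$ continuous, so by Proposition~\ref{prp:constantdimension} the set $F(P^*)$ is closed in $TM$ and $P^*$ is continuous on it. Since $\gamma \in \Gamma^1$, the map $\gamma'\colon [a,b] \to TM$ is continuous, so $\{t : \gamma'(t) \notin F(P^*)\} = (\gamma')^{-1}(TM \setminus F(P^*))$ is open in $[a,b]$; by Proposition~\ref{prp:lengthintegral}, $\int_a^b v(t)\,{\D}t = \ell_P(\gamma) < \infty$, so this open set has measure zero, hence is empty. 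Therefore $\gamma'(t) \in F(P^*)$ for \emph{every} $t$, and $v = P^* \circ \gamma'$ is a continuous nonnegative function on $[a,b]$ with $\int_a^b v = \ell_P(\gamma)$.

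Given $\epsilon > 0$, I would then choose, by uniform approximation of the continuous function $v$ on the compact interval $[a,b]$, a smooth $w\colon [a,b] \to \R^+$ with $w \ge v$ everywhere and $\int_a^b w < \ell_P(\gamma) + \epsilon$ (take a smooth function within $\eta$ of $v$ in sup-norm and add $2\eta$, with $\eta$ small; this also makes $w > 0$). Then $s(t) = \int_a^t w(\tau)\,{\D}\tau$ is a smooth diffeomorphism of $[a,b]$ onto $[0,T]$, where $T = \int_a^b w < \ell_P(\gamma) + \epsilon$, since $s' = w > 0$; let $r = s^{-1}\colon [0,T] \to [a,b]$, also a smooth diffeomorphism, with $r'(\sigma) = 1/w(r(\sigma))$. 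Then $\gamma \circ r$ is $C^1$, hence absolutely continuous, and by the chain rule and positive homogeneity of $P^*$,
\[
P^*\bigl((\gamma\circ r)'(\sigma)\bigr) = r'(\sigma)\, v(r(\sigma)) = \frac{v(r(\sigma))}{w(r(\sigma))} \le 1
\qquant \sigma \in [0,T],
\]
so $\gamma \circ r$ is $P$-subunit and $T$ lies in the set on the right-hand side. Letting $\epsilon \to 0$ yields $\inf\{\dots\} \le \ell_P(\gamma)$, completing the proof.

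The only real subtlety — and the only use of the hypothesis — is the passage from ``$v$ finite almost everywhere'' to ``$v$ continuous on $[a,b]$''; once that is in hand, the smooth majorant $w$ is needed in place of arc-length itself only because $v$ is merely continuous, not smooth, and everything else is routine.
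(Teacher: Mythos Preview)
Your proof is correct and follows essentially the same approach as the paper's own proof: establish the easy inequality first, then use Proposition~\ref{prp:constantdimension} together with the openness--measure-zero argument to show that $t \mapsto P^*(\gamma'(t))$ is finite and continuous on all of $[a,b]$, and finally construct the smooth diffeomorphism as the inverse of the antiderivative of a smooth strict majorant of $P^*(\gamma'(\cdot))$. The only cosmetic difference is that the paper pins down the majorant explicitly as lying between $P^*(\gamma') + \epsilon/2$ and $P^*(\gamma') + \epsilon$, whereas you obtain it by uniform approximation; both constructions achieve exactly the same thing.
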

\begin{proof}
By Proposition~\ref{prp:subfinslermetric}, $\ell_P(\gamma) = \ell_P(\gamma \circ r) \leq T$ when $\gamma \circ r \in \Gamma_P^\subunit( \leftclosedint 0,T\rightclosedint )$.
Hence we are done if $\ell_P(\gamma) = \infty$.
Otherwise, by Propositions~\ref{prp:constantdimension} and~\ref{prp:lengthintegral}, the set $\{ t \in \leftclosedint a,b \rightclosedint \tc P^*(\gamma'(t)) < \infty\}$ is closed in $\leftclosedint a,b\rightclosedint$ and has full measure, so is all of $\leftclosedint a,b\rightclosedint$.
This means that $P^*(\gamma'(t)) < \infty$ for all $t \in \leftclosedint a,b\rightclosedint$, and hence $t \mapsto P^*(\gamma'(t))$ is continuous by Proposition~\ref{prp:constantdimension} again.

Take now $\epsilon \in \R^+$.
We may find a smooth function $h_\epsilon\colon [a,b] \to \R$ such that
\[
P^*(\gamma'(t)) + \epsilon/2 \leq h_\epsilon(t) \leq P^*(\gamma'(t)) + \epsilon
\qquant t \in \leftclosedint a,b\rightclosedint,
\]
and then define the smooth diffeomorphism $s_\epsilon\colon [a,b] \to [0,s_\epsilon(b)]$ by
\[
s_\epsilon(t) = \int_a^t h_\epsilon(\tau) \,{\D}\tau.
\]
Denote by $r_\epsilon\colon [0,s_\epsilon(b)] \to [a,b]$ the inverse of $s_\epsilon$; then $\gamma \circ r_\epsilon$ is $P$-subunit since $(\gamma \circ r_\epsilon)' = (\gamma' \circ r_\epsilon) / (h_\epsilon \circ r_\epsilon)$.
The conclusion now follows because
\[
s_\epsilon(b) = \int_a^b h_\epsilon(\tau) \,{\D}\tau \to \ell_P(\gamma)
\]
as $\epsilon \to 0$ by Proposition~\ref{prp:lengthintegral}.
\end{proof}

In \S~\ref{subsection:nonsmootharclength}, we show that Proposition \ref{prop:smoothreparametrisation} need not hold if $x \mapsto \dim Z(P_x)$ is not continuous.

\section{The control distance for a differential operator}

Take $D \in \Diff_1(\bundle{E},\bundle{F})$.
We define a continuous fibre seminorm $P_D$ on $T^* M$ by
\[
P_D(\xi) = |\sigma_1(D)(\xi)|_\op.
\]
All the notions introduced in Section \ref{section:subfinsler} in connection with the seminorm $P_D$ may be applied to $D$: we will speak, for example, of $D$-subunit vectors and $D$-subunit curves, and the distance function $ \dist_{P_D}$ will be called the \emph{control distance function} associated to $D$ and written $\dist_D$.
These notions depend only on the seminorm $P_D$, so by \eqref{eq:symbadj}, they do not change if we replace $D$ with $D^+$, or with the operator $\DD \in \Diff_1(\bundle{E} \oplus \bundle{F}, \bundle{E} \oplus \bundle{F})$ given by
\begin{equation}\label{eq:selfadjointreduction}
\DD (f,g) = (D^+g, Df),
\end{equation}
which satisfies $\DD = \DD^+$ and
\[
\sigma_1(\DD)(\xi) = \begin{pmatrix} 0 & \sigma_1(D^+)(\xi) \\ \sigma_1(D)(\xi) & 0 \end{pmatrix},
\]
so $P_{\DD} = P_D = P_{D^+}$.

We will also say that $D$ is complete if $P_D$ is complete.
In particular, by Proposition~\ref{prp:compactsupportcomplete}, $D$ is complete if its symbol $\sigma_1(D)$ is a compactly-supported section of $\Hom(\C T^* M, \Hom(\bundle{E},\bundle{F}))$.

\subsection{The weak differentiability of Lipschitz functions}

Recall from \S~\ref{subsection:hoermander} that a $D$-subunit vector field is a smooth section $X$ of $TM$ that is $D$-subunit at each point of $M$, and that $\dist_D^\flow \geq \dist_D$ because any concatenation of flow curves of $D$-subunit fields is a $D$-subunit curve.

\begin{lemma}\label{lem:lipschitzsubunit}
Suppose that a representative of $f \in L^1_\loc(\bundle{T})$ satisfies
\[
|f(x) - f(y)| \leq L \dist_D^\flow(x,y)
\qquant x,y \in M,
\]
where $L \in \R^+$.
Then, for all $D$-subunit vector fields $X$, the distributional derivative $X f$ is in $L^\infty(\bundle{T})$ and $\| Xf \|_\infty \leq L$.
\end{lemma}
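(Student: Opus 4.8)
The plan is to realise $Xf$ as a distributional limit of difference quotients along the flow of $X$, each of which is pointwise bounded by $L$. Fix a $D$-subunit vector field $X$ and a point $x_0 \in M$. By the local existence theory for flows there are a relatively compact open neighbourhood $V$ of $x_0$ and $\epsilon \in \R^+$ such that the flow $\Phi_h$ of $X$ is defined and smooth on a neighbourhood of the closure of $V$, and is a diffeomorphism onto its image, for all $h$ with $|h| < \epsilon$. For such $h$ I set $D_h f = h^{-1}(f \circ \Phi_h - f)$ on $V$, using the given Lipschitz representative of $f$; this is a measurable function on $V$. Since $t \mapsto \Phi_t(x)$ is a flow curve of the $D$-subunit vector field $X$, it is a $D$-subunit piecewise flow curve joining $x$ to $\Phi_h(x)$, so $\dist_D^\flow(\Phi_h(x), x) \leq |h|$; hence $|D_h f(x)| \leq L$ for every $x \in V$, and in particular $D_h f \in L^\infty(V)$ with $\|D_h f\|_{L^\infty(V)} \leq L$ for all $|h| < \epsilon$.

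Next I would verify that $D_h f \to Xf$ in the sense of distributions on $V$ as $h \to 0$. For $\phi \in C^\infty_\comp(\bundle{T})$ with $\supp \phi \subseteq V$, the change of variables $y = \Phi_h(x)$ gives
\[
\llangle f \circ \Phi_h, \phi \rrangle = \llangle f, (\phi \circ \Phi_{-h}) \, J_h \rrangle,
\]
where $J_h$ is the smooth positive Jacobian density of the substitution with respect to the fixed measure on $M$, so that $J_0 \equiv 1$; therefore
\[
\llangle D_h f, \phi \rrangle = \llangle f, \ h^{-1}\bigl( (\phi \circ \Phi_{-h}) \, J_h - \phi \bigr) \rrangle.
\]
As $h \to 0$, the sections $h^{-1}( (\phi \circ \Phi_{-h}) J_h - \phi )$ converge uniformly, with supports contained in a fixed compact subset of $V$, to $-X\phi - (\Div X) \phi = X^+\phi$, where $X^+$ is the formal adjoint of $X$ (the divergence being taken with respect to the measure on $M$). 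Since $f \in L^1_\loc(\bundle{T})$, the dominated convergence theorem yields $\llangle D_h f, \phi \rrangle \to \llangle f, X^+\phi \rrangle = \llangle Xf, \phi \rrangle$ by \eqref{eqn:defn-distn}. Together with the bound of the previous paragraph this gives $|\llangle Xf, \phi \rrangle| \leq L \int_V |\phi| \,{\D}x$ for all such $\phi$; since these are dense in $L^1(V)$, the distribution $Xf$ is represented on $V$ by a function in $L^\infty(V)$ of norm at most $L$.

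Finally, letting $x_0$ range over $M$ and using uniqueness of the local representatives, these patch to a single $g \in L^\infty(\bundle{T})$ with $\|g\|_\infty \leq L$ and $Xf = g$; a partition of unity reduces the verification that $Xf = g$ distributionally to the local identities already established. I expect the only real work to be the change-of-variables identity and the computation of the uniform limit of the rescaled test sections as $X^+\phi$; the remaining steps are routine. One could alternatively work in flow-box coordinates in which $X = \partial/\partial x_1$, reducing to the one-dimensional fact that an $L$-Lipschitz function is absolutely continuous with derivative bounded by $L$, and then invoking Fubini together with the characterisation of weak derivatives by absolute continuity along almost every coordinate line.
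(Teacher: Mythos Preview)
Your proof is correct and follows essentially the same approach as the paper's: both realise $Xf$ as the distributional limit of difference quotients $h^{-1}(f\circ\Phi_h-f)$ along the flow of $X$, bound these pointwise by $L$ via the $\dist_D^\flow$-Lipschitz hypothesis, and pass to the limit after a change of variables that produces $X^+\phi$. The only cosmetic differences are that the paper localises by multiplying $X$ by a bump function and reduces to Lebesgue measure in a chart, whereas you localise to a flow domain and track the Jacobian $J_h$ directly; both are fine.
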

\begin{proof}
Compare with \cite[Theorem~1.3]{garofalo_lipschitz_1998}.
Without loss of generality, we suppose that $L = 1$.

Take $x \in M$, and a smooth bump function $\eta$ that is equal to $1$ in a neighbourhood of $x$.
It is enough to show that $\eta Xf \in L^1_\loc(\bundle{T})$ and $\| \eta Xf \|_\infty \leq 1$.
We may therefore suppose that $\supp X$ is compact and contained in a coordinate chart.
Moreover, since weak derivatives are independent of the measure on $M$, we may suppose that the measure coincides with Lebesgue measure in coordinates.
It will then be sufficient to show that
\[
|\llangle Xf, \overline\phi \rrangle| \leq \|\phi\|_1
\]
for all $\phi \in C^\infty_\comp(\bundle{T})$ with support contained in the coordinate chart.

Now
\[
\llangle Xf, \overline\phi \rrangle
= \llangle f, X^+ \overline\phi \rrangle
= -\int f (x)\, X \phi(x) \,{\D}x - \int f(x) \, (\Div X)(x) \, \phi(x) \,{\D}x.
\]
Denote by $(t,x) \mapsto F_t(x)$ the flow of $X$, so
\[
\begin{aligned}
-\int f(x) \, X \phi(x) \,{\D}x
&= \lim_{t \to 0} \int f(x) \frac{\phi(F_{-t}(x)) - \phi(x)}{t} \,{\D}x \\
&= \lim_{t \to 0} \int \frac{f(F_{t}(x)) - f(x)}{t} \phi(x) \,\det {\D}F_{t}(x) \,{\D}x \\
&\qquad + \lim_{t\to 0} \int f(x) \phi(x) \frac{\det {\D}F_{t}(x) - 1}{t} \,{\D}x.
\end{aligned}
\]
Note that the last limit exists, since $({\D} \det {\D}F_t(x)/ {\D}x) \bigr|_{t=0}  = \Div X(x)$, hence
\[
\llangle Xf, \overline\phi \rrangle
= \lim_{t \to 0} \int \frac{f(F_{t}(x)) - f(x)}{t} \phi(x) \,\det {\D}F_{t}(x) \,{\D}x.
\]
Further, $t \mapsto F_t(x)$ is a flow curve of the $D$-subunit field $X$ for all $x$.
Therefore $\dist_D^\flow(F_t(x),x) \leq |t|$ and so $|f(F_{t}(x))-f(x)|\leq |t|$.
Moreover, $\det {\D}F_0$ is identically equal to $1$, and the desired conclusion follows.
\end{proof}

\begin{proposition}\label{prp:lipschitzsymbol}
Suppose that a representative of $f \in L^1_\loc(\bundle{T}_\R)$ satisfies
\[
|f(x) - f(y)| \leq L \dist_D^\flow(x,y)
\qquant x,y \in M,
\]
where $L \in \R^+$.
Then $f$ is weakly $\Symbol{D}$-differentiable and $\|\,|\Symbol{D} f|_\op\|_\infty \leq L$.
\end{proposition}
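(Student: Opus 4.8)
The plan is to derive the statement from Lemma~\ref{lem:lipschitzsubunit}, which already bounds $Xf$ for $D$-subunit vector fields $X$, by exploiting the duality inherent in the definition $P_D(\xi)=|\sigma_1(D)(\xi)|_\op$. Both assertions --- weak $\Symbol{D}$-differentiability and the bound on $\|\,|\Symbol{D}f|_\op\|_\infty$ --- are local, so I would fix a coordinate chart $U=U_\alpha$, choose isometric trivialisations of $\bundle{E}|_U$ and $\bundle{F}|_U$, work there, and assume without loss of generality that $L=1$.

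The device is this. Given smooth sections $e$ of $\bundle{E}|_U$ and $\phi$ of $\bundle{F}|_U$ of pointwise norm at most $1$, define a smooth section $X_{e,\phi}$ of $\C TU$ by
\[
\xi(X_{e,\phi}(x))=\langle\sigma_1(D)(\xi)\,e(x),\phi(x)\rangle
\qquant \xi\in\C T^*_xM \text{ and } x\in U.
\]
For real $\xi$ one has $|\xi(\operatorname{Re}X_{e,\phi}(x))|\le|\sigma_1(D)(\xi)|_\op=P_D(\xi)$, so $\operatorname{Re}X_{e,\phi}$ is a $D$-subunit (real, smooth) vector field on $U$, and likewise $\operatorname{Im}X_{e,\phi}$; multiplying by a bump function $\zeta$ supported in $U$ produces a $D$-subunit vector field on $M$, to which Lemma~\ref{lem:lipschitzsubunit} applies. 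On the other hand, when $f$ is smooth, $\Symbol{D}f(x)=\sigma_1(D)(df_x)$ gives $\langle\Symbol{D}f(x)\,e(x),\phi(x)\rangle=(df_x)(X_{e,\phi}(x))=(X_{e,\phi}f)(x)$; mollifying $f$ and passing to the limit (or invoking the distributional Leibniz rule \eqref{eq:dist_leibniz} in the form $(\Symbol{D}f)e=D(fe)-f\,De$) shows that
\[
\langle(\Symbol{D}f)\,e,\phi\rangle=X_{e,\phi}f
\]
as scalar distributions for every $f\in L^1_\loc(\bundle{T}_\R)$.

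From here I would conclude in two steps. First, taking $e$ and $\phi$ from orthonormal smooth frames $e_1,\dots,e_r$ of $\bundle{E}|_U$ and $\phi_1,\dots,\phi_s$ of $\bundle{F}|_U$: Lemma~\ref{lem:lipschitzsubunit} applied to $\zeta\operatorname{Re}X_{e_i,\phi_j}$ and $\zeta\operatorname{Im}X_{e_i,\phi_j}$, together with the locality of differential operators, gives $\langle(\Symbol{D}f)e_i,\phi_j\rangle\in L^\infty_\loc(U)$; and since $\Symbol{D}f=\sum_i e_i^*\otimes\bigl((\Symbol{D}f)e_i\bigr)=\sum_{i,j}\langle(\Symbol{D}f)e_i,\phi_j\rangle\,e_i^*\otimes\phi_j$, it follows that $\Symbol{D}f\in L^\infty_\loc$, in particular $\Symbol{D}f\in L^1_\loc$, i.e.\ $f$ is weakly $\Symbol{D}$-differentiable. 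Second, fixing countable dense subsets of the unit spheres of the fibres and realising them, via the isometric trivialisations, as values of smooth sections $e_i$ of $\bundle{E}|_U$ with $|e_i|\equiv1$ and $\phi_j$ of $\bundle{F}|_U$ with $|\phi_j|\equiv1$: for each $(i,j)$ one gets $\operatorname{Re}\langle\Symbol{D}f(x)\,e_i(x),\phi_j(x)\rangle=(\operatorname{Re}X_{e_i,\phi_j})f(x)\le1$ for almost every $x\in U$, and intersecting these countably many full-measure sets, while using
\[
|\Symbol{D}f(x)|_\op=\sup\{\operatorname{Re}\langle\Symbol{D}f(x)\,a,b\rangle:\ |a|\le1,\ |b|\le1\}=\sup_{i,j}\operatorname{Re}\langle\Symbol{D}f(x)\,e_i(x),\phi_j(x)\rangle
\]
(continuity of the pairing in $(a,b)$, valid once $\Symbol{D}f$ is known to be locally essentially bounded), yields $|\Symbol{D}f(x)|_\op\le1$ for almost every $x\in U$. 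Covering $M$ by such charts completes the argument.

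The one point that genuinely needs an idea, rather than bookkeeping, is the passage from ``$Xf$ is bounded for each $D$-subunit vector field $X$'' to ``$\Symbol{D}f$ is bounded in operator norm'': Lemma~\ref{lem:lipschitzsubunit} controls only one subunit direction at a time, and in general there need not be enough smooth $D$-subunit vector fields to span the fibres of finite vectors. The construction of $X_{e,\phi}$ out of the symbol circumvents this, manufacturing for every pair $(e,\phi)$ an honestly smooth $D$-subunit field whose derivative of $f$ computes exactly the matrix coefficient $\langle\Symbol{D}f\,e,\phi\rangle$, so that the supremum defining the operator norm is matched entry by entry by quantities that Lemma~\ref{lem:lipschitzsubunit} already bounds. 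The remaining care --- keeping the real and imaginary parts of $X_{e,\phi}$ apart, justifying the distributional identity $\langle(\Symbol{D}f)e,\phi\rangle=X_{e,\phi}f$, and the bump-function localisation required to invoke the Lemma --- is routine.
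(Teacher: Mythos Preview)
Your argument is correct and follows a route genuinely different from the paper's. The paper first reduces to the formally self-adjoint case via $\DD=D\oplus D^+$, then defines the \emph{quadratic} vector fields $X_V h=\langle\I\sigma_1(D)({\D}h)V,V\rangle$ for sections $V$ of $\bundle{E}$; self-adjointness is precisely what makes these real, and the sharp bound $|\Symbol{D}f|_\op\le1$ then comes from the fact that the operator norm of the skew-adjoint section $\Symbol{D}f$ is attained on diagonal matrix coefficients. To reach the off-diagonal coefficients (needed for the crude $L^\infty$ bound) the paper polarises, incurring a factor of~$3$. You instead work directly with the \emph{bilinear} fields $X_{e,\phi}$ (with $e$ in $\bundle{E}$, $\phi$ in $\bundle{F}$) and observe that the real part of such a complex vector field is already $D$-subunit, without any self-adjointness hypothesis; the sharp bound follows because $|A|_\op=\sup_{|a|,|b|\le1}\operatorname{Re}\langle Aa,b\rangle$ for arbitrary linear maps. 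This avoids the reduction to $\DD$ and the polarisation step, at the cost of splitting each $X_{e,\phi}$ into real and imaginary parts; on balance your approach is slightly more direct. Both proofs rest on the same Lemma~\ref{lem:lipschitzsubunit} and the same countable-supremum device for the sharp pointwise bound.
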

\begin{proof}
Again, we suppose that $L=1$.

Consider first the case where $\bundle{E} = \bundle{F}$ and $D = D^+$.
Given any section $V \in C^\infty(\bundle{E})$, define the differential operator $X_V \in \Diff_1(\bundle{T}, \bundle{T})$ by
\[
X_V h = \I V^* (\Symbol{D} h) V = \langle \I \sigma_1(D)({\D}h) V, V \rangle ;
\]
this is the differential operator $\Symbol{D}$ composed with the multiplication operator $g \mapsto \I V^* g V$.
This operator is homogeneous (that is, it annihilates constants) and preserves real-valued functions by \eqref{eq:dist_tauadj}, because $D = D^+$; therefore $X_V$ corresponds to a smooth vector field on $M$.
Moreover, if $\|V\|_\infty \leq 1$, then
\[
|{\D}h(X_V)| = |\langle \I \sigma_1(D)({\D}h) V, V \rangle| \leq |\sigma_1(D)({\D}h)|_\op,
\]
from which it follows that $X_V$ is a $D$-subunit vector field; in this case, therefore, $X_V f \in L^\infty(\bundle{T})$ and $\| X_V f \|_\infty \leq 1$ by Lemma~\ref{lem:lipschitzsubunit}.

More generally, we may define the operators $X_{V,W} h = \I W^* (\Symbol{D} h) V$, and \eqref{eq:dist_tauadj} implies that $\afterline{(X_{V,W} h)} = X_{W,V} \overline{h}$.
Since $f$ is real-valued,
\[
X_{V,W} f = \frac{1}{2} (X_{V+W} f - X_V f - X_W f) + \frac{\I }{2} (X_{V+\I W} f - X_V f - X_{\I W} f) ,
\]
so $\| X_{V,W} f \|_\infty \leq 3$  when $\max\{ \|V\|_\infty, \|W\|_\infty\} \leq 1$.

To prove that $\Symbol{D} f \in L^\infty$, it will be sufficient to show that there is a constant $\kappa$ such that
\[
|\llangle \Symbol{D} f, \phi \rrangle| \leq \kappa \|\phi\|_1
\]
for each $\phi \in C^\infty_\comp(\Hom(\bundle{E},\bundle{E}))$ supported in an open subset $U$ of $M$ in which $\bundle{E}$ is trivialisable.
We may write $\phi$ as
\[
\phi = \sum_{j,k} \phi_{j,k} V_k^* \otimes V_j
\]
for a suitable choice of orthonormal frame $\{V_1,\dots,V_r\}$ of $\bundle{E}|_U$ and sections $\phi_{j,k} \in C^\infty_\comp(\bundle{T}|_U)$, so
\[
\llangle \Symbol{D} f, \phi \rrangle = \sum_{j,k}  \llangle V_j^* (\Symbol{D} f) V_k, \phi_{j,k} \rrangle = - \I \sum_{j,k} \llangle X_{V_k,V_j} f, \phi_{j,k} \rrangle,
\]
hence
\[
|\llangle \Symbol{D} f, \phi \rrangle| \leq 3 \sum_{j,k} \|\phi_{j,k}\|_1 \leq 3 r^2 \|\phi\|_1.
\]

Thus $\Symbol{D} f$ is an $L^\infty$-section of $\Hom(\bundle{E},\bundle{E})$ that satisfies $(\Symbol{D} f)^* = -\Symbol{D} f$ pointwise almost everywhere.
By using local trivialisations, it is easy to construct a countable family of sections $V_m \in C^\infty_\comp(\bundle{E})$ such that $\|V_m \|_\infty \leq 1$ and the set of the $V_m (x)$ of unit norm is dense in the unit sphere of $\bundle{E}_x$ for all $x \in M$.
Thus
\[
|\Symbol{D} f|_\op = \sup_{m\in\N} |\langle (\Symbol{D} f) V_m, V_m\rangle| = \sup_{m\in\N} |X_{V_m} f| \leq 1
\]
pointwise almost everywhere.

In the general case, if $\DD$ is defined as in \eqref{eq:selfadjointreduction}, then $\DD^+ = \DD$ and $\dist_D = \dist_{\DD}$, therefore our last result implies that $\Symbol{\DD} f \in L^\infty$ and $\|\,|\Symbol{\DD} f|_\op\,\|_\infty \leq 1$.
However,
\[
\Symbol{\DD} h = \begin{pmatrix} 0 & \Symbol{(D^+)} h \\ \Symbol{D} h & 0 \end{pmatrix},
\]
therefore weak $\Symbol{\DD}$-differentiability implies weak $\Symbol{D}$-differentiability, and $|\Symbol{\DD} f|_\op = |\Symbol{D} f|_\op$ pointwise almost everywhere.
The conclusion follows.
\end{proof}

Proposition~\ref{prp:lipschitzsymbol} extends to complex-valued functions $f$, by decomposing $f$ in its real and imaginary parts; however in this way one obtains the weaker estimate $|\Symbol{D} f|_\op \leq 2$.
The example where $M = \R^2$, $\bundle{E} = \bundle{T}$, $D = \partial_1 - \I \partial_2$, and $f(x_1,x_2) = x_1 + \I x_2$ shows that this estimate cannot be improved; since $|\Symbol{\DD} f|_\op \geq |\Symbol{D} f|_\op$, the assumption that $D = D^+$ does not help.

A partial converse of Proposition~\ref{prp:lipschitzsymbol} is easily established under additional regularity assumptions on $f$.

\begin{proposition}\label{prp:regularsymbollipschitz}
Suppose that $f \in C^1(\bundle{T}_\R)$ and $\|\,|\Symbol{D} f|_\op\|_\infty \leq L$, where $L \in \R^+$.
Then
\[
|f(x) - f(y)| \leq L\dist_D(x,y)
\qquant x,y \in M.
\]
\end{proposition}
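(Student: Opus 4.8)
The plan is to estimate the variation of $f$ along an arbitrary $D$-subunit curve joining $x$ to $y$ and then take the infimum that defines $\dist_D$. First, since $f \in C^1(\bundle{T}_\R)$, the section $\Symbol{D} f$ of $\Hom(\bundle{E},\bundle{F})$ is continuous and, at each point $z \in M$, equals $\sigma_1(D)(\D f_z)$; hence $|\Symbol{D} f|_\op(z) = |\sigma_1(D)(\D f_z)|_\op = P_D(\D f_z)$, and the hypothesis $\|\,|\Symbol{D} f|_\op\|_\infty \leq L$ becomes $P_D(\D f_z) \leq L$ for all $z \in M$.

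Next I would record the elementary duality inequality: if $\xi \in T^*_z M$ satisfies $P_D(\xi) \leq L$ and $v \in T_z M$ is $D$-subunit, that is $P_D^*(v) \leq 1$, then $|\xi(v)| \leq L$. When $L > 0$ this follows by applying the definition of $P_D^*$ to $\xi/L$; when $L = 0$ it holds because then $\xi$ lies in the zero subspace $Z(P_{D,z})$, which annihilates the finite subspace $F(P^*_{D,z})$, and $v$ belongs to the latter since $P_D^*(v) < \infty$.

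Now suppose $\dist_D(x,y) < \infty$ (the assertion being vacuous otherwise), let $T > \dist_D(x,y)$, and pick a $D$-subunit curve $\gamma\colon [0,T] \to M$ with $\gamma(0) = x$ and $\gamma(T) = y$. Since $\gamma$ is absolutely continuous and $f$ is $C^1$, hence locally Lipschitz, the composition $f \circ \gamma$ is absolutely continuous on $[0,T]$; moreover, at every $t$ where $\gamma'(t)$ exists, that is, almost everywhere, the chain rule gives $(f \circ \gamma)'(t) = \D f_{\gamma(t)}(\gamma'(t))$, and since $\gamma'(t)$ is then $D$-subunit, the duality inequality yields $|(f \circ \gamma)'(t)| \leq L\, P_D^*(\gamma'(t)) \leq L$. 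Integrating over $[0,T]$ gives $|f(y) - f(x)| = |\int_0^T (f\circ\gamma)'(t)\,\D t| \leq LT$, and letting $T \downarrow \dist_D(x,y)$ completes the proof.

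I do not anticipate a serious obstacle here: the only delicate points are the absolute continuity of $f \circ \gamma$ and the validity of the chain rule almost everywhere (both standard for a $C^1$ function precomposed with an $AC$ curve on a compact interval, argued chart by chart), together with the degenerate case $L = 0$ in the duality inequality, which is handled above. If one preferred to sidestep that case, one could instead work with the fibre seminorm $P_D + \epsilon\,|\cdot|_g$ for a riemannian metric $g$ and small $\epsilon > 0$, then let $\epsilon \to 0$; but the direct argument is cleaner.
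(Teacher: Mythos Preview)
Your proposal is correct and follows essentially the same route as the paper: estimate $|(f\circ\gamma)'(t)|$ along an arbitrary $D$-subunit curve via the duality between $P_D$ and $P_D^*$, integrate, and take the infimum over curves. The paper's proof is slightly terser (it normalises to $L=1$ and writes the key inequality directly as $|\D f(\gamma'(t))| \leq |\sigma_1(D)(\D f)|_\op$), but the content is identical; your extra care about the $L=0$ case is harmless but unnecessary here since $L \in \R^+$.
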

\begin{proof}
Again, we suppose that $L=1$.

Take a $D$-subunit curve  $\gamma\colon [0,T] \to M$ from $x$ to $y$.
Then $f \circ \gamma\colon [0,T] \to \R$ is absolutely continuous and
\[
|(f \circ \gamma)'(t)| = |{\D}f|_{\gamma(t)}(\gamma'(t))| \leq |\sigma_1(D)({\D}f|_{\gamma(t)})|_\op = |\Symbol{D} f(\gamma(t))|_\op \leq 1
\]
for almost all $t \in [0,T]$, since $\gamma'(t)$ is $D$-subunit.
Hence
\[
|f(x)-f(y)| \leq \int_0^T |(f \circ \gamma)'(t)| \,{\D}t \leq T.
\]
The conclusion follows from the arbitrariness of $\gamma$.
\end{proof}

To remove the regularity assumptions on $f$ from the previous statement, we need an extra hypothesis on the $\dist_D$-topology.

\begin{proposition}\label{prp:symbollipschitz}
Suppose that $f \in W^\infty_{\Symbol{D},\loc}(\bundle{T}_\R)$ and $\dist_D$ is varietal.
Then $f$ has a continuous representative.
If also $\|\,|\Symbol{D} f|_\op\|_\infty \leq L$, where $L \in \R^+$, then this continuous representative satisfies
\[
|f(x) - f(y)| \leq L \dist_D(x,y)
\qquant x,y \in M.
\]
\end{proposition}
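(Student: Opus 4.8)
The plan is to obtain continuity first and then the Lipschitz bound, reducing in each case to Proposition~\ref{prp:regularsymbollipschitz} for $C^1$ sections by mollification. For the continuity statement it suffices, since the property is local and any local representatives produced will agree almost everywhere (hence everywhere, the measure being positive on nonempty open sets) on overlaps, to exhibit a continuous representative near an arbitrary $x_0 \in M$. Fix $\rho>0$ with $K_0 := \bar{B}_D(x_0,3\rho)$ compact, which is possible because $R_D(x_0)>0$; as $\dist_D$ is varietal, $V := B_D(x_0,\rho/2)$ is an open neighbourhood of $x_0$. For any $\veceps_m\to\veczeroplus$ the sections $g_m := J^\vectau_{\veceps_m}f$ are smooth, converge to $f$ in $L^1_\loc$, and, along a subsequence, almost everywhere, by Theorem~\ref{thm:generalmollifier}(i),(iv). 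Applying Theorem~\ref{thm:generalmollifier}(viii) to the first-order operator $\Symbol{D}\in\Diff_1(\bundle{T},\Hom(\bundle{E},\bundle{F}))$, with $\zeta$ a bump function equal to $1$ on a neighbourhood of $K_0$, and using that $f,\Symbol{D}f\in L^\infty_\loc$ and that $|\Symbol{D}g_m|_\op$ is continuous, produces a constant $C$ independent of $m$ with $\sup_{K_0}|\Symbol{D}g_m|_\op\le C$.

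Now let $x,y\in V$. Since $\dist_D(x,y)<\rho$, for every $T\in(\dist_D(x,y),\rho)$ there is a $D$-subunit curve $\gamma\colon[0,T]\to M$ from $x$ to $y$, and such a curve satisfies $\dist_D(x_0,\gamma(t))\le\dist_D(x_0,x)+t<3\rho$, so its image lies in $K_0$; hence, exactly as in the proof of Proposition~\ref{prp:regularsymbollipschitz}, $|g_m(x)-g_m(y)|\le\int_0^T|\Symbol{D}g_m(\gamma(t))|_\op\,{\D}t\le CT$. Taking the infimum over such $T$ gives $|g_m(x)-g_m(y)|\le C\dist_D(x,y)$, and passing to the almost-everywhere limit along the subsequence yields $|f(x)-f(y)|\le C\dist_D(x,y)$ for almost every $x,y\in V$. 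Since $\dist_D$ is continuous, $f$ restricted to the full-measure set where this bound holds is uniformly $\dist_D$-continuous, hence extends to a continuous function on $V$ that represents $f$; this proves the first assertion.

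For the Lipschitz estimate, assume $\|\,|\Symbol{D}f|_\op\|_\infty\le L$; we may take $L>0$, the case $L=0$ following by letting $L\downarrow0$. Working with the continuous representative of $f$ just obtained, the function $P:=L^{-1}|\cdot|_\op$ is a continuous fibre seminorm on $\Hom(\bundle{E},\bundle{F})$ with $\|P(\Symbol{D}f)\|_\infty\le1$, so Corollary~\ref{cor:bdsmoothapprox}, applied to $\Symbol{D}$, provides real-valued sections $f_m\in C^\infty(\bundle{T}_\R)$ with $f_m\to f$ uniformly on compacta and $\|\,|\Symbol{D}f_m|_\op\|_\infty\le L$ for all $m$. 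By Proposition~\ref{prp:regularsymbollipschitz}, $|f_m(x)-f_m(y)|\le L\dist_D(x,y)$ for all $x,y\in M$, and letting $m\to\infty$ gives the claimed bound.

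The main obstacle is the continuity step: because $\Symbol{D}f$ lies only in $L^\infty_\loc$, and not in $L^\infty$, Proposition~\ref{prp:regularsymbollipschitz} cannot be applied to the mollifications $g_m$ directly — their symbols $\Symbol{D}g_m$ are controlled only on compact sets — so one must localise to the compact $\dist_D$-ball $K_0$ and verify that near-minimising subunit curves joining nearby points remain inside it; this is precisely the place where varietality of $\dist_D$ is used. (Once continuity is known, the sharp constant $L$ in the second part comes for free from Corollary~\ref{cor:bdsmoothapprox}.)
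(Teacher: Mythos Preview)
Your proof is correct and follows essentially the same strategy as the paper: mollify, obtain a uniform bound on $|\Symbol{D} J^\vectau_\veceps f|_\op$ over a compact $\dist_D$-ball via Theorem~\ref{thm:generalmollifier}, use the curve argument from Proposition~\ref{prp:regularsymbollipschitz} to get local equicontinuity, and then invoke Corollary~\ref{cor:bdsmoothapprox} for the sharp constant $L$. The only cosmetic difference is that the paper extracts the continuous representative by Arzel\`a--Ascoli on the family $\{J^\vectau_\veceps f\}$ (using equiboundedness from part (iii) as well), whereas you pass to an almost-everywhere convergent subsequence and extend by uniform continuity; both routes are standard and equivalent here.
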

\begin{proof}
Since $f$ is real-valued, the smooth approximants $J_\veceps f$ given by Theorem~\ref{thm:generalmollifier} are real-valued too, and form a bounded set in $W^\infty_{\Symbol{D},\loc}(\bundle{T})$.

Given any $x \in M$, take $R_x$ in $\leftopenint 0, R_D(\{x\})\rightopenint$ and write $K_x$ for the closed ball $\bar{B}_D(x,R_x)$.
By Proposition~\ref{prp:minimizingsubunit}, for all $y \in K_x$ there exists  a $D$-subunit curve $\gamma\colon [0,T] \to M$ joining $x$ to $y$ such that $T = \dist_D(x,y) \leq R_x$, hence the points of $\gamma$ lie in $K_x$.
Now, arguing as in the proof of Proposition~\ref{prp:regularsymbollipschitz},
\[
|J_\veceps f(x) - J_\veceps f(y)| \leq \dist_D(x,y) \sup_{z\in K_x} \, |\Symbol{D} J_\veceps f(z)|_\op.
\]
Since $\dist_D$ is varietal, the boundedness of the set $\{J_\veceps f\}_{\veceps \in \Epsilon}$  in $W^\infty_{\Symbol{D},\loc}(\bundle{T})$ implies the local equiboundedness and equicontinuity of the $J_\veceps f$, so, by the Arzel\`a--Ascoli theorem, there exists a subsequence of the net $(J_\veceps f)_{\veceps\in\Epsilon}$ that converges uniformly on compacta to a continuous function $g\colon M \to \R$.
Further, $f = g$ pointwise almost everywhere since $J_\veceps f \to f$ in $L^1_\loc(\bundle{E})$, and by replacing $f$ with $g$ we may suppose that $f$ is continuous on $M$.

If moreover $\|\,|\Symbol{D} f|_\op\|_\infty \leq L$ almost everywhere, then Corollary~\ref{cor:bdsmoothapprox} yields a sequence of smooth real-valued functions $f_m$ that converges locally uniformly to $f$, for which $\|\,|\Symbol{D} f_m |_\op\|_\infty \leq L$.
Since these $f_m$ are $L$-Lipschitz with respect to $\dist_D$ by Proposition~\ref{prp:regularsymbollipschitz}, their limit $f$ is $L$-Lipschitz too.
\end{proof}

In general, Propositions~\ref{prp:regularsymbollipschitz} and \ref{prp:symbollipschitz} do not extend to complex-valued functions $f$.
Indeed, suppose that $M = \C = \R^2$, $\bundle{E} = \bundle{T}$, and $D = \partial_1 + \I\partial_2$; then a holomorphic function $f\colon \C \to \C$ satisfies $D f = 0$ but may not be globally Lipschitz.
However, when $D = D^+$, the propositions do extend, since, by \eqref{eq:dist_tauadj},
\[
|\Symbol{D} \operatorname{Re} f|_\op \leq (|\Symbol{D} f|_\op + |\Symbol{D} \overline{f}|_\op)/2 = |\Symbol{D} f|_\op .
\]

We present now a consequence of Proposition~\ref{prp:lipschitzsymbol} that does not require $\dist_D$ to be varietal.

\begin{proposition}\label{prp:sobolev0}
Suppose that $1 \leq p < \infty$.
Then $L^p_c \cap W^p_D(\bundle{E}) \subseteq W^p_{D,0}(\bundle{E})$.
If $D$ is complete, then $W^p_D(\bundle{E}) = W^p_{D,0}(\bundle{E})$.
\end{proposition}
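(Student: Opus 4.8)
The plan is to treat the two assertions separately: the inclusion $L^p_\comp \cap W^p_D(\bundle{E}) \subseteq W^p_{D,0}(\bundle{E})$ is a routine mollification-plus-cutoff argument, while the equality for complete $D$ exploits the $\dist_D^\flow$-Lipschitz cutoff functions that Proposition~\ref{prp:lipschitzsymbol} makes available.

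For the inclusion, given $f \in L^p_\comp(\bundle{E})$ with $Df \in L^p(\bundle{F})$, I would fix a bump function $\eta$ equal to $1$ on a neighbourhood of the compact set $\supp f$, so that $\eta f = f$, and invoke Corollary~\ref{cor:cssmoothapprox} to obtain $f_m \in C^\infty_\comp(\bundle{E})$ with $(f_m,Df_m) \to (f,Df)$ in $L^p_\loc$. The approximants $\eta f_m \in C^\infty_\comp(\bundle{E})$ satisfy $D(\eta f_m) = (\Symbol{D}\eta)f_m + \eta\,Df_m$ by the classical Leibniz rule, and since $\eta$ and $\Symbol{D}\eta$ are bounded with compact support, multiplication by them is continuous from $L^p_\loc$ to $L^p_\comp$; thus $\eta f_m \to \eta f = f$ and $(\Symbol{D}\eta)f_m + \eta\,Df_m \to (\Symbol{D}\eta)f + \eta\,Df$ in $L^p$. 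As $\Symbol{D}\eta$ vanishes on the neighbourhood of $\supp f$ where $\eta \equiv 1$, we have $(\Symbol{D}\eta)f = 0$, while $\supp Df \subseteq \supp f$ (differential operators being local), so $\eta\,Df = Df$ and the limit is simply $Df$. Hence $\eta f_m \to f$ in $W^p_D(\bundle{E})$ and $f \in W^p_{D,0}(\bundle{E})$.

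For the second assertion, assuming $D$ complete, I would reduce by the first part to approximating an arbitrary $f \in W^p_D(\bundle{E})$ in $W^p_D$-norm by compactly supported sections. Fix a compact exhaustion $K_1 \subseteq K_2 \subseteq \cdots$ of $M$ with $K_j \subseteq \operatorname{int}(K_{j+1})$ and $\bigcup_j K_j = M$, and set $\chi_j = g \circ \dist_D(K_j,\cdot)$ with $g(t) = \max(0,1-t)$. Each $\chi_j$ is Borel (the balls $\bar B_D(K_j,s)$ being compact, hence closed), takes values in $[0,1]$, equals $1$ on $K_j$, and is supported in the compact set $\bar B_D(K_j,1)$ — this is where completeness enters — and satisfies $|\chi_j(x)-\chi_j(y)| \le \dist_D(x,y) \le \dist_D^\flow(x,y)$, so by Proposition~\ref{prp:lipschitzsymbol} it is weakly $\Symbol{D}$-differentiable with $\|\,|\Symbol{D}\chi_j|_\op\|_\infty \le 1$. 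Leibniz' rule in the form of Proposition~\ref{prp:leibniz} (with $h = \chi_j \in W^{p'}_{\Symbol{D},\loc}(\bundle{T})$) then gives $D(\chi_j f) = (\Symbol{D}\chi_j)f + \chi_j\,Df$, which is pointwise dominated by $|f| + |Df|$ and supported in $\bar B_D(K_j,1)$, so $\chi_j f \in L^p_\comp(\bundle{E}) \cap W^p_D(\bundle{E})$, hence lies in $W^p_{D,0}(\bundle{E})$ by the first part.

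Finally I would verify $\chi_j f \to f$ in $W^p_D(\bundle{E})$: both $|\chi_j f - f|$ and $|\chi_j\,Df - Df|$ are pointwise bounded by $(|f|+|Df|)\mathbf{1}_{M\setminus K_j}$, which tends to $0$ in $L^p$ by dominated convergence since $\bigcap_j(M\setminus K_j) = \emptyset$; moreover $\chi_j \equiv 1$ on $\operatorname{int}(K_j) \supseteq K_{j-1}$, so $\Symbol{D}\chi_j$ is supported in $M \setminus K_{j-1}$ and $\|(\Symbol{D}\chi_j)f\|_p \le \|f\|_{L^p(M\setminus K_{j-1})} \to 0$. Combining these with $D(\chi_j f) = (\Symbol{D}\chi_j)f + \chi_j\,Df$ gives convergence in $W^p_D(\bundle{E})$, and since $W^p_{D,0}(\bundle{E})$ is closed in $W^p_D(\bundle{E})$ we conclude $f \in W^p_{D,0}(\bundle{E})$. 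The main obstacle is the construction of the $\chi_j$: they must be compactly supported, which forces the use of $\dist_D$-balls and hence of completeness; their $\Symbol{D}$-images must be bounded in operator norm, which forces $\dist_D^\flow$-Lipschitzness and hence a construction from $\dist_D$ itself rather than from coordinate bump functions; and $\supp\Symbol{D}\chi_j$ must escape to infinity so that the error term $(\Symbol{D}\chi_j)f$ vanishes in the limit.
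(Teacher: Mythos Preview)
Your proof is correct and follows essentially the same strategy as the paper's: direct mollification for compactly supported $f$, and $\dist_D$-based cutoff functions combined with Proposition~\ref{prp:lipschitzsymbol} and the Leibniz rule (Proposition~\ref{prp:leibniz}) for the complete case. The only difference is a minor technical variation in the second part: the paper scales its cutoffs $g_m(x)=(1-m^{-1}\dist_D(K_m,x))_+$ so that $\|\,|\Symbol{D} g_m|_\op\|_\infty\le m^{-1}\to 0$, whereas you fix the Lipschitz constant at $1$ and instead observe that $\supp\Symbol{D}\chi_j\subseteq M\setminus K_{j-1}$ escapes to infinity; either device kills the error term $(\Symbol{D}\chi_j)f$ in $L^p$.
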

\begin{proof}
If $f \in W^p_D(\bundle{E})$ is compactly-supported, then it may be approximated in $W^p_D$ by compactly-supported smooth sections of $\bundle{E}$, by parts (i), (ii), (iv), and (ix) of Theorem~\ref{thm:generalmollifier}, hence $f \in W^p_{D,0}(\bundle{E})$.

Suppose now that $D$ is complete.
Take a sequence of compact sets $K_m$ in $M$ such that $K_m$ is contained in the interior of $K_{m+1}$ and $\bigcup_m K_m = M$, and define
\[
g_m(x) = (1-n^{-1} \dist_D(K_m,x))_+
\]
for all positive $n$.
By Proposition~\ref{prp:minimizingsubunit}, the functions $g_m$ are upper-semicontinuous and compactly-supported; moreover $0 \leq g_m \leq 1$, $g_m \uparrow 1$ pointwise and
\[
|g_m(x) - g_m(y)| \leq n^{-1} \dist_D(x,y),
\]
so $g_m$ is weakly $\Symbol{D}$-differentiable and $\|\,|\Symbol{D} g_m|_\op\|_\infty \leq n^{-1}$, by Proposition~\ref{prp:lipschitzsymbol}.
If $u \in W^p_D(\bundle{E})$, then, by Proposition~\ref{prp:leibniz},
\[
D (g_m u) = (\Symbol{D} g_m) u + g_m D u,
\]
therefore $g_m u, D(g_m u) \in L^p$ and $(g_m u, D(g_m u)) \to (u,Du)$ in $L^p$ by the dominated convergence theorem.
Since the $g_m u$ are compactly-supported, they belong to $W^p_{D,0}(\bundle{E})$, and we conclude that $u \in W^p_{D,0}(\bundle{E})$ too.
\end{proof}

\subsection{Equivalent definitions of the control distance}

Since the control distance function $\dist_D$ is the distance function associated with the fibre seminorm $P_D$, various equivalent characterisations of $\dist_D$ are contained in Section~\ref{section:subfinsler}.
In particular, the results of \S~\ref{subsection:hoermander} apply because of the following property.

\begin{proposition}
The fibre seminorm $P_D$ satisfies the Lipschitz seminorm condition.
\end{proposition}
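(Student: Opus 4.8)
The plan is to exhibit, for each chart index $\alpha\in\Alpha$, an explicit countable family $\mathfrak{X}$ of $P_D$-subunit vector fields on $U_\alpha$ built directly from the symbol of $D$, and then to verify the two requirements of Definition~\ref{defn:lipschitz}. Working in the coordinates $\phi_\alpha$ together with isometric trivialisations of $\bundle{E}$ and $\bundle{F}$ over $U_\alpha$, write $D$ as in \eqref{eq:diffopcoord}, so that $\tau_\alpha(\sigma_1(D))(x)(\xi)=\sum_{j=1}^n\xi_j\,\mat{a}_j(x)$ with each $\mat{a}_j$ a smooth $\Hom(\C^r,\C^s)$-valued function on $\R^n$. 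For $u\in\C^r$ and $w\in\C^s$ with $|u|\le 1$ and $|w|\le 1$, I would introduce the smooth vector field $Y_{u,w}$ on $U_\alpha$ whose coordinate expression is $\tau_\alpha Y_{u,w}(x)=(\operatorname{Re}\langle\mat{a}_j(x)u,w\rangle)_{j=1}^n$; invariantly, $\xi(Y_{u,w}|_x)=\operatorname{Re}\langle\sigma_1(D)(\xi)(x)\,u,w\rangle$ for every $\xi\in T^*_x M$.

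Two elementary observations come first. (a)~$Y_{u,w}$ is $P_D$-subunit on $U_\alpha$, since $|\xi(Y_{u,w}|_x)|\le|\sigma_1(D)(\xi)(x)|_\op\,|u|\,|w|\le P_D(\xi)$ for every covector $\xi$, hence $P_D^*(Y_{u,w}|_x)\le 1$. (b)~The family is equi-Lipschitz in coordinates: each $\mat{a}_j$ is $C^1$, hence Lipschitz on $B_{\R^n}(0,1)$ with some constant $L_j<\infty$, so that $|\tau_\alpha Y_{u,w}(x)-\tau_\alpha Y_{u,w}(y)|\le(\sum_j L_j^2)^{1/2}|x-y|$, uniformly in $u$ and $w$. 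Next I would fix countable dense subsets $\mathcal{U}$, $\mathcal{W}$ of the closed unit balls of $\C^r$, $\C^s$, and let $\mathfrak{X}$ be the countable set of all finite rational convex combinations $\sum_i q_i Y_{u_i,w_i}$ with $u_i\in\mathcal{U}$, $w_i\in\mathcal{W}$. Since $P_D^*$ is convex, a convex combination of $P_D$-subunit vectors is $P_D$-subunit, so every member of $\mathfrak{X}$ is a $P_D$-subunit vector field on $U_\alpha$, and each still obeys the bound in (b); this yields condition~(i) of Definition~\ref{defn:lipschitz} with $L=(\sum_j L_j^2)^{1/2}$.

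The real content is condition~(ii): for $x\in V_\alpha$ one needs $\overline{\{X|_x:X\in\mathfrak{X}\}}=\{v\in T_x M:P_D^*(v)\le 1\}=:K$. The inclusion $\subseteq$ is immediate, as each $X|_x\in K$ and $K$ is closed. For $\supseteq$, note that $\{X|_x:X\in\mathfrak{X}\}$ consists of the rational convex combinations of $A:=\{Y_{u,w}|_x:u\in\mathcal{U},w\in\mathcal{W}\}$; since $\mathcal{U},\mathcal{W}$ are dense and $(u,w)\mapsto Y_{u,w}|_x$ is continuous, $A$ is dense in the compact set $B:=\{Y_{u,w}|_x:|u|,|w|\le 1\}$, so the closure of $\{X|_x:X\in\mathfrak{X}\}$ equals the convex hull $\operatorname{conv}(B)$, which is compact. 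It then suffices to prove $\operatorname{conv}(B)=K$. If some $v_0\in K$ lay outside the compact convex set $\operatorname{conv}(B)$, the Hahn--Banach separation theorem would produce $\xi_0\in T^*_x M$ with $\xi_0(v_0)>\sup_{y\in B}\xi_0(y)=\sup_{|u|,|w|\le 1}\operatorname{Re}\langle\sigma_1(D)(\xi_0)(x)u,w\rangle=|\sigma_1(D)(\xi_0)(x)|_\op=P_D(\xi_0)$, contradicting $v_0\in K$ (which forces $|\xi_0(v_0)|\le P_D^*(v_0)\,P_D(\xi_0)\le P_D(\xi_0)$).

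I expect this separation step to be the main obstacle, and the point most worth stressing is that convex combinations are genuinely needed in the construction of $\mathfrak{X}$: a single field $Y_{u,w}$ need not reach every point of $K$, reflecting the fact that the closed operator-norm ball is dual to the closed nuclear-norm ball rather than to the set of rank-one contractions, so it is precisely the closed convex hull of the vectors $Y_{u,w}|_x$ that recovers $K$. The remaining matters---smoothness of the $Y_{u,w}$, the fact that $K$ is the compact unit ball of a genuine norm on the subspace $F(P^*_{D,x})$, and the (already noted) independence of the condition from the chosen atlas---are routine.
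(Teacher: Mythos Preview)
Your argument is correct and follows the same overall strategy as the paper's proof: build explicit subunit fields from the symbol paired against fibre vectors, show they are equi-Lipschitz in coordinates, pass to rational convex combinations, and invoke a duality argument (you use Hahn--Banach separation, the paper cites the bipolar theorem) to recover the full subunit ball at each point of $V_\alpha$. The one substantive difference is that the paper first reduces to the formally self-adjoint case via $P_D=P_{\DD}$ and then works with the one-parameter family $X_w h=\langle\I\sigma_1(\DD)({\D}h)V_w,V_w\rangle$, exploiting the fact that the operator norm of a self-adjoint operator equals its numerical radius; you instead work directly with the two-parameter bilinear family $\operatorname{Re}\langle\sigma_1(D)(\xi)u,w\rangle$, which realises the operator norm for arbitrary rectangular matrices without any reduction. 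Your route is a little more direct and avoids the detour through $\DD$; the paper's route has the minor advantage of a single fibre parameter and also multiplies by a bump function $\eta$ equal to $1$ on $V_\alpha$ so that the fields extend by zero to compactly supported global sections of $TM$---a cosmetic adjustment you could make as well, and which does not affect either the Lipschitz bound on $B_{\R^n}(0,1)$ or the density condition on $V_\alpha$.
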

\begin{proof}
Since $P_D = P_{\DD}$, it is not restrictive to suppose that $D = D^+$.
We may suppose moreover that the local trivialisations of $\bundle{E}$ are isometric.
Take $\alpha \in \Alpha$, and choose a bump function $\eta$ on $\R^n$ which is equal to $1$ on $B_{\R^n}(0,1)$, and a countable dense set $\mathcal{W}$ of the unit sphere in $\C^r$.
For every $w \in \mathcal{W}$, define $V_w \in C^\infty_\comp(\bundle{E})$ by requiring that $V_w$ is supported in $U_\alpha$ and 
\[
\tau_\alpha(V_w)(x) = \eta(x) w 
\qquant x \in \R^n,
\]
and then define the $D$-subunit field $X_w$ by
\[
X_w h = \I V_w^* (\Symbol{D} h) V_w = \langle \I \sigma_1(D)(dh) V_w, V_w \rangle .
\]
If $D$ is expressed in coordinates, as in \eqref{eq:diffopcoord}, then \eqref{eq:symbolcoord} implies that
\[
\tau_\alpha X_w(x) = ( \I \eta(x)^2 \langle \mat{a}_j(x) w, w \rangle)_{j}
\qquant x \in \R^n,
\]
from which it is clear that the family $\{\tau_\alpha X_w\}_{w \in \mathcal{W}}$ is equi-Lipschitz, with a Lipschitz constant depending on the derivatives of the smooth coefficients $\mat{a}_j$ of $D$.
Moreover, for all $x \in V_\alpha$, the set $\{V_w|_x\}_{w \in \mathcal{W}}$ is dense in the unit sphere of $\bundle{E}_x$, so
\[
P_D(\xi) = |\sigma_1(D)(\xi)|_\op = \sup_{w \in W} |\xi(X_w|_x)|
\]
for all $x \in V_\alpha$ and $\xi \in T^*_x M$.
The bipolar theorem \cite[Section 20.8]{koethe_topological_1969} implies that, for all $x \in V_\alpha$, the set $\{v \in T_x M \tc P^*_D(v) \leq 1\}$ is the closed convex envelope of $\{\pm X_w|_x \tc w \in \mathcal{W}\}$.
Hence the set $\mathfrak{X}$ of convex combinations with rational coefficients of elements of $\{\pm X_w \tc w \in \mathcal{W}\}$ is a countable family of compactly-supported $D$-subunit fields, such that $\{\tau_\alpha X\}_{X \in \mathfrak{X}}$ is equi-Lipschitz, and $\{X|_x\}_{X \in \mathfrak{X}}$ is dense in $\{v \in T_x M \tc P^*_D(v) \leq 1\}$ for all $x \in V_\alpha$.
\end{proof}

The following result is an immediate consequence of Proposition~\ref{prp:hoermander} and Corollary~\ref{cor:smoothdistanceequality}.

\begin{corollary}
If $P_D$ satisfies H\"ormander's condition, then $\dist_D^\flow$ is varietal, and, $\dist_D(x,y)  = \dist_D^\infty(x,y)  = \dist_D^\flow(x,y) $ for all $x,y \in M$; further, all are equal to 
\[
\inf \left\{ \ell_D(\gamma) \tc  \text{$\gamma \in \Gamma^\infty (\leftclosedint a,b\rightclosedint) $},\  \gamma(a) = x, \ \gamma(b) = y \right\} .
\]

\end{corollary}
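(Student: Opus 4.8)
The plan is to assemble the statement directly from the three results that immediately precede it, with no new argument required. First I would record that, by the Proposition just proved, the fibre seminorm $P_D$ satisfies the Lipschitz seminorm condition, so that the hypothesis of Corollary~\ref{cor:smoothdistanceequality} holds with $P = P_D$. Next, the standing assumption that $P_D$ satisfies H\"ormander's condition lets me invoke Proposition~\ref{prp:hoermander}, which gives at once that $\dist_D^\flow$ is varietal, and \emph{a fortiori} that $\dist_D$ and $\dist_D^\infty$ are varietal.

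With these two facts in hand, Corollary~\ref{cor:smoothdistanceequality} applies in full. Since $\dist_D^\flow$ is varietal, that corollary yields $\dist_D = \dist_D^\flow$; since $\dist_D^\infty$ is also varietal, it further yields $\dist_D = \dist_D^\infty$, together with the identification of this common value with $\inf\{\ell_D(\gamma) \tc \gamma \in \Gamma^\infty(\leftclosedint a,b\rightclosedint),\ \gamma(a)=x,\ \gamma(b)=y\}$. Concatenating these equalities produces the chain $\dist_D(x,y) = \dist_D^\infty(x,y) = \dist_D^\flow(x,y)$ and the length formula simultaneously. Here one uses only the convention adopted at the head of this section, namely that $\dist_D$, $\ell_D$, $\Gamma_D^\flow$, and so on are just $\dist_{P_D}$, $\ell_{P_D}$, $\Gamma_{P_D}^\flow$, etc., so no translation between the two notations is needed.

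Since every ingredient is already available, I do not expect any real obstacle. The only points that call for a line of care are purely formal: checking that the degenerate case $x = y$ is trivial (all the distances are $0$ and the constant curve realises the length infimum), which matters because the pointwise inequality in Corollary~\ref{cor:smoothdistanceequality} is stated only for $x \neq y$; and making sure that ``varietal'' is invoked consistently, so that the continuity of the distance functions --- which is what Corollary~\ref{cor:smoothdistanceequality} actually exploits via Proposition~\ref{prp:hoermander} --- is genuinely in force.
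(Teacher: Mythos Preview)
Your proposal is correct and matches the paper's own argument: the paper states the corollary as an immediate consequence of Proposition~\ref{prp:hoermander} and Corollary~\ref{cor:smoothdistanceequality}, and you have simply made explicit the one additional ingredient (the preceding Proposition establishing the Lipschitz seminorm condition for $P_D$) that is needed to invoke the latter. There is nothing to add.
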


Another characterisation of the control distance may be given in terms of smooth functions with ``bounded gradient''.

\begin{proposition}\label{prp:distancesmoothfunctions}
Suppose that $\dist_D$ is varietal.
Then
\begin{equation}\label{eq:distancesmoothfunctions}
\dist_D(x,y) = \sup \left\{|\xi(x) - \xi(y)| \tc  \xi \in C^\infty(\bundle{T}_\R) ,\ \| |\Symbol{D} \xi|_\op \|_\infty \leq 1\right\}.
\end{equation}
If $D$ is complete, then the supremum may be restricted to $\xi$ in $C^\infty_\comp(\bundle{T}_\R)$.
\end{proposition}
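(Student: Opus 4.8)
The plan is to prove the two inequalities $\tilde\dist_D(x,y) \le \dist_D(x,y)$ and $\dist_D(x,y) \le \tilde\dist_D(x,y)$ separately, where $\tilde\dist_D(x,y)$ denotes the supremum on the right-hand side of \eqref{eq:distancesmoothfunctions}. The first is the easy one and uses neither hypothesis: if $\xi \in C^\infty(\bundle{T}_\R)$ and $\|\,|\Symbol{D}\xi|_\op\,\|_\infty \le 1$, then Proposition~\ref{prp:regularsymbollipschitz} gives $|\xi(x) - \xi(y)| \le \dist_D(x,y)$, and taking the supremum over such $\xi$ yields $\tilde\dist_D(x,y) \le \dist_D(x,y)$; the same bound holds a fortiori for the supremum restricted to $C^\infty_\comp(\bundle{T}_\R)$.

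For the reverse inequality, fix $x,y \in M$ and a parameter $R \in \R^+$, and consider the test function $f = (R - \dist_D(x,\cdot))_+$. Since $t \mapsto (R-t)_+$ is $1$-Lipschitz on $[0,\infty]$ and $\dist_D(x,\cdot)$ is itself $1$-Lipschitz with respect to $\dist_D$, the section $f$ is $1$-Lipschitz with respect to $\dist_D$, hence also with respect to $\dist_D^\flow$ because $\dist_D \le \dist_D^\flow$ by \eqref{eq:distancesinequalities}. The varietal hypothesis enters here: as the $\dist_D$-topology is the manifold topology, $\dist_D(x,\cdot)$ is continuous where it is finite and is locally constant, equal to $\infty$, on the clopen set where it is infinite, so $f$ is a continuous, bounded, real-valued section of $\bundle{T}_\R$. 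By Proposition~\ref{prp:lipschitzsymbol}, $f$ is weakly $\Symbol{D}$-differentiable with $\|\,|\Symbol{D} f|_\op\,\|_\infty \le 1$; in particular $f \in C(\bundle{T}_\R)$, $\Symbol{D} f \in L^\infty_\loc(\Hom(\bundle{E},\bundle{F}))$, and $\|P(\Symbol{D} f)\|_\infty \le 1$ for the continuous fibre norm $P = |\cdot|_\op$ on $\Hom(\bundle{E},\bundle{F})$.

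Next I would apply Corollary~\ref{cor:bdsmoothapprox} to the homogeneous operator $\Symbol{D} \in \Diff_1(\bundle{T},\Hom(\bundle{E},\bundle{F}))$ and the section $f$: it produces a sequence $f_m \in C^\infty(\bundle{T}_\R)$, real-valued because the source bundle is $\bundle{T}$ and $f$ is real, with $f_m \to f$ uniformly on compacta and $\|\,|\Symbol{D} f_m|_\op\,\|_\infty \le 1$ for all $m$. Each $f_m$ is then admissible in the supremum defining $\tilde\dist_D$, so
\[
\tilde\dist_D(x,y) \ge \lim_m |f_m(x) - f_m(y)| = |f(x) - f(y)| = \min(\dist_D(x,y), R),
\]
and letting $R \to \infty$ gives $\tilde\dist_D(x,y) \ge \dist_D(x,y)$, which with the first step proves \eqref{eq:distancesmoothfunctions}. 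When $D$ is complete, the same argument applies with compactly supported approximants: $\supp f \subseteq \bar B_D(x,R)$ is closed, as $\dist_D$ is varietal, and relatively compact, as $D$ is complete, hence compact; fixing a relatively compact open neighbourhood $W$ of $\supp f$ and invoking the support statement of Corollary~\ref{cor:bdsmoothapprox}, one gets $\supp f_m \subseteq W$ for large $m$, so $f_m \in C^\infty_\comp(\bundle{T}_\R)$, and the displayed computation is unchanged.

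There is no single hard computation here; the proof is an assembly of earlier results, and the step needing care is the use of the varietal hypothesis. It is exactly what makes the natural candidate $f = (R - \dist_D(x,\cdot))_+$ continuous, and continuity of $f$ is precisely the condition under which Corollary~\ref{cor:bdsmoothapprox} yields smooth approximants retaining the sharp bound $\|\,|\Symbol{D} f_m|_\op\,\|_\infty \le 1$; without varietality one only controls $\dist_D(x,\cdot)$ in the coarser $\dist_D$-topology, and the mollified approximants need not keep the gradient bound. For the refinement, it is the conjunction of varietality and completeness that upgrades $\bar B_D(x,R)$ to a compact set and so lets us confine the supports of the $f_m$.
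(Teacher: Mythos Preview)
Your proof is correct and follows essentially the same route as the paper: the inequality $\tilde\dist_D\le\dist_D$ via Proposition~\ref{prp:regularsymbollipschitz}, then the reverse via the truncated distance $f=(R-\dist_D(x,\cdot))_+$, Proposition~\ref{prp:lipschitzsymbol}, and Corollary~\ref{cor:bdsmoothapprox}. The only cosmetic difference is that the paper takes the truncation parameter strictly less than $\dist_D(x,y)$ so that $|f(x)-f(y)|=\lambda$ directly, whereas you take $R$ arbitrary and pass to the limit through $\min(\dist_D(x,y),R)$; both are fine.
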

\begin{proof}
The left-hand side of \eqref{eq:distancesmoothfunctions} is greater than or equal to the right-hand side, without any assumptions on $D$, from  Proposition~\ref{prp:regularsymbollipschitz}.
For the reverse inequality, take $x,y \in M$ and $\lambda \in\leftopenint0, \dist_D(x,y)\rightopenint$, and define
\[
f(z) = (\lambda - \dist_D(x,z))_+
\qquant z \in M.
\]
Then $f$ is finite and continuous; moreover, by Proposition~\ref{prp:lipschitzsymbol}, $f$ is weakly $\Symbol{D}$-differentiable and $\|\,|\Symbol{D} f|_\op\|_\infty \leq 1$.
Therefore, by Corollary~\ref{cor:bdsmoothapprox}, there is a sequence of real-valued smooth functions $f_m$ such that $|\Symbol{D} f_m|_\op \leq 1$ and $f_m$ converges locally uniformly to $f$ ; thus
\[
|f_m(x) - f_m(y)| \to |f(x) - f(y)| = \lambda,
\]
and the first part of the conclusion follows by the arbitrariness of $\lambda$.
If $D$ is complete, then the function $f$ is compactly-supported, and by Corollary~\ref{cor:bdsmoothapprox} the smooth approximants $f_m$ may also be chosen compactly-supported.
\end{proof}

Now we are going to show that the characterisation of $\dist_D$ given by Proposition~\ref{prp:distancesmoothfunctions} may hold even when $\dist_D$ is not varietal.

Fix a riemannian metric $g$ on $M$.
This induces a fibre inner product on $\C T^* M$.
For all $m \in \N$, define $D_m \in \Diff_1(\bundle{E} \oplus \bundle{T}, \bundle{F} \oplus \C T^* M)$ by
\begin{equation}\label{eqn:def-of-Dn}
D_m(f,g) = (Df, 2^{-m} dg).
\end{equation}
Then
\[
\Symbol{(D_m)} h = \begin{pmatrix} \Symbol{D} h & 0 \\ 0 & 2^{-m} dh\end{pmatrix},
\]
so
\[
P_{D_m}(\xi) = \max \{P_D(\xi), 2^{-m} |\xi|_g\}
\qquant \xi \in T^* M .
\]

In particular, a vector $v \in T M$ is $D$-subunit if and only if it is $D_m$-subunit for all $m \in \N$.
Moreover, $\dist_{D_m} \leq 2^m \dist_g$, so $\dist_{D_m}$ is varietal for all $m \in \N$.

\begin{proposition}\label{prp:distancesupremumdistances}
Suppose that $D_0$, given by \eqref{eqn:def-of-Dn}, is complete.
Then
\[
\dist_D(x,y) = \sup_{m \in \N} \dist_{D_m}(x,y)
\qquant  x,y \in M.
\]
\end{proposition}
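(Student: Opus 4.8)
The plan is to prove the two inequalities $\sup_{m\in\N}\dist_{D_m}\le\dist_D$ and $\dist_D\le\sup_{m\in\N}\dist_{D_m}$ separately. The first needs no completeness hypothesis: since $P_{D_m}(\xi)=\max\{P_D(\xi),2^{-m}|\xi|_g\}\ge P_D(\xi)$, every $D$-subunit vector is $D_m$-subunit, so every $D$-subunit curve is $D_m$-subunit and $\dist_{D_m}\le\dist_D$ for every $m$. For the reverse inequality I fix $x,y\in M$ and may assume $L:=\sup_m\dist_{D_m}(x,y)<\infty$, the case $L=\infty$ being immediate from the first inequality. Since $2^{-m}\le1$ we have $P_{D_m}\le P_{D_0}$, hence $\dist_{D_m}\ge\dist_{D_0}$, so $\bar{B}_{D_m}(\{x\},L)\subseteq\bar{B}_{D_0}(\{x\},L)$; the latter set is compact because $D_0$ is complete (Proposition~\ref{prp:minimizingsubunit}). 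Applying Proposition~\ref{prp:minimizingsubunit} to $D_m$, and using $\dist_{D_m}(x,y)\le L$, I obtain for each $m$ a $D_m$-subunit curve $\gamma_m\colon[0,L]\to M$ with $\gamma_m(0)=x$ and $\gamma_m(L)=y$, whose image lies in the compact set $K:=\bar{B}_{D_0}(\{x\},L)$.

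Because $P_{D_m}$ is nonincreasing in $m$, each $\gamma_m$ is $D_{m'}$-subunit for every $m'\le m$; in particular every $\gamma_m$ is $D_0$-subunit, hence $1$-Lipschitz with respect to $\dist_{D_0}$, which is varietal. Thus $(K,\dist_{D_0})$ is a compact metric space, the $\gamma_m$ are equicontinuous, and the Arzel\`a--Ascoli theorem yields a subsequence $\gamma_{m_k}$ converging uniformly to a curve $\gamma\colon[0,L]\to M$ with $\gamma(0)=x$ and $\gamma(L)=y$. For each fixed $m'$, the curves $\gamma_{m_k}$ with $m_k\ge m'$ are $D_{m'}$-subunit and converge pointwise to $\gamma$, so $\gamma$ is $D_{m'}$-subunit by Lemma~\ref{lem:subunitlimit}; hence $\gamma$ is $D_{m'}$-subunit for every $m'\in\N$. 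In particular $\gamma$ is absolutely continuous, and $P^*_{D_m}(\gamma'(t))\le1$ for all $m\in\N$ and almost every $t$ (after discarding a countable union of null sets).

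The crux is to conclude that $\gamma$ is $D$-subunit, which follows from the pointwise claim: a tangent vector $v\in T_zM$ with $P^*_{D_m}(v)\le1$ for all $m$ satisfies $P^*_D(v)\le1$. If $v$ is not in the finite subspace $F(P^*_z)$, then, since $F(P^*_z)$ is the annihilator of $Z(P_z)$, there is $\xi\in Z(P_z)$ with $\xi(v)\ne0$; rescaling $\xi$ so that $2^{-m}|\xi|_g=1$ produces a covector of $P_{D_m}$-norm $1$ on which $v$ takes a value of modulus $2^m|\xi(v)|/|\xi|_g$, so $P^*_{D_m}(v)\to\infty$, contradicting the hypothesis. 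If instead $v\in F(P^*_z)$ but $P^*_D(v)>1$, choose $\xi_0$ with $P_D(\xi_0)\le1$ and $|\xi_0(v)|>1$, and let $\xi_0^W$ be its $g$-orthogonal projection onto the orthogonal complement $W$ of $Z(P_z)$ in $T^*_zM$; then $P_D(\xi_0^W)=P_D(\xi_0)$, since $Z(P_z)=\ker P_D$, and $\xi_0^W(v)=\xi_0(v)$, since every element of $Z(P_z)$ vanishes on $v$. As $P_D$ restricts to a genuine norm on the finite-dimensional space $W$, there is a constant $\kappa$, depending on $z$, with $|\xi_0^W|_g\le\kappa P_D(\xi_0^W)\le\kappa$, so $P_{D_m}(\xi_0^W)=\max\{P_D(\xi_0^W),2^{-m}|\xi_0^W|_g\}\le1$ once $2^m\ge\kappa$; then $P^*_{D_m}(v)\ge|\xi_0^W(v)|=|\xi_0(v)|>1$, once more a contradiction. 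Applying this claim to $\gamma'(t)$ for almost every $t$ shows that $\gamma$ is $D$-subunit, whence $\dist_D(x,y)\le L$, and combining with the first inequality gives $\dist_D(x,y)=L$.

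I expect this last pointwise claim to be the main obstacle: the dual norms $P^*_{D_m}$ grow without bound as $m\to\infty$ in the kernel directions of $P_D$, so one has to treat separately the vectors lying in $F(P^*_z)$, where the argument reduces to the equivalence of the norms $P_D$ and $|\cdot|_g$ on the finite-dimensional complement $W$, and the vectors outside $F(P^*_z)$, which are ruled out at once. The completeness of $D_0$ is used only to confine the approximating curves $\gamma_m$ to a fixed compact set so that the Arzel\`a--Ascoli argument applies.
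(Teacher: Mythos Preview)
Your proof is correct and follows the same Arzel\`a--Ascoli strategy as the paper: obtain $D_m$-subunit curves on a fixed interval, use completeness of $D_0$ to confine them to a compact set, extract a limiting curve that is $D_m$-subunit for every $m$, and conclude it is $D$-subunit. The pointwise claim you establish via the case split on $F(P^*_z)$ and the projection onto $W$ is precisely the fact the paper records just before the proposition (``a vector $v\in TM$ is $D$-subunit if and only if it is $D_m$-subunit for all $m$''), and a shorter route is available: given any $\xi$ with $P_D(\xi)\le1$, choose $m$ large enough that $2^{-m}|\xi|_g\le1$, so $P_{D_m}(\xi)\le1$ and hence $|\xi(v)|\le P^*_{D_m}(v)\le1$.
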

\begin{proof}
Recall from Proposition~\ref{prp:subfinslermetric} that a curve is $P$-subunit if and only if it is $1$-Lipschitz with respect to $\dist_P$.
From the definition of $D_0$, it is clear that $\dist_D \geq \dist_{D_m}$ for all $m \in \N$.

Fix $x,y \in M$ such that $\sup_{m \in \N} \dist_{D_m}(x,y)$ is finite, and take a finite $T$ greater than the supremum.
For all $m \in \N$, we choose a $D_m$-subunit curve $\gamma_m\colon \leftclosedint0,T\rightclosedint \to M$ such that $\gamma_m(0) = x$ and $\gamma_m(T) = y$.
If $m \leq m'$, then $\dist_{D_m} \leq \dist_{D_{m'}}$ by definition, hence $\gamma_{m'}$ is also $D_m$-subunit.

Consequently, all the curves $\gamma_m$ are $1$-Lipschitz with respect to $\dist_{D_0}$, and all take their values in $\bar{B}_{D_0}(x,T)$, which is compact because $D_0$ is complete.
By the Arzel\`a--Ascoli theorem, there is a subsequence of $(\gamma_m)_{m \in \N}$ that converges uniformly to a continuous curve $\gamma\colon [0,T] \to M$.
Then $\gamma(0) = x$ and $\gamma(T) = y$; further, $\gamma$ is $1$-Lipschitz with respect to all the $\dist_{D_m}$, that is, $\gamma$ is $D_m$-subunit for all $m \in \N$.
Thus $\gamma'(t)$ is $D_m$-subunit for almost every $t \in \leftclosedint0,T\rightclosedint$ and all $m\in\N$, which implies that $\gamma'(t)$ is $D$-subunit.
Hence $\gamma$ is $D$-subunit, and $\dist_D(x,y) \leq T$.
\end{proof}

\begin{corollary}
Suppose that $D_0$, given by \eqref{eqn:def-of-Dn}, is complete.
Then, for all $x,y \in M$,
\[
\dist_D(x,y) 
= \sup \left\{|\xi(x) - \xi(y)| \tc  \xi \in C^\infty_\comp(\bundle{T}_\R) ,\ \| |\Symbol{D} \xi|_\op\|_\infty \leq 1 \right\}.
\]
\end{corollary}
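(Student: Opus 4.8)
The plan is to combine the two immediately preceding results. Proposition~\ref{prp:distancesupremumdistances} expresses $\dist_D$ as the pointwise supremum $\sup_{m\in\N}\dist_{D_m}$, while Proposition~\ref{prp:distancesmoothfunctions} characterises each $\dist_{D_m}$ through smooth functions of bounded $\Symbol{(D_m)}$-gradient. The only genuine work is to verify that the \emph{complete} form of Proposition~\ref{prp:distancesmoothfunctions} applies to every $D_m$ (so that the test functions can be taken compactly supported), and then to reassemble the resulting iterated supremum.

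First I would check that each $D_m$ with $m\in\N$ is complete. Since $P_{D_m}(\xi)=\max\{P_D(\xi),2^{-m}|\xi|_g\}$ is pointwise nonincreasing in $m$, we have $P_{D_0}\geq P_{D_m}$, hence $\dist_{D_0}\leq\dist_{D_m}$ (a larger fibre seminorm on $T^*M$ yields a smaller dual fibre norm on $TM$, hence more subunit curves and a smaller distance). Therefore $\bar{B}_{D_m}(K,R)\subseteq\bar{B}_{D_0}(K,R)$ for every compact $K$ and every positive $R$, and the latter set is relatively compact because $D_0$ is assumed complete; so $\bar{B}_{D_m}(K,R)$ is relatively compact too, and $D_m$ is complete.

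Next, recalling that $\dist_{D_m}$ is varietal (as $\dist_{D_m}\leq 2^m\dist_g$), Proposition~\ref{prp:distancesmoothfunctions} in its complete form gives
\[
\dist_{D_m}(x,y)=\sup\bigl\{|\xi(x)-\xi(y)| \tc \xi\in C^\infty_\comp(\bundle{T}_\R),\ \||\Symbol{(D_m)}\xi|_\op\|_\infty\leq 1\bigr\}.
\]
Since $\Symbol{(D_m)}\xi$ is block-diagonal with blocks $\Symbol{D}\xi$ and $2^{-m}\,d\xi$, its pointwise operator norm is $\max\{|\Symbol{D}\xi|_\op,\,2^{-m}|d\xi|_g\}$, so the constraint on $\xi$ is equivalent to the conjunction of $\||\Symbol{D}\xi|_\op\|_\infty\leq 1$ and $\||d\xi|_g\|_\infty\leq 2^m$. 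Combining this with Proposition~\ref{prp:distancesupremumdistances},
\[
\dist_D(x,y)=\sup_{m\in\N}\,\sup\bigl\{|\xi(x)-\xi(y)| \tc \xi\in C^\infty_\comp(\bundle{T}_\R),\ \||\Symbol{D}\xi|_\op\|_\infty\leq 1,\ \||d\xi|_g\|_\infty\leq 2^m\bigr\}.
\]

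An iterated supremum equals the supremum over the union of the index families; and every $\xi\in C^\infty_\comp(\bundle{T}_\R)$ has $|d\xi|_g$ continuous with compact support, hence bounded, so it satisfies $\||d\xi|_g\|_\infty\leq 2^m$ for $m$ large enough. Thus the union over $m$ of the constraint families is exactly $\{\xi\in C^\infty_\comp(\bundle{T}_\R) : \||\Symbol{D}\xi|_\op\|_\infty\leq 1\}$, and the claimed identity follows. I do not anticipate a serious obstacle here; the one delicate point is the completeness of the $D_m$, and this is precisely why the hypothesis is placed on $D_0$ — the member of the family with the largest cotangent seminorm, and hence the most compact metric balls — mirroring the role played by $D_0$ in Proposition~\ref{prp:distancesupremumdistances}.
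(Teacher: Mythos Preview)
Your proof is correct and follows essentially the same route as the paper: both combine Proposition~\ref{prp:distancesupremumdistances} with the complete case of Proposition~\ref{prp:distancesmoothfunctions} applied to each $D_m$, then use $|\Symbol{D}\xi|_\op\leq|\Symbol{(D_m)}\xi|_\op$ to drop the auxiliary gradient bound. Your version is actually a bit more explicit than the paper's in verifying that each $D_m$ is complete (the paper just asserts this) and in spelling out why the iterated supremum collapses to the single constraint on $\Symbol{D}\xi$.
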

\begin{proof}
Fix $x,y \in M$, and take $\lambda$ less than $\dist_D(x,y)$.
By Proposition~\ref{prp:distancesupremumdistances}, there exists $m \in \N$ such that $\lambda < \dist_{D_m}(x,y)$.
Since $\dist_{D_m}$ is varietal and $D_m$ is complete, there exists $\xi \in C^\infty_\comp(\bundle{T}_\R)$ such that $|\xi(x) - \xi(y)| > \lambda$ and $\|\,|\Symbol{D_m} \xi|_\op\|_\infty \leq 1$ by Proposition~\ref{prp:distancesmoothfunctions}.
However,
\[
|\Symbol{D_m} \xi|_\op = \max\{|\Symbol{D} \xi|_\op, 2^{-m} |{\D}\xi|_g\} \geq |\Symbol{D} \xi|_\op,
\]
so $\|\,|\Symbol{D}\xi|_\op\|_\infty \leq 1$.
\end{proof}

In general, the completeness of $D_0$ depends on the choice of the riemannian metric $g$ in \eqref{eqn:def-of-Dn}.
However, if $M$ is compact, then any riemannian metric $g$ gives a complete $D_0$.
Moreover, if $D_0$ is complete for some $g$, then $D$ is complete too.
The following examples show that the completeness hypothesis is stronger than necessary to ensure \eqref{eq:distancesmoothfunctions}, but that it does not always hold.

\begin{example}
Suppose that $M = \leftopenint 0, 1 \rightopenint^2$ and $D = \sfrac{\partial}{\partial x_1}$.
Then
\[
\dist_D(x,y) = \begin{cases}
|x_1 - y_1| &\text{if $x_2 = y_2$,}\\
\infty &\text{otherwise,}
\end{cases}
\]
and \eqref{eq:distancesmoothfunctions} holds without $\dist_D$ being varietal or $D$ being complete.
\end{example}

\begin{example}
Suppose that $M = \R^2 \setminus \{(0,0)\}$ and $D = \sfrac{\partial}{\partial x_1}$.
Then 
\[
\dist_D((-1,0),(1,0)) = \infty.
\]
However, for all smooth $\xi\colon M \to \R$, the condition that $\|D\xi\|_\infty \leq 1$ implies that $|\xi(-1,t) - \xi(1,t)| \leq 2$ when $t \neq 0$ by the mean value theorem, and hence $|\xi(-1,0) - \xi(1,0)| \leq 2$ by continuity.
\end{example}

\section{The $L^2$ theory: formal and essential self-adjointness}

Given a differential operator $D \in \Diff_k(\bundle{E},\bundle{F})$, we denote for the moment by $D_s$ its restriction to compactly-supported smooth sections and by $D_d$ its extension to distributions.
Then $D_s$ may be thought of as a densely defined operator $L^2(\bundle{E}) \parto L^2(\bundle{F})$, and we may consider its Hilbert space adjoint $(D_s)^*\colon L^2(\bundle{F}) \parto L^2(\bundle{E})$. 
It is easily checked that the domain of $(D_s)^*$ is the space $W_{D^+}^2(\bundle{F})$, that is,
\[
\{ f \in L^2(\bundle{F}) \tc D^+_d f \in L^2(\bundle{E})\},
\]
and $(D_s)^*$ is the restriction of $D^+_d$ to this domain.
In particular, $(D_s)^* \supseteq D^+_s$, so $(D_s)^*$ is densely defined, $D_s$ is closable and
\[
\afterline{(D_s)} = (D_s)^{**} \subseteq (D^+_s)^*.
\]
The domains of $\afterline{(D_s)}$ and $(D^+_s)^*$ will be called the minimal and maximal domains of $D$ respectively; note that the maximal domain of $D$ is $W_{D}^2(\bundle{E})$, whereas the minimal domain of $D$ is $W^2_{D,0}(\bundle{E})$.
If $D$ is formally self-adjoint, that is, $\bundle{E} = \bundle{F}$ and $D = D^+$, then
\[
\afterline{(D_s)} = (D_s)^{**} \subseteq (D_s)^*,
\]
and clearly $\afterline{(D_s)}$ and $(D_s)^*$ are the minimal and maximal closed symmetric extensions of $D_s$ respectively; the essential self-adjointness of $D_s$ is thus equivalent to the equality of the minimal and maximal domains of $D$.

Henceforth, we will write $D$, $D^*$ and $\afterline{D}$ instead of $D_s$, $(D_s)^*$ and $\afterline{(D_s)}$.

We now rephrase the content of Proposition~\ref{prp:sobolev0} when $p=2$.

\begin{proposition}\label{prp:compactdomains}
Suppose that $D \in \Diff_1(\bundle{E},\bundle{F})$.
A compactly-supported section $f \in L^2(\bundle{E})$ belongs to the maximal domain of $D$ if and only if it belongs to its minimal domain.
If moreover $D$ is complete, then the minimal and maximal domains of $D$ coincide.
\end{proposition}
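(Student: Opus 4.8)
The plan is to read Proposition~\ref{prp:compactdomains} as essentially a restatement of Proposition~\ref{prp:sobolev0}, in the case $p = 2$, in the language of closed operators. Recall from the discussion preceding the statement that the maximal domain of $D$ has already been identified with $W^2_D(\bundle{E})$ and the minimal domain with $W^2_{D,0}(\bundle{E})$. The first step is therefore merely to invoke these identifications, after which both assertions become immediate.

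For the biconditional, one implication is trivial: $W^2_{D,0}(\bundle{E}) \subseteq W^2_D(\bundle{E})$ by the very definition of $W^2_{D,0}(\bundle{E})$ as the $W^2_D$-closure of $C^\infty_\comp(\bundle{E})$, so a section in the minimal domain always lies in the maximal domain, and here no compactness of support is needed. For the reverse implication, suppose that $f \in L^2(\bundle{E})$ is compactly supported and lies in the maximal domain, that is, $f \in L^2_\comp(\bundle{E}) \cap W^2_D(\bundle{E})$; then $f \in W^2_{D,0}(\bundle{E})$ by Proposition~\ref{prp:sobolev0} with $p = 2$, so $f$ lies in the minimal domain. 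Finally, if $D$ is complete, then $W^2_D(\bundle{E}) = W^2_{D,0}(\bundle{E})$ by the second assertion of Proposition~\ref{prp:sobolev0}, which is exactly the coincidence of the minimal and maximal domains.

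I do not expect any genuine obstacle at this level: the proposition is a corollary, and all the analytic substance lies in Proposition~\ref{prp:sobolev0} itself. For the record, the mechanism there — which one could inline for $p = 2$ — is to mollify a compactly supported $f \in W^2_D(\bundle{E})$ by the operators $J^\vectau_\veceps$ of Theorem~\ref{thm:generalmollifier}, using its parts (i), (ii), (iv) and (ix) to obtain smooth, eventually compactly supported approximants with $J^\vectau_\veceps f \to f$ and $D J^\vectau_\veceps f \to Df$ in $L^2$; and, when $D$ is complete, first to reduce a general $u \in W^2_D(\bundle{E})$ to the compactly supported case by multiplying it by Lipschitz cutoffs $g = (1 - \dist_D(K,\cdot)/R)_+$ attached to an exhaustion of $M$ by compact sets $K$, which are compactly supported because completeness of $D$ makes the balls $\bar{B}_D(K,R)$ compact (Proposition~\ref{prp:minimizingsubunit}) and satisfy $\|\,|\Symbol{D} g|_\op\|_\infty \le 1/R$ (Proposition~\ref{prp:lipschitzsymbol}), so that $D(g u) = (\Symbol{D} g)\, u + g\, Du$ (Proposition~\ref{prp:leibniz}) tends to $Du$ in $L^2$ by dominated convergence as $R \to \infty$. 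The only point in that route demanding any attention is keeping straight the passage from $L^2_\loc$ convergence to $L^2$ convergence, which is routine given the support control.
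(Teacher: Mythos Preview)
Your proposal is correct and follows exactly the paper's approach: the paper introduces Proposition~\ref{prp:compactdomains} with the sentence ``We now rephrase the content of Proposition~\ref{prp:sobolev0} when $p=2$'' and gives no separate proof, so your reduction to Proposition~\ref{prp:sobolev0} via the identifications of the maximal and minimal domains with $W^2_D(\bundle{E})$ and $W^2_{D,0}(\bundle{E})$ is precisely what is intended.
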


\begin{corollary}\label{cor:selfadjoint}
Suppose that $D \in \Diff_1(\bundle{E},\bundle{E})$ is complete and formally self-adjoint.
Then $D$ is essentially self-adjoint.
\end{corollary}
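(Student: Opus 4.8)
The plan is to read this corollary off Proposition~\ref{prp:compactdomains}, together with the discussion of minimal and maximal domains at the start of this section. Since $D$ is formally self-adjoint, it is a densely defined symmetric operator on $L^2(\bundle{E})$: the closure $\afterline{D}$ has domain the minimal domain $W^2_{D,0}(\bundle{E})$, the adjoint $D^*$ has domain the maximal domain $W^2_D(\bundle{E})$, and $\afterline{D} \subseteq D^*$ always holds. Consequently $D$ is essentially self-adjoint if and only if $W^2_{D,0}(\bundle{E}) = W^2_D(\bundle{E})$.

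First I would invoke Proposition~\ref{prp:compactdomains} (equivalently, Proposition~\ref{prp:sobolev0} with $p=2$): since $D$ is complete, $\bar{B}_D(K,R)$ is relatively compact for every compact $K \subseteq M$ and every $R > 0$, which is precisely the hypothesis under which that proposition gives $W^2_D(\bundle{E}) = W^2_{D,0}(\bundle{E})$. Combined with the previous paragraph this forces $\afterline{D} = D^*$, so $D$ is essentially self-adjoint.

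I do not expect any genuine obstacle here, as the substantive work has already been carried out. For orientation: the proof of Proposition~\ref{prp:sobolev0} takes $u \in W^2_D(\bundle{E})$, exhausts $M$ by compact sets $K_m$, and multiplies $u$ by the Lipschitz cutoffs $g_m(x) = (1 - n^{-1}\dist_D(K_m,x))_+$; completeness makes these compactly supported and, via Proposition~\ref{prp:lipschitzsymbol}, weakly $\Symbol{D}$-differentiable with $\| |\Symbol{D} g_m|_{\op} \|_\infty$ arbitrarily small, so Leibniz' rule (Proposition~\ref{prp:leibniz}) and dominated convergence give $g_m u \to u$ in $W^2_D(\bundle{E})$ with each $g_m u$ compactly supported, after which Theorem~\ref{thm:generalmollifier} smooths each $g_m u$ within $W^2_D$. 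The only point the write-up of the corollary itself must make explicit is the translation recalled above between the abstract minimal and maximal domains of the symmetric operator $D$ and the Sobolev spaces $W^2_{D,0}(\bundle{E})$ and $W^2_D(\bundle{E})$, everything else being a citation.
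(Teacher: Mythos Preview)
Your proposal is correct and matches the paper's own treatment: the corollary is stated without proof precisely because it follows immediately from Proposition~\ref{prp:compactdomains} together with the identification, recalled just before it, of essential self-adjointness with the equality $W^2_{D,0}(\bundle{E}) = W^2_D(\bundle{E})$. Your recapitulation of the mechanism behind Proposition~\ref{prp:sobolev0} is accurate but superfluous for the write-up of the corollary itself.
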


\section{Finite propagation speed}

Take a formally self-adjoint element $D \in \Diff_1(\bundle{E},\bundle{E})$.
We say that $u_t$ is a \emph{solution} of
\begin{equation}\label{eq:onde}
\frac{\D}{{\D}t} u_t = \I D^* u_t
\end{equation}
if $t \mapsto u_t$ is an $L^2(\bundle{E})$-valued map defined on a subinterval $I$ of $\R$, which is continuously differentiable on $I$ (as an $L^2(\bundle{E})$-valued map), takes its values in the domain of $D^*$ and satisfies \eqref{eq:onde} for all $t \in I$; we say moreover that $u_t$ is \emph{energy-preserving} if $t \mapsto \| u_t \|_2$ is constant.

If $D$ admits a self-adjoint extension $\tilde D$, so
\[
\afterline{D} \subseteq \tilde D = (\tilde D)^* \subseteq D^*,
\]
then $u_t = \E^{\I t\tilde D} f$ is an energy-preserving solution of \eqref{eq:onde} for all $f$ in the domain of $\tilde D$, since $\E^{\I t\tilde D}$ is unitary.
In fact, $u_t = \E^{\I t\tilde D} f$ is defined for an arbitrary $f \in L^2(\bundle{E})$, but need not be differentiable in $t$, and satisfies an integral version of the equation \eqref{eq:onde}, that is,
\[
u_t = u_0 + \I D^* \int_0^t u_s \,{\D}s
\]
\cite[Lemma~II.1.3]{engel_oneparameter_2000}; however such a ``mild solution'' of \eqref{eq:onde} may be approximated by ``classical solutions'' because the domain of $\tilde D$ is dense in $L^2(\bundle{E})$.

In any case, for an arbitrary $D$, compactly-supported solutions automatically preserve energy.

\begin{proposition}\label{prp:energy}
If $u_t$ is a solution of \eqref{eq:onde}, defined on an interval $I$, and $\supp u_t$ is compact for all $t \in I$, then $u_t$ is energy-preserving.
\end{proposition}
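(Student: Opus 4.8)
The standard way to prove that a compactly-supported solution of a Schrödinger-type equation is energy-preserving is to differentiate $\|u_t\|_2^2$ and show that the derivative vanishes, using formal self-adjointness. The key point is that the compact support hypothesis makes the integration by parts legitimate even though $D$ is only formally self-adjoint (we cannot a priori use unitarity of $\E^{\I t \tilde D}$, since $D$ need not be essentially self-adjoint or even admit a self-adjoint extension).

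First I would fix $t_0 \in I$ and compute, for $t$ near $t_0$,
\[
\frac{\D}{{\D}t} \|u_t\|_2^2 = \frac{\D}{{\D}t} \llangle u_t, u_t\rrangle = \llangle \tfrac{\D}{{\D}t} u_t, u_t \rrangle + \llangle u_t, \tfrac{\D}{{\D}t} u_t\rrangle = \llangle \I D^* u_t, u_t \rrangle + \llangle u_t, \I D^* u_t \rrangle,
\]
which is justified because $t \mapsto u_t$ is a $C^1$ map into $L^2(\bundle{E})$ and the inner product is a continuous bilinear form. This equals $\I \bigl( \llangle D^* u_t, u_t\rrangle - \overline{\llangle D^* u_t, u_t\rrangle}\bigr) = -2\operatorname{Im} \llangle D^* u_t, u_t\rrangle$. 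So it suffices to show that $\llangle D^* u_t, u_t\rrangle$ is real for each $t$.

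Now I would invoke the compact support hypothesis: for each fixed $t \in I$, the section $u_t$ is compactly-supported and lies in the domain of $D^*$, which is $W^2_{D^+}(\bundle{E}) = W^2_D(\bundle{E})$ (here $D = D^+$). Thus $u_t \in L^2_\comp \cap W^2_D(\bundle{E})$, and by Proposition~\ref{prp:compactdomains} — equivalently, the $p=2$ case of Proposition~\ref{prp:sobolev0} — it lies in the minimal domain $W^2_{D,0}(\bundle{E})$, i.e.\ in $\afterline{D}$. Hence $D^* u_t = \afterline{D} u_t$, and we may apply integration by parts: since $\supp(u_t \otimes u_t)$ is compact and $u_t \in W^2_{D,\loc}(\bundle{E}) = W^2_{D^+,\loc}(\bundle{E})$, Proposition~\ref{prp:parts} gives $\llangle D u_t, u_t\rrangle = \llangle u_t, D^+ u_t\rrangle = \llangle u_t, D u_t\rrangle = \overline{\llangle D u_t, u_t\rrangle}$, so $\llangle D^* u_t, u_t\rrangle = \llangle D u_t, u_t\rrangle$ is real. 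Therefore $\tfrac{\D}{{\D}t}\|u_t\|_2^2 = 0$ on $I$, and $t \mapsto \|u_t\|_2$ is constant.

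The only genuine subtlety — and the step I would treat most carefully — is the passage from "$u_t$ is a compactly-supported element of the maximal domain" to "$D^* u_t$ can be paired with $u_t$ by honest integration by parts." This is exactly what Proposition~\ref{prp:compactdomains} (via Proposition~\ref{prp:sobolev0}) is designed to supply, so once that is cited the argument is routine; alternatively one can bypass it by mollifying $u_t$ with $J^\vectau_\veceps$ and using the commutator estimates of Theorem~\ref{thm:generalmollifier}, but using Proposition~\ref{prp:compactdomains} is cleaner. I would also remark that no regularity of $t \mapsto D^* u_t$ is needed beyond what the definition of "solution" already provides, since we only differentiate the scalar function $t \mapsto \|u_t\|_2^2$.
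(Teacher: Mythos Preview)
Your proof is correct and follows essentially the same approach as the paper: differentiate $\|u_t\|_2^2$, then use Proposition~\ref{prp:compactdomains} to place the compactly-supported $u_t$ in the minimal domain, making the pairing $\llangle D^* u_t, u_t\rrangle$ real. The only cosmetic difference is that the paper phrases the last step via the Hilbert-space adjoint identity $\llangle u_t, D^* u_t\rrangle = \llangle \afterline{D} u_t, u_t\rrangle = \llangle D^* u_t, u_t\rrangle$, whereas you invoke Proposition~\ref{prp:parts} directly; these are equivalent.
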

\begin{proof}
The function $t \mapsto \| u_t \|_2^2$ is differentiable, with derivative
\[
\I \llangle D^* u_t, u_t \rrangle - \I \llangle u_t, D^* u_t \rrangle.
\]
Since $\supp u_t$ is compact, $u_t$ is in the domain of $\afterline{D}$ by Proposition~\ref{prp:compactdomains}, therefore
\[
\llangle u_t, D^* u_t \rrangle = \llangle \afterline{D} u_t, u_t \rrangle = \llangle D^* u_t, u_t \rrangle,
\]
and hence the derivative of $t \mapsto \| u_t \|_2^2$ is identically null.
\end{proof}

The relationship between preservation of energy and compactness of support may be partially reversed.

\begin{theorem}\label{thm:propagation}
Suppose that $K \subseteq W$, where $K\in \mathfrak{K}(M)$ and $W \in \mathfrak{O}(M)$.
There exists $\epsilon$, depending on $K$ and $W$,  such that, for all energy-preserving solutions $u_t$ of \eqref{eq:onde} defined on an interval $I$ containing $0$, if
\[
\supp u_0 \subseteq K
\]
then
\[
\supp u_t \subseteq W
\]
for all $t \in I \cap \leftopenint -\epsilon,\epsilon\rightopenint $.
\end{theorem}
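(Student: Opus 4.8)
The plan is to run the classical energy (``cone of dependence'') argument, adapted to the non-elliptic setting — where the relevant gradient bound is $\|\,|\Symbol{D}\phi|_\op\|_\infty\le1$ rather than $\|\nabla\phi\|_\infty\le1$ — and to the fact that solutions need not be compactly supported, which forces an approximation by cut-offs near infinity. First I would fix once and for all a \emph{complete} riemannian metric $g$ on $M$ with $P_D(\xi)\le|\xi|_g$ for all $\xi\in T^*M$ (such a $g$ exists: start from any complete metric and rescale it upwards, as in the proof of Lemma~\ref{lem:riemannianapproximants}; enlarging a metric preserves completeness). Then $\dist_g$ is varietal and proper and $\dist_g\le\dist_D$ by \eqref{eq:riemanniandistancecomparison}. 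Since $K\in\mathfrak{K}(M)$, I would choose $\delta\in\R^+$ with $B_g(K,\delta)\subseteq W$ and, shrinking $\delta$ if need be, so small that $\bar{B}_D(K,\delta)$ is compact (legitimate by Proposition~\ref{prp:minimizingsubunit}); put $\epsilon=\delta/2$ and
\[
\phi(x)=\bigl(\dist_D(K,x)\wedge\delta\bigr)-\delta/2 \qquant x\in M.
\]
This $\phi$ is a bounded Borel function (its sublevel sets below $\delta$ are the compact sets $\bar{B}_D(K,r)$, $r<\delta$, by Proposition~\ref{prp:minimizingsubunit}); it equals $-\delta/2$ on $K$ and $\delta/2$ on $M\setminus W$ (there $\dist_D(K,\cdot)\ge\dist_g(K,\cdot)\ge\delta$); and $|\phi(x)-\phi(y)|\le\dist_D(x,y)\le\dist_D^\flow(x,y)$, so by Proposition~\ref{prp:lipschitzsymbol} $\phi\in W^\infty_{\Symbol{D},\loc}(\bundle{T}_\R)$ with $\|\,|\Symbol{D}\phi|_\op\|_\infty\le1$. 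I would also fix $F\in C^1(\R)$ bounded, nondecreasing, vanishing on $\leftopenint-\infty,0\rightclosedint$ and strictly positive on $\leftopenint0,\infty\rightopenint$.

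The crux is the energy inequality: for every energy-preserving solution $u_t$ on an interval $I\ni0$ and every $t\in I$,
\[
\llangle F(\phi-|t|)u_t,u_t\rrangle\le\llangle F(\phi)u_0,u_0\rrangle .
\]
Replacing $(t,u_t)$ by $(-t,u_{-t})$ and $D$ by $-D$ (again formally self-adjoint, with the same $P_D$), we may assume $t\ge0$. Fix a smooth compactly-supported $\eta$ with $0\le\eta\le1$ and $|\Symbol{D}\eta|_\op\le2$, drawn from a family $\eta_R$ of such cut-offs built from $\dist_g(x_0,\cdot)$-balls about a base point $x_0$, with $\eta_R\uparrow1$ and $\supp\Symbol{D}\eta_R\subseteq\{\dist_g(x_0,\cdot)\ge R\}$. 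Write $\psi_t=F(\phi-t)$ and $N_\eta(t)=\llangle\eta\psi_t u_t,u_t\rrangle$. Since $t\mapsto\psi_t$ is $C^1$ into $L^\infty$ and $t\mapsto u_t$ is $C^1$ into $L^2$ with $u_t'=\I D^*u_t$, $N_\eta$ is $C^1$ and
\[
N_\eta'(t)=-\llangle\eta F'(\phi-t)u_t,u_t\rrangle+\I\llangle\eta\psi_t D^*u_t,u_t\rrangle-\I\llangle\eta\psi_t u_t,D^*u_t\rrangle .
\]
Now $\eta\psi_t u_t$ is compactly supported and, by Leibniz' rule (Proposition~\ref{prp:leibniz}), $D(\eta\psi_t u_t)=(\Symbol{D}(\eta\psi_t))u_t+\eta\psi_t D^*u_t\in L^2(\bundle{E})$, so $\eta\psi_t u_t$ lies in the maximal and hence (Proposition~\ref{prp:compactdomains}) in the minimal domain of $D$; integration by parts (Proposition~\ref{prp:parts}, using $D=D^+$) turns the last two terms into $-\I\llangle(\Symbol{D}(\eta\psi_t))u_t,u_t\rrangle$. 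By the chain rule (Proposition~\ref{prp:chain}, valid as $\Symbol{D}$ is homogeneous) and $\|\,|\Symbol{D}\phi|_\op\|_\infty\le1$ we have $|\Symbol{D}(\eta\psi_t)|_\op\le\psi_t|\Symbol{D}\eta|_\op+\eta F'(\phi-t)$, so the $\eta F'(\phi-t)$ part is dominated by $\llangle\eta F'(\phi-t)u_t,u_t\rrangle$, which cancels the first term; therefore
\[
N_\eta'(t)\le\int_M\psi_t\,|\Symbol{D}\eta|_\op\,|u_t|^2\le2\|F\|_\infty\int_{\supp\Symbol{D}\eta}|u_t|^2 .
\]
Integrating from $0$ to $t$, using $N_\eta(0)=\int_M\eta F(\phi)|u_0|^2=0$ (because $\phi\le0$ on $\supp u_0\subseteq K$), and then letting $\eta=\eta_R\uparrow1$: the left side increases to $\llangle F(\phi-t)u_t,u_t\rrangle$ by monotone convergence, while the right side tends to $0$ since $\{u_s:0\le s\le t\}$ is compact in $L^2(\bundle{E})$, hence uniformly integrable at infinity, and $\supp\Symbol{D}\eta_R$ escapes to infinity. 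This gives $\llangle F(\phi-t)u_t,u_t\rrangle\le0$, which is the energy inequality.

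Since $F\ge0$, the energy inequality forces $F(\phi-|t|)|u_t|^2=0$ almost everywhere, and, as $F>0$ on $\leftopenint0,\infty\rightopenint$, this means $u_t$ vanishes a.e.\ on $\{\phi>|t|\}$, i.e.\ $\supp u_t\subseteq\{x\in M:\phi(x)\le|t|\}$, for every $t\in I$. When $|t|<\epsilon=\delta/2$, every $x\in\supp u_t$ satisfies $\phi(x)\le|t|<\delta/2$, so $x\notin M\setminus W$ (where $\phi=\delta/2$); that is, $\supp u_t\subseteq W$. The constant $\epsilon=\delta/2$ depends only on $K$, $W$ and the metric $g$, which itself depends only on $D$.

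The step I expect to be the main obstacle is the passage $\eta_R\uparrow1$ in the energy inequality: one must exhibit smooth compactly-supported cut-offs increasing to $1$ whose symbol-gradients $\Symbol{D}\eta_R$ are \emph{uniformly} bounded in operator norm, with supports leaving every compact set. This is precisely where the completeness of the auxiliary metric $g$ and the domination $P_D\le|\cdot|_g$ are used — so that $|\Symbol{D}\eta_R|_\op\le|d\eta_R|_g$ is controlled and $\dist_g(x_0,\cdot)$ is proper — together with the uniform $L^2$-integrability at infinity of the time-slices $u_s$, $s$ in the relevant interval (which follows from the continuity of $s\mapsto u_s$). Everything else is a routine application of the Leibniz rule, the chain rule and the integration-by-parts result of Sections~\ref{section:weakderivatives}--\ref{section:subfinsler}, of Proposition~\ref{prp:compactdomains}, and of the elementary inclusion $\{\dist_D(K,\cdot)<\delta\}\subseteq B_g(K,\delta)\subseteq W$.
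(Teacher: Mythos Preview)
Your argument is correct, but it takes a genuinely different route from the paper's proof.

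The paper's proof is considerably shorter. It picks a \emph{smooth} bump function $g$ that equals $1$ on $K$ and is supported in $W$, sets $\epsilon<\|\,|\Symbol{D}g|_\op\|_\infty^{-1}$, and uses the weight $h_t(x)=\phi(g(x)+\epsilon^{-1}t)$ with $\phi$ smooth, nondecreasing, $\phi\equiv1$ on $[1,\infty)$. The key point is that $\Symbol{D}h_t$ is compactly supported (because $\Symbol{D}g$ is), so after subtracting the constant $\phi(\epsilon^{-1}t)$ one gets a compactly-supported multiplier and no cut-offs at infinity are needed; the integration by parts is immediate from Proposition~\ref{prp:compactdomains}. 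Since $h_t$ is close to $1$ on $K$, the paper then compares $\llangle h_t u_t,u_t\rrangle$ with $\|u_t\|_2^2$, and this is where the energy-preserving hypothesis enters (both to kill the contribution of the constant and in the final step $\llangle h_0u_0,u_0\rrangle=\|u_0\|_2^2=\|u_t\|_2^2$).

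Your approach instead uses a weight $F(\phi-t)$ that \emph{vanishes} on $K$; this has the pleasant consequence that you never invoke energy preservation --- in fact your computation proves the support conclusion for \emph{every} solution of \eqref{eq:onde} with $\supp u_0\subseteq K$, not only the energy-preserving ones. The price is that $F(\phi-t)$ is not compactly supported (it equals the positive constant $F(\delta/2-t)$ outside $\bar B_D(K,\delta)$), which forces the auxiliary cut-offs $\eta_R$ and the complete-metric argument. Your use of $\dist_D$ here is also more than the theorem requires: the paper reserves that for the quantitative Theorem~\ref{thm:propagationdistance}, and for the present qualitative statement a smooth bump function suffices.

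Two small points: you should assume $F'$ bounded (needed for Proposition~\ref{prp:chain}, and implicitly for $t\mapsto F(\phi-t)$ to be $C^1$ into $L^\infty$); and the smooth cut-offs $\eta_R$ with uniformly bounded $|\Symbol{D}\eta_R|_\op$ require a word, since $\dist_g(x_0,\cdot)$ is only Lipschitz --- Corollary~\ref{cor:bdsmoothapprox} applied to suitable truncations of $\dist_g(x_0,\cdot)$ does the job.
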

\begin{proof}
See \cite[Proposition 10.3.1]{higson_analytic_2000}.

Choose a bump function $g$ that is $1$ on $K$ and supported in $W$, and take $\epsilon$ less than $\|\,|\Symbol{D} g|_\op\|_\infty^{-1}$.
Choose also a smooth nondecreasing function $\phi\colon \R \to [0,1]$ such that $\phi(t) < 1$ when $t < 1$ and $\phi(t) = 1$ when $t \geq 1$.

Define functions $h_t\colon M \to [0,1]$ for all $t \in \leftclosedint 0,\infty\rightopenint$ by
\[
h_t(x) = \phi(g(x) + \epsilon^{-1} t).
\]
Now $h_t$ depends smoothly on $t$, with derivative (relative to $t$) given by
\[
\dot h_t(x) = \epsilon^{-1} \phi'(g(x) + \epsilon^{-1} t) \geq 0,
\]
since $\phi$ is smooth and nondecreasing; further, by the chain rule,
\[
\Symbol{D} h_t(x) = \phi'(g(x) + \epsilon^{-1} t) \, \Symbol{D} g(x) = \epsilon \, \dot h_t(x) \, \Symbol{D} g(x).
\]
Consequently,
\begin{equation}\label{eq:pointwisepositive}
\dot h_t -\I \Symbol{D} h_t = \dot h_t (1 - \I \epsilon \Symbol{D} g) \geq 0
\end{equation}
pointwise as a section of $\Hom(\bundle{E},\bundle{E})$, since $\I \epsilon \Symbol{D} g$ is pointwise self-adjoint by \eqref{eq:dist_tauadj}, and $\|\,|\I \epsilon \Symbol{D} g|_\op\|_\infty \leq 1$.

Take $u_t$ as in the hypotheses.
Then, by Leibniz' rule,
\[
D(h_t u_t) = (\Symbol{D} h_t) u_t + h_t D u_t.
\]
Now $h_t u_t \in L^2$ and $D(h_t u_t) \in L^2$ since $u_t \in L^2$, $Du_t \in L^2$, $h_t \in L^\infty$, and $\Symbol{D} h_t \in L^\infty$.
Moreover $h_t u_t$ is compactly-supported, so belongs to the domain of $\afterline{D}$ by Proposition~\ref{prp:compactdomains}, and
\[
\llangle h_t u_t, D^* u_t \rrangle = \llangle \afterline{D} (h_t u_t), u_t \rrangle = \llangle D^* (h_t u_t), u_t \rrangle,
\]
hence
\[
\begin{aligned}
\frac{\D}{{\D}t} \llangle h_t u_t, u_t \rrangle
&= \llangle \dot h_t u_t, u_t \rrangle +\I \llangle h_t D^* u_t, u_t \rrangle -\I \llangle h_t u_t, D^* u_t \rrangle \\
&= \llangle \dot h_t u_t, u_t \rrangle -\I \llangle (\Symbol{D} h_t) u_t, u_t \rrangle \geq 0
\end{aligned}
\]
by Leibniz' rule and \eqref{eq:pointwisepositive}.
Therefore, for all positive $t$,
\[
\llangle h_t u_t, u_t \rrangle \geq \llangle h_0 u_0, u_0 \rrangle = \llangle u_0, u_0 \rrangle = \llangle u_t, u_t \rrangle,
\]
since $u_t$ is energy-preserving, $\supp u_0 \subseteq K$ and $h_0$ is equal to $1$ on $K$.
But then $h_t u_t = u_t$ almost everywhere, since  $0 \leq h_t \leq 1$.
Note that $h_t = \phi(\epsilon^{-1} t) < 1$  on the open set $M \setminus \supp g$ when $t < \epsilon$; hence $\supp u_t \subseteq \supp g \subseteq W$.

The case where $t < 0$ may be treated by replacing $D$ with $-D$ and $u_t$ with $u_{-t}$.
\end{proof}

As a consequence, we establish the uniqueness of energy-preserving solutions of \eqref{eq:onde} for small times and compactly-supported initial datum.

\begin{corollary}\label{cor:uniquenessenergypreserving}
Suppose that $K$ is a compact subset of $M$.
There exists $\epsilon \in \R^+$, depending on $K$, such that, for all $f \in L^2(\bundle{E})$ for which $\supp f \subseteq K$, two energy-preserving solutions $u^{}_t$ and $v^{}_t$ of \eqref{eq:onde} that satisfy $u^{}_0 = v^{}_0 = f$ coincide when $|t| < \epsilon$.
In particular, when $|t|  < \epsilon$, the value of $\E^{\I t\tilde D} f$ does not depend on the self-adjoint extension $\tilde D$ of $D$.
\end{corollary}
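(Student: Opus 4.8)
The plan is to derive both assertions from finite propagation speed (Theorem~\ref{thm:propagation}) together with the fact that compactly-supported solutions of~\eqref{eq:onde} automatically preserve energy (Proposition~\ref{prp:energy}), exploiting that~\eqref{eq:onde} is linear. To fix the constant: given the compact set $K$, choose a relatively compact open set $W$ with $K \subseteq W$, then a relatively compact open set $W'$ with $\overline{W} \subseteq W'$, and let $\epsilon \in \R^+$ be the constant furnished by Theorem~\ref{thm:propagation} for the pair $(\overline{W}, W')$. This $\epsilon$ depends only on $K$.

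First I would treat uniqueness. Let $u_t$ and $v_t$ be energy-preserving solutions of~\eqref{eq:onde} with $u_0 = v_0 = f$ and $\supp f \subseteq K$, each defined on an interval containing $0$. Since $\supp f \subseteq \overline{W}$, Theorem~\ref{thm:propagation} gives $\supp u_t, \supp v_t \subseteq W'$ for all $t$ in the respective intervals of definition with $|t| < \epsilon$. The difference $w_t = u_t - v_t$ is again a solution of~\eqref{eq:onde}, because $D^*$ and differentiation are linear, and for such $t$ its support lies in the compact set $\overline{W'}$; hence, on the common interval of definition intersected with $\leftopenint -\epsilon, \epsilon \rightopenint$, Proposition~\ref{prp:energy} shows that $w_t$ preserves energy, so $\| w_t \|_2 = \| w_0 \|_2 = 0$ and $u_t = v_t$ there.

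For the last assertion, recall (as noted before Proposition~\ref{prp:energy}) that $t \mapsto \E^{\I t\tilde D} g$ is an energy-preserving solution of~\eqref{eq:onde} whenever $g$ lies in the domain of a self-adjoint extension $\tilde D$ of $D$, since $\E^{\I t\tilde D}$ is unitary; in particular this holds for every $g \in C^\infty_\comp(\bundle{E})$, as $C^\infty_\comp(\bundle{E})$ is contained in the domain of $\afterline{D}$, hence of $\tilde D$. Now take $f \in L^2(\bundle{E})$ with $\supp f \subseteq K$, and approximate it in $L^2(\bundle{E})$ by sections $f_n \in C^\infty_\comp(\bundle{E})$ with $\supp f_n \subseteq W$, using the mollifiers of Theorem~\ref{thm:generalmollifier} (parts (i), (ii), (iv)). Applying the uniqueness just proved, but with $\overline{W}$ in place of $K$ — which is legitimate precisely because $\epsilon$ was extracted for the pair $(\overline{W}, W')$ — we obtain $\E^{\I t\tilde D_1} f_n = \E^{\I t\tilde D_2} f_n$ for $|t| < \epsilon$ and any two self-adjoint extensions $\tilde D_1, \tilde D_2$ of $D$; letting $n \to \infty$ and using the boundedness of $\E^{\I t\tilde D_1}$ and $\E^{\I t\tilde D_2}$ yields $\E^{\I t\tilde D_1} f = \E^{\I t\tilde D_2} f$ for $|t| < \epsilon$.

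The only point that needs care, and the reason for the nested neighbourhoods $K \subseteq W \subseteq \overline{W} \subseteq W'$, is that the last assertion forces one to apply the uniqueness statement to the smooth approximants $f_n$, whose supports can be confined only to $W$ rather than to $K$; this is handled by establishing uniqueness directly for initial data supported in the larger compact set $\overline{W}$. Everything else — that differences of solutions solve~\eqref{eq:onde}, that compactly-supported solutions conserve energy, and that the propagation bound localises supports — is already available from the results above, so I expect no further obstacle.
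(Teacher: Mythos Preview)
Your argument is correct and follows the same route as the paper's: apply Theorem~\ref{thm:propagation} to $u_t$ and $v_t$ separately to confine their supports, so that the difference $w_t$ is a compactly-supported solution and hence energy-preserving by Proposition~\ref{prp:energy}, forcing $\|w_t\|_2 = \|w_0\|_2 = 0$. The paper uses a single neighbourhood $K \subseteq W$ and leaves the ``In particular'' assertion implicit; your nested pair $K \subseteq W \subseteq \overline{W} \subseteq W'$ together with the explicit $C^\infty_\comp$-approximation is a legitimate (and careful) way to supply that detail for general $f \in L^2(\bundle{E})$.
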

\begin{proof}
Take a relatively compact open neighbourhood $W$ of $K$ in $M$, and take $\epsilon$, depending on $K$ and $W$, as in Theorem~\ref{thm:propagation}.

Write $w_t$ for $u^{}_t - v^{}_t$.
The $w_t$  is a solution of \eqref{eq:onde}, and $\supp w_t \subseteq W$ when $|t| < \epsilon$ by Theorem~\ref{thm:propagation}, therefore $w_t$ is energy-preserving when $|t| < \epsilon$ by Proposition~\ref{prp:energy}, and the conclusion follows since $\|w_0\|_2 = 0$.
\end{proof}

\subsection{Propagation and the control distance}

By using the control distance function $\dist_D$ associated to $D = D^+ \in \Diff_1(\bundle{E},\bundle{E})$, we establish a quantitative version of Theorem~\ref{thm:propagation}.
Recall Definition \ref{eqn:def-R_P(K)} of $R_D(K)$. 

\begin{theorem}\label{thm:propagationdistance}
Suppose that $K$ is a compact  subset of $M$.
If $U$ is an energy-preserving solution of \eqref{eq:onde} defined on an interval $I$ containing $0$ and
\[
\supp u_0 \subseteq K,
\]
then
\[
\supp u_t \subseteq \bar{B}_D(K,|t|)
\]
for all $t \in I \cap \leftopenint -R_D(K), R_D(K)\rightopenint$.
\end{theorem}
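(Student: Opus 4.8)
The plan is to rerun the energy argument from the proof of Theorem~\ref{thm:propagation}, but to replace the fixed bump function $g$ used there by a \emph{time-dependent} cutoff built directly from the control distance to $K$, designed to expand at unit speed. Since $P_{-D} = P_D$, so that $\dist_{-D} = \dist_D$ and $R_{-D}(K) = R_D(K)$, and since $v_t := u_{-t}$ is an energy-preserving solution of \eqref{eq:onde} with $D$ replaced by $-D$, it will suffice to treat $t \ge 0$. I would fix $R$ with $0 < R < R_D(K)$; by Proposition~\ref{prp:minimizingsubunit} the sets $\bar{B}_D(K,s)$ are then compact for all $s \le R$. Setting $\rho = \dist_D(K,\cdot)$ and $\rho_R = \min(\rho,R)$, this $\rho_R$ is bounded, and lower semicontinuous (hence Borel) by Proposition~\ref{prp:minimizingsubunit}, so it lies in $L^1_\loc(\bundle{T}_\R)$; and it is $1$-Lipschitz with respect to $\dist_D$, hence a fortiori with respect to $\dist_D^\flow$ by \eqref{eq:distancesinequalities}. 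Proposition~\ref{prp:lipschitzsymbol} then applies and shows that $\rho_R$ is weakly $\Symbol{D}$-differentiable with $\|\,|\Symbol{D}\rho_R|_\op\|_\infty \le 1$; moreover $\rho_R$ lies in $W^p_{\Symbol{D},\loc}(\bundle{T}_\R)$ for every finite $p$.

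Next I would pick a smooth nondecreasing $\phi\colon\R\to\leftclosedint0,1\rightclosedint$ with $\phi = 0$ on $\leftopenint-\infty,0\rightclosedint$, $\phi = 1$ on $\leftclosedint1,\infty\rightopenint$ and $\phi < 1$ on $\leftopenint-\infty,1\rightopenint$, fix $\delta \in \R^+$, and set $h_t = \phi(\delta^{-1}(t-\rho_R)+1)$ for $t \in \R^+$. The chain rule (Proposition~\ref{prp:chain}, applied to the homogeneous operator $\Symbol{D}$), together with the boundedness of $\phi'$ and of $|\Symbol{D}\rho_R|_\op$, gives $h_t \in W^\infty_{\Symbol{D},\loc}(\bundle{T}_\R)$ with $\Symbol{D}h_t = -\dot h_t\,\Symbol{D}\rho_R$ and $\dot h_t = \delta^{-1}\phi'(\delta^{-1}(t-\rho_R)+1) \ge 0$, together with $0 \le h_t \le 1$. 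Since $h_0 = 1$ on $\{\rho_R \le 0\} = \bar{B}_D(K,0) \supseteq K \supseteq \supp u_0$, and since for $t \le R-\delta$ both $h_t$ and $\Symbol{D}h_t$ are supported in the compact set $\bar{B}_D(K,t+\delta) \subseteq \bar{B}_D(K,R)$, the section $h_t u_t$ is compactly supported; by Leibniz' rule (Proposition~\ref{prp:leibniz}) it then lies in $W^2_D(\bundle{E})$, hence in the minimal domain of $D$ by Proposition~\ref{prp:compactdomains}, so that $\afterline{D}(h_t u_t) = (\Symbol{D}h_t)u_t + h_t D^* u_t$.

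The positivity input is that, because $D = D^+$, identity \eqref{eq:dist_tauadj} forces $\Symbol{D}\rho_R$ to be pointwise skew-adjoint almost everywhere; together with $\|\,|\Symbol{D}\rho_R|_\op\|_\infty \le 1$ this gives $\dot h_t - \I\Symbol{D}h_t = \dot h_t(1+\I\Symbol{D}\rho_R) \ge 0$ pointwise almost everywhere. Then, exactly as in the proof of Theorem~\ref{thm:propagation}, for $t\in I\cap\leftclosedint0,R-\delta\rightclosedint$ I would compute
\[
\frac{\D}{\D t}\llangle h_t u_t, u_t\rrangle = \llangle (\dot h_t - \I\Symbol{D}h_t)u_t, u_t\rrangle \ge 0,
\]
so that, using $h_0 u_0 = u_0$ and energy preservation, $\|u_t\|_2^2 \ge \llangle h_t u_t,u_t\rrangle \ge \|u_0\|_2^2 = \|u_t\|_2^2$; since $0 \le h_t \le 1$, this forces $h_t|u_t|^2 = |u_t|^2$ almost everywhere, so $u_t$ vanishes on $\{h_t < 1\} = \{\rho_R > t\} = \{\rho > t\}$, the last equality because $t < R$. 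Hence $\supp u_t \subseteq \bar{B}_D(K,t)$ for all $t \in I\cap\leftclosedint0,R-\delta\rightclosedint$; taking the union over $\delta \in \R^+$ and then over $R \in \leftopenint0,R_D(K)\rightopenint$, and combining with the reflected case $t \le 0$, one obtains $\supp u_t \subseteq \bar{B}_D(K,|t|)$ for all $t \in I\cap\leftopenint-R_D(K),R_D(K)\rightopenint$.

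The hard part will be the construction and handling of $h_t$ rather than the final energy identity: because $\dist_D$ need not be varietal, $\rho = \dist_D(K,\cdot)$ is in general only lower semicontinuous, so it cannot be replaced by a smooth approximant as in Proposition~\ref{prp:distancesmoothfunctions}; one must instead work with the genuinely non-smooth cutoff $h_t$, which is why the weak Leibniz and chain rules are needed, and one has to check carefully that $h_t$, $\dot h_t$ and $\Symbol{D}h_t$ belong to the correct Sobolev and $L^\infty$ spaces and, above all, have compact support. This last point is exactly where the restriction $t < R < R_D(K)$ is used, via Proposition~\ref{prp:minimizingsubunit}, and it is what makes Proposition~\ref{prp:compactdomains} applicable. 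Verifying the pointwise positivity and differentiating $t\mapsto\llangle h_t u_t,u_t\rrangle$ under the integral sign (all supports lying in one fixed compact set) is then routine, just as in Theorem~\ref{thm:propagation}.
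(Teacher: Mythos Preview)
Your proposal is correct and follows essentially the same approach as the paper: both replace the smooth bump function $g$ of Theorem~\ref{thm:propagation} by a non-smooth cutoff built from $\dist_D(K,\cdot)$, invoke Proposition~\ref{prp:lipschitzsymbol} for weak $\Symbol{D}$-differentiability, and rerun the energy argument using Propositions~\ref{prp:leibniz}, \ref{prp:chain}, and~\ref{prp:compactdomains}. The only difference is cosmetic: the paper keeps the structure $h_t = \phi(g + \epsilon^{-1}t)$ with $g(x) = (1-\epsilon^{-1}\dist_D(K,x))_+$, obtaining $\supp u_t \subseteq \bar{B}_D(K,\epsilon)$ for $|t|<\epsilon$ and then intersecting over $\epsilon$, whereas your cutoff $h_t = \phi(\delta^{-1}(t-\rho_R)+1)$ tracks the expanding ball directly and yields $\supp u_t \subseteq \bar{B}_D(K,t)$ without the final intersection step.
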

\begin{proof}
It suffices to prove that $\supp u_t \subseteq \bar{B}_D(K,\epsilon)$ when $|t| < \epsilon$, since
\[
\bar{B}_D(K,|t|) = \bigcap_{\delta > |t|} \bar{B}_D(K,\delta).
\]

Take any $\epsilon$ such that $|t| < \epsilon < R_D(K)$.
We now follow the proof of Theorem~\ref{thm:propagation}, with one modification: we define $g$, which is no longer smooth, by
\[
g(x) = (1-\epsilon^{-1} \dist_D(K,x))_+
\qquant x \in M.
\]
Again by Proposition~\ref{prp:minimizingsubunit}, $\bar{B}_D(K,r)$ is compact when $r \leq \epsilon$, and hence $g$ is upper-semicontinuous; moreover it is clear that $0 \leq g \leq 1$, that $g = 1$ on $K$, that $\supp g \subseteq \bar{B}_D(K,\epsilon)$ and that
\[
|g(x) - g(y)| \leq \epsilon^{-1} \dist_D(x,y),
\]
so $g$ is weakly $\Symbol{D}$-differentiable and $\|\,|\Symbol{D} g|_\op\|_\infty \leq \epsilon^{-1}$ by Proposition~\ref{prp:lipschitzsymbol}.
The steps of the proof of Theorem~\ref{thm:propagation} may now be repeated, interpreting $\Symbol{D}$-derivatives in the weak sense, and using Propositions~\ref{prp:leibniz} and \ref{prp:chain} whenever Leibniz' rule and the chain rule are invoked.
\end{proof}

A quantitative version of Corollary~\ref{cor:uniquenessenergypreserving} on uniqueness of energy-preserving solutions may be derived as before.
In fact, with a little more effort, we also establish an existence result.
To avoid boundary value problems, we restrict attention to the interval $\leftopenint -R_D(K),R_D(K)\rightopenint $. 

\begin{theorem}
Suppose that $K \in \mathfrak{K}(M)$ and that $f \in W^2_D(\bundle{E})$ is supported in $K$.
Then there exists an energy-preserving solution $u_t$ of \eqref{eq:onde} on the interval $\leftopenint -R_D(K),R_D(K)\rightopenint $ such that $u_0 = f$; moreover, any other energy-preserving solution of \eqref{eq:onde} with initial datum $f$ coincides with $u_t$ on the intersection of their domains.
\end{theorem}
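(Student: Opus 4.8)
The plan is to reduce to the complete, hence essentially self-adjoint, case by modifying $D$ away from a large control ball, and then to exploit finite propagation speed for the modified operator to see that the resulting group orbit actually solves the original equation near $K$.

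Fix $T_0 \in \leftopenint 0, R_D(K)\rightopenint$. Since $\bar{B}_D(K,R)$ is compact for every $R < R_D(K)$ (if $\bar{B}_D(K,R)$ is compact and $R' < R$, then $\bar{B}_D(K,R')$ is a closed subset of it, hence compact, by Proposition~\ref{prp:minimizingsubunit}), the set $\bar{B}_D(K,T_0)$ is compact; choose a relatively compact open $W_0$ with $\bar{B}_D(K,T_0) \subseteq W_0$ and a bump function $\chi$ on $M$ equal to $1$ on $\afterline{W_0}$. Define $D_\chi \in \Diff_1(\bundle{E},\bundle{E})$ by $D_\chi f = \tfrac12(\chi\, Df + D(\chi f))$. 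Because $D = D^+$ and $\chi$ is real, $(\chi D)^+ = D\chi$, so $D_\chi$ is formally self-adjoint; moreover $\sigma_1(D_\chi)(\xi) = \chi\,\sigma_1(D)(\xi)$, so $P_{D_\chi} = \chi P_D \leq P_D$ and $\sigma_1(D_\chi)$ is compactly-supported, whence $D_\chi$ is complete and, by Corollary~\ref{cor:selfadjoint}, essentially self-adjoint; write $\afterline{D_\chi}$ for its self-adjoint closure. From $P_{D_\chi} \leq P_D$ we get $P^*_{D_\chi} \geq P^*_D$, so $D_\chi$-subunit curves are $D$-subunit, hence $\dist_{D_\chi} \geq \dist_D$ and $\bar{B}_{D_\chi}(K,r) \subseteq \bar{B}_D(K,r)$ for all $r$. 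Finally, since $\D\chi = 0$ on $W_0$, Leibniz' rule \eqref{eq:dist_leibniz} gives $D_\chi\psi = D\psi$ on $W_0$ for every distributional section $\psi$; consequently, if $\supp u \subseteq W_0$, then $D_\chi u = D u$ as distributions, because $\llangle D u,\phi\rrangle = \llangle u, D\phi\rrangle = \llangle u, D_\chi \phi\rrangle = \llangle D_\chi u,\phi\rrangle$ for all $\phi \in C^\infty_\comp(\bundle{E})$.

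Now $f \in W^2_D(\bundle{E})$ with $\supp f \subseteq K \subseteq W_0$, where $\chi \equiv 1$, so $D_\chi f = D f \in L^2(\bundle{E})$; as $f$ is compactly-supported, Proposition~\ref{prp:compactdomains} places $f$ in the domain of $\afterline{D_\chi}$. Set $u_t = \E^{\I t \afterline{D_\chi}} f$ for $t \in \R$: this is a $C^1$, energy-preserving solution of $\tfrac{\D}{\D t} u_t = \I D_\chi^* u_t$ with $u_0 = f$ and $\supp u_0 \subseteq K$, so Theorem~\ref{thm:propagationdistance} applied to $D_\chi$ (for which $R_{D_\chi}(K) = \infty$) yields
\[
\supp u_t \subseteq \bar{B}_{D_\chi}(K,|t|) \subseteq \bar{B}_D(K,|t|) \qquant t \in \R .
\]
When $|t| < T_0$ this support lies in $W_0$, so $D u_t = D_\chi u_t = D_\chi^* u_t = \afterline{D_\chi} u_t \in L^2(\bundle{E})$; hence $u_t$ lies in the maximal domain $W^2_D(\bundle{E})$ of $D$, with $D^* u_t = \afterline{D_\chi} u_t$, and therefore $(u_t)_{|t| < T_0}$ is an energy-preserving solution of \eqref{eq:onde} with $u_0 = f$ on $\leftopenint -T_0,T_0\rightopenint$.

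It remains to glue these over $T_0 \uparrow R_D(K)$ and to prove the uniqueness assertion; both follow from one remark. Suppose $v_t$, $w_t$ are energy-preserving solutions of \eqref{eq:onde}, defined on intervals containing $0$, with $v_0 = w_0 = f$ and $\supp f \subseteq K$. By Theorem~\ref{thm:propagationdistance}, $\supp v_t, \supp w_t \subseteq \bar{B}_D(K,|t|)$; fixing $T_0 < R_D(K)$, on the interval $I'$ obtained by intersecting $\leftopenint -T_0,T_0\rightopenint$ with the two domains, the difference $v_t - w_t$ is a solution of \eqref{eq:onde} with support in the compact set $\bar{B}_D(K,T_0)$ and vanishing at $t = 0$, so by Proposition~\ref{prp:energy} it is energy-preserving, hence identically zero on $I'$. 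Since $T_0 < R_D(K)$ is arbitrary, $v$ and $w$ agree on the intersection of their domains with $\leftopenint -R_D(K),R_D(K)\rightopenint$. Applying this to the solutions built from two choices $T_0 < T_1$ shows they coincide on $\leftopenint -T_0,T_0\rightopenint$, so the $u_t$ patch together to a single $C^1$, energy-preserving solution of \eqref{eq:onde} on $\leftopenint -R_D(K),R_D(K)\rightopenint = \bigcup_{0 < T_0 < R_D(K)} \leftopenint -T_0,T_0\rightopenint$; applying it again with $w_t = u_t$ and $v_t$ an arbitrary energy-preserving solution with $v_0 = f$ gives the stated uniqueness. The one delicate point, already isolated above, is the passage from the $D_\chi$-equation to \eqref{eq:onde}: one must know that $D_\chi$ and $D$ induce the same distributional derivative on sections supported where $\chi \equiv 1$, and that on compactly-supported sections the maximal and minimal domains of $D$ agree (Proposition~\ref{prp:compactdomains}), so that $D^* u_t$ is defined and equals $\afterline{D_\chi} u_t$ there.
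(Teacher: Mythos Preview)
Your proof is correct and follows essentially the same approach as the paper: truncate $D$ by a bump function to obtain a formally self-adjoint operator $D_\chi$ with compactly-supported principal symbol (hence complete and essentially self-adjoint), use the group $\E^{\I t\afterline{D_\chi}}$ and finite propagation speed to keep the solution inside the region where $\chi\equiv 1$, and observe that there $D_\chi$ and $D$ agree so the orbit solves the original equation; uniqueness comes from Theorem~\ref{thm:propagationdistance} and Proposition~\ref{prp:energy}. Your version is slightly more explicit than the paper's in carrying out the gluing over $T_0\uparrow R_D(K)$ and in verifying the distributional identity $D_\chi u = Du$ for sections supported where $\chi\equiv 1$.
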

\begin{proof}
All energy-preserving solutions $u_t$ such that $\supp u_0 \subseteq K$ remain compactly-supported when $|t| < R_D(K)$ by Theorem~\ref{thm:propagationdistance}, so uniqueness of the solution on $\leftopenint -R_D(K),R_D(K)\rightopenint $ is proved as in Corollary~\ref{cor:uniquenessenergypreserving}.
It remains to show the existence of an energy-preserving solution on all intervals $[-R,R]$ where $R < R_D(K)$.

Note that, if $D$ is complete, then $D^*$ is self-adjoint by Corollary~\ref{cor:selfadjoint}, and $u_t = \E^{\I t D^*} f$ is the required solution.
In the general case, take a bump function $\eta$ that is equal to $1$ on a neighbourhood of $\bar{B}_D(K,R)$, and define $D_0 \in \Diff_1(\bundle{E},\bundle{E})$ by
\[
D_0 f = \frac{1}{2} (\eta Df + D(\eta f)) = \eta Df + \frac{1}{2} (\Symbol{D} \eta) f.
\]
Then it is easily checked that $D_0$ is formally self-adjoint and $\sigma_1(D_0) = \eta \sigma_1(D)$ is compactly-supported, therefore $D_0$ is complete by Proposition \ref{prop:P-complete-M-complete}, and we may take $u_t = \E^{\I tD_0^*}f$, which is an energy-preserving solution of
\[
\frac{\D}{{\D}t} u_t = \I D_0^* u_t
\]
for all $t \in \R$.
Since $\sigma_1(D_0) = \eta \sigma_1(D)$ and $|\eta| \leq 1$, all $D_0$-subunit vectors are $D$-subunit, hence $\dist_D \leq \dist_{D_0}$ and consequently, by Theorem~\ref{thm:propagationdistance},
\[
\supp u_t \subseteq \bar{B}_{D_0}(K,R) \subseteq \bar{B}_D(K,R)
\]
for all $|t| \leq R$.
Moreover, $D$ and $D_0$ coincide on a neighbourhood of $\bar{B}_D(K,R)$ by construction, therefore $u_t$ is a solution of \eqref{eq:onde} when $|t| \leq R$.
\end{proof}

\subsection{Second-order operators}\label{subsection:secondorder}

Consider now the second-order equation
\begin{equation}\label{eq:onde_2}
\Bigl(\frac{\D}{{\D}t}\Bigr)^2 u_t = -L^* u_t,
\end{equation}
for some positive $L \in \Diff_2(\bundle{E},\bundle{E})$.
Suppose that $\tilde L$ is a positive self-adjoint extension of $L$, denote the continuous extension of $\lambda \mapsto \lambda^{-1} \sin \lambda$ to $\R$ by $\sinc$, and define $u_t$ by 
\begin{equation}\label{eq:sol_onde_2}
u_t = \cos(t \tilde L^{1/2}) f + t \sinc(t \tilde L^{1/2}) g.
\end{equation}
It is well-known that $u_t$ satisfies \eqref{eq:onde_2} together with the initial conditions $u_0 = f$ and $\dot u_0 = g$, at least when $f$ is in the domain of $\tilde L$ and $g$ is in the domain of $\tilde L^{1/2}$.

Suppose that $L$ factorises as $L = D^+ D$ for some $D \in \Diff_1(\bundle{E},\bundle{F})$, and recall that $\DD = D \oplus D^+$.
If $\tilde{\DD}$ is any self-adjoint extension of $\DD$, then $\tilde{\DD}^2$ preserves the decomposition of $L^2(\bundle{E} \oplus \bundle{F})$ as $L^2(\bundle{E}) \oplus L^2(\bundle{F})$, and $\tilde{\DD}^2 (f,0) = (\tilde L f, 0)$ for some positive self-adjoint extension $\tilde L$ of $L$.
In particular,
\begin{align*}
(\cos(t\tilde L^{1/2}) f, 0) 
&= \cos(t (\tilde{\DD}^2)^{1/2}) (f,0) = \cos(t \tilde{\DD}) (f,0) \\
&= \frac{\E^{\I t\tilde{\DD}} + \E^{- \I t\tilde{\DD}}}{2} (f,0),
\end{align*}
because the cosine function is even, and moreover
\[
\frac{\D}{{\D}t} (t \sinc(t \tilde L^{1/2}) g) = \cos(t \tilde L^{1/2}) g.
\]
Therefore if $\supp f \,\cup\, \supp g$ is compact and $u_t$ is defined by \eqref{eq:sol_onde_2}, then from Theorem~\ref{thm:propagationdistance} we deduce that
\[
\supp u_t \subseteq \bar{B}_D(\supp f \cup \supp g,|t|)
\]
whenever $|t| < R_D(\supp f \cup \supp g)$.

Note that
\[
P_D(\xi) = |\sigma_1(D)(\xi)|_\op = ( |\sigma_2(L)(\xi^{\odot 2})|_\op )^{1/2}
\]
by \eqref{eq:symbcomp} and \eqref{eq:symbadj}, so the fibre seminorm $P_D$ and the associated distance function may be expressed directly in terms of the second-order symbol of $L$.

When $P_D$ is complete, $\DD$ is essentially self-adjoint by Corollary~\ref{cor:selfadjoint}.
In fact the smoothness of solutions of symmetric hyperbolic systems with smooth coefficients (see \cite[Section 7.6]{alinhac_hyperbolic_2009} for an elementary proof), together with the finite propagation speed, implies that the operators $\E^{\I t\DD^*}$ preserve $C^\infty_c(\bundle{E} \oplus \bundle{F})$, and an argument of Chernoff \cite[Lemma 2.1]{chernoff_essential_1973} proves that $\DD^2$ is essentially self-adjoint too.
In particular, if  $L = D^+ D$, then $L$ is essentially self-adjoint, and $W^2_L(\bundle{E}) \subseteq W^2_D(\bundle{E})$ with continuous inclusion, because
\[
\llangle Df, Df \rrangle = \llangle f, Lf \rrangle \leq \|f\|_2 \|Lf\|_2
\]
for every $f$ in the maximal domain of $L$.
It is then not difficult to deduce that, for all maps $t \mapsto u_t$ in $C^2(I; L^2(\bundle{E}))$ that satisfy \eqref{eq:onde_2}, the equality
\[
v_t = (\dot u_t, \I D u_t)
\]
defines a mild solution of
\[
\frac{\D}{{\D}t} v_t = \I \DD^* v_t,
\]
and consequently $v_t = \E^{\I  t\DD^*} v_0$ (see \cite[Propositions VI.3.2 and II.6.4]{engel_oneparameter_2000}).
This implies that \eqref{eq:onde_2} has a unique solution for given initial data $u_0 = f$ and $\dot u_0 = g$, that is,
\[
u_t = \cos(t (L^*)^{1/2}) f + t \sinc(t (L^*)^{1/2}) g.
\]

\section{Examples}
This section contains examples that illustrate our theory.
We begin with a discussion of multilinear algebra, then pass to the examples.
Most of these are concerned with applications to differential operators, but the final example shows that smooth subunit parametrisations of smooth curves may not enable us to compute length.
\subsection{Preliminaries on multilinear algebra}

Suppose that $V$ is an $n$-dimensional vector space over $\C$.
As usual, if $\{v_1,\dots,v_n\}$ is a basis of $V$ and $J = \{j_1,\dots,j_k\}$, where $1 \leq j_1 < \dots < j_k \leq n$, then we define the element $v_J$ of the exterior algebra $\bigoplus_{k=0}^n \Ext^k V$ by
\[
v_J = v_{j_1} \wedge \dots \wedge v_{j_k} .
\]
When $V$ is endowed with a hermitean inner product $\langle\cdot,\cdot\rangle$, there exists a unique hermitean inner product $\langle\cdot,\cdot\rangle$ on the exterior algebra $\Ext V$ such that $\Ext^k V \perp \Ext^{k'} V$ when $k \neq k'$ and, for every orthonormal basis $\{v_1,\dots,v_n\}$ of $V$, the multivectors $v_J$, where $J$ varies over the $k$-element subsets of $\{1 \dots, n\}$, form an orthonormal basis of $\Ext^k V$ when $k=0,\dots,n$.

Given any $\alpha,\beta \in \Ext V$, we define $\alpha \vee \beta \in \Ext V$ by requiring that
\[
\langle \alpha \vee \beta , \gamma \rangle = \langle \beta, \alpha \wedge \gamma \rangle
\qquant \gamma \in \Ext V.
\]
The map $(\alpha,\beta) \mapsto \alpha \vee \beta$ is sesquilinear (conjugate-linear in the first variable), 
and moreover
\[
\langle \alpha \wedge \beta, \alpha \vee \gamma \rangle = \langle \alpha \wedge \alpha \wedge \beta, \gamma \rangle = 0,
\]
so
\[
|\alpha \wedge \beta + \alpha \vee \gamma|^2 = |\alpha \wedge \beta|^2 + |\alpha \vee \gamma|^2.
\]
Suppose now $\alpha \in \Ext^1 V = V$; then we set $\alpha = |\alpha| v_1$ and extend $v_1$ to an orthonormal basis $\{v_1,\dots,v_n\}$ of $V$.
If $\beta = \sum_J b_J v_J$ for some $b_J \in \C$, where $J$ ranges over the subsets of $\{1,\dots,n\}$, then
\[
\alpha \wedge \beta = |\alpha| \sum_{J \not\ni 1} b_J v_{J \cup \{1\}}
\quad\text{and}\quad
\alpha \vee \beta = |\alpha| \sum_{J \not\ni 1} b_{J \cup \{1\}} v_J,
\]
hence
\begin{equation}\label{eq:prodottoscalarevettore}
|\alpha \wedge \beta - \alpha \vee \beta|^2 = |\alpha \wedge \beta|^2 + |\alpha \vee \beta|^2 = |\alpha|^2 |\beta|^2.
\end{equation}

\subsection{Riemannian manifolds}

Let $M$ be an $n$-dimensional manifold.
The exterior algebra $\Ext M$ over the complexified cotangent bundle $\C T^* M$ is the bundle $\bigoplus_{k=0}^m \Ext^k M$; its sections are known as differential forms.
In particular, $\Ext^0 M = \bundle{T}$, $\Ext^1 M = \C T^* M$, and the differential ${\D} \in \Diff_1(\Ext^0 M, \Ext^1 M)$ extends to the exterior derivative ${\D} \in \Diff_1(\Ext M, \Ext M)$, which satisfies ${\D}^2 = 0$ and
\begin{gather*}
{\D}\alpha \in C^\infty(\Lambda^{k+1} M)
\quad\text{and}\quad
{\D}(\alpha \wedge \beta) = {\D}\alpha \wedge \beta + (-1)^k \alpha \wedge {\D}\beta
\end{gather*}
for all $\alpha \in C^\infty(\Ext^k M)$ and all $\beta \in C^\infty(\Ext M)$.
Hence
\[
[{\D},m(h)] \alpha = {\D}(h\alpha) - h{\D}\alpha = {\D}h \wedge \alpha,
\]
for all $h \in C^\infty(\bundle{T})$ and $\alpha \in C^\infty(\Ext M)$, that is,
\[
\sigma_1({\D})(\xi) \beta = \xi \wedge \beta ,
\]
when $x \in M$, $\xi \in \C T^*_x M$, and $\beta \in \Ext_x M$.

Suppose now that $M$ is endowed with a riemannian metric $g$.
This defines a hermitean fibre inner product on $\C T^* M$, which in turn extends to a hermitean fibre inner product $\langle\cdot,\cdot\rangle$ on $\Ext M$.
The formal adjoint ${\D}^+$ of the exterior derivative ${\D}$ is then defined, and satisfies
\[
\sigma_1({\D}^+)(\xi) \beta = - \overline{\xi} \vee \beta
\]
where $x \in M$, $\xi \in \C T^*_x M$ and $\beta \in \Ext_x M$, by \eqref{eq:symbadj}.
We set $D = {\D} + {\D}^+$; then $D$ is formally self-adjoint and
\[
\sigma_1(D)(\xi) \beta = \xi \wedge \beta - \overline{\xi} \vee \beta,
\]
so, when $\xi = \overline{\xi} \in T^*_x M$ is real,
\[
|\sigma_1(D)(\xi) \beta| = |\xi| |\beta|,
\]
by \eqref{eq:prodottoscalarevettore}, and
\[
P_D(\xi) = |\sigma_1(D)(\xi)|_\op = |\xi|.
\]
Thus the control distance function $\dist_D$ associated to $D$ is just the riemannian distance function $\dist_g$ on $M$.

Define $\varDelta = D^2 = {\D}{\D}^+ + {\D}^+ {\D}$.
This is the Laplace operator on forms induced by the riemannian structure.
Hence, according to \S~\ref{subsection:secondorder}, when $(M,g)$ is complete, the riemannian distance also describes the propagation of the solution $u_t$ of the second-order equation $\ddot u_t = -\varDelta u_t$ given by
\[
u_t = \cos(t \varDelta^{1/2}) u_0 + t \sinc(t \varDelta^{1/2}) \dot u_0.
\]
Since $\varDelta$ preserves the degree of forms, such a solution $u_t$ is a $k$-form for all $t \in \R$ whenever the initial data $u_0$ and $\dot u_0$ are both $k$-forms.

\subsection{Hermitean complex manifolds}

Suppose now that $M$ is a complex manifold of real dimension $2n$.
The decomposition
\[
\C T^* M = \Ext^{1,0} M \oplus \Ext^{0,1} M
\]
given by the complex structure in turn induces a decomposition of $\Ext^k M$, namely,
\[
\Ext^k M = \bigoplus_{p+q = k} \Ext^{p,q} M;
\]
then $\bigoplus \Ext^{p,q} M$ is an algebra bigrading of $\Ext M$.
Let $\pi_{p,q} \in \Hom(\Ext M, \Ext M)$ denote the projection onto $\Ext^{p,q} M$.
The exterior derivative ${\D}$ decomposes as $\partial + \dbar$, where
\[
\partial \alpha = \pi_{p+1,q} {\D}\alpha
\quad\text{and}\quad
\dbar\alpha = \pi_{p,q+1} {\D}\alpha
\qquant \alpha \in C^\infty(\Ext^{p,q} M);
\]
then $\partial^2 = \partial \dbar + \dbar \partial = \dbar^2 = 0$ and
\[
\partial(\alpha \wedge \beta) = \partial \alpha \wedge \beta + (-1)^k \alpha \wedge \partial \beta
\quad\text{and}\quad
\dbar(\alpha \wedge \beta) = \dbar \alpha \wedge \beta + (-1)^k \alpha \wedge \dbar \beta
\]
for all $\alpha \in C^\infty(\Ext^k M)$, $\beta \in C^\infty(\Ext M)$.
As before,
\[
[\partial, m(h)] \alpha = \partial h \wedge \alpha
\quad\text{and}\quad
[\dbar, m(h)] \alpha = \dbar h \wedge \alpha,
\]
so
\[
\sigma_1(\partial)(\xi)\beta = \pi_{1,0} \xi \wedge \beta
\quad\text{and}\quad
\sigma_1(\dbar)(\xi)\beta = \pi_{0,1} \xi \wedge \beta.
\]
For any choice of riemannian metric $g$ on $M$,
\[
\sigma_1(\partial^+)(\xi)\beta = -\pi_{1,0} \overline{\xi} \vee \beta
\quad\text{and}\quad
\sigma_1(\dbar^+)(\xi)\beta = -\pi_{0,1} \overline{\xi} \vee \beta,
\]
by \eqref{eq:symbadj}, hence also, when $\xi = \overline{\xi}$, that is, $\xi$ is real,
\[
|\sigma_1(\partial + \partial^+)(\xi)\beta| = |\pi_{1,0} \xi| |\beta|
\quad\text{and}\quad
|\sigma_1(\dbar + \dbar^+)(\xi)\beta| = |\pi_{0,1} \xi| |\beta|
\]
by \eqref{eq:prodottoscalarevettore}.
In particular, if $g$ is compatible with the complex structure (that is, the complex structure $J\colon T_x M \to T_x M$ is an isometry for every $x \in M$), then for real $\xi$,
\[
|\pi_{1,0} \xi|^2 = |\pi_{0,1} \xi|^2 = |\xi|^2/2 ,
\]
so the distance functions associated to $\partial + \partial^+$ and $\dbar + \dbar^+$ coincide with the riemannian distance function on $M$ multiplied by $\sqrt{2}$ (that is, the propagation speed with respect to the riemannian distance is at most $1/\sqrt{2}$).
The complex Laplacian $\square$ on forms is given by
\[
\square = (\dbar + \dbar^+)^2 = \dbar \dbar^+ + \dbar^+ \dbar;
\]
when $M$ is a K\"ahler manifold, $\varDelta = 2 \square$, which is consistent with the result already obtained for $\varDelta$.

See \cite{wells_differential_2008,folland_neumann_1972} for more on the material in this subsection.

\subsection{CR manifolds}

Let $M$ be an $n$-dimensional manifold endowed with a CR structure of codimension $n-2k$, that is, an involutive complex subbundle $\bundle{L}$ of $\C TM$ of rank $k$ such that $\bundle{L}_x \cap \overline{\bundle{L}}_x = \{0\}$ for all $x$ in $M$.
The exterior algebra $\Ext^{0,\bullet} M = \Ext (\overline{\bundle{L}}^*)$ over the dual of $\overline{\bundle{L}}$ may be identified with the quotient of $\Ext M$ by a suitable graded fibre ideal $\bundle{I}$.
Correspondingly $C^\infty(\Ext^{0,\bullet} M)$ may be identified with $C^\infty(\Ext M) / C^\infty(\bundle{I})$.
The exterior derivative ${\D}$ passes to the quotient bundle, giving a differential operator $\dbar_b \in \Diff_1(\Ext^{0,\bullet} M,\Ext^{0,\bullet} M)$ that satisfies
\begin{gather*}
\dbar_b ^2 = 0 \\
\dbar_b \alpha \in C^\infty(\Ext^{0,q+1} M) \\
\dbar_b (\alpha \wedge \beta) = \dbar_b \alpha \wedge \beta + (-1)^q \alpha \wedge \dbar_b \beta
\end{gather*}
for all $\alpha \in C^\infty(\Ext^{0,q} M)$ and all $\beta \in C^\infty(\Ext^{0,\bullet} M)$.

Note that $\Ext^{0,0} M = \Ext^0 M = \bundle{T}$, so $\dbar_b f = \pi {\D}f$, where $\pi\colon \C T^* M \to \overline{\bundle{L}}^*$ is the restriction morphism.
Thus
\[
[\dbar_b, m(h)] \alpha = \dbar_b h \wedge \alpha 
\quad\text{and}\quad
\sigma_1(\dbar_b)(\xi) \beta = \pi\xi \wedge \beta.
\]
Any choice of hermitean fibre inner product on $\overline{\bundle{L}}$ induces a hermitean inner product along the fibres of $\Ext^{0,\bullet} M$, and
\[
\sigma_1(\dbar_b^+)(\xi) \beta = - \pi{\overline\xi} \vee \beta,
\]
so again, for all real $\xi$,
\[
|\sigma_1(\dbar_b + \dbar_b^+)(\xi) \beta| = |\pi\xi| |\beta|,
\]
that is, if $D = \dbar_b + \dbar_b^+$, then
\[
P_D(\xi) = |\pi \xi|.
\]

If $\xi \in T^* M$, then $\pi \xi = 0$ if and only if $\xi$ vanishes on $TM \cap (\bundle{L} \oplus \overline{\bundle{L}})$; in other words, the Levi distribution $TM \cap (\bundle{L} \oplus \overline{\bundle{L}})$ is the subbundle spanned by the $D$-subunit vectors.
In particular, if $M$ is a nondegenerate CR manifold and $n=2k+1$, then $P_D$ satisfies H\"ormander's condition; for a discussion of the higher-codimensional case, see, for example, \cite[Section 12.1]{boggess_cr_1991}.

Note moreover that the Kohn Laplacian $\square_b$ on the tangential Cauchy--Riemann complex is given by
\[
\square_b = D^2 = \dbar_b \dbar_b^+ + \dbar_b^+ \dbar_b .
\]

For more information on CR manifolds, see, for example, \cite{boggess_cr_1991,dragomir_differential_2006}.

\subsection{Subriemannian structures}

Let $E$ be a real vector bundle on $M$, endowed with a fibre inner product and a smooth bundle homomorphism $\mu\colon E \to TM$.
Consider the adjoint morphism $\mu^*\colon T^* M \to E^*$, and its complexification $\mu^*\colon \C T^* M \to \C E^*$.
Define the differential operator $D \in \Diff_1(\bundle{T},\C E^*)$ by $D f = \mu^*({\D}f)$.
Then $\Symbol{D} = D$, modulo the identification $\Hom(\bundle{T},\C E^*)  = \C E^*$; further $P_D(\xi) = |\mu^*(\xi)|$, $P_D^*(v) = \inf \{ |w| \tc v = \mu(w) \}$, and the $D$-subunit vectors are the images under $\mu$ of the $w \in E$ such that $|w| \leq 1$.

A commonly considered case is when $E$ is a subbundle of $TM$ and $\mu$ is the inclusion map.
Then $E$ is called the horizontal distribution \cite[Section 1.4]{montgomery_tour_2002}, and is the set of the tangent vectors $v$ for which $P_D^*(v) < \infty$.

Another commonly considered case \cite{jerison_subelliptic_1987,garofalo_lipschitz_1998} is when $E$ is the trivial bundle $\bundle{T}^r$ with the standard inner product.
In this case, there are (subunit) vector fields $X_j = \mu(Y_j)$, where the $Y_j$ are the constant sections of $E$ corresponding to the standard basis of $\R^r$.
Hence
\[
P_D(\xi)^2 = \sum_j |\mu^*(\xi)(Y_j)|^2 = \sum_j |\xi(X_j)|^2,
\]
so
\[
|\Symbol{D} f|_\op^2 = |D f|^2 = \sum_j |X_j f|^2,
\]
and
\[
P_D^*(v)^2 = \inf \Bigl\{ \sum_j c_j^2 \tc v = \sum_j c_j X_j|_x \Bigr\}
\qquant v \in T_x M.
\]

\subsection{Nonriemannian propagation}

The fibre seminorm $P_D$ on $T^*M$ associated to $D \in \Diff_1(\bundle{E},\bundle{F})$ is defined to be the pullback of an operator norm along the fibres of $\Hom(\bundle{E},\bundle{F})$.
In the previous examples, however, $P_D$ is actually induced by some (possibly degenerate) inner product on $T^* M$.
We present now a simple example showing that this is not always the case.

Let $M$ be $\R^n$, take $\bundle{E} = \bundle{F} = \bundle{T}^n$, and define $D$ by
\[
D (f_1,\dots,f_n) = (\I \partial_1 f_1,\dots,\I \partial_n f_n),
\]
where $\partial_1,\dots,\partial_n$ are the partial derivatives on $\R^n$.
Then
\[
\sigma_1(D)(\xi) = \begin{pmatrix}\I \xi_1 & & \\ & \ddots & \\ & &\I \xi_n \end{pmatrix},
\]
so $P_D(\xi) = |\xi|_\infty$ and $P_D^*(v) = |v|_1$; here, as usual, $|\xi|_\infty = \max_j |\xi_j|$ and $|v|_1 = \sum_j |v_j|$.
Consequently, $\dist_D(x,y) = |x-y|_1$, hence $\dist_D$ is varietal and $D$ is complete, therefore $D$ is essentially self-adjoint, and
\[
\E^{\I tD} (f_1,\dots,f_n)(x) = (f_1(x_1-t,x_2,\dots,x_n),\dots,f_n(x_1,\dots,x_{n-1},x_n-t)).
\]
Hence the condition $\supp (\E^{\I tD} f) \subseteq \bar{B}_D(\supp f,|t|)$ given by Theorem~\ref{thm:propagationdistance} is optimal.
This shows that the natural distance describing the propagation of solutions of \eqref{eq:onde} need not be riemannian or even subriemannian.

\subsection{Nonsmooth arc-length reparametrisation}\label{subsection:nonsmootharclength}

\runinhead{Construction of the subriemannian structure.}

Take $M = \R^2$ with Lebesgue measure.

Fix a smooth ${{u}}\colon \R^2 \to \R$.
Define the smooth vector fields $X,Y$ on $\R^2$ by
\begin{align*}
X|_p &= \frac{2}{\sqrt{4+3{{u}}(p)^2}} \left( \frac{\partial}{\partial x} + {{u}}(p) \frac{\sqrt{3}}{2} \frac{\partial}{\partial y} \right), \\
Y|_p &= \frac{{{u}}(p)}{2 \sqrt{1+{{u}}(p)^2}} \frac{2}{\sqrt{4+3{{u}}(p)^2}} \left( - {{u}}(p) \frac{\sqrt{3}}{2} \frac{\partial}{\partial x} + \frac{\partial}{\partial y} \right),
\end{align*}
where $\{\sfrac{\partial}{\partial x}, \sfrac{\partial}{\partial y}\}$ denotes the standard basis of $\R^2$.
With respect to the standard riemannian (that is, euclidean) structure of $\R^2$,
\[
\langle X, Y \rangle = 0, \qquad |X| = 1,
\quad\text{and}\quad
|Y| = \frac{|{{u}}|}{2 \sqrt{1+{{u}}^2}}
\]
at every point of $\R^2$.
Indeed, if we define the ``matrix field'' $M$ by
\[
M|_p = \frac{1}{\sqrt{4+3{{u}}(p)^2}}
\begin{pmatrix}
2                            & - {{u}}(p) \sqrt{3}\\
{{u}}(p) \sqrt{3} & 2
\end{pmatrix},
\]
then $M$ is pointwise orthogonal and
\[
X = M \frac{\partial}{\partial x}, \qquad Y = \frac{{{u}}}{2 \sqrt{1+{{u}}^2}} M \frac{\partial}{\partial y}
\]
pointwise.

Define the differential operator $D \in \Diff_1(\bundle{T},\bundle{T}^2)$ by $Df = (X f , Y f)$.
Then
\[
\sigma_1(D)|_p(\xi) = (\langle X|_p, \xi \rangle , \langle Y|_p, \xi \rangle ),
\]
hence the associated fibre seminorm on the cotangent bundle $T^* \R^2$ is given by
\[
\begin{aligned}
P_D|_p(\xi)^2 
&= |\sigma_1(D)|_p(\xi)|_\op^2 = \langle X|_p, \xi \rangle^2 + \langle Y|_p, \xi \rangle^2 \\
&= \left\langle \xi , H|_p \, \xi \right\rangle,
\end{aligned}
\]
where
\[
H
= \frac{1}{4(1+{{u}}^2)} \begin{pmatrix} 4 + {{u}}^2 & 2\sqrt{3}{{u}} \\ 2\sqrt{3}{{u}} & 4{{u}}^2 \end{pmatrix}.
\]

On the one hand, at points $p$ where ${{u}}(p) \neq 0$, the matrix $H|_p$ is nondegenerate; in this case, the norm $P_D^*$ on the tangent bundle is given by
\[
P_D|_p(v)^2 = \langle v, H|_p^{-1} v \rangle,
\]
where
\[
H^{-1}
= \frac{1}{{{u}}^2} \begin{pmatrix} 4{{u}}^2 & - 2 \sqrt{3}{{u}} \\ - 2 \sqrt{3}{{u}} & 4+{{u}}^2 \end{pmatrix},
\]
and $\{X|_p, Y|_p\}$ is an orthonormal basis for the corresponding inner product on $T_p \R^2$.

On the other hand, at points $p$ where ${{u}}(p) = 0$,
\[
P_D|_p(\xi) = \left| \left\langle \frac{\partial}{\partial x}, \xi \right\rangle\right|,
\]
hence $P^*_D$ is the extended norm
\[
P_D^*|_p(v) = \begin{cases} \left| \left\langle \sfrac{\partial}{\partial x}, v \right\rangle\right| &\text{if $\left\langle \sfrac{\partial}{\partial y}, v \right\rangle = 0$,} \\ \infty & \text{otherwise,} \end{cases}
\]
and $X|_p = \sfrac{\partial}{\partial x}$ and $Y|_p = 0$.
In particular,
\[
P_D^*|_p\left(\frac{\partial}{\partial x} \right)
= \begin{cases}
2 &\text{if ${{u}}(p) \neq 0$,}\\
1 &\text{if ${{u}}(p) = 0$.}
\end{cases}
\]

\runinhead{A choice of ${{u}}$.}

Let $\Q = \{q_m\}_{m \in \N}$ be an enumeration of the rational numbers, and set
\[
 A = \bigcup_{m \in \N} \leftopenint q_m - 2^{-m-3}, q_m + 2^{-m-3} \rightopenint.
\]
Then $A$ is a dense open subset of $\R$ whose measure $|A|$ is at most $\sum_{m = 0}^\infty 2^{-m-2}$, that is, $1/2$.

Since $\R \setminus A$ is closed in $\R$, there exists a smooth function ${{v}}\colon \R \to \leftclosedint 0, 1 \rightclosedint$ such that ${{v}}^{-1}(0) = \R \setminus A$ \cite[Theorem~1.5]{kahn_global_1980}.
In fact, after composing ${{v}}$ with a smooth function from $\R$ to $[0,1]$ that vanishes exactly on $\leftopenint-\infty,0\rightclosedint$, we may suppose that ${{v}}$ vanishes to infinite order at all points of $\R \setminus A$.
Set then ${{u}} ( x , y) = {{v}}(x)$.

\runinhead{H\"ormander's condition.}

Let $Z$ be a $D$-subunit field.
Then $Z = \phi X + \psi Y$ for some real-valued functions $\phi,\psi$ with $\phi^2 + \psi^2 = 1$.
Since $\langle Z, X \rangle = \phi$, we see that $\phi$ is smooth, so $\phi X$ and $\psi Y$ are smooth too.
Moreover, since $|\psi| \leq 1$, the smooth field $\psi Y$ vanishes at least to the same order as $Y$, at every point of $\R^2$, and hence $\psi Y$ vanishes to infinite order at every point of $(\R \setminus A) \times \R$.

Take now a system $Z_1,\dots,Z_r$ of $D$-subunit vector fields, and decompose $Z_j$ as $\phi_j X + \psi_j Y$.
Then any iterated Lie bracket of $Z_1,\dots,Z_r$ is the sum of an iterated Lie bracket of $\phi_1 X,\dots,\phi_r X$ and of iterated Lie brackets where some of the $\psi_j Y$ occur.
The first summand is then a smooth multiple of $X$, whereas the other summands vanish to infinite order at every point of $(\R \setminus A) \times A$ (indeed, the set of smooth vector fields vanishing to infinite order at some $p \in M$ is an ideal of the Lie algebra of smooth vector fields).
We conclude that the iterated Lie bracket of $Z_1,\dots,Z_r$, evaluated at any point of $(\R \setminus A) \times \R$, is a multiple of $X$.

Hence H\"ormander's condition for $P_D$ fails at all points of $(\R \setminus A) \times \R$.

\runinhead{Topologies.}

Define $Z = 2^{-1} \sfrac{\partial}{\partial x}$ and $W = {{u}}(\sqrt{4+{{u}}^2})^{-1} \sfrac{\partial}{\partial y}$, and then set $\mathfrak{X} = \{Z,W\}$.
Then $\mathfrak{X}$ is a system of smooth $D$-subunit vector fields on $\R^2$.
Write $\dist_\mathfrak{X}$ for the distance function corresponding to the class of $D$-subunit curves that are piecewise flow curves of $Z$ or $W$.
Clearly
\[
\dist_D \leq \dist_D^\infty \leq \dist_D^\flow \leq \dist_\mathfrak{X}.
\]
We now show that $\dist_\mathfrak{X}$ is varietal, so all the other distance functions above are.

Take $(x,y) \in \R^2$ and $r \in \R^+$.
We want to prove that $\bar{B}_{\mathfrak{X}}((x,y),r)$ is a neighbourhood of $(x,y)$.
Since $A$ is dense in $\R$, there is $x' \in A$ such that $|x-x'| < r/8$ and ${{v}}(x') \neq 0$.
We claim that every point $( \tilde x , \tilde y ) \in \R^2$ such that
\[
|(\tilde x,\tilde y) - (x,y)|_\infty < \min\left\{ \frac{r}{8}, \frac{r}{4} \frac{|{{v}}(x')|}{\sqrt{4+{{v}}(x')^2}} \right\}
\]
belongs to $\bar{B}_\mathfrak{X}((x,y),r)$.
The idea is to go from $(x , y)$ to $(x' , y)$ along the flow of $Z$, then from $(x' , y)$ to $(x' , \tilde y)$ along the flow of $W$, and finally from $(x' , \tilde y)$ to $(\tilde x , \tilde y)$ along the flow of $Z$.
Such a curve is defined on an interval of length
\[
2 |x-x'| + \frac{\sqrt{4+{{v}}(x')^2}}{|{{v}}(x')|} |y-\tilde y| + 2|x' - \tilde x| < \frac{r}{4} + \frac{r}{4} + \frac{r}{2} = r,
\]
hence its final point $(\tilde x,\tilde y)$ belongs to the ball $\bar{B}_\mathfrak{X}((x,y),r)$.

\runinhead{A smooth curve with nonsmooth arc-length.}

Let $\phi\colon \leftclosedint 0,T\rightclosedint \to \R$ be absolutely continuous, and define $\gamma(t) = ( \phi(t) , 0 )$.
Then
\[
P_D^*|_{\gamma(t)}(\gamma'(t))
= \begin{cases}
2 |\phi'(t)| &\text{if $\phi(t) \in A$,}\\
|\phi'(t)| &\text{if $\phi(t) \notin A$,}\\
\end{cases}
\]
at every point $t$ where $\phi$ is differentiable.

The set $\tilde A = \phi^{-1}(A)$ is open in $\leftclosedint 0, T \rightclosedint$, but need not be dense.
However, any connected subset of $\leftclosedint 0,T \rightclosedint \setminus \tilde A$ is mapped by $\phi$ onto a connected subset of $\R \setminus A$, which has at most one element because $A$ is dense.
Hence $\phi$ is locally constant on the interior of $\leftclosedint 0, T \rightclosedint \setminus \tilde A$, so $\phi'(t) = 0$ for every interior point $t$ of $\leftclosedint 0, T \rightclosedint \setminus \tilde A$.
The remaining points of $\leftclosedint 0, T \rightclosedint \setminus \tilde A$, that is, the boundary points, also belong to the closure of $\tilde A$.

Suppose now that $\gamma$ is $D$-subunit and $C^1$.
Then $|\phi'(t)| \leq 1/2$ for all $t \in \tilde A$.
Since $\phi'$ is continuous, $|\phi'| \leq 1/2$ on the closure of $\tilde A$, hence on all $\leftclosedint 0, T \rightclosedint$.
This means that
\[
|\phi(T) - \phi(0)| \leq \int_0^T |\phi'(t)| \,{\D}t \leq T/2,
\]
that is, $T \geq 2 |\phi(T) - \phi(0)|$.

Suppose further that $\phi$ is nondecreasing, $\phi(0) = 0$ and $\phi(T) = 1$, so $T \geq 2$.
The length $\ell_D(\gamma)$ does not depend on the parametrisation.
Therefore, if we define $\tilde\gamma(t) = ( t , 0 )$ for all $t \in \leftclosedint 0, 1 \rightclosedint$, then
\[
\begin{aligned}
\ell_D(\gamma) 
 = \ell_D(\tilde\gamma)
&= \int_0^1 P_D^*|_{\tilde\gamma(t)}(\tilde\gamma'(t)) \,{\D}t \\
&= 2 \left|\leftclosedint 0,1 \rightclosedint \cap A\right| + \left|\leftclosedint 0,1 \rightclosedint \setminus A\right| \\
& = 1 + \left|\leftclosedint 0,1 \rightclosedint \cap A\right| \leq 3/2 < 2.
\end{aligned}
\]

In summary, every $D$-subunit, $C^1$ reparametrisation of $\gamma$ is defined on an interval of width at least $2$.
By contrast, the arc-length reparametrisation of $\gamma$ is $D$-subunit and defined on an interval of width at most $3/2$.

\bibliographystyle{amsabbrv}
\bibliography{propagation}

\end{document}